\documentclass{gtart_a}
\pdfoutput=1


\title[Pseudoholomorphic punctured spheres in
$\mathbb{R}{\times}(S^{1}{\times}S^{2})$]
{Pseudoholomorphic punctured spheres in
$\mathbb{R}{\times}(S^{1}{\times}S^{2})$:\\Properties and existence}

\author{Clifford Henry Taubes}
\givenname{Clifford Henry}
\surname{Taubes}
\address{Department of Mathematics\\Harvard University\\\newline
Cambridge MA 02138\\USA}
\email{chtaubes@math.harvard.edu}

\volumenumber{10}
\issuenumber{}
\publicationyear{2006}
\papernumber{23}
\lognumber{0731}
\startpage{785}
\endpage{928}
\doi{}
\MR{}
\Zbl{}

\keyword{pseudoholomorphic}
\keyword{punctured sphere}
\keyword{almost complex structure}
\keyword{symplectic form}
\keyword{moduli space}
\subject{primary}{msc2000}{53D30}
\subject{secondary}{msc2000}{53C15}
\subject{secondary}{msc2000}{53D05}
\subject{secondary}{msc2000}{57R17}

\arxivreference{}  
\arxivpassword{}   

\received{6 April 2004}
\revised{}
\accepted{9 May 2006}
\published{24 July 2006}
\publishedonline{24 July 2006}
\proposed{Rob Kirby}
\seconded{Peter Ozsv\'ath, Yasha Eliashberg}
\corresponding{}
\editor{}
\version{}



\AtBeginDocument{\let\bar\wbar\let\hat\what\def\notin{\not\in}}

\def\setobjecttype#1{}
\def\unfrac#1#2{#1/#2}


\newcommand{\step}[1]{\medskip\hypertarget{\thesubsection:#1}{{\bf #1}}\qua\ignorespaces}

\newcommand{\substep}[1]{\medskip\hypertarget{\thesubsection:#1}{{\bfseries\itshape #1}}\qua\ignorespaces}
\newcommand{\substepp}[1]{\medskip\hypertarget{\thesubsection:#1X}{{\bfseries\itshape #1}}\qua\ignorespaces}


\def\refstep#1{\hyperlink{\thesubsection:#1}{#1}}
\def\refstepp#1{\hyperlink{\thesubsection:#1X}{#1}}
\def\refsteps#1#2{\hyperlink{#1:#2}{#2}}


\def\itaubes#1{\addtocounter{equation}{1}
\begin{itemize}
\leftskip25pt
\item
\noindent\llap{\hbox to 53.5pt{\rm\hypertarget{eq:#1}%
{(\theequation)}\hss}}\ignorespaces}

\def\enditaubes{\end{itemize}}

\def\qtaubes#1{\addtocounter{equation}{1}\par\leftskip38pt
\noindent\llap{\hbox to 38pt{\rm\hypertarget{eq:#1}%
{(\theequation)}\hss}}\ignorespaces}

\def\endqtaubes{\par\medskip\leftskip0pt}


\def\eqreft#1#2{\hyperlink{eq:#1.#2}{(#1--#2)}}





\DeclareMathOperator{\kernel}{kernel}
\DeclareMathOperator{\Hom}{Hom}
\DeclareMathOperator{\cokernel}{cokernel}
\DeclareMathOperator{\Deg}{Deg}
\DeclareMathOperator{\End}{End}
\DeclareMathOperator{\constant}{constant}
\DeclareMathOperator{\Ree}{Re}
\DeclareMathOperator{\spann}{span}
\DeclareMathOperator{\Crit}{Crit}
\DeclareMathOperator{\dist}{dist}

\makeatletter
\def\cnewtheorem#1[#2]#3{\newtheorem{#1}{#3}[section]
\expandafter\let\csname c@#1\endcsname\c@theorem}


\newtheorem{theorem}{Theorem}[section]
\cnewtheorem{prop}[theorem]{Proposition}
\cnewtheorem{lemma}[theorem]{Lemma}
\cnewtheorem{corollary}[theorem]{Corollary}
\newtheorem{constraint}{Constraint}
\newtheorem{case}{Case}
\theoremstyle{definition}
\cnewtheorem{deff}[theorem]{Definition}

\makeatother  

\makeautorefname{constraint}{Constraint}
\makeautorefname{subsection}{Subsection}
\makeautorefname{case}{Case}
\makeautorefname{deff}{Definition}

\numberwithin{equation}{section}

\newcommand{\mc}{\ensuremath{\mathfrak{c}}}
\newcommand{\mf}{\ensuremath{\mathfrak{f}}}

\renewcommand{\thesubsection}{\thesection.\Alph{subsection}}


\begin{document}

\begin{asciiabstract}
This is the first of at least two articles that describe the moduli
spaces of pseudoholomorphic, multiply punctured spheres in R x (S^1 x
S^2) as defined by a certain natural pair of almost complex structure
and symplectic form. This article proves that all moduli space
components are smooth manifolds. Necessary and sufficient conditions
are also given for a collection of closed curves in S^1 x S^2
to appear as the set of |s| --> infinity limits of the constant s in R
slices of a pseudoholomorphic, multiply punctured sphere.
\end{asciiabstract}

\begin{htmlabstract}
This is the first of at least two articles that describe the moduli
spaces of pseudoholomorphic, multiply punctured spheres in
<b>R</b>&times;(S<sup>1</sup>&times; S<sup>2</sup>) as defined by a
certain natural pair of almost complex structure and symplectic
form. This article proves that all moduli space components are
smooth manifolds. Necessary and sufficient conditions are also given
for a collection of closed curves in S<sup>1</sup> &times;  S<sup>2</sup>
to appear as the set of |s|&rarr;&infin; limits
of the constant s&isin;<b>R</b> slices of a
pseudoholomorphic, multiply punctured sphere.
\end{htmlabstract}

\begin{abstract}
This is the first of at least two articles that describe the moduli
spaces of pseudoholomorphic, multiply punctured spheres in $\mathbb{R}
 \times  (S^1 \times  S^2)$ as defined by a
certain natural pair of almost complex structure and symplectic
form. This article proves that all moduli space components are
smooth manifolds. Necessary and sufficient conditions are also given
for a collection of closed curves in $S^1 \times  S^2$
to appear as the set of $| s| \to \infty $ limits
of the constant $s \in \mathbb{R}$ slices of a
pseudoholomorphic, multiply punctured sphere.
\end{abstract}

\maketitle

\section{Introduction}\label{sec:1}

This is the first of at least two articles that describe the moduli
spaces of multiply punctured, pseudoholomorphic spheres for a very
natural symplectic form and compatible almost complex structure on
$\mathbb{R}\times(S^{1}\times S^{2})$.  In this regard, the
symplectic form and attending almost complex structure arise when
considering 4 dimensional, compact Riemannian manifolds with an
associated self-dual harmonic 2--form.  To elaborate, if the metric
is suitably generic, then the zero locus of the harmonic form is an
embedded union of circles and the harmonic 2--form defines a
symplectic structure on the complement of this locus (see, for example,
Honda \cite{Ho} or Gay and Kirby \cite{GK}). In addition, the complement of any given
component of the zero locus is in an open set that is diffeomorphic
to $(0, \infty)\times(S^{1}\times S^{2})$.  As explained in
\cite{T1}, the given self-dual 2--form can be modified on the
complement of its zero locus so as to give a symplectic form on this
complement that restricts to any of these $(0,
\infty)\times(S^{1}\times S^{2})$ subsets as either the symplectic
form from $\mathbb{R}\times(S^{1}\times S^{2})$ considered here, or
that of its push-forward by a free, symplectic
$\mathbb{Z}/2\mathbb{Z}$ action.

With the preceding understood, remark next that there is some
evidence (see \cite{T2}) that pseudoholomorphic curves for certain
almost complex structures, compatible with this new symplectic form,
code information about the smooth structure on the underlying 4
dimensional manifold.  And, if such is the case, then a program to
decipher this code will almost surely need knowledge of the
multi-punctured sphere pseudoholomorphic curve moduli spaces on the
whole of $\mathbb{R}\times(S^{1}\times S^{2})$.  For example, these
moduli spaces will arise in a definition of a smooth 4--manifold
invariant that uses any sort of refined version of the
Eliashberg--Givental--Hofer symplectic field theory \cite{EGH}. (Some
refinement would have to be made since the symplectic form that
arises on $\mathbb{R}\times(S^{1}\times S^{2})$ comes from an
overtwisted contact structure on $S^{1}\times S^{2}$.)

This article provides an introduction to the multi-punctured sphere
moduli spaces, a description of some of their local properties, an
introduction to techniques used in the sequel article, and an
existence proof for the various components.  The afore-mentioned
sequel describes the components of the multi-punctured sphere moduli
spaces in great detail with the help of an explicit parametrization.
The reader is also referred to a sort of prequel to this series,
this the article \cite{T3} that describes the pseudoholomorphic
disks, cylinders and certain of the 3--holed spheres in
$\mathbb{R}\times(S^{1}\times S^{2})$.

\medskip
{\bf Acknowledgements}\qua
Before turning to the details, there is a debt to acknowledge: In
hindsight, the approach in these articles most probably owes a great
deal to the author's subconscious remembering of old conversations
with both Helmut Hofer and Michael Hutchings.

The author is supported in part by the National Science Foundation.

\subsection{The symplectic and contact geometry of $\mathbb{R}\times(S^{1}\times S^{2})$}\label{sec:1a}
An introduction to the relevant geometry is in order.  To start, introduce standard
coordinates $(s, t, \theta ,\varphi)$ for $\mathbb{R}\times(S^{1}\times S^{2})$ where $s$ is
the Euclidean coordinate for the $\mathbb{R}$ factor, $t\in\mathbb{R}/(2\pi\mathbb{Z})$ is the
coordinate for the $S^1$ factor and $(\theta,\varphi)\in[0,\pi]\times\mathbb{R}/(2\pi\mathbb{Z})$
are standard spherical coordinates for the 2--sphere factor.  The symplectic form that is used
here on $\mathbb{R}\times(S^{1}\times S^{2})$ comes as the `symplectification' of a contact
1--form on $S^{1}\times S^{2}$, this the 1--form
\begin{equation}\label{eq:1.1}
\alpha\equiv-\big(1-3\cos^{2}\theta\big)dt-\surd 6 \cos \theta\sin^{2}\theta d\varphi.
\end{equation}
To be explicit, here is the symplectic form:
\begin{equation}\label{eq:1.2}
\omega=d\big(e^{-\surd6s}\alpha\big).
\end{equation}
Note that the convention is that the $s\to\infty$ end of $\mathbb{R}\times(S^{1}\times S^{2})$
is the concave side end and the $s\to-\infty$ is the convex side
end. (The concave side end is the half that appears in the
4--manifold context.)  It proves convenient at times to write the form $\omega$ as
\begin{equation}\label{eq:1.3}
\omega=dt\wedge df+d\varphi\wedge dh,
\end{equation}
where
\begin{equation}\label{eq:1.4}
f\equiv e^{-\surd6s}\big(1-3\cos^{2}\theta\big) \qquad\text{and}\qquad h\equiv\surd6 e^{-\surd6s}
\cos\theta\sin^{2}\theta.
\end{equation}
The almost complex structure that defines here the notion of a pseudoholomorphic subvariety
is specified by the relations
\begin{equation}\label{eq:1.5}
J\cdot\partial_{t}=g\partial_{f}\text{ and }J\cdot\partial_{\varphi}=\sin^{2}\theta g \partial_{h},
\end{equation}
where $g =\surd6 e^{-\surd6s}(1 + 3\cos^{4}\theta)$.  This almost complex structure is
$\omega$--compatible.  In fact, the form $g^{-1}\omega(\cdot,J(\cdot))$ on the tangent bundle of
$\mathbb{R}\times(S^{1}\times S^{2})$ is the standard product
metric, $ds^{2}+dt^{2}+d\theta^{2}+\sin^{2}\theta d\varphi^{2}$. As remarked earlier, this
almost complex structure is not integrable.

Note that $J$ is invariant under an $\mathbb{R}\times(S^{1}\times S^{1})$ subgroup of the product
metric's group of isometries, $\mathbb{R}\times S^{1}\times SO(3)$.  Here, the $\mathbb{R}$
factor in this subgroup acts as the constant translations along the $\mathbb{R}$ factor in
$\mathbb{R}\times(S^{1}\times S^{2})$, the first $S^{1}$ factor in the subgroup
acts to rotate the $S^{1}$ factor of $\mathbb{R}\times(S^{1}\times S^{2})$, while the second
$S^{1}$ factor rotates the 2--sphere about the axis where $\theta\in\{0,\pi\}$.  Thus, the
$\mathbb{R}$ action is generated by the vector field $\partial_{s}$ and the two $S^{1}$ actions
are respectfully generated by the vector fields $\partial_{t}$ and $\partial_{\varphi}$
This particular $S^{1}\times S^{1}$ subgroup of the metric isometry group is denoted below as $T$.

\subsection{The pseudoholomorphic subvarieties}\label{sec:1b}

Following the lead of Hofer \cite{H1,H2,H3} and Hofer--Wysocki--Zehnder
\cite{HWZ1,HWZ2,HWZ3}, a pseudoholomorphic subvariety in
$\mathbb{R}\times(S^{1}\times S^{2})$ is defined here as follows:

\begin{deff}\label{deff:1.1}
A pseudoholomorphic subvariety $C\subset\mathbb{R}\times(S^{1}\times S^{2})$ is a non-empty,
closed subset with the following properties:
\begin{itemize}

\item
The complement in $C$ of a countable, nowhere accumulating subset is
a 2--dimensional submanifold whose tangent space is $J$--invariant.

\item
$\int_{C\cap K}\omega<\infty$ when $K\subset\mathbb{R}\times(S^{1}\times S^{2})$
is an open set with compact closure.

\item
$\int_{C}d\alpha<\infty$.
\end{itemize}
A pseudoholomorphic subvariety is said to be `reducible' if the removal of a finite set
of points makes a set with more than one connected component.
\end{deff}
Note that \cite{T3} uses the term `HWZ variety' for what is defined
here to be a pseudoholomorphic subvariety.

If $C\subset\mathbb{R}\times(S^{1}\times S^{2})$ is an irreducible pseudoholomorphic subvariety,
then $C$ defines a canonical `model curve', this a complex curve $C_0$ that comes with
a proper, pseudoholomorphic map to $\mathbb{R}\times(S^{1}\times S^{2})$ that is almost everywhere
1--1 and has image $C$.  A multi-punctured, pseudoholomorphic sphere is, by definition, an
irreducible, pseudoholomorphic subvariety whose model curve is a multiply punctured
sphere.

\subsection{The ends of pseudoholomorphic subvarieties}\label{sec:1c}

The set of pseudoholomorphic subvarieties comes with a natural topology whose description
occupies the next subsection.  This subsection constitutes a digression of sorts
to introduce various facts about the large $|s|$ portions of pseudoholomorphic subvarieties
that are used both to define this topology and to characterize the resulting space of
subvarieties.  This digression has three parts.

\step{Part 1}
Pseudoholomorphic subvarieties are
quite well behaved at large $|s|$.  In particular, as demonstrated
in  \cite[Section 2]{T3}, any given irreducible pseudoholomorphic
subvariety $C$ has the following property:
\qtaubes{1.6}
\textsl{There exists $R > 1$ such that the $|s|\geq R$ portion of $C$ is a finite disjoint union
of embedded cylinders to which the function $s$ restricts as an unbounded function without
critical points. Moreover, the constant $|s|$ slices of any such cylinder converge
in $S^1 \times S^2$ as $|s|\to\infty$ to a closed orbit of the Reeb vector field}
\begin{equation*}
\hat{\alpha}\equiv\big(1-3\cos^{2}\theta\big)\partial_{t}+\surd 6
\cos\theta\partial_{\varphi}.
\end{equation*}
\textsl{In addition, this convergence is such that any constant $|s|$ slice defines a closed
braid in a tubular neighborhood of the limit closed
orbit.}
\endqtaubes
The notion of convergence used here can be characterized as follows:  The diameter of a
tubular neighborhood of the limit closed orbit that contains any given $|s|\geq\mathbb{R}$ slice
can be taken to be a function of $|s|$ that decreases to zero at an exponential rate as $|s|$
diverges.

The closed orbits of $\hat{\alpha}$ are called Reeb orbits.  As
noted in \cite{T3}, they can all be listed; and here is the full
list:
\begin{itemize}

\item \textsl{The $\theta= 0$ and  $\theta=\pi$ loci.}
\item \textsl{The others are labeled by data $((p,p'),\iota)$ where
$\iota\in \mathbb{R}/(2\pi\mathbb{Z})$ and where $p$ and $p'$ are
relatively prime integers that are subject to
the following constraints:
\begin{enumerate}
\item[(a)] At least one is non-zero.
\item[(b)] $|\frac{p'}{p}|>\frac{\surd 3}{\surd 2}$ when $p < 0$.
\end{enumerate}
The Reeb orbit that is labeled by this data is the locus where  $p't - p\varphi=\iota$ and
where $\theta$ is the unique point in $(0,\pi)$ for which}
\setcounter{equation}{6}
\begin{equation}\label{eq:1.7}
p'\big(1-3\cos^{2}\theta\big)-p\surd6\cos\theta=0 \text{ and }p'\cos\theta\geq 0.
\end{equation}
\end{itemize}
The convention in \eqref{eq:1.7} takes a pair of integers to be relatively prime when they
have no common, positive integer divisor save for the number 1.  For example,
$(0,-1)$ and $(0, 1)$ are deemed to be relatively prime, as is $(-1, -1)$.  In this regard,
any pair $P = (p, p')$ that obeys the constraints in the second point above defines, via
\eqref{eq:1.7}, a unique angle between 0 and $\pi$.

The pair $p$ and $p'$ can be alternatively defined as the respective integrals over the Reeb
orbit of the 1--forms $\frac{1}{2\pi}dt$ and $\frac{1}{2\pi}d\varphi$ using the orientation
from $\hat{\alpha}$.

Note that all $\theta\in(0,\pi)$ Reeb orbits come in smooth, 1--parameter families.
In this regard, a $(p, p')$ Reeb orbit is fixed by the subgroup of $T$ generated by
$p\partial_t+ p'\partial_{\varphi}$ while its corresponding family is obtained from its
translates by the action of $T$. Meanwhile, the Reeb orbits where $\theta= 0$ and $\theta=\pi$
are $T$--invariant.

\step{Part 2}
Granted the preceding, it then follows that any end of $C$ whose associated limit Reeb orbit
lies where $\theta\in(0,\pi)$ determines a triple $(\varepsilon,(p,p'))$, where $\varepsilon\in
\{+,-\}$ and where $(p, p')$ are a pair of integers.  To elaborate, $\varepsilon$ is $+$ for
a concave side end and $-$ for a convex side one.  Meanwhile, the pair $(p, p')$ is a positive,
integer multiple of the relatively prime pair of integers that classifies the end's limiting
Reeb orbit. In particular, they are the respective integrals of the 1--forms
$\frac{1}{2\pi}dt$ and $\frac{1}{2\pi}d\varphi$ over any constant $|s|$ slice of the end with
the latter oriented so its homology class in a tubular neighborhood of the limit Reeb orbit
is a positive multiple of the class of the Reeb orbit.  For example, if $\gamma\subset
S^{1}\times S^{2}$ is a $(p, p')$ Reeb orbit, then $\mathbb{R}\times\gamma$ is
a pseudoholomorphic cylinder and in this case the integer pair that is associated to
either end is $(p, p')$.

Of course, an end $E\subset C$ of the sort just described also
determines an element, $\iota_{E}\in\mathbb{R}/(2\pi\mathbb{Z})$,
this the angular parameter in \eqref{eq:1.7} that helps to specify
its associated limit Reeb orbit. A convex side end also determines a
real number, $c_E$. This comes about as follows: Let $\theta_E$
denote the $|s|\to\infty$ limit of $\theta$ on $E$. The arguments
from  \cite[Section 2]{T3} can be used to prove that the function
$\theta$ on any end of $C$ can be written as
\begin{equation}\label{eq:1.8}
\theta=\theta_{E}+c_{E}e^{-\zeta|s|}+o\big(e^{-(\zeta+\varepsilon)|s|}\big),
\end{equation}
where $c_E$ is constant while $\zeta=\surd6\sin^{2}\theta_{E}
(1+3\cos^{2}\theta_{E})/(1+3\cos^{4}\theta_{E})$, and $\varepsilon$
is positive and also determined a priori by $\theta_E$. In this
regard, if $c_{E}= 0$, then the leading order term in \eqref{eq:1.8}
is both above and below $\theta_E$ over the large and constant $|s|$
slices of $E$ unless $\theta$ is constant on $E$. In the latter
case, $E$ is part of some $\mathbb{R}\times\gamma$ with $\gamma$ a
Reeb orbit.  Note that $c_E$ is always zero for a concave side end
(see the proof of the second point in  \cite[(4.21)]{T3}).

\step{Part 3}
As will now be explained, an end of $C$
whose limit Reeb orbit has $\theta= 0$ or $\pi$ but does not
coincide with the corresponding $\theta= 0$ or $\theta=\pi$ cylinder
can also be assigned a discrete triple $(\varepsilon, (p, p'))$ as
well as an angular parameter and a real number.  To begin the
explanation, note first that $\varepsilon\in\{+,-\}$ has the same
meaning as before, $+$ when the end is on the convex side and $-$
otherwise. Meanwhile $p$ and $p'$ are the respective integrals over
any sufficiently large and constant $|s|$ slice of the 1--forms
$\frac{1}{2\pi}dt$ and $\frac{1}{2\pi}d\varphi$ using the pull-back
of $-dt$ to define the orientation.  In this regard, the following
fact from  \cite[Section 2]{T3} is used:
\qtaubes{1.9}
\textsl{%
Let $C$ denote an irreducible pseudoholomorphic subvariety.  If $\gamma\subset S^{1}\times S^{2}$
is a Reeb orbit, and if $C\neq \mathbb{R}\times\gamma$,  then $C$  has at most a finite number of
 intersections with $R\times\gamma$.}
\endqtaubes
As can be proved using results from  \cite[Sections 2 and 3]{T3},
any pair $(p, p')$ that arises in the manner just described from an
end of $C$ where the $|s|\to\infty$ limit of $\theta$ is 0 or $\pi$
is constrained by the following rules:
\itaubes{1.10}
\textsl{$p < 0$  in all cases.}
\item \textsl{$\frac{p'}{p}< -\frac{\surd 3}{\surd 2}$  for $\theta= 0$
concave side ends and  $\frac{p'}{p}> -\frac{\surd 3}{\surd 2}$  for
$\theta = 0$  convex side ends.}

\item \textsl{$\frac{p'}{p}> \frac{\surd 3}{\surd 2}$  for $\theta=\pi$
 concave side ends and $\frac{p'}{p}< \frac{\surd 3}{\surd 2}$
  for $\theta=\pi$
 convex side ends.}
\end{itemize}

To explain the angle and real number to assign an end $E\subset C$ where the $|s|\to\infty$
limit of $\theta$ is 0, introduce the functions
\begin{equation}\label{eq:1.11}
a=|h|^{1/2}\cos\varphi\qquad\text{and}\qquad b\equiv|h|^{1/2}\sin\varphi.
\end{equation}
The analysis from  \cite[Section 3]{T3} can be used to prove that
an end of the sort under consideration can be parametrized in an
orientation preserving fashion at large $|s|$ by a complex parameter
$z\in\mathbb{C}-0$ via a map that sets
\begin{equation}\label{eq:1.12}
s-i\,t=p\,\ln(z)\qquad\text{and}\qquad a-i\,b=\hat{c}_{E}z^{\pm p}(1+o(1)),
\end{equation}
where the $+$ sign is used when the $|s|\to\infty$ limit of $\theta$ on the end is 0 and the $-$
sign when this limit is $\pi$.  Note that \eqref{eq:1.12} is valid for a concave side end only
where $|z|\ll 1$ and only where $|z|\gg 1$ for a convex side one.  In any case, the constant
$\hat{c}_{E}\in\mathbb{C}-0$ while the term indicated by $o(1)$ and its derivatives limits to
zero as $|\ln|z||\to\infty$. Granted the preceding, the real number, $c_E$, and the angle,
$\iota_{E}$, assigned to $E$ are the respective real and imaginary parts of $\ln(\hat{c}_{E})$.

An end $E\subset C$ where the $|s|\to\infty$ limit of $\theta$ is $\pi$ has an analogous
orientation preserving parametrization by $z\in\mathbb{C}-0$ that is obtained from the
preceding by using the fact that the almost complex geometry is invariant under the
involution of $S^{1}\times S^{2}$ that acts to send $(t,(\theta,\varphi))\to(t+\pi,(\pi-\theta,
-\varphi))$.

\subsection{The moduli spaces}\label{sec:1d}
Fix a finite set whose elements are of the following sort.  Each element
is a 4--tuple of
the form $(\delta,\varepsilon ,(p, p'))$ with $\delta\in\{-1,0,1\}$, $\varepsilon\in\{+, -\}$
and $(p, p')\in\mathbb{Z}\times\mathbb{Z}$. A given 4--tuple is allowed to appear more than
once in this set.  Let $\hat{A}$ denote the set that is obtained by augmenting this chosen
set of 4--tuples with a single pair, $(\mc_{+},\mc_{-})$, of non-negative integers.  Such data
sets are used in what follows to label subsets of the set of pseudoholomorphic subvarieties.

Given a non-negative integer, $\zeta$, and a set $\hat{A}$ as just described, let
$\mathcal{M}_{\hat{A},\zeta}$,  denote the set of irreducible pseudoholomorphic subvarieties
in $\mathbb{R}(S^{1}\times S^{1})$ with the following
three properties:  First, if $C\in\mathcal{M}_{\hat{A},\zeta}$, then $C$'s model curve has genus
$\zeta$.  Second, there is a $1-1$ correspondence between the 4--tuples in $\hat{A}$ and the set
of ends of $C$ so that when $E$ is any given end of $C$, then its
corresponding 4--tuple
in $\hat{A}$ is as follows:  The component $\delta$ is $1$, $0$ or $-1$ in the respective
cases that the $|s|\to\infty$ limit of $\theta$ on $E$ is 0, neither 0 nor $\pi$, or $\pi$.
Meanwhile, the component $\varepsilon$ is $+$ when $E$ is a concave side end
and $-$ otherwise.  Finally, the pair $(p, p')$ from the 4--tuple is the integral over any
sufficiently large $|s|$ slice of $E$ of the pair of 1--forms
$(\frac{1}{2\pi}dt,\frac{1}{2\pi}d\varphi)$. Said succinctly,
the 4--tuples from the set $\hat{A}$ describe the discrete asymptotic data of the ends of any
subvariety in $\mathcal{M}_{\hat{A},\zeta}$.  Finally, $C$ has intersection number $\mc_+$ with
the $\theta = 0$ cylinder and $\mc_-$ with the $\theta=\pi$ cylinder.

The genus 0 subvarieties are the multiply punctured spheres.  The $\zeta = 0$ version of
$\mathcal{M}_{\hat{A},\zeta}$, is denoted below as $\mathcal{M}_{\hat{A}}$.

Give $\mathcal{M}_{\hat{A},\zeta}$, the topology where a basis for the neighborhoods of a given
$C\in\mathcal{M}_{\hat{A},\zeta}$ are the subsets that consist of those
$C'\in\mathcal{M}_{\hat{A},\zeta}$ with
\begin{equation}\label{eq:1.13}
\sup_{z\in C}\dist(z,C')+\sup_{z\in C'}\dist(C,z)<k.
\end{equation}
Here, $k$ is some fixed positive real number.  The topological space
that results is a `moduli space'.  In this regard, note that the
definition of a moduli space given here differs from that in
\cite{T3} in the case that $\hat{A}$ has 4--tuples with first
component equal to $\pm 1$ since the definition in \cite{T3} does
not constrain the $p'$ component of the 4--tuple.

In any event, with the data set $\hat{A}$ fixed, the structure of the corresponding
moduli space $\mathcal{M}_{\hat{A}}$ of multi-punctured spheres is of prime interest in
this article.  In this regard, the first significant result in this
article is summarized by the following theorem:

\begin{theorem}\label{thm:1.2}
The multi-punctured sphere moduli space $\mathcal{M}_{\hat{A}}$ has the structure of a finite
dimensional, smooth manifold.
\end{theorem}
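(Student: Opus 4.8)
The plan is to realize $\mathcal{M}_{\hat A}$ locally as the zero set of a Fredholm section of a Banach vector bundle over a Banach manifold of maps, and to show this section is transverse so that the implicit function theorem produces smooth manifold charts. First I would fix $C\in\mathcal{M}_{\hat A}$ with model curve $C_0$, a multiply punctured sphere, and its proper pseudoholomorphic map $\phi\colon C_0\to\mathbb{R}\times(S^1\times S^2)$ that is almost everywhere $1$--$1$. Using the asymptotic descriptions from \hyperlink{eq:1.6}{(1.6)}, \eqref{eq:1.8} and \eqref{eq:1.12}, each end of $C$ is modeled on a standard cylinder with exponentially decaying deviation from its limit Reeb orbit (or, at $\theta\in\{0,\pi\}$, its standard model), so I would build a weighted Sobolev space $\mathcal{B}$ of nearby maps $C_0\to\mathbb{R}\times(S^1\times S^2)$: sections of $\phi^*T(\mathbb{R}\times(S^1\times S^2))$ with $L^2_{k}$ norms taken relative to a small exponential weight $e^{\delta|s|}$ on each end, the weight chosen positive but smaller than the decay rate $\zeta$ appearing in \eqref{eq:1.8} (and the analogous rate on the $\theta\in\{0,\pi\}$ ends), together with a finite-dimensional space of ``asymptotic constant'' deformations that move the parameters $\iota_E$, $c_E$ and the conformal structure/puncture positions. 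Over $\mathcal{B}$ sits the bundle $\mathcal{E}$ whose fiber is the corresponding weighted space of $(0,1)$--forms with values in $\phi^*T$, and the section $\mathfrak{s}(\psi)=\bar\partial_J\psi$ has $\mathcal{M}_{\hat A}$ near $C$ as (a quotient of) its zero locus.

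The heart of the argument is to check that the linearization $D=D\mathfrak{s}|_{C}$, the relevant Cauchy–Riemann operator on $C_0$, is Fredholm and surjective. Fredholmness on the cylindrical ends follows from the standard theory once the asymptotic operators are shown to be nondegenerate on the exponentially weighted spaces: here one uses that the Reeb orbits in question, although they come in $1$--parameter $T$--families, have asymptotic operators whose kernels are exactly accounted for by the finite-dimensional asymptotic deformation parameters built into $\mathcal{B}$, so that after passing to the weighted space $D$ has closed range and finite-dimensional kernel and cokernel. For surjectivity I would exploit the special structure of $(J,\omega)$: the $T=S^1\times S^1$ symmetry from Subsection~\ref{sec:1a} and the explicit formulas \eqref{eq:1.3}--\eqref{eq:1.5}. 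Concretely, the functions $f$ and $h$ of \eqref{eq:1.4} pull back to $C_0$ as harmonic functions for the induced conformal structure (because $dt\wedge df+d\varphi\wedge dh$ restricted to a $J$--holomorphic curve controls $\bar\partial$ of $t+if$ and of $\varphi+ih$ on the complement of the $\theta\in\{0,\pi\}$ loci), and this rigidity lets one split $D$ into pieces that are either manifestly surjective $\bar\partial$--type operators on the punctured sphere or are governed by an honest complex-linear operator with nonnegative ``generalized degree'' at the punctures. An automatic-transversality type computation — comparing the Fredholm index of $D$ with twice the dimension of $\ker D$ via the Riemann–Roch formula on the punctured sphere with the chosen weights, using genus $0$ and the sign constraints on $(p,p')$ in \hyperlink{eq:1.7}{(1.7)} and \hyperlink{eq:1.10}{(1.10)} — then forces $\operatorname{coker}D=0$.

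Granted surjectivity of $D$, the implicit function theorem in Banach spaces gives that $\mathfrak{s}^{-1}(0)$ near $C$ is a smooth finite-dimensional manifold of dimension $\operatorname{ind}D$. I would then quotient by the finite-dimensional reparametrization group of the domain — the automorphisms of the multiply punctured sphere fixing the puncture labeling — which acts smoothly; on the genus $0$, multiply punctured sphere the relevant automorphism group is finite-dimensional and, because $\phi$ is somewhere injective, acts with at most finite stabilizers and in fact the stabilizer can be absorbed, so the quotient is again a smooth manifold. Matching this local model with the metric topology \eqref{eq:1.13} is the final bookkeeping step: the exponential convergence of ends recorded in Subsection~\ref{sec:1c} shows that Hausdorff-distance-closeness of subvarieties is equivalent to $\mathcal{B}$--closeness of the associated maps, so the charts obtained are homeomorphisms onto open subsets of $\mathcal{M}_{\hat A}$ and overlap smoothly. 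The main obstacle I anticipate is the surjectivity/automatic-transversality step, specifically organizing the weighted functional-analytic setup so that the $1$--parameter $T$--families of Reeb orbits do not spoil Fredholmness and so that the degree count at all three types of ends (generic, $\theta=0$, $\theta=\pi$) genuinely yields a nonnegative contribution; the explicit integrable-like structure coming from $f$ and $h$ is what I would lean on to push this through.
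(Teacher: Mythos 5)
Your Banach--Fredholm framework and the recognition that the crux is an automatic--transversality statement both match the paper's strategy (which proves the stronger \fullref{thm:2.3}, valid for every admissible $J'$). Where your proposal diverges is in the mechanism offered for surjectivity of the linearization, and this is where the real content lives. You lean on a purported integrable structure: that $f$ and $h$ of~\eqref{eq:1.4} pull back as harmonic functions on $C$ because $J$ pairs $\partial_t$ with $\partial_f$ and $\partial_\varphi$ with $\partial_h$. But those pairings in~\eqref{eq:1.5} carry the factor $g$, so $t+if$ and $\varphi+ih$ are not literally $\bar\partial$--closed, and more to the point this special feature of $J$ is not available for the arbitrary admissible $J'$ that the paper also needs to handle in Sections~\ref{sec:3} and~\ref{sec:4}. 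As stated, that step would not go through, and even if it did it is not clear how a splitting of $D_C$ adapted to $f,h$ would interact with the asymptotic weights on ends of all three types.

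The paper's actual transversality argument is the cleaner degree count that your ``Riemann--Roch on the genus zero punctured sphere'' phrase gestures at, but the details matter. For an immersed $C$ one works with the normal bundle $N\to C_0$ and the $\bar\partial$--type operator $D_C$ of~\eqref{eq:2.5}, and \fullref{lem:2.8} computes $\Deg(N)=N_++N_-+\hat N+c_{\hat A}+2\varsigma-2$ from the two adjunction--type identities in~\eqref{eq:2.10}. Any $\eta\in\ker(D_C)$ has only positive--degree zeros (because $D_C\eta=0$ gives it the local form of a holomorphic section) and non-negative asymptotic degree on each end; the total is $\Deg(N)$ by~\eqref{eq:2.13}. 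The subtlety you do not address is the subspace $K_0\subset\ker(D_C)$, of codimension at most $N_+$, consisting of elements that decay on the concave ends with $\theta$--limit in $(0,\pi)$: on $K_0$ those ends contribute degree $\geq 1$, forcing $\sum_z\deg_z(\eta)\leq N_-+\hat N+c_{\hat A}-2$ for genus zero. Imposing vanishing at $N_-+\hat N+c_{\hat A}-1$ chosen points cuts out $K_1\subset K_0$ which must therefore be trivial, yet a dimension count (\refsteps{2.C}{Step 3} and \refsteps{2.C}{Step 5} in \fullref{sec:2c}) shows $K_1\neq\{0\}$ whenever $\operatorname{cokernel}(D_C)\neq\{0\}$. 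This is the contradiction; no appeal to any integrability of $J$ is made. Finally, your proposal passes over the non-immersed case entirely: the paper treats it separately in \fullref{prop:2.9} via the splitting $\phi^*T_{1,0}=W\oplus N$ and the bound $\wp\leq N_-+\hat N+c_{\hat A}-2$ of \fullref{cor:2.11}, which is needed to make the index of $D^N$ positive before the degree count can be re-run.
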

This theorem is proved in \fullref{sec:2}.  This same section also describes various local coordinate
charts for $\mathcal{M}_{\hat{A}}$.  In particular some are obtained using the
$\mathbb{R}/(2\pi\mathbb{Z})$ and real valued parameters that are defined from the $|s|\to\infty$
limits on various ends.

As explained in \fullref{sec:2}, the dimension of $\mathcal{M}_{\hat{A}}$ is determined by the set
$\hat{A}$.  A formula is given in \fullref{prop:2.5}.  This proposition provides a
formula for the `formal' dimension for any given $\zeta> 0$ version of
$\mathcal{M}_{\hat{A},\zeta}$,  in terms of $\hat{A}$ and $\zeta$. To elaborate, note first
that \fullref{sec:2} proves that any $\zeta>0$ version of $\mathcal{M}_{\hat{A},\zeta}$,  is a finite
dimensional variety in the sense that any given point has a neighborhood that
is homeomorphic to the zero locus near the origin of a smooth map between two
Euclidean spaces.  The difference between the dimensions of the domain and range Euclidean spaces
is independent of the chosen point in $\mathcal{M}_{\hat{A},\zeta}$.  This difference is taken
to be the formal dimension of $\mathcal{M}_{\hat{A},\zeta}$.

\subsection{When $\mathcal{M}_{\hat{A}}$ is non-empty}\label{sec:1e}
This subsection provides necessary and sufficient conditions on $\hat{A}$ so as to
guarantee a non-empty version of $\mathcal{M}_{\hat{A}}$. In this regard, it follows from
what has been said already that $\mathcal{M}_{\hat{A}}=\phi$ unless the constraints listed
next are obeyed.  A set, $\hat{A}$, of the sort under consideration that obeys these
constraints is said here to be an \textit{asymptotic data set}.

Here is the first asymptotic data set constraint:  Each $(\delta,\varepsilon,(p, p'))\in\hat{A}$
must obey:
\itaubes{1.14}
\textsl{If $\delta= 0$ and $p < 0$,  then  $|\frac{p'}{p}|>\frac{\surd
3}{\surd 2}$.}

\item \textsl{If $\delta=1$, then $p < 0$.  In addition,
$\frac{p'}{p}<-\frac{\surd 3}{\surd 2}$ when $\varepsilon= +$,  and
$\frac{p'}{p}>-\frac{\surd 3}{\surd 2}$  when $\varepsilon >-$.}

\item \textsl{If $\delta=-1$,  then $p < 0$.   In addition,
$\frac{p'}{p}>\frac{\surd 3}{\surd 2}$  when $\varepsilon= +$,
and $\frac{p'}{p}<\frac{\surd 3}{\surd 2}$  when $\varepsilon >-$.}
\end{itemize}
Two more constraints come via Stokes' theorem as applied to line integrals of $dt$ and $d\varphi$:
\begin{equation}\label{eq:1.15}
\sum_{(\delta,\varepsilon,(p,p'))\in\hat{A}}\varepsilon\,p=0 \qquad\text{and}\qquad
\sum_{(\delta,\varepsilon,(p,p'))\in\hat{A}}\varepsilon\,p'=-\big(\mc_{+}-\mc_{-}\big).\end{equation}
Here is the next asymptotic data set constraint:
\qtaubes{1.16}
\textsl{%
If $\hat{A}$  has two 4--tuples and $\mc_{+}=\mc_{-}= 0$, then the 4--tuples have
relatively prime integer pairs.}
\endqtaubes
To explain, note that when $\hat{A}$ has $\mc_{+}=\mc_{-}= 0$ and two
4--tuples, then $\mathcal{M}_{\hat{A}}$ has only cylinders.  All such
spaces are described in  \cite[Section 4]{T3}, and all obey
\eqreft1{16}.

The final two asymptotic data set constraints involve the set $\Lambda_{\hat{A}}\subset[0,\pi]$
that consists of the angle 0 when $\mc_{+}> 0$ or $\hat{A}$ has a $(1,\ldots)$ element,
the angle $\pi$ when $\mc_{-}> 0$ or $\hat{A}$ has a $(-1,\ldots)$ element, and the angles that
are defined via \eqref{eq:1.7} from the $(0,\ldots)$ elements in $\hat{A}$.  Granted this
definition, here are the last two constraints:
\itaubes{1.17}
\textsl{If $\Lambda_{\hat{A}}$  has one angle, then $\hat{A}$ has  $\mc_{+}=\mc_{-}= 0$ and two 4--tuples,
$(0,+,P)$  and $(0,-,P)$  with $P$  relatively prime.
}

\item \textsl{If $\Lambda_{\hat{A}}$  has more than one angle, then neither extremal
angle arises via \eqref{eq:1.7} from an integer pair of any $(0,+,\ldots)$ element in $\hat{A}$.
}
\end{itemize}
With regards to the first point here, note that any moduli space where the corresponding
$\Lambda_{\hat{A}}$ has one angle contains only $\mathbb{R}$ invariant cylinders.  The
final point arises by virtue of the fact noted previously that the constant $c_E$ in
\eqref{eq:1.8} is zero when $E$ is a concave side end of a subvariety where the $s\to\infty$
limit of $\theta$ is in $(0,\pi)$.

In all that follows, $\hat{A}$ refers to an asymptotic data set where $\Lambda_{\hat{A}}$ has
more than one angle. As it turns out, $\mathcal{M}_{\hat{A}}$ is nonempty if and only if
the data from $\hat{A}$ can be used to construct a certain linear graph with labeled edges.
The latter graph is denoted in what follows by $L_{\hat{A}}$.  The following three
part digression describes what is involved.

\step{Part 1}
To set the stage, introduce $\Lambda_{\hat{A}}$ to denote the set in $[0,\pi]$ that
consists of the distinct angles that come via \eqref{eq:1.7} from the integer pairs of
the $(0,\ldots)$ elements in $\hat{A}$ together with the angle 0 when $\mc_{+} > 0$ or when
$\hat{A}$ has a $(1,\ldots)$ element, and the angle $\pi$ when $\mc_{-}> 0$ or when $\hat{A}$ has
a $(-1,\ldots)$ element.

\step{Part 2}
In the present context, a linear graph is viewed as a finite set of distinct points in
$[0,\pi]$ with two or more elements.  In particular, each graph has at least one edge.
The points in the set are the vertices of the graph, and the intervals that connect adjacent
points are the edges. Thus, such a graph has two monovalent vertices and some number of
bivalent ones.  As a point in $[0,\pi]$, each vertex has a canonical angle assignment.
These angles should coincide with the angles in $\Lambda_{\hat{A}}$.

\step{Part 3}
As noted at the outset, the edges of the graph $L_{\hat{A}}$ are labeled.  In particular, each
edge is labeled by an ordered pair of integers subject to the set of six constraints that
appear in the upcoming list \eqreft1{18}.

The notation used is as follows:  When e designates an edge, then $Q_e$ or $(q_{e},{q_e}')$
is used to denote its corresponding ordered pair of integers. An edge is said to
`start a graph' when its smallest angle is the smallest angle on its graph.
By the same token, and edge is said to `end a graph' when its largest angle is the largest angle
on its graph.

Here are the constraints:
\itaubes{1.18}
\textsl{If  $e$  ends the graph at an angle in $(0,\pi)$,  then $-Q_{e}$
is the sum of the pairs  from each of the $(0,-,\ldots)$  elements in $\hat{A}$
 that define this maximal angle  via \eqref{eq:1.7}.
}

\item \textsl{If $\pi$  is the largest angle on $e$,  then  $Q_{e}$
 is obtained using the following rule: First, subtract the sum of the integer pairs from the $(-1,-,\ldots)$
 elements  in $\hat{A}$  from the sum of those from the $(-1,+,\ldots)$  elements, and then
 subtract $(0, \mc_{-})$  from the result.
}

\item \textsl{If $e$  starts the graph at an angle in $(0,\pi)$, then
$Q_{e}$  is the sum of the pairs  from each of the $(0,-,\ldots)$  elements in
$\hat{A}$ that define this minimal angle  via \eqref{eq:1.7}.
}

\item \textsl{If $0$  is the smallest angle on $e$,  then $Q_{e}$  is obtained using the following rule:
First, subtract the sum of the integer pairs from the $(1,+,\ldots)$
elements in $\hat{A}$  from the sum of those from the $(1,-,\ldots)$
elements and then subtract
$(0, \mc_{+})$  from the result.
}

\item \textsl{Let $o$ denote a bivalent vertex, let $\theta_0$ denote its angle, and let $e$  and $e'$
denote its incident edges with the
convention that $\theta_0$  is the largest angle on $e$.   Then
$Q_{e} - Q_{e'}$  is obtained by subtracting the sum of the integer pairs
from the $(0,-,\ldots)$  elements in $\hat{A}$  that define $\theta_0$  via \eqref{eq:1.7} from
the sum of the integer pairs from the $(0,+,\ldots)$  elements in $\hat{A}$
that defined $\theta_0$  via \eqref{eq:1.7}.
}

\item \textsl{
Let $\hat{e}$  denote an edge.  Then  $p{q_{\hat{e}}}'-p'q_{\hat{e}}>0$
in the case that $(p, p')$  is an  integer pair that defines the angle of a bivalent vertex on $\hat{e}$.
Moreover, if $ {q_{\hat{e}}}'<0$  and if neither vertex on $\hat{e}$
has angle $0$  or $\pi$,  and if the version of \eqref{eq:1.7}'s integer $p'$ for one of the vertex angles is
positive, then both  versions of $p'$  are positive.
}
\end{itemize}

A graph of the sort just described is deemed to be an `positive line graph'
for $\hat{A}$. The next theorem explains its significance.

\begin{theorem}\label{thm:1.3}
Suppose that $\hat{A}$ is an asymptotic data set.  Then $\mathcal{M}_{\hat{A}}$ is non-empty
if and only if $\hat{A}$ has a a positive line graph.
\end{theorem}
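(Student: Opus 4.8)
The proof naturally splits into the two implications of the "if and only if," and the first—necessity—is largely bookkeeping while the second—sufficiency—is where the genuine work lies.

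**Necessity.** Suppose $C \in \mathcal{M}_{\hat{A}}$ is a multi-punctured sphere with the prescribed asymptotic data. The plan is to extract the linear graph $L_{\hat{A}}$ directly from the geometry of $C$. First I would study the function $\theta$ on $C$. The critical points of $\theta$ and the behavior described in \eqref{eq:1.8} show that the image $\theta(C) \subset [0,\pi]$ is a closed interval, and the values in $\Lambda_{\hat{A}}$ at which something "happens" (ends limiting to a Reeb orbit, or intersections with the $\theta=0,\pi$ cylinders) are exactly the vertices. Between consecutive vertices, a constant-$\theta_0$ slice $C \cap \{\theta = \theta_0\}$ is a finite union of circles carrying a well-defined homology class in $S^1 \times S^2$, which I would record as the edge label $Q_e = (q_e, q_e')$ via integration of $\frac{1}{2\pi}dt$ and $\frac{1}{2\pi}d\varphi$. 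Stokes' theorem applied to $dt$ and $d\varphi$ over the region of $C$ between two slices, together with the end behavior from \refsteps{sec:1}{Part 2} and \refsteps{sec:1}{Part 3} and the residue-type contributions of the ends and of the intersections with the axis cylinders, yields exactly the balance equations in the first five points of \eqreft1{18}. The sixth point—the positivity conditions $p q_{\hat e}' - p' q_{\hat e} > 0$ and the sign condition on $q_{\hat e}'$—is where positivity of intersections enters: I would show that $C$ meets a transverse translate $\mathbb{R} \times \gamma$ of a Reeb cylinder in finitely many points (this is \eqreft1{9}) and that each such intersection counts positively, which forces the linking/winding inequality. The sign condition on $p'$ near an edge with $q_{\hat e}' < 0$ comes from the constraint on which Reeb orbits occur, i.e. from \eqref{eq:1.7} and the fact that $c_E = 0$ for concave ends in $(0,\pi)$.

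**Sufficiency.** This is the main obstacle and is what I expect \fullref{sec:2}'s machinery and the sequel to be building toward. Given a positive line graph $L_{\hat{A}}$, I would construct a subvariety $C$ edge by edge. The key observation is that an edge $e$ with label $Q_e = (q_e, q_e')$ connecting angles $\theta_- < \theta_+$ should be realized by a cylinder-like piece of pseudoholomorphic curve whose constant-$|s|$ slices sweep monotonically between the $(q_e, q_e')$ Reeb orbits at $\theta_-$ and $\theta_+$; the positivity inequality $pq_e' - p'q_e > 0$ at intervening vertices is precisely the condition that such a monotone sweep exists and stays pseudoholomorphic (it is the condition that the relevant "slope" function is monotone along the edge). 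I would first build these edge pieces—this amounts to solving an ODE/boundary-value problem for the $T$-symmetric reduction, or invoking the explicit parametrizations from \cite{T3} for cylinders and 3-holed spheres as building blocks—and then glue consecutive pieces at the bivalent vertices, where the vertex-balance condition (fifth point of \eqreft1{18}) guarantees the homology classes match up and the limiting Reeb orbits at the shared angle are compatible. At the monovalent ends I attach the prescribed asymptotic ends, using the first four points of \eqreft1{18} to match homology classes, together with pieces of the $\theta = 0$ and $\theta = \pi$ cylinders with the correct multiplicities $\mc_\pm$. A gluing/implicit-function argument—of the type that \fullref{sec:2} sets up to prove smoothness of $\mathcal{M}_{\hat{A}}$, together with its dimension count in \fullref{prop:2.5}—then deforms this broken configuration into an honest irreducible pseudoholomorphic subvariety, whose model curve is checked to be genus $0$ by an Euler-characteristic count: each edge contributes a thrice-or-more-punctured sphere glued along circles, and a graph that is linear (a tree, in fact a path) keeps the total genus at $0$.

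**Where the difficulty concentrates.** The hard part is the existence of the edge pieces with the right monotonicity, and controlling the gluing so that no extra intersections with the axis cylinders or spurious components are created; the positivity constraints in the sixth point of \eqreft1{18} are doing double duty here, both as necessary conditions extracted from intersection positivity and as the precise hypotheses that make the construction go through. I also expect a subtlety in ruling out, on the necessity side, graphs that are not linear: one must show that the model curve being a sphere, combined with the end data, forces the associated graph to be a path rather than a tree with branching, which is again an Euler-characteristic / intersection argument on $C$. The remaining constraints \eqreft1{14}--\eqreft1{17} defining an asymptotic data set are exactly what is needed to exclude the degenerate cases (only $\mathbb{R}$-invariant cylinders, or $\Lambda_{\hat A}$ with one angle) where the edge-by-edge construction would be vacuous or ill-posed.
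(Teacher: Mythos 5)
The central gap is that you have not identified the two‑stage structure of the proof, and your sufficiency argument cannot work as stated.

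The paper does not go directly from a positive line graph $L_{\hat{A}}$ to a subvariety (or back). It first proves \fullref{thm:3.1}: $\mathcal{M}_{\hat{A}}\ne\emptyset$ if and only if $\hat{A}$ admits a \emph{moduli space graph}, which is a contractible \emph{tree} with monovalent, bivalent and trivalent vertices satisfying Constraints 1--3 of \fullref{sec:3a}. Only then, in \fullref{sec:5}, is it shown by a purely combinatorial argument that a moduli space graph exists for $\hat{A}$ if and only if a positive line graph does: \fullref{sec:5a} ``unzips'' $L_{\hat{A}}$, inserting trivalent vertices near the extremal angles and near bivalent vertices; \fullref{sec:5b} uses explicit graph moves (\refsteps{sec:5b}{Move 1}--\refsteps{sec:5b}{Move 7}) to collapse a tree into a line. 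Your proposal collapses these two stages into one, and the sufficiency half then breaks.

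Specifically, on sufficiency you propose to realize each edge $e$ of $L_{\hat{A}}$ by a single cylinder whose constant-$\theta$ slices carry the class $Q_e$ and sweep monotonically between the endpoint angles, and then glue at bivalent vertices and attach ends. This cannot produce a genus-zero punctured sphere with the prescribed asymptotics in general: at an endpoint of $L_{\hat{A}}$ where several $(0,\pm,\ldots)$ elements of $\hat{A}$ share the same angle, the nearby slice must split into several circles with distinct primitive classes, and this transition is exactly a non-extremal critical point of $\theta$ — a \emph{trivalent} vertex in the moduli space graph. There is no pseudoholomorphic cylinder connecting a single $Q_e$-class circle directly to the correct collection of Reeb orbits. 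The construction in \fullref{sec:4} builds one cylinder $K_e$ per edge of the moduli space graph $T$, with the trivalent-vertex gluing described in \fullref{sec:4d}, and this is not dispensable. Your mention of ``3-holed spheres as building blocks'' gestures at the needed idea, but you treat it as a technicality rather than recognizing it forces the introduction of an intermediate graph with branching.

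On necessity, your approach — recording total homology classes of constant-$\theta$ slices between consecutive angles of $\Lambda_{\hat{A}}$ and applying Stokes' theorem plus intersection positivity — is plausible and more directly geometric than the paper's combinatorial route through \fullref{sec:5b}. The total slice is a sum of components of $C_0 - \Gamma$, each contributing positively to $\alpha_{Q}$, so the positivity requirement in the sixth point of \eqreft1{18} is inherited by the sum; and the balance conditions in the first five points follow from Stokes as you say. However, your closing remark that ``one must show that the model curve being a sphere \ldots forces the associated graph to be a path rather than a tree with branching'' reveals a misconception: the tree $T_C$ defined in \fullref{sec:2g} from a generic $C \in \mathcal{M}_{\hat{A}}$ is \emph{not} a path — the paper states this explicitly at the end of \fullref{sec:1e}. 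The graph obtained from total slice homology is a different object, linear by construction; nothing needs to be ``forced'' to be linear. What does need to be verified — and what the paper handles with the explicit moves — is that the labels of this linear graph satisfy all of \eqreft1{18}, in particular the delicate sign clause of the sixth point, and that a positive line graph can always be reconstituted from a moduli space graph.
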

Note that \fullref{thm:3.1} provides equivalent necessary
and sufficient criteria for a non-empty $\mathcal{M}_{\hat{A}}$.

To explain something of the nature of \fullref{thm:1.3}, note that a graph much like $L_A$
can be constructed from any subvariety in $\mathcal{M}_{\hat{A}}$. As explained in the next
section, the edges of the latter graph are in $1-1$ correspondence with the components of
the complement in the subvariety of the singular and/or non-compact constant $\theta$ loci.
Meanwhile, the integer pair that is assigned to any given edge is obtained by integrating
the pair $\frac{1}{2\pi}dt$ and $\frac{1}{2\pi}d\varphi$ about any constant $\theta$ slice
of the corresponding component. However, a graph from a subvariety can differ from $T$ in
one fundamental aspect. Although the graph as defined by the subvariety is contractible and
connected, it might not be linear.

However, as is proved in the subsequent sections, if $L_{\hat{A}}$ obeys the constraints
in \eqreft1{18}, then there exists either a subvariety in $\mathcal{M}_{\hat{A}}$ that
supplies precisely this graph $L_{\hat{A}}$, or there is a sequence of subvarieties in
$\mathcal{M}_{\hat{A}}$ whose graphs converge to $L_{\hat{A}}$ in a suitable sense.

The conditions in \eqreft1{18} are more or less direct
consequences of the manner in which the graph $L_{\hat{A}}$ is
designed to encodes aspect of the topology of the constant $\theta$
slices of a pseudoholomorphic subvariety. Indeed, the only subtle
constraint is the final one. The latter can be seen as necessary by
considering intersections between the given pseudoholomorphic
subvariety and versions of $\mathbb{R}\times\gamma$ where
$\gamma\subset S^{1}\times S^{2}$ is an open subset with compact
closure in the integral curve of the Reeb vector field. Such
submanifolds are pseudoholomorphic, and the final constraint in
\eqreft1{18} follows from the fact that the local intersection
numbers with such submanifolds are necessarily positive. In this
regard, keep in mind that local intersection numbers between any two
pseudoholomorphic subvarieties are positive (see, for example, McDuff \cite{M}.)

As indicated, subvarieties in $\mathcal{M}_{\hat{A}}$ can be constructed
granted only the constraint in \eqreft1{18}.  Observations by Michael
Hutchings (see also Hutchings--Sullivan \cite{HS}) led the author
to think that the constraints in \eqreft1{18} are sufficient as well
as necessary.

\subsection{Outline of the remaining sections}\label{sec:1f}
The remainder of this article is organized as follows:
\fullref{sec:2} contains a proof of \fullref{thm:1.2}. This section also describes some
useful coordinate systems on $\mathcal{M}_{\hat{A}}$, in particular, some that are obtained
using the topology of the constant $\theta$ loci on the pseudoholomorphic subvarieties.
The latter play a prominent role in the later sections and in the sequel to this article.
The final subsection of this article explains how the $\theta$--level sets are used to assign
a contractible, connected graph to each subvariety in $\mathcal{M}_{\hat{A}}$.

\fullref{sec:3} starts the proof of \fullref{thm:1.3} with a result of the following sort:
A reasonably general class of immersed subvarieties with the large $|s|$ asymptotics of a
subvariety from $\mathcal{M}_{\hat{A}}$ can be deformed to give a subvariety from $\mathcal{M}_{\hat{A}}$.
The latter result with the constructions in \fullref{sec:4} provide necessary and sufficient
conditions for the existence of subvarieties in $\mathcal{M}_{\hat{A}}$ that differ from
those in \fullref{thm:1.3}. These alternative conditions are stated as \fullref{thm:3.1}.
Arguments, strategies and constructions from \fullref{sec:3} also play prominent roles in the sequel
to this article, as does \fullref{thm:3.1}.

\fullref{sec:4} completes the proof of \fullref{thm:3.1} by exhibiting subvarieties to start
the deformations that are described in \fullref{sec:3}.  The final section explains how \fullref{thm:1.3} follows from \fullref{thm:3.1}.

%
%

\setcounter{theorem}{0}

\section{Dimensions and regularity}\label{sec:2}

As remarked in the opening section, the interest here is in the
moduli space of pseudoholomorphic, multiply punctured spheres in
$\mathbb{R} \times (S^1 \times  S^2)$. In this regard, a given
component is labeled by an asymptotic data set, $\hat{A}$, subject
to the constraints in \eqreft1{14}--\eqreft1{17}. The added
constraints on $\hat{A}$ from \fullref{thm:1.3} are not assumed
in this section.

As in the introduction, $\mathcal{M}_{\hat{A}}$ denotes the part of the moduli
space that is labeled by $\hat{A}$. Among other things, this subsection
establishes that $\mathcal{M}_{\hat{A}}$ is a smooth manifold and derives a
formula in terms of the data from $\hat{A}$ for its dimension. The final three
parts of the subsection describe various useful constructions that are
subsequently used to study $\mathcal{M}_{\hat{A}}$. In particular, these
include various local coordinate charts for $\mathcal{M}_{\hat{A}}$ that can be
constructed from the $\mathbb{R}/(2\pi \mathbb{Z})$ and real valued parameters
that are associated to the ends of the subvarieties in $\mathcal{M}_{\hat{A}}$.

The subsection starts by introducing a somewhat more general context for the
subsequent discussions.

\subsection{Admissible almost complex structures}\label{sec:2a}

Arguments in the next section require facts about the moduli spaces of
pseudoholomorphic subvarieties in $\mathbb{R}  \times  (S^1  \times
S^2)$ as defined by almost complex structures that differ from $J$. The
almost complex structures that arise are deemed here to be `admissible', and
they are distinguished by three salient features: First, the almost complex
structure is `tamed' by all sufficiently small, constant and positive r
versions of the symplectic form $d(e^{-rs} \alpha )$. In this
regard, the form $d(e^{-rs}  \alpha )  \equiv \mu $
tames an almost complex structure, $J'$, when the quadratic function on
$T(\mathbb{R} \times  (S^1  \times  S^2))$ that sends any given
vector $v$ to $\mu (v, J'(v))$ is positive on the complement of the zero
section. Second, the almost complex structure sends $\partial_{s}$ to
$(1+3\cos^4 \theta )^{-1 / 2} \;\hat {\alpha }$, where $\hat
{\alpha }$ is the Reeb vector field in~\eqreft16. Third, the given almost
complex structure agrees with $J$ on the complement of some compact subset of
$\mathbb{R}  \times  (S^1  \times  S^2)$. The almost complex
structure $J$ is, of course, admissible as it is compatible with all $r > 0$
versions of $d(e^{ - rs}\alpha )$ and thus tamed by all of them.

As defined, the set of admissible almost complex structures should be viewed
as a Frech\^{e}t space with the topology defined so that a given sequence,
$\{J_{\alpha }\}$, of such structures converges to a given admissible $J'$
when there is $C^{\infty }$ convergence on compact sets and when there
exists some fixed compact subset such that each $J_{\alpha }= J$ on its
complement.

As it turns out, this space of admissible almost complex structures is
contractible. To see why, note first that if $J'$ is admissible, and $w  \in
\kernel(\alpha )$ is tangent to a constant $s$ slice of $\mathbb{R}  \times
(S^1  \times  S^2)$, then $J'w$ must be of the form $w' + a \partial
_{s} + b \hat {\alpha }$ where $a$ and $b$ can be any pair of real numbers
and where $w'$ is also annihilated by $\alpha $ and tangent to $S^1  \times
 S^2$. In this regard, $d\alpha (w, w')$ must be positive when $w  \ne
0$. Written in this way, the space of admissible almost complex structures
manifestly deformation retracts onto the subspace of those that map the
tangents to $S^1  \times  S^2$ in the kernel of $\alpha $ to
themselves. Meanwhile, the latter subspace is contractible since $SL(2; \mathbb{R})/SO(2)$ is contractible.

Consider now the following generalization of \fullref{deff:1.1}:

\begin{deff}
\label{deff:2.1}

Let $J'$ denote an admissible almost complex structure. A non-empty subset, $C$, in $\mathbb{R}\times(S^1\times S^2)$
is a $J'$--pseudoholomorphic subvariety if it is closed and has the following properties:
\begin{itemize}
\item The complement in $C$ of a countable, nowhere accumulating subset is
a 2--dimensional
submanifold whose tangent space is $J'$--invariant.

\item
$\smallint_{C \cap K}  \omega  < \infty $ when $K  \subset   \mathbb{R}\times  (S^1  \times  S^2)$
is an open set with compact closure.

\item
$\smallint_{C} d\alpha  < \infty $.
\end{itemize}
\end{deff}

A subvariety is called `pseudoholomorphic' below without reference to a
particular almost complex structure only in the case that $J$ is the unnamed
almost complex structure. Unless stated to the contrary, a subvariety should
be assumed irreducible.

Here is the simplest, yet very important example: Let $\gamma    \subset
S^1  \times  S^2$ denote a closed orbit of the vector field $\hat
{\alpha }$ from~\eqreft16. Let $J'$ denote any admissible almost complex
structure. Then $\mathbb{R}  \times   \gamma    \subset   \mathbb{R}  \times
 (S^1  \times  S^2 )$ is a $J'$--pseudoholomorphic subvariety.

With \fullref{deff:2.1} understood, the next proposition summarizes
results from \cite[Propositions~2.2 and~2.3]{T3} that are germane to
the situation at hand.

\begin{prop}\label{prop:2.2}

Suppose that $J'$ is an admissible almost complex structure and that $C$
is a $J'$--pseudoholomorphic subvariety.
Then, there exists $R > 1$ such that the $|s|    \ge  R$ portion of $C$ is a finite disjoint
union of embedded cylinders to which the function $s$ restricts as an unbounded function without
critical points. Moreover,
\begin{itemize}
\item
The constant  $|s|$ slices of any such cylinder converge in $S^1   \times  S^2 $ as $|s|   \to
\infty $ to some closed orbit in $S^1   \times S^2$ of the Reeb vector field $\hat {\alpha }$ from~\eqreft16.
In this regard, there exists $\kappa > 0$ such that the function of $|s|$ that assigns the maximum
distance from the large $|s|$ slices of $E$ to the limit closed orbit of $\hat {\alpha }$ is bounded by a
constant multiple of $e^{-\kappa| s| }$.

\item
This convergence is such that any sufficiently large and constant $|s|$ slice defines a closed braid in
any given tubular neighborhood of the limit closed orbit. In this regard, all sufficiently large $|s|$
slices are disjoint from the limit Reeb orbit unless the subvariety is the product of $\mathbb{R}$ with the Reeb orbit.

\item
The subvariety $C$ is the image of a complex curve via a proper,
$J'$--pseu\-do\-ho\-lo\-mor\-phic map into
$\mathbb{R}  \times  (S^1   \times  S^2 )$ that is 1--1 on the complement of a finite set of points.
\end{itemize}
\end{prop}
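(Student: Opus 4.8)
The plan is to reduce Proposition 2.2 to the corresponding statements in \cite{T3}. The paper \cite{T3} proves analogous asymptotic-control and model-curve results (its Propositions 2.2 and 2.3) for the almost complex structure $J$; our task here is to check that each ingredient of those arguments survives replacing $J$ by an arbitrary admissible $J'$. The three properties stated in the proposition are: (i) the large-$|s|$ structure as a finite union of embedded cylinders on which $s$ has no critical points, with exponential convergence to a Reeb orbit; (ii) the closed-braid description and the disjointness alternative; and (iii) existence of the proper model curve, $1$--$1$ off a finite set. I would prove these in that order, since (ii) and (iii) depend on (i).

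First I would fix notation: since $J'$ is admissible, on the complement of a compact set $K_0 \subset \mathbb{R}\times(S^1\times S^2)$ we have $J' = J$, and the monotonicity built into Definition~\ref{deff:2.1} via $\int_C d\alpha < \infty$ plus taming by $d(e^{-rs}\alpha)$ gives the a~priori energy bound. The key point is that the large-$|s|$ analysis in \cite[Section 2]{T3} is entirely local to the ends of $C$, and every end of $C$ eventually lies in the region $\mathbb{R}\times(S^1\times S^2)\setminus K_0$ where $J' = J$. Hence once I argue that $s$ is proper on $C$ and unbounded on each noncompact piece — which follows from the energy finiteness and the fact that $\int_C d\alpha < \infty$ forces $C$ to ``escape to the ends'' of $\mathbb{R}\times(S^1\times S^2)$ in the standard Hofer-type way — the quoted results of \cite{T3} apply verbatim to the portion of $C$ where $|s| \geq R$ for $R$ large enough that $\{|s|\geq R\}$ misses $K_0$. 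This gives (i): finiteness of the number of ends, the embedded-cylinder structure, absence of critical points of $s$, convergence to a $\hat\alpha$--orbit, and the exponential rate $e^{-\kappa|s|}$. The braid description and the disjointness dichotomy in (ii) likewise come directly from the $|s|\geq R$ analysis of \cite{T3}, together with \eqref{eq:1.9}'s finiteness of intersections with $\mathbb{R}\times\gamma$, which is itself an $|s|$-large statement once one knows intersections are isolated; positivity of intersections (McDuff \cite{M}) handles the case $C\neq\mathbb{R}\times\gamma$.

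For (iii), the construction of the model curve is the standard one: off the prescribed countable nowhere-accumulating set, $C$ is a smooth $J'$--holomorphic submanifold, and near each point of the singular set elliptic regularity together with the local structure theory of $J'$--holomorphic curves (positivity of intersections again, to control branching) produces a local normalization; these patch to a complex curve $C_0$ with a proper map to $\mathbb{R}\times(S^1\times S^2)$, generically injective because a nonconstant $J'$--holomorphic map is $1$--$1$ away from a discrete set. The ``proper'' assertion uses that $s$ is proper on $C$, established in the course of proving (i). None of this is sensitive to the choice of admissible $J'$: admissibility was arranged precisely so that $J'\partial_s = (1+3\cos^4\theta)^{-1/2}\hat\alpha$, which is what pins down the asymptotic model ODE, and so that $J' = J$ outside a compact set, which is what lets me import the delicate exponential-decay estimates of \cite{T3} without redoing them.

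\textbf{The main obstacle} I expect is not any single hard estimate but the bookkeeping required to confirm that the \cite{T3} arguments really are ``local at the ends'' — in particular, that no step in their proof of exponential convergence secretly uses integrability or the global form of $J$ on a region that an end of some $C$ might repeatedly enter. The cleanest way around this is to observe that the asymptotic analysis only ever examines $C$ on a set of the form $\{|s|\geq R\}$ for $R$ as large as desired; since each end is a graph converging to a Reeb orbit, for $R$ large that set lies in $\{|s|\geq R\}\subset \mathbb{R}\times(S^1\times S^2)\setminus K_0$, so $J'$ and $J$ literally coincide there. A secondary, more routine point is verifying that the energy/monotonicity hypotheses in Definition~\ref{deff:2.1} — taming by $d(e^{-rs}\alpha)$ for all small $r>0$, plus $\int_C d\alpha<\infty$ — are exactly what \cite{T3} uses as input, so that the hypotheses transfer; this is a matter of matching conventions rather than proving anything new.
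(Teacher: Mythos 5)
Your proposal is correct and takes essentially the same route as the paper: the paper presents Proposition~\ref{prop:2.2} simply as a summary of~\cite[Propositions~2.2 and~2.3]{T3} applicable to the admissible-$J'$ setting, and your argument — that the asymptotic analysis of~\cite{T3} is local to $\{|s|\ge R\}$ where an admissible $J'$ literally coincides with $J$, together with the resulting properness of $s$ and the standard model-curve normalization — is precisely the justification that is left implicit there.
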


With \fullref{prop:2.2} understood, define an `end' of a $J'$--pseudoholomorphic
subvariety to be any of the cylinders that appear in \fullref{prop:2.2}. The
ends of any given $J'$--pseudoholomorphic subvariety comprise a set with a
natural 1--1 correspondence to some `asymptotic data set' as defined in the
previous section. In this regard, the correspondence is defined for the
$J'$--pseudoholomorphic subvariety in the same manner as with a
$J$--pseudoholomorphic one. Note as well that a $J'$--pseudoholomorphic
subvariety, if irreducible, can be assigned a `genus', the genus of its
model curve.

Granted the preceding, suppose now that $\hat{A}$ is an asymptotic data set and
that $\varsigma$ a non-negative integer. Define the $J'$ version of the
moduli spaces $\mathcal{M}_{\hat{A},\varsigma }$ as done for the $J$ version in
\fullref{sec:1d}; this the set of irreducible, $J'$--pseudoholomorphic
subvarieties with genus $\varsigma$ whose set of ends are in 1--1
correspondence with the data set $\hat{A}$. Use $\mathcal{M}_{\hat{A},\varsigma
,J'}$ to denote this set. Give this set the topology where a basis for the
open neighborhoods of a given subvariety have the form in~\eqref{eq:1.13}.

The following subsumes \fullref{thm:1.2}:

\begin{theorem}
\label{thm:2.3}
Let $J'$ denote an admissible almost complex structure and let $\hat{A}$ denote an asymptotic data set.
Then the multipunctured sphere moduli space $\mathcal{M}_{\hat{A},0,J'}$ is a smooth manifold.
\end{theorem}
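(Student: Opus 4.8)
The plan is to prove that $\mathcal{M}_{\hat{A},0,J'}$ is a smooth manifold by realizing it locally as the zero set of a Fredholm section of a Banach bundle and then showing that this section is transverse to the zero section whenever the genus is zero. The first step is to set up the deformation problem. Given $C \in \mathcal{M}_{\hat{A},0,J'}$, pass to its model curve $C_0$ (a punctured sphere) together with the proper $J'$--holomorphic map $f\co C_0 \to \mathbb{R}\times(S^1\times S^2)$ supplied by the third bullet of \fullref{prop:2.2}. Using the exponential-decay asymptotics in \fullref{prop:2.2} (the $e^{-\kappa|s|}$ bound) together with the refined normal-form expansions recalled in \eqref{eq:1.8} and \eqref{eq:1.12}, introduce on each end a weighted Sobolev space modeled on the cylinder, with exponential weight chosen between $0$ and the first nonzero asymptotic rate, and allowing a finite-dimensional family of "asymptotic constant" deformations that move the parameters $\iota_E$ and $c_E$ (equivalently $\hat c_E$) while keeping the discrete data $\hat{A}$ fixed. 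The linearization of the $\bar\partial_{J'}$--operator at $f$ is then a Fredholm operator $D_C$ between these weighted spaces; its index is computed by a Riemann--Roch count on the punctured sphere, incorporating the contributions from the chosen weights and the asymptotic-constant directions.

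The second step is to show $D_C$ is surjective, so that $\mathcal{M}_{\hat{A},0,J'}$ is cut out transversally and is a manifold of dimension $\operatorname{index} D_C$. Here genus $0$ is essential, and this is the step I expect to be the main obstacle. The standard generic-perturbation argument is not available, since we want the statement for every admissible $J'$, not merely a residual set. Instead I would exploit the structure specific to this setting: the almost complex structure is $T$--invariant on the complement of a compact set, $J'$ sends $\partial_s$ to a multiple of the Reeb field, and the target fibers over $S^1\times S^2$ in a way that lets one split the normal bundle of (the immersed part of) $C$ and analyze $D_C$ in terms of a $\bar\partial$--operator on a line bundle over the genus-zero curve. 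The cokernel of such a $\bar\partial$--operator is governed by an $H^1$ of a line bundle of controlled degree; the asymptotic weights and the allowed asymptotic-constant deformations are designed exactly so that this degree is large enough (equivalently, the dual bundle has negative degree) to force $H^1 = 0$ on a genus-zero base. I would carry this out by first treating the positivity of intersections with the $\theta\in\{0,\pi\}$ cylinders and with the submanifolds $\mathbb{R}\times\gamma$ to pin down the relevant degrees, then invoking vanishing of $H^1$ for negative line bundles on $\mathbb{CP}^1$ with punctures.

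The third step is to assemble the local pieces into a global manifold structure and verify the dimension is as claimed. Surjectivity of $D_C$ makes the Kuranishi model trivial, so a neighborhood of $C$ in $\mathcal{M}_{\hat{A},0,J'}$ is identified with an open set in $\ker D_C$; standard gluing/continuity estimates (using the uniform asymptotic control from \fullref{prop:2.2} so that nearby subvarieties have uniformly controlled ends) show these charts are compatible and that the topology they induce agrees with the one defined by \eqref{eq:1.13}. Finally, one must divide out or account for the reparametrization group of the model curve and the automorphisms; for a multiply punctured sphere with enough punctures this group is finite (or handled by fixing marked points), so no quotient singularities arise, and the dimension formula reduces to the Riemann--Roch computation from step one, which is exactly the content of \fullref{prop:2.5}. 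The only genuinely delicate points, beyond the surjectivity argument, are checking that the weighted functional-analytic setup correctly captures the moduli problem defined by \eqref{eq:1.13} — in particular that the asymptotic parameters $(\iota_E, c_E)$ really are free to vary and are not over-counted — and that the Fredholm index is insensitive to the admissible $J'$, which follows since all admissible structures agree near infinity and the space of them is connected (indeed contractible, as noted above).
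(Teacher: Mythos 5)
Your overall strategy matches the paper's: realize $\mathcal{M}_{\hat{A},0,J'}$ locally via a Fredholm operator $D_C$ between exponentially weighted Sobolev spaces (the paper's \eqref{eq:2.7}), then establish surjectivity by an automatic-transversality argument that exploits genus zero, positivity of zeros for $\ker D_C$ elements, and a degree count. That is precisely the spirit of \fullref{prop:2.7} and its Steps~2--5, and you correctly observe that a generic-perturbation argument is unavailable here. However, there is a genuine gap: your set-up assumes the tautological map $\phi\co C_0\to\mathbb{R}\times(S^1\times S^2)$ is an immersion, so that the normal line bundle $N\to C_0$ exists and $D_C$ is a Cauchy--Riemann-type operator on $N$. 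When $\phi$ has critical points (non-immersion singularities, which do occur), this bundle does not exist and the naive line-bundle degree argument fails. The paper devotes \fullref{sec:2d}--\fullref{sec:2e} to exactly this: one splits $\phi^*T_{1,0}$ as $W\oplus N$, defines $D_C$ as a projection of the full linearization modulo a vector space $V$ of complex-structure deformations of $C_0$, and separately treats the operators $D^W$ and $D^N$. Crucially, to keep the index of $D^N$ positive enough for the automatic-transversality count to work, one needs an a priori bound $\wp\leq N_-+\hat N+c_{\hat{A}}+2\varsigma-2$ on the total ramification $\wp$, which is extracted (\fullref{cor:2.11}) from the $\theta$-level-set Euler characteristic computation in \fullref{prop:2.10}. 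None of this appears in your proposal, and it is roughly half the content of the proof.

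Two secondary points. First, the degree bookkeeping in your Step~2 is too loose: it is not enough to say the degree is ``large enough to force $H^1=0$''; the paper's argument (Steps~3--5 of \fullref{prop:2.7}) depends on an asymmetry between concave and convex ends --- one restricts to the subspace $K_0\subset\ker D_C$ of elements decaying on concave ends (codimension $\leq N_+$), shows nontrivial elements of $K_0$ pick up degree $\geq 1$ on each such end, then kills $K_0$ by vanishing at $N_-+\hat N+c_{\hat A}-1$ points. That concave/convex asymmetry (matching the $N_+$ versus $2N_-$ appearance in \eqref{eq:2.2}) must be tracked carefully and cannot be hidden in ``designed exactly so that''. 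Second, the paper works with unparametrized subvarieties from the outset (\fullref{deff:1.1}), so there is no reparametrization group to quotient by; the role that reparametrizations would play in a map-based set-up is instead absorbed into the finite-dimensional space $V$ of first-order deformations of the complex structure on $C_0$, incorporated into the definition of $D_C$. Your remark about automorphism groups is therefore answering a question the paper's framework doesn't pose.
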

The proof of this theorem occupies the next three parts of the subsection.

\subsection{The local structure of the moduli spaces}\label{sec:2b}

This section contains what is essentially a review of material from
  \cite[Sections 2, 3 and 4]{T3}. Before starting the review, agree
to fix an admissible almost complex structure $J'$, an asymptotic
data set $\hat{A}$ subject to the constraints in \eqreft1{14}--\eqreft1{17},
and fix a non-negative integer $\varsigma$. Set $\mathcal{M}
\equiv   \mathcal{M}_{\hat{A},\varsigma ,J'}$. This subsection
describes the local structure around points in $\mathcal{M}$.

The following three propositions summarize the story on the local
structure of $\mathcal{M}$ about any given subvariety. The proofs
are straightforward and mostly cosmetic modifications of arguments
from \cite[Sections 3 and 4]{T3}. An outline is given at the end
of this subsection but the details are left to the reader.

\begin{prop}\label{prop:2.4}
There exists a positive integer $\hat{I}$ that depends only on $\hat{A}$ and $\varsigma$; and given
$C  \in   \mathcal{M}$, there is a positive integer, $n$, a smooth map, $f$,
from an origin centered ball in $\mathbb{R}^{\hat{I} + n}$ to $\mathbb{R}^{n}$ that maps 0 to 0,
and a homeomorphism from $f^{-1}(0)$ onto a neighborhood of $C$ in $\mathcal{M}$ that maps the origin to $C$.
\end{prop}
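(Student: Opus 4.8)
The plan is to produce the chart by a Kuranishi-type reduction of the $\overline\partial_{J'}$--equation near $C$, following \cite[Sections 3 and 4]{T3} with only notational changes to allow a general admissible $J'$ in place of $J$. First I would replace the subvariety $C$ by a parametrization: by the third point of \fullref{prop:2.2}, $C$ is the image of its genus $\varsigma$ model curve $C_0$ under a proper $J'$--pseudoholomorphic map $u_0$ that is $1$--$1$ off a finite set, so a neighborhood of $C$ in $\mathcal{M}$ is a neighborhood of $u_0$ in the space of such maps modulo the automorphisms of $C_0$. The fact that the topology defined by \eqref{eq:1.13} agrees with the exponentially weighted $C^{\infty}$ topology on these maps, and that nearby subvarieties are exactly the images of the nearby maps, is the content of \cite[Sections 2--4]{T3}. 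I would then fix once and for all a finite-dimensional slice $\mathcal{S}$ for the $\mathrm{Aut}(C_0)$--action on parametrizations, together with the variations $j$ of the domain complex structure, so that a neighborhood of $C$ in $\mathcal{M}$ is identified with the set of $\overline\partial_{J'}$--holomorphic maps $u=\exp_{u_0}(\xi)$ with $(\xi,j)$ in $\mathcal{S}$.

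Next I would set up the functional-analytic framework. Over each end of $C$, \fullref{prop:2.2} and the asymptotic analysis of \cite[Sections 2 and 3]{T3} give exponentially fast convergence to a cylinder $\mathbb{R}\times\gamma$ over a Reeb orbit, and they exhibit a finite list of asymptotic parameters carried by the deformations: the position of the limit Reeb orbit inside its $T$--family, the angle $\iota_E$, and the constant $c_E$ (equivalently the constant $\hat{c}_E$ of \eqref{eq:1.12} at an end limiting on $\theta\in\{0,\pi\}$), subject to the constraint that $c_E=0$ on every concave side end. I would let $\mathcal{B}$ be the Banach manifold of maps $\exp_{u_0}(\xi)$ with $\xi$ ranging over a weighted Sobolev space whose exponential weight at each puncture is chosen strictly between $0$ and the first nonzero eigenvalue of the corresponding asymptotic operator, enlarged by the finite-dimensional factor of admissible asymptotic parameters and paired with the slice variations $j$ of the domain complex structure; and I would let $\mathcal{E}\to\mathcal{B}$ be the Banach bundle whose fibre is the matching weighted space of $(0,1)$--forms valued in the pulled-back tangent bundle. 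The assignment $u\mapsto\overline\partial_{J'}u$ is then a smooth section of $\mathcal{E}$ vanishing at $u_0$.

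The final step is the reduction itself. The linearization $D$ of this section at $u_0$ is a Fredholm operator between the weighted spaces — Fredholmness because the chosen weights avoid the spectrum of the asymptotic operators — and its index is computed from $\hat{A}$ and $\varsigma$ by the Riemann--Roch count of \cite[Sections 3 and 4]{T3}, corrected by the number of asymptotic parameters, the dimension of the Teichm\"uller space of $C_0$, and $\dim\mathrm{Aut}(C_0)$; this index is the integer $\hat{I}$, which manifestly depends only on $\hat{A}$ and $\varsigma$ (see \fullref{prop:2.5} for the explicit value). Setting $n=\dim\cokernel(D)$ and choosing a splitting $\mathcal{E}_{u_0}=\mathrm{image}(D)\oplus V$ with $V\cong\mathbb{R}^{n}$, I would write $\overline\partial_{J'}\exp_{u_0}(\xi)=D(\xi)+Q(\xi)$ with $Q$ vanishing to second order, use the implicit function theorem to solve the $\mathrm{image}(D)$--component of the equation over an origin-centered ball in $\kernel(D)\cong\mathbb{R}^{\hat{I}+n}$, and take $f$ to be the residual $V$--valued component of $\overline\partial_{J'}$ restricted to the resulting graph. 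Then $f^{-1}(0)$ is exactly the set of genuine solutions in the slice, and by the slice construction together with the topology-matching cited above it is homeomorphic to a neighborhood of $C$ in $\mathcal{M}$ via a map sending $0$ to $C$.

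The step I expect to be the real obstacle is the bookkeeping of the asymptotic data: one must choose the exponential weights and the finite-dimensional asymptotic-parameter factor so that, simultaneously, $D$ is Fredholm with index independent of the point $C$ — which forces the constraint $c_E=0$ on concave side ends and the special form \eqref{eq:1.12} of the ends limiting on $\theta\in\{0,\pi\}$ to be built into $\mathcal{B}$ correctly — and so that $f^{-1}(0)$ surjects onto a \emph{full} neighborhood of $C$ in $\mathcal{M}_{\hat{A},\varsigma,J'}$, which rests on the uniform exponential decay estimate of \fullref{prop:2.2} and the local description of the ends in \cite[Sections 2--4]{T3}. Once these choices are pinned down, the remainder is a routine application of the implicit function theorem, which is why \cite[Sections 3 and 4]{T3} can be followed almost verbatim.
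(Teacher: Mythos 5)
Your proposal is correct in outline and would work, but it organizes the chart construction differently from the paper, and the differences are worth flagging. The paper does not pass to a Banach manifold of maps with a fixed slice for $\mathrm{Aut}(C_0)$ and an explicit finite-dimensional factor of asymptotic parameters. Instead, in the immersed case it works directly with sections of the pull-back normal bundle $N\to C_0$ (the pseudoholomorphicity condition on a nearby subvariety $e(\eta)$ becoming the scalar equation \eqref{eq:2.3}), and it encodes the asymptotic freedoms not by enlarging the domain with extra coordinates but by the choice of exponential weight function $r$ in \eqref{eq:2.6}--\eqref{eq:2.7}: the weight decays slowly enough (weight $-\kappa|s|$, $\kappa$ small) that the bounded modes corresponding to the $T$--translations, $\iota_E$, and $c_E$ already lie in the domain and hence in $\kernel(D_C)$. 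For non-immersed $C$ the paper then splits $\phi^*T_{1,0}$ as $W\oplus N$ and introduces the fixed space $V$ of complex-structure variations, defining $D_C=(1-\Pi)\hat D$, so the reparametrization and Teichm\"uller directions are absorbed into the $W$--summand rather than quotiented out by a slice. The upshot is the same Fredholm index $\hat I$ and the same Kuranishi reduction, but the bookkeeping you rightly identify as the delicate step — making sure $\kernel(D_C)$ captures exactly the $\hat I$ genuine deformations, no more and no fewer — is done in the paper by calibrating weights against the asymptotic operator's spectrum, whereas your version does it by hand-adding parameter directions and weighting strictly below the first eigenvalue. Your way is the more familiar one from SFT expositions; the paper's way is closer to [T3] and avoids having to separately justify surjectivity of the chart onto the moduli space, since nearby subvarieties are literally graphs of sections of $N$. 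Either is fine; just be aware that "following [T3] almost verbatim," as you say, would actually produce the paper's normal-bundle version rather than the map-space version you sketch.
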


The integer $\hat{I}$ that appears here is the `formal dimension' of $\mathcal{M}$.
The formula for $\hat{I}$ given in the next proposition uses $\varsigma$ and
some of the data from $\hat{A}$. The data of particular use in this regard are
listed next. First on the list is the integer
\begin{equation}\label{eq:2.1}
c_{\hat{A}}   \equiv  c_{ + } + c_{ - }.
\end{equation}
In this regard, note that $c_{\hat{A}}   \ge  0$ as it counts the number of
intersections, each weighted with its multiplicity, between any given $C \in   \mathcal{M}$
with the $\theta =0$ and $\theta =\pi$ cylinders. To
elaborate, let $C_{0}$ denote the model curve for $C$ and let $\phi \co  C_0
 \to   \mathbb{R}  \times  (S^1   \times  S^2 )$ denote its attending
$J'$--pseudoholomorphic map onto $C$. As the points in $C_{0}$ where the
pull-back of $\theta$ is either 0 or $\pi$ are isolated, the closure of a
small disk about each such point will intersect the $\theta =0$ or $\theta
=\pi$ cylinder only at its origin. Thus, the $\phi$--image of each such
disk has a well defined intersection number with the $\theta    \in \{
0, \pi \}$ locus. This number is positive because any two
$J'$--pseudoholomorphic subvarieties have only positive local intersection
numbers at their intersection points. The sum of these local intersection
numbers is the integer $c_{\hat{A}}$ in~\eqref{eq:2.1}. 

Second on the list are the integers $N_{ - }$ and $N_{ + }$, these the
respective number of elements of the form $(0, -, \ldots)$
and $(0, +, \ldots )$ in $\hat{A}$. Finally, use $\hat {N}$ to
denote the number of elements in $\hat{A}$ whose first entry is $1$ or $-1$. With
this notation set, consider:

\begin{prop}\label{prop:2.5}

The integer $\hat{I}$ from \fullref{prop:2.4} is given by the formula
\begin{equation}\label{eq:2.2}
\hat{I} = N_{ + } + 2(N_{ - }+\hat {N} + C_{\hat{A}}+\varsigma  - 1).
\end{equation}
\end{prop}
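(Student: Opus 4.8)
The plan is to compute the formal dimension $\hat I$ by the standard Riemann--Roch/index-theoretic route for punctured pseudoholomorphic curves, exactly as in the Hofer--Wysocki--Zehnder and Eliashberg--Givental--Hofer framework, and then to bookkeep the contributions end by end. First I would fix $C \in \mathcal{M}$, its model curve $C_0$ (a sphere with $N_+ + N_- + \hat N$ punctures), and the $J'$--pseudoholomorphic map $\phi\co C_0 \to \mathbb{R}\times(S^1\times S^2)$. The linearization of the $\bar\partial_{J'}$--operator at $\phi$ is a real-linear Cauchy--Riemann type operator $D$ acting on sections of $\phi^*T(\mathbb{R}\times(S^1\times S^2))$ over $C_0$, with prescribed exponential decay at the punctures governed by the asymptotic operators at the limiting Reeb orbits. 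By \fullref{prop:2.2} and the asymptotic analysis recalled from \cite[Sections 2--4]{T3}, these asymptotic operators are nondegenerate in the appropriate weighted sense, so $D$ is Fredholm, and $\hat I = \operatorname{index} D$ is the formal dimension appearing in \fullref{prop:2.4}.

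The main computation is then to evaluate this Fredholm index via the punctured Riemann--Roch formula: $\operatorname{index} D = n\,\chi(C_0) + 2\langle c_1(\phi^*T, \text{trivialization at punctures}), [C_0]\rangle + \sum_{\text{punctures}} \mu_{\text{CZ}}(\gamma_E)$ (with the usual sign and normalization conventions for the Conley--Zehnder terms at the ends), where $n = 2$ is the complex dimension. The key point is that for each type of end the relevant Conley--Zehnder index and relative first Chern number have already been pinned down, in effect, by the asymptotic descriptions in Parts 1--3 of \fullref{sec:1c}: the $\theta\in(0,\pi)$ Reeb orbits come in $T$--families (so have a Morse--Bott degeneracy contributing a shift), while the $\theta = 0,\pi$ orbits are rigid. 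I would organize the count so that a $(0,+,\ldots)$ end contributes $1$ to $\hat I$, each $(0,-,\ldots)$ end and each $(\pm 1,\ldots)$ end contributes $2$, each unit of intersection multiplicity with the $\theta\in\{0,\pi\}$ cylinders contributes $2$ (this is where the $c_{\hat A}$ term enters, via the positivity-of-intersections discussion preceding \eqref{eq:2.1} realized as an adjunction/Chern-class correction), and the genus enters through $\chi(C_0) = 2 - 2\varsigma - (\text{number of punctures})$; assembling these with the constant $-2$ from the sphere's Euler characteristic yields precisely $\hat I = N_+ + 2(N_- + \hat N + c_{\hat A} + \varsigma - 1)$ as in \eqref{eq:2.2}.

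Two subtleties need care. The first is the choice of trivialization at each puncture used to define the relative Chern number and the Conley--Zehnder indices: the asymmetry between $N_+$ (coefficient $1$) and $N_-$ (coefficient $2$) reflects the different normal behavior of concave-side versus convex-side ends — recall from Part 2 of \fullref{sec:1c} that the constant $c_E$ in \eqref{eq:1.8} is forced to vanish on every concave-side end, which cuts the expected deformation parameter at such an end roughly in half. I would make the trivialization explicit using the coordinates $(s,t,\theta,\varphi)$ and the functions in \eqref{eq:1.11}--\eqref{eq:1.12}, so that the per-end index contributions can be read off directly; getting these boundary conditions and the associated Morse--Bott shifts consistent across all three end types is the step I expect to be the main obstacle. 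The second subtlety is that the $\theta\in\{0,\pi\}$ cylinders are themselves $J'$--pseudoholomorphic subvarieties that $C$ may hit, so the relative Chern number must be computed on the complement and then corrected by the intersection data; here I would invoke the positivity of local intersection numbers (\cite{M}, as cited in the introduction) to identify the correction term with $2c_{\hat A}$. Once the per-end and per-intersection contributions are tabulated, \eqref{eq:2.2} follows by summing, and the independence of $\hat I$ from the chosen $C \in \mathcal{M}$ is automatic since every term in the sum depends only on $\hat A$ and $\varsigma$. The formula for $\hat I$ as the Fredholm index of $D$, together with the implicit-function-theorem setup already granted in \fullref{prop:2.4}, completes the proof.
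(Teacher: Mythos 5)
Your proposal takes essentially the same route the paper (and \cite{T3}, to which it defers) takes: identify $\hat I$ with the Fredholm index of the linearized Cauchy--Riemann operator in the weighted Sobolev setting, then bookkeep the contributions of the punctures and the $\theta\in\{0,\pi\}$ intersections by a punctured Riemann--Roch count. The paper packages the end data as the relative self-intersection, relative $c_1$, and double-point identities in \eqref{eq:2.10} rather than as an explicit sum of Conley--Zehnder indices, and otherwise cites \cite[Sections~3d and 4b,d]{T3} for the index computation, so your sketch is at the same level of detail as the paper's own argument and the per-end contributions you list do reassemble correctly into \eqref{eq:2.2}.
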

This proposition is also proved below

A point $C  \in   \mathcal{M}$ is called a regular point when the $n = 0$ case of
\fullref{prop:2.4} applies. The next proposition concerns the subset of regular
points in $\mathcal{M}$.

\begin{prop}\label{prop:2.6}

The set of regular points in $\mathcal{M}$ has the structure of a smooth manifold of dimension
$\hat{I}$. Moreover, if $C  \in   \mathcal{M}$ is a regular point, then \fullref{prop:2.4}'s
homeomorphism between the $\hat{I}$--dimensional ball and a neighborhood of $C$ in $\mathcal{M}$
defines a smooth coordinate chart.
\end{prop}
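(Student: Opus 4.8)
The plan is to deduce \fullref{prop:2.6} as a formal consequence of \fullref{prop:2.4}, the usual Fredholm/elliptic regularity machinery for pseudoholomorphic curves, and the structure of the local Kuranishi-type model. Recall that \fullref{prop:2.4} gives, near any $C\in\mathcal{M}$, a smooth map $f$ from a ball in $\mathbb{R}^{\hat I+n}$ to $\mathbb{R}^{n}$ with $f(0)=0$, together with a homeomorphism from $f^{-1}(0)$ onto a neighborhood of $C$. A point $C$ is regular precisely when this model can be taken with $n=0$, so that $f^{-1}(0)$ is already an open ball in $\mathbb{R}^{\hat I}$; the claim is that the collection of such charts, one for each regular $C$, forms a smooth atlas on the set $\mathcal{M}^{\mathrm{reg}}$ of regular points.

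First I would verify that $\mathcal{M}^{\mathrm{reg}}$ is open in $\mathcal{M}$: if $C$ is regular, its \fullref{prop:2.4} neighborhood is homeomorphic to an open ball in $\mathbb{R}^{\hat I}$, and inside that neighborhood every nearby subvariety $C'$ is again modeled by the restriction of the same $f$ (now recentered at the point of $f^{-1}(0)$ corresponding to $C'$), so $n=0$ works for $C'$ as well. Hence $\mathcal{M}^{\mathrm{reg}}$ is a union of open balls and in particular is a topological $\hat I$--manifold. The substantive content is that the transition functions between two overlapping \fullref{prop:2.4} charts are smooth. For this I would recall how the charts in \fullref{prop:2.4} are actually built — following \cite[Sections 3 and 4]{T3}: the ambient Banach space is a weighted Sobolev space of sections of the normal bundle of the model curve (with exponential weights dictated by $\hat A$ and the asymptotics in \eqreft16, \eqref{eq:1.8}, \eqref{eq:1.12}), the deformation operator is a real-linear first-order elliptic operator whose Fredholm index is computed to be $\hat I$, and the $\mathbb{R}^{\hat I+n}$ coordinates are (kernel of the operator) $\oplus$ (obstruction space pairing data), with $f$ the obstruction section. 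When $C$ is regular the cokernel vanishes, so the chart is literally the zero set of the Fredholm section restricted to the kernel, which the implicit function theorem identifies smoothly with an open set in the $\hat I$--dimensional kernel. Two such charts at $C_{1}$ and $C_{2}$ differ by a change of base point, a change of the chosen complement/slice to reparametrization, and elliptic regularity promoting $W^{k,2}$ solutions to smooth ones; all of these are smooth operations on the finite-dimensional solution set, so the overlap maps are diffeomorphisms.

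The main obstacle — and the step I would spend the most care on — is checking that the smooth structure is independent of the auxiliary choices made in constructing the \fullref{prop:2.4} chart, namely the Banach space completion, the reference metric, the exponential weights, and the local parametrization of the model curve; and, secondly, reconciling the intrinsic topology on $\mathcal{M}$ from \eqref{eq:1.13} (sup-distance between subvarieties as subsets) with the topology coming from the chart coordinates. For the first point, any two admissible sets of choices yield deformation operators that are conjugate by an isomorphism that is smooth on the relevant finite-dimensional pieces (the weights do not affect the kernel once they are in the admissible range, by the asymptotic decay estimates of \fullref{prop:2.2}), so the identity map on $\mathcal{M}^{\mathrm{reg}}$ is smooth in both directions. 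For the second, one uses that a small deformation in the Sobolev norm controls, and is controlled by, a small deformation in the Hausdorff-type distance of \eqref{eq:1.13}, again because of the uniform exponential convergence on the ends; this is exactly the compatibility already implicit in the statement of \fullref{prop:2.4} that the chart is a homeomorphism onto a neighborhood in $\mathcal{M}$. Granting these, the final sentence of \fullref{prop:2.6} — that for regular $C$ the \fullref{prop:2.4} homeomorphism from the $\hat I$--ball is itself a smooth chart — is then immediate, since that homeomorphism is, by construction, one of the charts in the atlas we have just shown to be smooth.
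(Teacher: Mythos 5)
Your argument follows the same route the paper takes: \fullref{prop:2.6} is treated as a formal consequence of the implicit function theorem underlying the construction of the map $F$ (and hence of the \fullref{prop:2.4} chart), with regularity meaning the obstruction space vanishes so the chart is literally the kernel of the Fredholm operator $D_C$, yielding \eqref{eq:2.8}. The paper states this in a single sentence; your proposal fills in the openness of the regular locus, smoothness of overlaps, and independence of auxiliary choices, all of which are the unstated content of that sentence.
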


The remainder of this subsection sketches the argument for the preceding
propositions. To begin, fix a smooth Riemannian metric, $g'$, on $\mathbb{R}
\times  (S^1   \times  S^2 )$ for which $J'$ is orthogonal. In this
regard, such a metric can and should be chosen so that $\partial_{s}$ and
$\hat {\alpha }$ have norm 1, and such that $g'$ agrees with $ds^2  +
dt^2 + d\theta ^2 + \sin^2\theta  d\varphi ^2$ on the
complement of a compact subset of $\mathbb{R}  \times  (S^1   \times S^2 )$.

Now fix $C  \in   \mathcal{M}$ and let $C_{0}$ again denote the
model curve for $C$. For simplicity, assume that the
$J'$--pseudoholomorphic map $\phi$ from $C_{0}$ is an immersion. The
story when $C$ is not immersed is similar in most respects to that
given below and is summarized briefly in \fullref{sec:2d}. The reader is
referred to   \cite[Section 3]{T3} for a more detailed account.

Granted that $C$ is immersed, there exists a pull-back normal bundle, $N  \to
 C_{0}$; its fiber at any given point is the $g'$--orthogonal complement in
$T(\mathbb{R}  \times  (S^1   \times  S^2 ))|_{C}$ to $TC_0$
at the image point in $C$. This bundle inherits a complex, hermitian line
bundle structure from $J'$ and the metric $g'$. The latter structure endows $N$
with the structure of a holomorphic bundle over $C_{0}$. In addition, there
is a disk bundle $N_1   \subset  N$ of \color[rgb]{0,0,0} some fixed radius, $r_1$,
together with an immersion, $e\co  N_1   \to   \mathbb{R}\times
(S^1   \times  S^2 )$, onto a regular neighborhood of $C$ that restricts
to the zero section as the map from $C_{0}$. In this regard, $e$ is chosen so
as to embed any given fiber of $N_1$ as a pseudoholomorphic disk. Also,
the differential of $e$ is uniformly bounded, and it defines along the zero
section a $g'$--isometric map from $TC_{0}   \otimes  N$ to the pull-back of
$TX$.

Let $\eta $ denote a smooth section of $N_1$. Then $e(\eta )$ is a
pseudoholomorphic subvariety if and only if $\eta $ satisfies a certain
differential equation, one with the schematic form
\begin{equation}\label{eq:2.3}
\bar {\partial }\eta  + \nu \eta  + \mu
\bar {\eta }  + \mathcal{R} _{0}(\eta )+\mathcal{R}_{1}(\eta
)\cdot \partial \eta =0 ,
\end{equation}
where the notation is as follows: First, $\nu $ and $\mu $ are bounded
sections of $T^{0,1} C_0$ and $N^2 \otimes  T^{0,1} C_0$
respectively. Second, $\mathcal{R}_0$ is a smooth (but not complex
analytic), fiber preserving map from $N_1 \otimes  N_1$ to $N
\otimes  T^{0,1} C_0$. Meanwhile, $\mathcal{R}_1$ is a smooth (but not
complex analytic), fiber preserving map from $N_1$ to
$\Hom(T^{1,0} C_0$, $T^{0,1} C_0)$. These two maps obey
\begin{equation}\label{eq:2.4}
| \mathcal{R}_0 (\eta )|    \le  c | \eta | ^2 \qquad \text{and}\qquad
| \mathcal{R}_1 (\eta )|    \le  c | \eta |.
\end{equation}
where $c$ is a constant.

The linear part of~\eqref{eq:2.3} defines the first order, operator $D_C$,
an $\mathbb{R}$--linear map
from the space of sections of $N$ to those of $N  \otimes
T^{0,1} C_0$. Thus,
\begin{equation}\label{eq:2.5}
D_C \eta    \equiv   \bar {\partial }\eta +\nu   \eta +\mu
  \bar {\eta }.
\end{equation}
The operator $D_C$ induces a bounded, Fredholm operator between various
weighted, Sobolev space completions of certain subspaces of sections of $N$
and $N  \otimes  T^{0,1} C_{0}$. In particular, the completions of
interest are defined as follows: Fix a positive, but very small real number,
$\kappa $; an upper bound can be deduced from the data in $\hat{A}$. Now, fix a
smooth non-negative function, $r$, on $C_{0}$ with the following properties:
\begin{itemize}
\item \textsl{$r = -\kappa |s| $ on any end in $C_{0}$  that provides an element in $\hat{A}$  with first component 0.
}

\item \textsl{If $E  \subset  C_0$  is an end that contributes an element of the form $(\pm 1, \pm ,(p, p'))$, then
\begin{equation}\label{eq:2.6}
r = \bigg(-\kappa +| \frac{p' }{ p}| -\sqrt {\frac{3}{2}} \bigg) |s| \text{ on }E.
\end{equation}
}
\end{itemize}

With such a function chosen, define respective domain and range Hilbert
spaces for $D_C$ to be the completions of the spaces of smooth sections of
$N$ and $N  \otimes  T^{0,1} C_0$ for which the quadratic functionals
\begin{equation}\label{eq:2.7}
\eta \to \int_{C_0 } e^r (| \nabla \eta | ^2+| \eta
| ^2)\qquad \text{and}\qquad \eta    \to   \int_{C_0 } e^r | \eta
| ^2,
\end{equation}
are finite. The respective functionals in~\eqref{eq:2.7} define the Hilbert space
norms on the domain and range. These are both denoted here as $\| \cdot   \| $.

\cite[Lemma~3.3 in Section 3]{T3} asserts that the
operator $D_C$ defines a Fredholm operator from the domain Hilbert
space to the range. The vector spaces kernel $(D_C)$ and cokernel
$(D_C)$ refer to the respective kernel and cokernel of this Fredholm
operator. Arguments from  \cite[Sections 3d and 4b,d]{T3} prove
that the index of this Fredholm operator is the integer $\hat{I}$.

Meanwhile, almost verbatim copies of the constructions from  \cite[Section~3c]{T3} allow
\cite[Proposition~3.2]{T3}  to
construct a ball, $B  \subset \kernel(D_C)$ about the origin, a
smooth map, $f\co  B  \to \cokernel(D_C)$, and a smooth map $F\co  B  \to
C^{\infty }(N_1)$ with the following properties: First, $\| f(\eta
)\|
   \le  c \| \eta \| ^2$ and $|F(\eta )-\eta | +  | \nabla (F(\eta )-\eta |
 \le  c \| \eta \| ^2$. Second, $F$ composes
with the exponential map $e\co  N_1   \to   \mathbb{R}  \times  (S^1
\times  S^2 )$ so as to map $f^{-1}(0)$ homeomorphically onto a
neighborhood of $C$ in $\mathcal{M}$.

The conclusions of these last two paragraphs restate those of Propositions~\ref{prop:2.4}
and~\ref{prop:2.5}  The conclusions of \fullref{prop:2.6} follow as a formal
consequence of the role played by the implicit function theorem in the
construction of $F$. In this regard, keep in mind that when $C  \in   \mathcal{M}$ is a regular point,
then the use of the map $F$ provides the identification
\begin{equation}\label{eq:2.8}
T\mathcal{M}|_C = \kernel(D_C).
\end{equation}
It proves useful in subsequent arguments to write $D_C$ explicitly
on the ends of $C$. For this purpose, let $E$ denote a given end.
Constructions from  \cite[Section 2]{T3} parametrize $E$ by
coordinates $(\rho , \tau )   \in [0, \infty )  \times
\mathbb{R}/(2\pi \mathbb{Z})$ and trivialize $N$ over $E$ as $E
\times   \mathbb{R}^2$ so that $D_C$ becomes an operator of the form
\begin{equation}\label{eq:2.9}
\partial_{\rho }+
\begin{pmatrix}
 - \varsigma '  &  - \partial_\tau  \\
\partial_\tau  &  - \varsigma
\end{pmatrix}
 + \hat{o}_1 + \hat{o}_2 \cdot \partial
_{\rho } + \hat{o}_3\cdot \partial_{\tau },
\end{equation}
acting on 2--component column vectors. To elaborate, $\varsigma$ is a
positive constant for concave side ends and negative for convex side ones.
In all cases, the value of $\varsigma$ is determined by the element from
$\hat{A}$ that labels $E$. Meanwhile, $\varsigma ' = \varsigma$ when the $|
s|    \to   \infty $ limit of $\theta$ on $E$ is 0 or $\pi$.
Otherwise, $\varsigma ' = 0$. Finally, $\hat{o}_{1 - 3}$ are smooth, $2 \times
 2$ matrix valued functions whose components with their derivatives decay to
zero as $\rho    \to   \infty $ faster than $e^{-\kappa '\rho }$
with $\kappa' $ a positive constant. Note that the coordinates $(\rho ,
\tau )$ are such that the function $|s|$ restricts to $E$ as a
multiple of $\rho $. Also, $d\rho    \wedge  d\tau $ orients $E$.
Meanwhile, the trivialization chosen for $N$ is such that when $\gamma
\subset  S^1   \times  S^2 $ is a Reeb orbit where $\theta    \notin
 \{0, \pi \}$ and $E  \subset   \mathbb{R}  \times   \gamma$, then
the column vector with 0 in its top entry and 1 in its lower entry is a
positive multiple of the projection to $\gamma$'s normal bundle of the
vector field $-\partial_{\theta }$. The column vector with 1 in its top
entry and 0 in its lower entry defines a deformation of $\gamma$ along its
orbit under the action of the group, $T$, of isometries of $\mathbb{R}  \times
(S^1   \times  S^2 )$ generated by the vector fields $\partial_{t}$
and $\partial_{\varphi }$.

\subsection{The moduli space for multi-punctured spheres near immersed varieties}\label{sec:2c}

Restrict attention now to the genus zero case. Thus, $\mathcal{M}=\mathcal{M}_{\hat{A},0}$
contains only multi-punctured spheres. Using the notation in
\fullref{prop:2.5}, the model curve of each $C  \in   \mathcal{M}$ has $N_{ +}+N_{ - }+\hat {N}$
punctures. Here is the first key observation about $\mathcal{M}$:

\begin{prop}\label{prop:2.7}
The space $\mathcal{M}$ is a smooth manifold of dimension $N_{ + }+2(N_{ - }+\hat
{N}+c_{\hat{A}}-1)$ on some neighborhood of any given subvariety with only immersion singularities.
In this regard, the operator $D_C$ has trivial cokernel for each immersed subvariety in
$\mathcal{M}$ and thus each such subvariety is a regular point of $\mathcal{M}$.
\end{prop}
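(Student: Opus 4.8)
The plan is to reduce everything to the single claim that $\cokernel(D_C)=0$ for every immersed $C\in\mathcal M$, and then to establish that vanishing by automatic transversality. The reduction is formal: in the construction recalled in \fullref{sec:2b}, the integer $n$ is the dimension of $\cokernel(D_C)$, so if this cokernel is trivial then the $n=0$ case of \fullref{prop:2.4} applies, $C$ is a regular point, and \fullref{prop:2.4} supplies a homeomorphism from a ball in $\kernel(D_C)$ onto a neighborhood of $C$ in $\mathcal M$, which \fullref{prop:2.6} promotes to a smooth coordinate chart. The dimension of this chart is $\dim\kernel(D_C)$, which equals the Fredholm index of $D_C$ once the cokernel vanishes, and that index is $\hat I=N_++2(N_-+\hat N+c_{\hat A}-1)$ by \fullref{prop:2.5} with $\varsigma=0$. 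Note that this already exhibits a neighborhood of $C$ in $\mathcal M$ as a smooth manifold of the stated dimension, with no hypothesis imposed on the other subvarieties near $C$.

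To prove $\cokernel(D_C)=0$ I would use the weighted $L^2$ pairings behind \eqref{eq:2.7} to identify $\cokernel(D_C)$ with the kernel of the formal adjoint $D_C^\dagger$, acting on the weighted space obtained by reversing the sign of the weight $r$. This $D_C^\dagger$ is again a first-order, real-linear Cauchy--Riemann operator; via the chosen metrics it acts on sections of a complex line bundle over $C_0$ isomorphic to $K_{C_0}\otimes N^{-1}$. On each end, \eqref{eq:2.9} dualizes to an operator of the same schematic form (with $\partial_\rho$ replaced by $-\partial_\rho$ and the constants $\varsigma,\varsigma'$ retaining their meanings), and the weight $r$ --- equal to $-\kappa|s|$ on the $(0,\ldots)$ ends and given by \eqref{eq:2.6} on the $(\pm1,\ldots)$ ends --- pins down the asymptotic winding of any $\mathfrak n\in\kernel(D_C^\dagger)$ at each puncture. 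Since $\mathfrak n$ solves a perturbed Cauchy--Riemann equation, the Carleman similarity principle shows that unless $\mathfrak n\equiv0$ its zeros are isolated with strictly positive local multiplicities, while its leading behavior at every puncture is controlled as just described.

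The computation to run is then the zero count. Capping off the punctures of $C_0$ and twisting $K_{C_0}\otimes N^{-1}$ by the per-puncture winding data attached to $r$ produces a line bundle over the $2$--sphere whose degree equals the total multiplicity of the zeros of $\mathfrak n$; hence a nonzero $\mathfrak n$ forces this degree to be nonnegative. On the other hand the degree is assembled from $\deg N^{-1}$, the canonical contribution $2\varsigma-2$, and one winding term per end, and --- since the index of $D_C$ equals $\hat I$ by \fullref{prop:2.5} and $\varsigma=0$ --- it comes out strictly negative. (In the cleanest bookkeeping the relevant defect is the quantity $\hat I+2-(N_++N_-+\hat N)=N_-+\hat N+2c_{\hat A}$, which is nonnegative and, given the constraints \eqreft1{14}--\eqreft1{17} on an asymptotic data set, positive.) This contradiction gives $\mathfrak n\equiv0$, so $\cokernel(D_C)=0$, which is exactly the input needed for the reduction in the first paragraph.

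The main obstacle is precisely this sign check: one must extract from \eqref{eq:2.9} and its dual the exact winding each of the three kinds of ends --- $(0,+,\ldots)$, $(0,-,\ldots)$ and $(\pm1,\pm,\ldots)$ --- contributes to the degree, and confirm that these sum with $2\varsigma-2$ to a negative integer. This is the step whose details must parallel the index computations of \cite{T3} most closely. The genus-zero hypothesis is indispensable here: for $\varsigma>0$ the canonical contribution $2\varsigma-2$ is nonnegative, the count no longer rules out a nonzero $\mathfrak n$, and indeed \fullref{prop:2.4} only claims a variety structure in that range. One also observes that the immersed double points of $C$ are irrelevant, since $D_C$ and $N$ are built over the model curve $C_0$ rather than over $C$, and that the almost-everywhere-injectivity of $C$ from \fullref{deff:1.1} is what legitimizes this normal-bundle deformation theory and hence the count.
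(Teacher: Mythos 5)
Your strategy is a recognizable and legitimate alternative to the paper's: you pass to the formal adjoint $D_C^\dagger$, identify $\cokernel(D_C)$ with its kernel acting on (roughly) $K_{C_0}\otimes N^{-1}$ with the opposite weight, and then try to kill it by a negative-degree count after Carleman similarity. The paper instead stays entirely on the kernel side: it introduces the subspace $K_0\subset\kernel(D_C)$ of elements decaying on the $(0,+,\ldots)$ ends, shows $\dim K_0\ge 2(N_-+\hat N+c_{\hat A}-1)$ with strict inequality if $\cokernel(D_C)\ne 0$ (Steps 3--4), observes via \eqref{eq:2.13} and \eqref{eq:2.14} that any nonzero element of $K_0$ has at most $N_-+\hat N+c_{\hat A}-2$ zeros counted with multiplicity, and then imposes vanishing at a generic $(N_-+\hat N+c_{\hat A}-1)$-point set $\Omega$ to force a nonzero element of $K_0$ that violates that bound whenever $\cokernel(D_C)\ne 0$ (Step 5). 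This sidesteps the adjoint entirely and reuses the already-established end-winding facts for $D_C$ from \eqref{eq:2.9}--\eqref{eq:2.13}, so no new asymptotic analysis is needed. Your reduction to $\cokernel(D_C)=0$ and the role assigned to Propositions \ref{prop:2.4}--\ref{prop:2.6} are both correct.

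The gap is precisely where you say the main obstacle is: you never extract from \eqref{eq:2.9} and its dual the winding that each of the three kinds of ends contributes to the capped degree of $K_{C_0}\otimes N^{-1}$, and without that the argument is only a template. The parenthetical ``cleanest bookkeeping'' claim, that the degree defect is $\hat I+2-(N_++N_-+\hat N)=N_-+\hat N+2c_{\hat A}$, is arithmetically consistent with \fullref{prop:2.5} but is asserted rather than derived; there is no accounting of how $\deg N^{-1}$, the $2\varsigma-2$ term, and the per-end winding terms actually sum to that number. In particular, the weight $r$ is $-\kappa|s|$ on $(0,\ldots)$ ends but given by \eqref{eq:2.6} on $(\pm1,\ldots)$ ends, the asymptotic matrix in \eqref{eq:2.9} has $\varsigma'=\varsigma$ on the latter but $\varsigma'=0$ on the former, and the concave/convex distinction flips the sign of $\varsigma$ --- all of which feed into the adjoint winding per end in different ways. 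Until that case analysis is actually done and shown to sum (with the Riemann--Hurwitz and $\deg N^{-1}$ contributions) to a strictly negative integer, the proof is incomplete. Your observation that $N_-+\hat N+2c_{\hat A}>0$ under the standing constraints on $\hat A$ is correct, but its relevance hinges on the unverified step. The paper's route avoids this entirely, which is why it goes to the trouble of Steps 3--5 rather than dualizing.
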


To explain the terminology, a subvariety is said to have only immersion
singularities when the tautological map from its model curve is an
immersion.

An analogous assertion for the non-immersed submanifolds is given in \fullref{sec:2d}.

The proof of \fullref{prop:2.7} requires a preliminary digression to introduce
new pairings between the fundamental class of a pseudoholomorphic subvariety
and certain classes from $H^2(\mathbb{R}  \times  (S^1   \times
S^2 ); \mathbb{Z})$. In this regard, keep in mind that the fundamental
class of a non-compact subvariety does not canonically define a linear
functional on this second cohomology.

In this digression, $C$ denotes any given $J'$--pseudoholomorphic
subvariety without restriction on its genus or its singularities. To
start, note that Subsection 3.a in \cite{T3} defines an integer valued
pairing between the fundamental class of an irreducible,
pseudoholomorphic subvariety and its Poincare' dual. It also defines
an integer valued pairing between the fundamental class of such a
subvariety and the first Chern class for the given almost complex
structure on $\mathbb{R}  \times  (S^1   \times  S^2 )$. With $C$
denoting the subvariety in question, these integer pairings are
respectively denoted by $\langle e, [C]\rangle$ and $\langle c_1,
[C]\rangle $; they enter both in \cite[Proposition 3.1]{T3} to
compute the Euler characteristic of $C$, and in \cite[Proposition 3.6]{T3}.

To elaborate, $\langle e, [C]\rangle $ is a `self-intersection' number
that is defined by pushing $C$ off of itself using a fiducial push-off on its
ends. In this regard, the fiducial push-off used to define $\langle e,
[C]\rangle $ pushes along any section of $N$ over the large $|s|$
part of $C$ that is homotopic there through nowhere zero sections to a
particular standard section. To obtain this standard section, note first
that $\partial_{\theta }$ is not tangent to any Reeb orbit that is
obtained as the $|s|    \to   \infty $ limit of the constant $s$
slices of any end of $C_0$ where $\lim_{| s| \to \infty }  \theta
   \notin \{0, \pi \}$. Thus, the projection of $\partial_{\theta
}$ along the large $|s|$ part of $C$ to its local normal bundle
defines a non-vanishing section, $\eta_{0}$, of $N$ over any such end. This
$\eta_{0}$ is used for the standard section over any end of $C$ where
$\lim_{| s| \to \infty }  \theta    \notin \{0, \pi \}$. A
different section is used on those ends where $\lim_{| s| \to \infty
}  \theta    \in \{0, \pi \}$.

Meanwhile, $\langle c_1, [C]\rangle $ is defined using a certain
standard section for the restriction to the large $|s|$ part of
$C$ of the $J$--version of the canonical bundle for $\mathbb{R}  \times  (S^1
\times  S^2 )$. The standard section over an end $E  \subset  C_0$
is $(dt + \frac{i}{g} df)  \wedge  (\sin^2\theta  d\varphi
+\frac{i}{g} dh)$ in the case that the $| $s$|    \to
  \infty $ limit of $\theta$ on E is neither 0 nor $\pi$. A different
section is used on the other ends.

If $C$ is not a $\theta =0$ or $\theta =\pi$ cylinder, then $\eta
_{0}$ is nowhere zero at large $|s|$ on $C$ and so can be used
on all ends of $C$ to define a pairing between $C$'s fundamental class and its
Poincare' dual. This new pairing is denoted here by $\langle e, [C]\rangle
_*$. Of course, the new pairing is identical to the old when $\hat{A}$
has no elements with first component $\pm 1$.

If $C$ is not a $\theta =0$ or a $\theta =\pi$ cylinder, then $(dt +
\frac{i}{g}df)  \wedge  (\sin^2\theta  d\varphi  +
\frac{i}{g} dh)$ is also non-zero at large $|s|$ on $C$,
and so this section can be used on all of $C$'s ends to define a new pairing
of $C$ with $c_{1}$. This new pairing is denoted by $\langle c_1,[C]\rangle_*$.
This new pairing has the virtue that it is equal
to minus the number of intersections (counted with multiplicity) between $C$
and the locus where $\theta$ is 0 or $\pi$.

To end the digression, let $m_C$ denote the number of double points
of any compactly supported perturbation of $C$ that gives an
immersed, symplectic curve with purely double point immersion
singularities with positive local intersection numbers. (Such a
perturbation always exists.) It then follows from  \cite[Proposition 3.1 and Proposition 4.1]{T3}
using observations from  \cite[Section 4c]{T3} that
\begin{align}\label{eq:2.10}
\begin{split}
-\chi (C) &= \langle e, [C]\rangle_*+\langle c_1,
[C]\rangle_* - 2m_C.\\
\hat{I} &= \langle e, [C]\rangle_*-\langle c_1,
[C]\rangle_* - 2m_C + N_{ - }+\hat {N}.
\end{split}
\end{align}
These last formulas end the digression.

\begin{proof}[Proof of \fullref{prop:2.7}]
The proof has five steps.

\step{Step 1}
By definition, no $C$ in any $\mathcal{M}_{\hat{A},\varsigma }$ is a $\theta  =
0$ or $\theta =\pi$ cylinder. Thus, the section $\eta_{0}$ as defined
above is nowhere zero at large $|s|$ on $C$. Define $\Deg(N)$ to be
the usual algebraic count of the zero's of any section of $N$ that has no
zeros where $|s|$ is large and is homotopic on the large $|s| $ slices of $C_0$
through non-vanishing sections to $\eta_0$. The following lemma identifies $\Deg(N)$:

\begin{lemma}\label{lem:2.8}
Let $\hat{A}$ be an asymptotic data set and $\varsigma$ a non-negative real number. If
$C  \in \mathcal{M}_{\hat{A},\varsigma }$ is an immersed subvariety, then
\begin{equation}\label{eq:2.11}
\Deg(N) = N_{ + } + N_{ - }+\hat {N} + c_{\hat{A}} + 2\varsigma
-2.
\end{equation}
\end{lemma}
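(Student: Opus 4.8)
The plan is to derive \eqref{eq:2.11} by combining the relative adjunction formula \eqref{eq:2.10} with the interpretation of $\Deg(N)$ as a framed self-intersection number and with the elementary topology of the model curve $C_{0}$.

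The first step is to record the identity
\[
\langle e,[C]\rangle_{*}=\Deg(N)+2m_{C}.
\]
Indeed, by construction $\langle e,[C]\rangle_{*}$ counts the intersections of $C$ with a push-off $e(\eta)$, where $\eta$ is a section of $N$ that is nowhere zero on the large $|s|$ part of $C_{0}$ and is homotopic there through nowhere zero sections to the standard section $\eta_{0}$; this is precisely the class of sections whose algebraic zero count defines $\Deg(N)$. Replacing $C$ by a compactly supported, immersed symplectic perturbation $C'$ with exactly $m_{C}$ transverse double points changes neither $\langle e,[C]\rangle_{*}$ nor $\Deg(N)$, since the perturbation is homotopic to the identity and fixed near the ends. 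The intersections of $C'$ with $e(\eta)$ away from the double points correspond to the zeros of $\eta$, hence contribute $\Deg(N)$, while near each double point there are generically two further intersection points, each of positive local intersection number. This gives the displayed identity.

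Granted this, the first line of \eqref{eq:2.10} yields
\[
\Deg(N)=\langle e,[C]\rangle_{*}-2m_{C}=-\chi(C)-\langle c_{1},[C]\rangle_{*}.
\]
Now I would substitute the two remaining ingredients. Since $C\in\mathcal{M}_{\hat{A},\varsigma}$ is, by definition, neither the $\theta=0$ nor the $\theta=\pi$ cylinder, the pairing $\langle c_{1},[C]\rangle_{*}$ equals minus the number of intersections, counted with multiplicity, of $C$ with the $\theta\in\{0,\pi\}$ locus, and this number is $c_{\hat{A}}$; thus $\langle c_{1},[C]\rangle_{*}=-c_{\hat{A}}$. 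Meanwhile the model curve $C_{0}$ is a genus $\varsigma$ surface with one puncture for each of the $N_{+}+N_{-}+\hat{N}$ ends, so $\chi(C)=\chi(C_{0})=2-2\varsigma-(N_{+}+N_{-}+\hat{N})$. Putting these together,
\[
\Deg(N)=-\bigl(2-2\varsigma-(N_{+}+N_{-}+\hat{N})\bigr)+c_{\hat{A}}=N_{+}+N_{-}+\hat{N}+c_{\hat{A}}+2\varsigma-2,
\]
which is \eqref{eq:2.11}. As a consistency check, feeding the second line of \eqref{eq:2.10} together with the formula $\hat{I}=N_{+}+2(N_{-}+\hat{N}+c_{\hat{A}}+\varsigma-1)$ from \fullref{prop:2.5} into $\Deg(N)=\langle e,[C]\rangle_{*}-2m_{C}$ produces $\Deg(N)=\hat{I}-c_{\hat{A}}-N_{-}-\hat{N}$, the same integer.

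The only genuinely delicate point is the bookkeeping behind $\langle e,[C]\rangle_{*}=\Deg(N)+2m_{C}$: one must confirm that the push-off defining $\langle e,[C]\rangle_{*}$ and the end framing defining $\Deg(N)$ are governed by the same homotopy class of nowhere zero sections of $N$ (both by $\eta_{0}$, noting that $\partial_{\theta}$ and $-\partial_{\theta}$ project to homotopic nowhere zero sections), and that the local contribution near each double point is exactly $2$, which uses positivity of local intersection numbers for $J'$--pseudoholomorphic curves and for their $\omega$--tamed symplectic perturbations. Everything else is formal substitution, so I do not expect any serious obstacle.
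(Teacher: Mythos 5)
Your argument is correct, and the algebra differs from the paper's in an interesting way. The paper's proof takes the \emph{sum} of the two lines in \eqref{eq:2.10}, which cancels $\langle c_1,[C]\rangle_*$ and instead brings in the Fredholm index $\hat{I}$; it then substitutes the Euler characteristic of the $(N_+{+}N_-{+}\hat{N})$--punctured genus $\varsigma$ surface together with the formula for $\hat{I}$ from \fullref{prop:2.5} and rearranges. You use only the \emph{first} line of \eqref{eq:2.10} and instead substitute directly the identity $\langle c_1,[C]\rangle_*=-c_{\hat{A}}$, which the paper records in the paragraph preceding \eqref{eq:2.10}. Both routes are legitimate; yours has the modest advantage of not invoking \fullref{prop:2.5} at all, so the lemma is derived purely from the relative adjunction identity, the interpretation of $\langle c_1,[C]\rangle_*$ as minus the $\theta\in\{0,\pi\}$ intersection count, and elementary surface topology, rather than from an index formula whose own derivation also rests on \cite[Section 3]{T3}. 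Your unpacking of $\Deg(N)=\langle e,[C]\rangle_*-2m_C$ (two extra positive intersections per double point between $C'$ and its push-off) is also a genuine elaboration: the paper declares this relation to hold ``by definition'' and gives no further justification, so the accounting you spell out is worth having on the record. One small caution: the word ``generically'' in your sentence about the two further intersection points near a double point is doing real work, since you need the push-off section $\eta$ to be non-vanishing and in general position near those double points; but this is a standard transversality refinement and does not affect the count.
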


\begin{proof}[Proof of \fullref{lem:2.8}]
By definition, the integer $\Deg(N)$ and the
pairing $\langle e, [C]\rangle_*$ are related by the formula
$\Deg(N) = \langle e, [C]\rangle_* - 2m_C$. This being the
case, add the two lines in~\eqref{eq:2.10} to obtain
\begin{equation}\label{eq:2.12}
-\chi (C) + \hat{I} = 2\Deg(N) + N_{ - }+\hat {N}.
\end{equation}
Upon rearrangement and division by 2, this last equation gives~\eqref{eq:2.11}.
\end{proof}

\step{Step 2}
Now suppose that $C  \in   \mathcal{M}_{\hat{A},0}   \equiv   \mathcal{M}_{\hat{A}}$. Let $\eta $ be
an element in $\kernel (D_C)$. It
follows from the large $|s|$ picture of $D_C$ in~\eqref{eq:2.9} that
each such $\eta $ is bounded on $C$ and has a well defined $| s|
\to   \infty $ limit on each end of $C$. Moreover, the form of~\eqref{eq:2.9} also
implies that $\eta $ has finitely many zeros on $C_0$ and so has a well
defined degree at large $|s|$ on each end of $C_0$ as measured
with respect to the degree zero standard of $\eta_{0}$ and with the
orientation of the constant $\rho $ circles reversed. (Imagine gluing a disk
to such a circle and then the degree is the degree as viewed from the origin
of the glued disk.) Moreover, the form for $D_C$ given in~\eqref{eq:2.9} implies
that this large $|s|$ degree on each end of $C_0$ is
non-negative. Indeed, a column vector with negative degree at all large
$\rho $ that is annihilated by the operator in~\eqref{eq:2.9} will grow in size as
$\rho    \to   \infty $ faster than allowed as a member of
$\kernel(D_C)$. (The growth is faster than the exponential of a positive,
constant multiple of $\rho $ whose value is determined by the data from
$\hat{A}$. Meanwhile, the function $r$ that appears in~\eqref{eq:2.7}  is defined so as to
rule out elements with such growth.)

When $E  \in  \End(C)$, use $\delta_{E}(\eta )$ to denote the degree of
$\eta $ at large $|s|$ on $E$.

Meanwhile, if $z  \in  C_0$ is a point where $\eta $ vanishes, use
$\deg_{z}(\eta )$ to denote the local degree of $\eta $. Note that
$\deg_{z}(\eta ) > 0$ at each zero of $\eta $ as can be proved using the
fact that $D_C\eta =0$.

The following identity is now a consequence of the various definitions:
\begin{equation}\label{eq:2.13}
\sum_{E}  \delta_{E}(\eta )+\sum_{\{z: \eta (z) = 0\}}
\deg_{z}(\eta ) = \Deg(N)
\end{equation}
For reference later, note that right hand equality in~\eqref{eq:2.13} is valid even
if $C$ is not everywhere $J'$--pseudoholomorphic and $\eta $ is not a solution to
a particular differential equation. Indeed,~\eqref{eq:2.13} holds provided only that
$C$ is an immersed subvariety with the large $|s|$ asymptotics of
a $J'$--pseudoholomorphic subvariety, and that the section $\eta $ of $C$'s
pull-back normal bundle has no large $|s|$ zeros. Of course, in
this general context, there need not exist sign constraints on $\delta
_{(\cdot )}(\eta )$ and $\deg_{(\cdot )}(\eta )$.

\step{Step 3}
The next point to make is that there exists a subspace, $K_{0}$, of
codimension no greater than $N_{ + }$ in $\kernel(D_C)$ with the property
that $| \eta |    \to  0$ as $| s|  \to   \infty $
on each concave side end of $C_0$ where $\lim_{| s| \to \infty }
\theta    \notin  \{0, \pi \}$. Indeed, the upper bound on the
codimension of $K_{0}$ follows from the assertion that the requirement of a
non-zero limit on a concave side end of $C_0$ defines a codimension 1
condition on $\kernel(D_C)$. And, the latter claim follows from the form of
$D_C$ given by~\eqref{eq:2.9} since any 2--component column vector that is bounded,
has non-zero limit as $\rho    \to   \infty $ and is annihilated by~\eqref{eq:2.9}
must limit to zero or to the column vector with zero in the lower entry and
1 in the upper. In this regard, remember that the constant $\varsigma$ in~\eqref{eq:2.9}
is positive for concave side ends.

To summarize: The subspace $K_0$ has $\dim(K_0)   \ge  2(N_{ -
}+\hat {N} +c_{A} -1)$ with a strict inequality when $\cokernel(D_C)$ is
non-trivial.

\step{Step 4}
The subspace $K_0$ also has the following key property: If $\eta    \in
 K_0$ is non-trivial, then the degree $\delta_{(\cdot )}(\eta )
\ge 1$  on each concave side end. Indeed, this is another consequence of the
form for $D_C$ given in~\eqref{eq:2.9} because a 2--component column vector with
zero degree at all sufficiently large $\rho $ that is annihilated by~\eqref{eq:2.9}
will converge to a non-zero multiple of the column vector with zero in the
lower entry and 1 in the upper.

With this positivity noted, then~\eqref{eq:2.11} and~\eqref{eq:2.13} imply that
\begin{equation}\label{eq:2.14}
\sum_{\{z: \eta (z) = 0\}} \deg_{z}(\eta )   \le  N_{ - }+\hat
{N}+c_{\hat{A}}-2
\end{equation}
if $\eta $ is a non-trivial element of $K_0$. In this regard, note that~\eqref{eq:2.14}
is an equality if and only if $\eta $ has degree $\delta_{(\cdot
)}(\eta ) = 1$ on each concave side end of $C_0$ where $\lim_{|
s| \to \infty }  \theta    \notin  \{0, \pi\}$ and $\delta
_{(\cdot )}(\eta )=0$ on all other ends. (Remember that $\delta
_{(\cdot )}(\eta )   \ge 0$  on all ends.)

\step{Step 5}
Now take a set $\Omega    \subset  C_0$ of $N_{ - }+\hat {N} +
c_{\hat{A}} - 1$ distinct points. These points define a subset $K_{1}
\subset  K_{0}$ of codimension no greater than $2(N_{ - }+\hat
{N}+ c_{\hat{A}}-1)$ whose elements vanish at each point in $\Omega $. By
virtue of the dimension count in \refstep{Step 3}, above, this subspace $K_1$ is
non-trivial if $\cokernel(D_C)$ is non-trivial. However, by virtue
of~\eqref{eq:2.14}, and by virtue of the fact that $\deg_{(\cdot )}(\eta ) > 0$ at each
of its zeros, the subspace $K_0$ has no elements that vanish at more than
$N_{ - }+\hat {N}+c_{\hat{A}}-1$ points of $C_0$. Thus $K_1$ must be
trivial. This being the case, $\cokernel(D_C) = \{0\}$, and it follows
that $\mathcal{M}$ is smooth near $C_0$ of the asserted dimension.
\end{proof}

\subsection{The story on $\mathcal{M}$ near non-immersed subvarieties}\label{sec:2d}

This subsection explains why $\mathcal{M}$ is a smooth manifold on neighborhoods
of its non-immersed subvarieties. There are five parts to the story.

\step{Part 1}
To start, suppose that an admissible $J'$ has been
fixed and that $C$ is a subvariety in the $J'$ version of
$\mathcal{M}$. Let $\phi \co  C_0   \to \mathbb{R}  \times  (S^1 \times
S^2 )$ again denote the tautological map from $C$'s model curve onto
$C$. Since $\phi$ is $J'$--pseudoholomorphic, it fails as an
immersion only where its differential is zero. Use $\Xi \subset
C_{0}$ to denote the set of points where this occurs. By virtue of
\fullref{prop:2.2}, the set $\Xi $ is compact, and standard
arguments about the local structure of pseudoholomorphic
subvarieties (as can be found in McDuff \cite{M}, and McDuff and Salamon
\cite{MS}) prove that $\Xi $ is
a finite set. To be more explicit, results from \cite{M} (see also
Ye \cite{Ye}) can be used to prove that there is a holomorphic
coordinate u on disk in $C_0$ centered at a given point in $\Xi $
and complex coordinates $(x, y)$ on a ball in $\mathbb{R}  \times
(S^1   \times S^2 )$ centered at the image of the given point that
give $\phi$ the form
\begin{equation}\label{eq:2.15}
\phi (u) = (a u^{q + 1}, 0) + o(| u| ^{q + 2})
\end{equation}
where $a$ is a non-zero complex number and $q  \ge  1$ is an integer. As a
consequence, the pull-back by $\phi$ of the $(1,0)$ part of the complexified
tangent space to $\mathbb{R}  \times  (S^1   \times  S^2 )$ canonically
splits as $\phi^*T_{1,0}(\mathbb{R}  \times  (S^1   \times  S^2 ))
= W  \oplus  N$ where $W$ and $N$ are complex line bundles that are
characterized as follows: The differential of $\phi$ provides a complex
linear map from $T_{1,0}C$ into $W$ and $N$ restricts to $C_0 -\Xi $
as its pull-back normal bundle.

\step{Part 2}
This step constitutes a digression to elaborate on the assertion in
\fullref{prop:2.4} in the present case. To begin the digression, note that a
deformation of the map $\phi$ that moves image points only slightly can
always be written as the image via an exponential map of a section of
$\phi^*T_{1,0}(\mathbb{R}  \times  (S^1   \times  S^2 ))$. In this
regard, an exponential map restricts to the zero section as $\phi$ and it
embeds a ball in each of the fibers that is centered at the origin and has a
base-point independent radius. Note that an exponential map can be chosen to
embed a disk about the origin in each fiber of each of the $W$ and $N$
subbundles as $J'$--pseudoholomorphic submanifolds in $\mathbb{R}  \times
(S^1   \times  S^2 )$. These disks can be assumed to have base-point
independent radii.

A section of $\phi^*T_{1,0}(\mathbb{R}  \times  (S^1   \times
S^2 )$ defines $J'$--pseudoholomorphic map from $C_0$ into $\mathbb{R}
\times  (S^1   \times  S^2 )$ if and only if it obeys a certain
non-linear differential equation whose linearization along the zero section
has the form $\underline{d}\eta =0$ where $\hat{D}$ is a first
order, $\mathbb{R}$--linear operator with the symbol of the Cauchy--Riemann
operator $\bar {\partial }$. Here, $\hat{D}$ maps the space of sections
of $\phi^*T_{1,0}(R \times  (S^1   \times  S^2 ))$ to those of
$\phi^*T_{1,0}(\mathbb{R}  \times  (S^1   \times  S^2 )) \otimes
 T^{0,1} C_0$. The operator $\hat{D}$ is described in \cite[Part 2 of Section 3b]{T3}.
 Note in particular that $\hat{D}$
maps the sections of the $W$ summand of $\phi_* T_{1,0}(\mathbb{R}
\times (S^1   \times  S^2 ))$ to sections of $W  \otimes  T^{0,1}
C_0$. In particular, if $v$ is a section of $T_{0,1} C_0$, then
$\hat{D}(\phi_* v) = \phi_*(\bar {\partial }v)$. Meanwhile,
$\hat{D}$ followed by the orthogonal projection onto $N
\otimes  T^{0,1} C_0$ acts as the operator in~\eqref{eq:2.5} on
sections over $C_0 -\Xi $ of the $N$ summand of $\phi_*
T_{0,1}(\mathbb{R}  \times  (S^1   \times  S^2 ))$.

This operator $\hat{D}$ is Fredholm when mapping a certain
$L^2_1$ Hilbert space completion of a particular subspace of sections
of $\phi^*T_{0,1}(\mathbb{R}  \times  (S^1   \times  S^2 ))$ to a
corresponding $L^{2}$ Hilbert space completion of one of $\phi^*T_{1,0}(\mathbb{R}  \times  (S^1   \times  S^2 ))  \otimes
T^{0,1} C_0$. These completions are defined by norms on the $W$ and $N$
summands of these bundles that are straightforward analogs of those that are
depicted in~\eqref{eq:2.7}. In this regard, the norms on the $W$ summand force the
sections to have limit zero as $| s|    \to   \infty $, while
those on the $N$ summands are weighted exactly as depicted in~\eqref{eq:2.7}.

Now deformations of $\phi$ that preserve the $J'$--pseudoholomorphic condition
are not of primary interest. Rather, the interest is in deformations of
$\phi$ that are $J'$--pseu\-do\-ho\-lo\-mor\-phic for an appropriately deformed complex
structure on $C_0$. The description of the latter requires the
introduction of the vector space of first order deformations of the complex
structure on $C_0$. In particular, when there are three or more ends to
$C_0$, this last vector space has dimension $N_{ + }+N_{ - }+\hat
{N}-3$, and it is the quotient of a suitably constrained (at large $s$) space
of sections of $T_{1,0} C_{0}   \otimes  T^{0,1} C_0$ by the image of
$\bar {\partial }$. This the case, fix a vector space $V$ of smooth sections
of $T_{1,0} C_0   \otimes  T^{0,1} C_0$ that projects
isomorphically to said quotient.

All this understood, let $D_C$ now denote $(1 - \prod )\cdot \underline
{D}$ where $\prod $ denotes the orthogonal projection in $D$'s range space onto
$\phi_*V$. Here, $\phi_*V$ denotes the image of $V$
under the tautological map that is defined by the differential of $\phi$,
thus a vector subspace of the summand $W  \otimes  T^{0,1} C_0$ of
$\phi ^*T_{1,0}(\mathbb{R}  \times  (S^1   \times
S^2 )) \otimes  T^{0,1} C_0$. The operator $D_C$ is viewed here
as mapping the domain of $\hat{D}$ to the image of $(1 - \prod )$ in
the range Hilbert space for $\hat{D}$.

The operator $D_C$ as just described plays the role for the
non-immersed subvarieties that is played by its namesake in the
story in the immersed case following \fullref{prop:2.6} and
in the previous subsection. In particular, arguments from
\cite[Section 3]{T3} prove the following: A neighborhood of $C$ in
$\mathcal{M}$ is homeomorphic to $f^{-1}(0)$ where $f$ is a certain
smooth map from a ball in the kernel of $D_C$ to the cokernel of
$D_C$ that vanishes with its first derivatives at the origin.
Moreover, $C$ is a regular point of $\mathcal{M}$ when
$\cokernel(D_C) = \{0\}$ in which case $\mathcal{M}$ is a smooth
manifold near $C$ and~\eqref{eq:2.8} holds. Arguments from
\cite[Section 3]{T3} also prove that the index of $D_C$ is equal to
$\hat{I}$ from~\eqref{eq:2.2}.

The identification between $f^{-1}(0)$ and $\mathcal{M}$ uses the
exponential map from a uniform radius ball subbundle of $\phi^*T(\mathbb{R}
\times  (S^1   \times  S^2 )$ into $\mathbb{R}  \times  (S^1
\times  S^2 )$. The identification also involves a certain smooth map
from the domain of $f$ to the domain of $D_C$. The latter map, $F$, is
smooth in the $C^{\infty }$ topology, it maps 0 to 0 and it is the identity
to first order. The embedding of $f^{-1}(0)$ into $\mathcal{M}$
identifies any $\lambda    \in   f^{-1}(0)$ with the image in
$\mathbb{R}  \times  (S^1   \times  S^2 )$ of the composition of
exponential map with $F(\lambda )$.

Granted all of this, here is the promised analog of \fullref{prop:2.7}:

\begin{prop}\label{prop:2.9}
The space $\mathcal{M}$ is a smooth manifold of dimension $N_{ + }+(N_{ - }+\hat{N}+c_{\hat{A}}-2)$
on some neighborhood of any given non-immersed subvariety. In particular the operator
$D_C$ as just described has trivial cokernel at each such subvariety and each is a regular point of $\mathcal{M}$.
\end{prop}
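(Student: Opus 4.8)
The plan is to follow the proof of \fullref{prop:2.7} and reduce the statement to the vanishing of $\cokernel(D_C)$, where $D_C = (1-\Pi)\hat{D}$ is the operator described above. Once that vanishing is known, the constructions recalled in \fullref{sec:2d} identify a neighborhood of $C$ in $\mathcal{M}$ with a ball in $\kernel(D_C)$, exhibit $C$ as a regular point, and show $\mathcal{M}$ to be smooth near $C$ of dimension equal to the Fredholm index $\hat{I}$ of $D_C$. The only feature absent from the immersed case is the splitting $\phi^{*}T_{1,0}(\mathbb{R}\times(S^{1}\times S^{2})) = W \oplus N$. Since $\hat{D}$ carries $W$--sections into sections of $W\otimes T^{0,1}C_0$, since $\hat{D}$ followed by the orthogonal projection to $N\otimes T^{0,1}C_0$ acts on the $N$--summand as the operator \eqref{eq:2.5} --- now read on the holomorphic line bundle $N\to C_0$, extended over the finite set $\Xi$ --- and since $\Pi$ projects onto $\phi_{*}V\subset W\otimes T^{0,1}C_0$, the operator $D_C$ is block upper triangular relative to the $W$-- and $N$--decompositions of its domain and range: its $(2,1)$ block vanishes, its $(2,2)$ block is the \eqref{eq:2.5}--operator on $N$, and its $(1,1)$ block is $(1-\Pi)$ composed with the Cauchy--Riemann operator on $W$. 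It therefore suffices to prove that the two diagonal blocks are surjective.

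First I would treat the $(1,1)$ block. Conjugating by the bundle map $d\phi\co T_{1,0}C_0\to W$, which by \eqref{eq:2.15} vanishes to some order $q_z\ge 1$ at each $z\in\Xi$, turns this block into $(1-\Pi)$ applied to $\bar\partial$ on a line bundle isomorphic to $T_{1,0}C_0$ twisted by the effective divisor $\sum_{z\in\Xi}q_z z$, with the weighted norm on the $W$--summand forcing decay at the punctures. The subspace $V$, hence $\phi_{*}V$, was introduced precisely to represent the cokernel of this operator: $V$ maps isomorphically onto the quotient of the sections of $T_{1,0}C_0\otimes T^{0,1}C_0$ by $\bar\partial$ of the vector fields, which --- after accounting for the twist at $\Xi$ and the decay conditions at the $N_{+}+N_{-}+\hat{N}$ punctures --- is exactly the cokernel of $\bar\partial$ on $W$. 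Granted this identification, $(1-\Pi)\bar\partial$ on $W$ is surjective, and its kernel is the space of infinitesimal automorphisms of the punctured sphere $C_0$, which is trivial since $C_0$ has at least three ends. (Fewer than three ends occur only for cylinders, which are described in \cite[Section 4]{T3}.) Since the $(1,1)$ block is onto with trivial kernel and $D_C$ has index $\hat{I}$, the $(2,2)$ block has index $\hat{I}$ as well; moreover projection to the $N$--summand then identifies $\kernel(D_C)$ with $\kernel$ of the $(2,2)$ block.

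Then I would show that the $(2,2)$ block --- the \eqref{eq:2.5}--operator on $N$ --- has trivial cokernel by running Steps 1 through 5 of the proof of \fullref{prop:2.7} essentially verbatim, its index $\hat{I}$ being precisely the input those steps require. The three ingredients the argument consumes all survive in the present, possibly non-immersed, setting: $\Deg(N)$ is given by the non-immersed analog of \fullref{lem:2.8}, which is immediate because the pairings $\langle e,[C]\rangle_{*}$, $\langle c_1,[C]\rangle_{*}$ and the identities \eqref{eq:2.10} were set up in the preceding digression for an arbitrary $J'$--pseudoholomorphic subvariety; the asymptotic normal form \eqref{eq:2.9} holds on the ends because $\Xi$ is compact and hence disjoint from all sufficiently large $|s|$ slices; and the positivity $\deg_z(\eta) > 0$ at the zeros of a solution follows from the similarity principle for the $\bar\partial$--type operator \eqref{eq:2.5}, which applies to sections of the line bundle $N$ near points of $\Xi$ as well. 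Steps 3--5 then produce the same contradiction with a nontrivial cokernel as in the immersed case, so both diagonal blocks are onto, $\cokernel(D_C)=\{0\}$, and the proposition follows. I expect the main obstacle to be the Riemann--Roch and deformation-theory bookkeeping in the $(1,1)$--block step --- reconciling the abstractly defined space $V$ with the analytic cokernel of $\bar\partial$ on the twisted line bundle $W$ in the correct weighted Sobolev setting; everything else is a cosmetic rerun of the proof of \fullref{prop:2.7} and the constructions of \cite[Section 3]{T3}.
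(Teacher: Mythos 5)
Your reduction to proving that the two diagonal blocks of $D_C$ are surjective is the right framework and matches the paper's decomposition into $D^W$ and $D^N$. However, there is a substantive error in your treatment of the $(1,1)$ block, and it propagates into the $(2,2)$-block step.

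You claim the $(1,1)$ block $(1-\Pi)\bar\partial$ on $W$ has trivial kernel, identifying that kernel with the infinitesimal automorphisms of $C_0$. This is wrong whenever $\phi$ is non-immersed. Your own conjugation exhibits $W$ as $T_{1,0}C_0$ twisted by the effective divisor $\sum_{z\in\Xi}q_z z$, which has degree $\wp = \sum_z q_z \ge 1$ higher than $T_{1,0}C_0$; the kernel of $\bar\partial$ on this larger line bundle contains, beyond the (trivial) infinitesimal automorphisms, a $2\wp$-dimensional space of holomorphic sections of $W$ that vanish at $\Xi$ to orders strictly less than $q_z$ and are therefore not in the image of $\phi_*$. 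The paper's argument is precisely calibrated to this: $D^W$ has index $2\wp$; the cokernel is shown trivial by noting that any $2\wp + 1$ independent kernel elements would force a nonzero one of the form $\phi_* v$, whence $\bar\partial v \in V$, $\bar\partial v = 0$, and $v=0$ by decay; hence $\dim\kernel(D^W) = 2\wp$, not $0$. Consequently the $(2,2)$ block $D^N$ has index $\hat{I} - 2\wp$, not $\hat{I}$ as you assert, and $\kernel(D_C)$ is the direct sum $\kernel(D^W)\oplus\kernel(D^N)$ rather than just $\kernel(D^N)$.

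This matters for your second step, because running Steps 1--5 of the proof of \fullref{prop:2.7} on $D^N$ requires its index to be positive (indeed one needs $N_- + \hat{N} + c_{\hat{A}} - 1 - \wp \ge 1$ so that the set $\Omega$ is nonempty). With index $\hat{I} - 2\wp = N_+ + 2(N_- + \hat{N} + c_{\hat{A}} - 1 - \wp)$, this positivity is not free: it is supplied by \fullref{cor:2.11}, which gives $\wp \le N_- + \hat{N} + c_{\hat{A}} - 2$ via the Euler-class count of \fullref{prop:2.10} applied to the $\theta$-level sets. Your proposal never invokes this bound. Without it the argument does not close, since an a priori large $\wp$ would make the index of $D^N$ non-positive and render the degree/zero-count argument vacuous. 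So the two missing ingredients are: (i) $D^W$ has index $2\wp$ and trivial cokernel, not trivial kernel; and (ii) the estimate $\wp \le N_- + \hat{N} + c_{\hat{A}} - 2$ from \fullref{cor:2.11}, which is what makes the $D^N$ argument applicable.
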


\step{Part 3}
This part of the subsection contains the following proof.

\begin{proof}[Proof of \fullref{prop:2.9}]
It follows from the discussion in the
preceding part of the subsection that it is sufficient to prove that the
dimension of the kernel of $D_C$ is equal to its index.

To start this task, recall that the restriction of the operator $D_C$ to
the elements in its domain that are sections of $W$ maps this subspace to the
subspace of its domain whose elements are sections of $W  \otimes
T^{0,1} C_0$. Denote this restricted operator as $D^W$. Meanwhile, the
composition of $D_C$ and then pointwise orthogonal projection to $N
\otimes  T^{0,1} C_0$ restricts to the subspace of sections of $N$ in
$D_C$'s domain to give a differential operator that is denoted in what
follows as $D^N$. Since $D_C$ is Fredholm, so are $D^W$ and $D^N$.
Furthermore, the sum of their indices is the index of $D_C$. In this
regard, note that $D_C$ followed by pointwise orthogonal projection to the
summand $W  \otimes  T^{0,1} C_0$ defines a zeroth order operator from
$D^N$'s domain to the range space of $D^{W}$.

To give formulas for the indices of $D^W$ and $D^N$, associate to each $z
 \in   \Xi $ the integer $q  \equiv  q_{z}$ that appears in the relevant
version of~\eqref{eq:2.15}, then set $\wp    \equiv   \sum_{z \in
\Xi } q_{z}$. An analysis much like that used in
\cite[Section 3]{T3} for $D_C$'s namesake in~\eqref{eq:2.7} finds that $D^W$
has index $2\wp $ and $D^N$ has index $\hat{I} - 2\wp $. This is all
relevant to $D_C$ by virtue of the following observation: If the
cokernel of $D^W$ is trivial, then $\kernel(D_C)$ is isomorphic to
the direct sum of the kernels of $D^W$ and $D^N$. Thus, it is enough
to prove that both $D^W$ and $D^N$ have trivial cokernel.

Consider first the case of $D^W$. Were the kernel to have $2\wp +1$
linearly independent vectors, then by virtue of~\eqref{eq:2.15}, this kernel would
have a non-trivial vector from the image via $\phi_*$ of
$TC_0$. This would provide a vector field on $C_0$. Let $v$ denote the
latter. Then $\bar {\partial }v  \in  V$ and this implies that $\bar
{\partial }v$ must be zero since $V$ is defined so as to project
isomorphically onto the cokernel of $\bar {\partial }$. But then $v$ must be
zero because its $| s|    \to   \infty $ limit is zero on all
ends of $C_0$. Thus, $D^W$ has index $2\wp $ and trivial cokernel.

Consider next the case of $D^N$. To this end, note that the bundle $N$ has
an associated degree that is defined with respect to the large $|s| $ section
that was used to give~\eqref{eq:2.8} in the immersed case. As a
consequence of~\eqref{eq:2.15}, the degree of $N$ so defined is $N_{ + }+(N_{ -
}+\hat {N}+c_{\hat{A}}-2) - \wp $. Now, \fullref{cor:2.11} to come asserts
that $\wp    \le  N_{ - }+\hat {N}+c_{\hat{A}}-2$ in all cases. Grant
this bound. As $D^N$ has index $N_{ + }+2(N_{ - }+\hat
{N}+c_{\hat{A}}-1) - 2\wp $, its index is at least $N_{ + }+2$. Since this
index is positive, the argument given in the previous subsection can be
taken in an essentially verbatim fashion to prove that $D^N$ has trivial
cokernel and thus its kernel dimension is $N_{ + }+2(N_{ - }+\hat
{N}+c_{\hat{A}}-1) - 2\wp $.
\end{proof}

\subsection{The critical points of $\theta$}\label{sec:2e}

This subsection serves as a digression of sorts to describe various key
properties of the pull-back of $\theta$ to any given subvariety in $\mathcal{M}$.
The discussion here has four parts. In this regard, note that the last
part has the promised \fullref{cor:2.11}.

\step{Part 1}
Let $J'$ denote an admissible almost complex structure and let $C$ be a
$J'$--pseudoholomorphic subvariety. Assume that $C$ is not an
$\mathbb{R}$--invariant
cylinder, but there is no need to assume that $C$ is a multiply punctured
sphere. This part explains why the only local maxima and minima of $\theta
$'s pull-back to $C$'s model curve occur where $\theta =0$ or $\theta  =\pi$.

To see why such is the case, consider a point $z  \in  C_0$ where
$\theta    \in  (0, \pi )$ and $d\theta =0$. The point $\phi (z)$ sits
in some pseudoholomorphic disk $D'$ whose tangent space is everywhere spanned
by $\partial_{s}$ and the vector field $\hat {\alpha }$. Note that
$\theta$ is constant on $D'$. Since $C$ is not $\mathbb{R}$--invariant, the closure
of $D'$ intersects $C$ only at $\phi (z)$ if its radius is small, so assume that
such is the case. Then $D'$ has a well defined, intersection number with the
$\phi$--image of any sufficiently small radius disk in $C_0$ centered on
$z$, and this is positive because both $D'$ and the image of the disk in $C_0$
are pseudoholomorphic. Were $\theta (z)$ a local maximum or minimum of
$\theta$ on $C$, then a sufficiently small radius version of $D'$ could be
pushed in the respective $\partial_{\theta }$ or $-\partial_{\theta }$
directions so that the resulting isotopy has the following two properties:
First, it avoids the $\phi$--image of the boundary of any sufficiently small
radius disk in $C_0$ centered on $z$. Second, it results in a disk that is
entirely disjoint from the $\phi$--image of the whole any sufficiently small
radius disk centered at $z$. Such an isotopy is precluded by the positive
intersection number between $D'$ and the $\phi$--image of the disks in
$C_0$ centered on $z$.

\step{Part 2}
Continuing the story from \refstep{Part 1}, define the degree of vanishing of
$d\theta$ at $z$ to be one less than the intersection number between $D'$ and
the $\phi$--image of any sufficiently small radius disk in $C_0$ centered
at $z$. Denote this number by $\deg(d\theta |_{z})$. What follows is
an equivalent definition of this number.

To start, note that by virtue of the fact that $\theta$ is constant on
the integral curves of the vector field $\hat {\alpha }$ but not so on
$C$, a neighborhood in $\mathbb{R} \times (S^1 \times S^2 )$ of $\phi
(z)$ has complex coordinates, $(x, y)$, with the following four
properties: First, $(0, 0)$ is the point $z$. Second, $dx$ and $dy
\spann T^{1,0}(\mathbb{R} \times (S^1 \times S^2 ))$ at $\phi
(z)$. Third, the $y = 0$ disk is $J'$--pseudoholomorphic. Finally, the
constant $x$ disks centered where $y = 0$ are $J'$--pseudoholomorphic
disks whose tangent planes are everywhere spanned by the vector fields
$\partial_{s}$ and $\hat {\alpha }$. Thus, $\theta$ is constant on the
$x = \constant$ disks and so a function of $x$ only. This understood,
then $\theta$ can be written as $\theta = \Ree(\sigma x) + o(| x| ^2)$
with $\sigma$ a non-zero constant

Meanwhile, by virtue of the fact that $\phi$ is $J'$--pseudoholomorphic, there
is a complex coordinate, $u$, for a small radius disk in $C_0$ centered at $z$
such that $\phi$ pulls the coordinate $x$ back as $\phi^*x = a u^{p
+ 1} + o(| u| ^{p + 2})$ with $a  \in   \mathbb{C}$ and with $p$
an integer of size at least 1. The integer $p$ is the degree of vanishing of
$d\theta$ at $z$ because $d\theta$ pulls back near $z$ as
\begin{equation}\label{eq:2.16}
d\theta  =p \Ree(\sigma  a u^{p}du) + o(| u| ^{p + 1}).
\end{equation}

\step{Part 3}
Note that $C_0$ has at most a finite number of critical points in any
given compact subset. This is a consequence of~\eqref{eq:2.16} and the fact that
$\theta$'s extremal critical points occur where $C$ intersects the $\theta
= 0$ or $\theta =\pi$ cylinders.

The next point is that $d\theta$ is non-vanishing at all
sufficiently large values of $|s|$ on $C_0$. To see that such is the
case, note first that if $E$ is an end of $C$ where the $| s|    \to
\infty $ limit of $\theta$ is either 0 or $\pi$,
then~\eqref{eq:1.12} implies that there are no large $|s|$ critical
points of $\theta$ on $E$. The analysis used in
\cite[Sections 2 and 3]{T3} also serves to prove this in the case that the $| s| \to
\infty $ limit of $\theta$ on $E$ is in $(0, \pi )$. To be more
precise with regard to the latter case, these techniques in
\cite{T3} find coordinates $(\rho , \tau )$ for $E$ such that $\rho
$ is a positive multiple of $s$, $\tau    \in \mathbb{R}/(2\pi
\mathbb{Z})$, and $d\rho    \wedge d\tau $ is positive. Moreover,
when written as a function of $\rho $ and $\tau $, the function
$\theta$ has the form
\begin{equation}\label{eq:2.17}
\theta (\rho , \tau )=\theta_{E} + e^{-r\rho } \big(b
\cos(n(\tau +\sigma )) + \hat{o}\big),
\end{equation}
where the notation is as follows: First, $\theta_{E}$ is the $|
s|  \to   \infty $ limit of $\theta$ on $E$. Second, $r > 0$ when $E$ is
on the concave side and $r < 0$ when $E$ is on the convex side of $C$. Third, $b$ is
a non-zero real number, $n$ is a non-negative integer, but strictly positive
if $E$ is on the concave side of $C_0$, and $\sigma    \in   \mathbb{R}/(2\pi \mathbb{Z})$.
Finally, $\hat{o}$ and its first derivatives limit to
zero as $| \rho |    \to   \infty $.

In what follows, the integer $n$ that appears in~\eqref{eq:2.17} is denoted as
$\deg_{E}(d\theta )$. In case that $E$ is an end of $C$ where $\lim_{|
s| \to \infty }  \theta$ is 0 or $\pi$, define $\deg_{E}(d\theta )$
to be zero.

\step{Part 4}
This part starts with the following key proposition:

\begin{prop}\label{prop:2.10}
Let $C$  be a $J'$--pseudoholomorphic subvariety that is not
$\mathbb{R}$--in\-var\-i\-ant and introduce $k_C$
to denote the number of points in $C_0$ where $\theta$ is either zero or $\pi$. Then
\begin{equation}\label{eq:2.18}
\sum_{E} \deg_{E}(d\theta )_+\sum_{z} \deg(d\theta |
_{z}) = N_{ + }+N_{ - }+\hat {N}+k_{C}+2\varsigma -2
\end{equation}
where the first sum on the left hand side is indexed by the ends of $C$, and the second sum
on the left hand side is indexed by the set of non-extremal critical points of $\theta$ on $C_0$.
\end{prop}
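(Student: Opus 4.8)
The plan is to prove \fullref{prop:2.10} by interpreting both sides as degrees of a section of a line bundle over $C_0$, then evaluating that degree in two ways. The function $\theta$ pulls back to $C_0$ as a map whose differential $d\theta$ is a section of $T^{1,0}C_0$ (after using the complex structure to regard the real 1--form $d\theta$ as a $(1,0)$--form modulo lower order, as in \eqref{eq:2.16}); more precisely, the leading part of $d\theta$ defines a holomorphic-type section of $T^{*(1,0)}C_0$ whose zeros are exactly the critical points of $\theta$. Counting zeros of such a section relates $\sum_z\deg(d\theta|_z)$ plus the contribution from the critical points where $\theta\in\{0,\pi\}$ plus the end contributions $\sum_E\deg_E(d\theta)$ to the degree of $T^{*(1,0)}C_0$ relative to a fixed trivialization at the ends. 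The point is that $2\varsigma-2$ is (minus) the Euler characteristic of the closed genus $\varsigma$ surface obtained from $C_0$ by filling in the punctures, i.e. $\deg(T^{*(1,0)}\overline{C_0})=2\varsigma-2+(\text{number of punctures})$, and the number of punctures is $N_++N_-+\hat N$.

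First I would set up the line bundle bookkeeping: let $\overline{C_0}$ be the compact Riemann surface obtained by filling in the $N_++N_-+\hat N$ punctures, so $\deg(T^{*(1,0)}\overline{C_0})=2\varsigma-2+(N_++N_-+\hat N)$. Next I would show that the "leading term" of $d\theta$, which by \eqref{eq:2.16} near a non-extremal critical point $z$ vanishes to order exactly $\deg(d\theta|_z)$ and by \eqref{eq:2.17} near an end $E$ vanishes to order $\deg_E(d\theta)$ measured against the standard trivialization coming from the $(\rho,\tau)$ coordinates, extends across the $\theta\in\{0,\pi\}$ points with a zero of order exactly $1$ at each such point. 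This last fact is the crucial local computation: near a point $z$ where $\theta=0$ (or $\pi$), one has $\theta=|h|$ up to the factors in \eqref{eq:1.4}, and using the parametrization \eqref{eq:1.12} together with $a,b$ from \eqref{eq:1.11} one checks that $d\theta$ (equivalently $d|h|$) pulls back with a simple zero in the local holomorphic coordinate. Granting these local orders, the global count of zeros of this section of $T^{*(1,0)}\overline{C_0}$ (with the standard trivializations imposed at the punctures, which is why the $\deg_E(d\theta)$ appear rather than being absorbed) gives
\begin{equation*}
\sum_E \deg_E(d\theta) + \sum_z \deg(d\theta|_z) + k_C = 2\varsigma-2+(N_++N_-+\hat N),
\end{equation*}
which is exactly \eqref{eq:2.18}.

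The main obstacle I expect is the local analysis at the $\theta\in\{0,\pi\}$ points, i.e.\ verifying that $d\theta$ has a \emph{simple} zero there in the relevant sense and that the trivializations of $T^{*(1,0)}C_0$ near these points and near the ends are compatible with the ones implicit in \eqref{eq:2.16} and \eqref{eq:2.17}. Since $\theta$ is not a holomorphic function—its differential is only a $(1,0)$ form to leading order, with an $o(|u|^{p+1})$ error in \eqref{eq:2.16}—one must argue that the counting of zeros is unaffected by these errors, which is the standard argument that a section agreeing to sufficiently high order with a holomorphic section has the same zero count, valid because all the relevant vanishing orders are finite (this finiteness is \refstep{Part 3} above). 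A secondary point requiring care is that the "degree at an end" conventions in \eqref{eq:2.17}—where $n$ is allowed to be $0$ on convex side ends but is forced positive on concave side ends—must be matched with the orientation conventions for $d\rho\wedge d\tau$, so that the end contributions enter \eqref{eq:2.18} with the correct sign; once the orientations are pinned down this is routine. Finally, one should note that the hypothesis that $C$ is not $\mathbb{R}$--invariant is used precisely to guarantee (via \refstep{Part 1}) that $\theta$ is genuinely non-constant, so that $d\theta$ is not identically zero and the degree count makes sense.
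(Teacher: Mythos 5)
Your overall strategy — reading \eqref{eq:2.18} as the degree count for a section of $T^{*(1,0)}C_0$ built from $\partial\theta$, with the non-extremal critical points, the $\theta\in\{0,\pi\}$ points, and the ends supplying the local contributions — is exactly what the paper's one-line ``standard Euler class calculation'' means. But the execution contains a sign error at the $\theta\in\{0,\pi\}$ points, and as a result your displayed equation does \emph{not} agree with \eqref{eq:2.18} even though you assert that it does.

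Here is the issue. Near a point $z\in C_0$ with $\theta(z)=0$, the function $\cos\theta$ (which, unlike $\theta$ itself, is smooth) has an isolated local \emph{maximum}. If $a-ib$ vanishes to order $k$ in the local holomorphic coordinate $u$ (so $k$ is the local contribution of $z$ to $c_{\hat A}$), then $h\sim\theta^2$ gives $\theta\sim|u|^k$ and $\cos\theta\sim 1-c|u|^{2k}$ with $c>0$, whence $\partial(\cos\theta)\sim -ck\,|u|^{2k-2}\bar u\,du$. The winding of $\bar u$ is $-1$, so the degree of the zero of $\partial(\cos\theta)$ — equivalently of $\partial\theta$, since dividing by the real, positive factor $\sin\theta$ does not change the winding — at such a point is $-1$, independent of $k$. (Check: for a Morse function on $S^2$ with one max and one min and no saddles, $\partial f$ has degree $-1$ at each extremum, total $-2=\deg K_{S^2}$.) By the identical computation the degree is $-1$ at each $\theta=\pi$ point as well. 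Your proposal instead asserts ``a zero of order exactly $1$'' at these points, hence a contribution of $+k_C$ rather than $-k_C$ in the degree count; your final display,
$$\sum_E\deg_E(d\theta)+\sum_z\deg(d\theta|_z)+k_C=2\varsigma-2+(N_++N_-+\hat N),$$
is therefore off by $2k_C$ from \eqref{eq:2.18} and is not ``exactly'' \eqref{eq:2.18}. With the correct $-k_C$ contribution, and keeping your (correct) convention that the trivialization at each end is by $d\rho+i\,d\tau$ — which accounts for the extra $N_++N_-+\hat N$ on the right beyond the honest $\deg(T^{*(1,0)}\overline{C_0})=2\varsigma-2$ — the bookkeeping produces \eqref{eq:2.18} on the nose.

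Two smaller inaccuracies: you write ``$\theta=|h|$ up to the factors in \eqref{eq:1.4}''; from \eqref{eq:1.4}, $h\sim\sqrt6\,e^{-\sqrt6 s}\theta^2$ near $\theta=0$, so it is $\theta^2\sim|h|$, i.e.\ $\theta\sim|h|^{1/2}$. And the literal assertion ``$\deg(T^{*(1,0)}\overline{C_0})=2\varsigma-2+(\text{number of punctures})$'' is false — that is the degree of the log-canonical bundle $K(\sum p_i)$, or equivalently the degree of $T^{*(1,0)}C_0$ relative to the end trivializations by $d\rho+i\,d\tau$; the canonical bundle of the closed genus-$\varsigma$ surface itself has degree $2\varsigma-2$. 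This conflation is harmless in the arithmetic so long as the end contributions are correspondingly measured against the $dw$-trivialization, which is what you in fact do, but it should be stated cleanly. The hypothesis that $C$ is not $\mathbb R$--invariant is used, as you say, to ensure $d\theta\not\equiv 0$, but it also underlies the finiteness of the zero set via Parts 1--3 of the surrounding subsection.
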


\begin{proof}[Proof of \fullref{prop:2.10}]

This is a standard Euler class
calculation given that all of the extremal points of $\theta$'s pull-back
to $C_0$ occur where $\theta =0$ or $\theta =\pi$. The $k_C$
summand on the right hand side of~\eqref{eq:2.18} accounts for the singular behavior
of $d\theta$ at the points in $C_0$ where $\theta$ is either 0 or $\pi$.

This proposition has three immediate consequences. These are stated in the
upcoming corollary. This corollary refers to the integer $\wp $ that is
defined from the singular points of $\phi_*$ as in the proof of
\fullref{prop:2.9}. In particular, $\wp $ can be defined for any
$J'$--pseudoholomorphic subvariety that is not $\mathbb{R}$--invariant. To be
precise, each zero of $\phi_*$ on such a subvariety has a version
of~\eqref{eq:2.15} with an attending integer $q$. Then $\wp $ is obtained by adding the
resulting set of integers. The corollary that follows also refers to the
integer $\wp_*$ that is obtained by restricting the sum for $\wp
$ to the integers that are associated to the zeros of $\phi_*$ that
lie where $\theta =0$ or $\theta =\pi$.
\end{proof}

\begin{corollary}\label{cor:2.11}
Suppose that $C$  is a $J'$--pseudoholomorphic subvariety that is not $\mathbb{R}$--invariant. Then
\begin{itemize}
\item
The number of non-extremal critical points of $\theta$'s pull-back to the model curve $C_0$
is no greater than $N_{ - }+\hat {N}+k_C+2\varsigma -2$.

\item
$\wp_* \le c_{\hat{A}}-k_C$.

\item
$\wp \le N_{ - }+\hat {N}+c_{\hat{A}}+2\varsigma -2$.
\end{itemize}
\end{corollary}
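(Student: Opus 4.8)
The plan is to read off all three bounds from the Euler-class identity \eqref{eq:2.18} of \fullref{prop:2.10} after separately controlling the three types of contributions on its left-hand side: the ends of $C$, the points where $\theta$ is extremal, and the non-extremal critical points of $\theta$. I begin with the ends. Among the ends of $C$ there are exactly $N_+$ whose $|s|\to\infty$ limit of $\theta$ lies in $(0,\pi)$ and which sit on the concave side of $C_0$, and by the normal form \eqref{eq:2.17} each of these has $\deg_E(d\theta)\ge 1$; every other end has $\deg_E(d\theta)\ge 0$. Hence $\sum_E\deg_E(d\theta)\ge N_+$, so \eqref{eq:2.18} gives
\[
\sum_z\deg(d\theta|_z)\ \le\ N_-+\hat N+k_C+2\varsigma-2 ,
\]
with the sum over the non-extremal critical points of $\theta$ on $C_0$. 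Since each such point contributes $\deg(d\theta|_z)\ge 1$ by \eqref{eq:2.16}, their number is at most the right-hand side; this is the first bullet.

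For the second bullet I would compare, at each point of $C_0$ where $\theta\in\{0,\pi\}$, the vanishing order of $\phi_*$ with the local intersection number of $C$ with the $\theta=0$ or $\theta=\pi$ cylinder. Fix $z$ with $\theta(z)=0$ (the $\theta(z)=\pi$ case being identical via the involution recalled in \fullref{sec:1c}), let $q_z\ge 0$ be the vanishing order of $\phi_*$ at $z$ (so $q_z=0$ when $\phi$ is immersed there), and let $m_z\ge 1$ be the local intersection number of $C$ with the $\theta=0$ cylinder at $z$. That cylinder is $J'$--pseudoholomorphic, so near $\phi(z)$ there are holomorphic coordinates in which it is a coordinate slice $\{w=0\}$, whence $m_z$ equals the vanishing order at $z$ of the holomorphic function $\phi^*w$ on $C_0$. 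By \eqref{eq:2.15}, $\phi$ minus its value at $z$ vanishes to order exactly $q_z+1$, so $\phi^*w$ vanishes to order at least $q_z+1$ and therefore $m_z\ge q_z+1$. Summing over the $k_C$ points with $\theta\in\{0,\pi\}$ and using that $c_{\hat A}$ is the sum of these local intersection numbers (as recorded just after \eqref{eq:2.1}) gives $c_{\hat A}\ge k_C+\wp_*$, which is the second bullet.

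For the third bullet I would split $\wp=\wp_*+\sum_z q_z$, the last sum over the zeros of $\phi_*$ with $\theta\in(0,\pi)$. At such a $z$ one has $d(\phi^*\theta)=(d\theta)\circ d\phi=0$, so $z$ is a non-extremal critical point of $\theta$; moreover, writing $\theta=\Ree(\sigma x)+o(|x|^2)$ and $\phi^*x=a\,u^{\deg(d\theta|_z)+1}+o(|u|^{\deg(d\theta|_z)+2})$ with $a\ne 0$ as in the local analysis of \fullref{sec:2e}, the vanishing of $\phi$ minus its value at $z$ to order at least $q_z+1$ forces $q_z\le\deg(d\theta|_z)$. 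Hence $\sum_z q_z$ over the zeros of $\phi_*$ with $\theta\in(0,\pi)$ is bounded above by $\sum_z\deg(d\theta|_z)$ over all non-extremal critical points of $\theta$, which the first step bounds by $N_-+\hat N+k_C+2\varsigma-2$; adding $\wp_*\le c_{\hat A}-k_C$ from the second bullet yields $\wp\le N_-+\hat N+c_{\hat A}+2\varsigma-2$. The arithmetic throughout is routine once the two order comparisons $m_z\ge q_z+1$ and $q_z\le\deg(d\theta|_z)$ are established; the step needing the most care is the former, namely the identification of the local intersection number with the $\theta\in\{0,\pi\}$ cylinder with the vanishing order of the pulled-back transverse coordinate even at a branch point of $\phi$, which is where positivity of local intersection numbers between $J'$--pseudoholomorphic subvarieties and the normal form \eqref{eq:2.15} are both used.
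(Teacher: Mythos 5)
Your proof is correct and follows essentially the same route as the paper: you read all three bounds off the Euler-class identity \eqref{eq:2.18}, deduce the first bullet from $\deg_E(d\theta)\ge 1$ on the $(0,+,\ldots)$ ends, deduce the second from the observation that a $\theta\in\{0,\pi\}$ branch point of order $q_z$ contributes at least $q_z+1$ to $c_{\hat A}$, and deduce the third by bounding $\wp-\wp_*$ by the critical-point sum in \eqref{eq:2.18}. The only difference is that you spell out the two order comparisons ($m_z\ge q_z+1$ and $q_z\le\deg(d\theta|_z)$) that the paper states in one line, which is fine and, if anything, a useful clarification.
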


\begin{proof}[Proof of \fullref{cor:2.11}]
The asserted bound on the critical points
follows directly from~\eqref{eq:2.18} by virtue of the fact that $\deg_{E}(d\theta
)$ is in all cases non-negative and at least 1 on the $(0,+,\ldots )$ elements in $\hat{A}$.

As for the bounds on $\wp_*$ and $\wp $, remark first that
$d(\cos\theta )$ pulls back as zero at the singular points of $\phi_*$.
Now, to argue for the bound on $\wp_*$, focus attention on
a point $z  \in C_0$ where $\phi_*$ is zero and $\theta$ is
either zero or $\pi$. Let $q_z$ denote the integer that appears in the
corresponding version of~\eqref{eq:2.15}. Then it follows from~\eqref{eq:2.15} that $z$
contributes a factor of at least $q_{z}+1$ to the count for $c_A$. This
observation implies the bound for $\wp_*$.

Granted the $\wp_*$ bound, then the asserted bound on $\wp $ then
follows from the fact that $\wp -\wp_*$ is no greater
than the second sum on the left hand side of~\eqref{eq:2.18}.
\end{proof}

\subsection{Local parametrizations for points in $\mathcal{M}$}\label{sec:2f}

A close reading of the proofs of Propositions~\ref{prop:2.7} and~\ref{prop:2.9} indicate that
certain natural functions on the subvarieties in $\mathcal{M}_{\hat{A}}$ can
serve as local coordinates. For example, the proof suggest that such is the
case for the $\mathbb{R}/(2\pi   \mathbb{Z})$ parameters that characterize
concave side ends where $\lim_{| s| \to \infty }  \theta    \notin
 \{0, \pi \}$. The purpose of this subsection is to prove that such is
the case, and also to provide an expanded list of local coordinates.

To begin, partition $\hat{A}$ into the disjoint subsets whereby any two elements
in the same subset are identical and any two elements from distinct subsets
are distinct. Let $\Lambda $ denote this list of subsets. Label the elements
in each subset in the partition $\Lambda $ by consecutive integers starting
at 1.

Now, let $\mathcal{M}^{\Lambda }$ denote the set of elements of the form $(C, L
 \equiv \{L_{\lambda} \}_{\lambda \in \Lambda})$ where $C \in
\mathcal{M}_{\hat{A}}$ and where any given $L_{\lambda }$ is a 1--1
correspondence between the subset $\lambda $ and the set of ends of $C$ that
contribute elements to $\lambda $. This space $\mathcal{M}^{\Lambda }$ has a
natural topology whereby the evident projection to $\mathcal{M}_{\hat{A}}$ is a
covering map. It is fair to view $\mathcal{M}^{\Lambda }$ as a moduli
space of subvarieties with labeled ends.

The point of this is that a pair consisting of a subset $\lambda $ from the
partition $\Lambda $ and an element $r  \in   \lambda $, define a function,
\begin{equation}\label{eq:2.19}
\varpi_{\lambda ,r}\co  \mathcal{M}^{\Lambda }   \to   \mathbb{C}^{*}
\end{equation}
that maps a given $(C, L)$ to $\exp(c_{E(r)} + i \iota_{E(r)})$.
Here, $E(r)  \subset C$ is the end that $L_{\lambda}$  assigns to $r$, while $c_{(\cdot
)}$ and $\iota_{(\cdot )}$ are the continuous parameters that are
associated to the given end. In this regard, note that when $\lambda $
consists of elements of the form $(0,+,\ldots )$, then any $r
 \in   \lambda $ version of $\varpi_{\lambda ,r}$ maps to the unit
circle in $\mathbb{C}$. Let
\begin{equation}\label{eq:2.20}
\varpi_{ + }\co \mathcal{M}^{\Lambda } \to  \times
_{N_ + } S^1
\end{equation}
denote the product of the $N_{ + }$ versions of $\varpi_{\lambda ,r}$
where $\lambda   \subset \hat{A}$ is a such a $(0,+,\ldots )$
element.

Now consider the following:

\begin{prop}\label{prop:2.12}
Fix integers $b  \in \{ 0,\ldots , N_{ - }\}$ and $c\le
N_{ - }+\hat {N}+c_{\hat{A}}-b-2$, then fix a size $b$ subset of $(0,-,\ldots )$
elements from $\hat{A}$ and a  $c$--element subset in $(0, \pi )$. Use $B$ for the former and
$\theta$ for the latter, and use $\mathcal{M}^{B}[\Theta ] \subset   \mathcal{M}^{\Lambda }$
to denote the subset of subvarieties with the following three properties: First,
$\theta$'s pull-back to the associated model curve has precisely $c$ non-extremal critical points.
Second, $\theta$ is the corresponding $c$--element set of critical values. Finally, the versions of
$c_{(,)}$ from~\eqref{eq:1.8} vanish for the ends from $B$ but for no other convex side ends.
If non-empty, this set $\mathcal{M}^{B}[\Theta ]$ is a smooth submanifold of $\mathcal{M}^{\Lambda }$
of dimension $N_{ + }+b+c+2$. Furthermore, choose any map $\varpi_{\lambda ,r}$ for which $r$ is a $(\pm 1, \ldots)$
from $\hat{A}$ or a $(0, -, \ldots )$ element from $\hat{A}-B$. Then, the map $\varpi_{ + }  \times \,(\times
_{(\lambda ', r'):r' \in B}  \varpi_{\lambda ',r'})\times   \varpi_{\lambda ,r}$ restricts over
$\mathcal{M}[\Theta ]$ as a smooth submersion to $(\times_{N_ + }
S^1 )  \times  (\times_{b} S^1 )  \times  \mathbb{C}$*.
\end{prop}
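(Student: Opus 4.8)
The plan is to reduce both assertions to the surjectivity of a single $\mathbb{R}$--linear map on $\kernel(D_{C})$ at a given point $C\in\mathcal{M}^{B}[\Theta]$. Since the projection $\mathcal{M}^{\Lambda}\to\mathcal{M}_{\hat{A}}$ is a covering, \eqref{eq:2.8} together with \fullref{prop:2.7} (and \fullref{prop:2.9} when $C$ is not immersed) identifies $T_{C}\mathcal{M}^{\Lambda}$ with $\kernel(D_{C})$, a space of real dimension $\hat{I}=N_{+}+2(N_{-}+\hat{N}+c_{\hat{A}}-1)$ on which $D_{C}$ has trivial cokernel. On a small neighborhood $\mathcal{U}$ of $C$ in $\mathcal{M}^{\Lambda}$ the implicit function theorem on the model curve, applied via the local form \eqref{eq:2.16} of $d\theta$, shows that the $c$ non--extremal critical points of $\theta$ persist, stay non--degenerate, and keep distinct critical values; these critical values, the angular parameters $\iota_{E}$ on the $N_{+}$ concave $(0,+,\ldots)$ ends, and the parameters $c_{E},\iota_{E}$ on the convex $(0,-,\ldots)$ and $(\pm1,\ldots)$ ends all vary smoothly on $\mathcal{U}$. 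After shrinking $\mathcal{U}$ so that $\theta$ acquires no further critical points and no $c_{E}$ off $B$ vanishes, $\mathcal{M}^{B}[\Theta]\cap\mathcal{U}$ is the common zero locus of the $c$ functions ``critical value minus its assigned value in $\Theta$'' and the $b$ functions $\{c_{E}\}_{E\in B}$. One must further observe, via the Euler--class identity \eqref{eq:2.18}, that on this common zero locus $k_{C}$ is forced down to $c_{\hat{A}}-(c_{\max}-c)$ with $c_{\max}\equiv N_{-}+\hat{N}+c_{\hat{A}}-b-2$; equivalently, given the other conditions, ``precisely $c$ critical points'' amounts to the persistence of $c_{\max}-c$ collisions among the $c_{\hat{A}}$ intersection points of the subvariety with the $\theta\in\{0,\pi\}$ cylinders, which is $2(c_{\max}-c)$ further independent conditions. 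This is why the expected codimension is $b+c+2(c_{\max}-c)=\hat{I}-(N_{+}+b+c+2)$.

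Next I would compute the differentials of all these functions along $\eta\in\kernel(D_{C})$. Differentiating a critical value and using $D_{C}\eta=0$ shows its derivative is the value of $\eta$ (the $N$--component of $\eta$ in the non--immersed case) at the corresponding critical point, measured against the phase $\sigma$ of \eqref{eq:2.16}. The derivative of $c_{E}$ on a convex $(0,-,\ldots)$ end is, up to a nonzero constant, the coefficient of the leading decaying Fourier mode of $\eta$ on $E$ --- the $\tau$--independent solution of the normal form \eqref{eq:2.9} in the direction of the column vector with $1$ below and $0$ above, with profile $e^{\varsigma\rho}$, whose decay rate realizes the exponent of \eqref{eq:1.8} (compare the discussion after \eqref{eq:1.8} and the form \eqref{eq:2.17}); on a $(\pm1,\ldots)$ end one reads the analogous coefficient off \eqref{eq:1.12}. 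Likewise the derivatives of $\iota_{E}$ on the concave ends and on the $B$--ends, and of $\ln\hat{c}_{E}=c_{E}+i\iota_{E}$ on the single chosen end, are the constant ($1$ on top, $0$ below) and leading coefficients of $\eta$ on those ends, so the differential of $\varpi_{+}\times(\times_{r'\in B}\varpi_{\lambda',r'})\times\varpi_{\lambda,r}$ is of this evaluation type. I would record, in the same evaluation form, the differentials of the $2(c_{\max}-c)$ collision conditions at the $\theta\in\{0,\pi\}$ points. Assembling everything gives one $\mathbb{R}$--linear map $\Phi$ from $\kernel(D_{C})$ to a Euclidean space; once $\Phi$ is surjective, transversality of its ``defining'' block gives the submanifold $\mathcal{M}^{B}[\Theta]$ of the stated dimension with tangent space the kernel of that block, and surjectivity of the remaining block restricted to that kernel is precisely the submersion onto $(\times_{N_{+}}S^{1})\times(\times_{b}S^{1})\times\mathbb{C}^{*}$.

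The surjectivity of $\Phi$ is the heart of the matter, and I would prove it by the zero--counting argument used for \fullref{prop:2.7}. An $\eta\in\kernel(D_{C})$ killed by every component of $\Phi$ is forced, by the normal forms above, to vanish at the $c$ critical points, to have vanishing leading $\theta$--mode on each $B$--end (hence $\delta_{E}(\eta)\ge1$ there by \eqref{eq:2.9}), to have vanishing constant mode on each concave $(0,+,\ldots)$ end so that $|\eta|\to0$ and hence $\delta_{E}(\eta)\ge1$ there (the argument being that given for \fullref{prop:2.7}), and to vanish in the extra marked data. Then \eqref{eq:2.13} with $\Deg(N)=N_{+}+N_{-}+\hat{N}+c_{\hat{A}}-2$ from \eqref{eq:2.11}, together with $\delta_{E}(\eta)\ge0$ on all ends and $\deg_{z}(\eta)>0$ at every zero, bounds the weighted zero count of $\eta$ below the total demanded by the constraints --- the arithmetic closing exactly because $c\le c_{\max}$ --- so $\eta\equiv0$. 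For non--immersed $C$ this runs through the splitting $D_{C}=D^{W}\oplus D^{N}$ from the proof of \fullref{prop:2.9}: $D^{W}$ contributes only the deformations of the conformal structure of $C_{0}$ and of $\phi$ and has trivial cokernel, while the zero--count is repeated verbatim for $D^{N}$ using $\Deg(N)=N_{+}+N_{-}+\hat{N}+c_{\hat{A}}-2-\wp$ and the bound $\wp\le N_{-}+\hat{N}+c_{\hat{A}}-2$ from \fullref{cor:2.11}.

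The step I expect to be the main obstacle is exactly this last zero count carried out with all the constraints at once: one has to track precisely how each imposed vanishing raises the relevant $\delta_{E}(\eta)$ or $\deg_{z}(\eta)$, and how the $2(c_{\max}-c)$ collision conditions interact with $k_{C}$ in \eqref{eq:2.18}, so that the resulting inequality is sharp and collapses only under $c\le c_{\max}$. Once this count is in hand, the smoothness of the data functions on $\mathcal{U}$, the identification of their differentials, and the passage from surjectivity of $\Phi$ to the submanifold--and--submersion conclusion are all routine.
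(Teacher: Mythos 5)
Your proposal follows the paper's own strategy: reduce both assertions to linear algebra on $\kernel(D_{C})$, use the Euler--class identity \eqref{eq:2.18} to get the codimension, and prove transversality via a weighted zero count in the spirit of \fullref{prop:2.7}. Your codimension count phrased as ``$b+c$ plus $2(c_{\max}-c)$ collision conditions'' does give the same total as the paper's \eqref{eq:2.22}, although your intermediate assertion that membership in $\mathcal{M}^{B}[\Theta]$ forces $k_{C}=c_{\hat{A}}-(c_{\max}-c)$ only holds when all the $\deg_{E}(d\theta)$ and $\deg(d\theta|_{z})$ take their minimal values; the paper's tally is organized so as to remain correct for arbitrary degree configurations allowed by the definition of $\mathcal{M}^{B}[\Theta]$, which only fixes the number of critical points and the critical value set, not the individual degrees.

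The place where your sketch is too quick is precisely the step you flag as the main obstacle. For non-immersed $C$ the constraints that cut out $K^{*}$ are imposed on the full section $\lambda\in\kernel(D_{C})$ with values in $\phi^{*}T_{1,0}(\mathbb{R}\times(S^{1}\times S^{2}))$, not on its projection $\lambda^{N}$, and they are phrased as vanishing orders of $\langle d\theta,\lambda\rangle$ and of $r(\lambda)$ rather than of $\lambda^{N}$ itself. The paper's Part 3 of the proof supplies the two observations, in terms of the integers $q_{z}$ from \eqref{eq:2.15}, needed to convert these conditions into $q_{z}$-corrected vanishing orders for $\lambda^{N}$; it is exactly these corrections that make the zero count for $D^{N}$ close, as in \eqref{eq:2.24}, where the summands are $\deg(d\theta|_{z})-q_{z}$ and $p_{z}-q_{z}-1$ rather than $\deg(d\theta|_{z})$ and $p_{z}-1$. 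Your statement that ``the zero-count is repeated verbatim for $D^{N}$'' is therefore not accurate. Moreover, once $\lambda^{N}=0$ is established one still has to argue that $\lambda$, now a section of $W$, lies pointwise in the image of $\phi_{*}$ near every critical and singular point (the paper uses the second observation again), and then invoke the fact from the proof of \fullref{prop:2.9} that $\kernel(D^{W})$ meets $\phi_{*}(T_{1,0}C_{0})$ only in $\{0\}$; ``$D^{W}$ has trivial cokernel'' by itself does not give this, since $\kernel(D^{W})$ has positive dimension $2\wp$. In the immersed case all the $q_{z}$ vanish, $W$ coincides with $\phi_{*}T_{1,0}C_{0}$, and your argument does reduce to the paper's.
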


Note that in the case $B = \emptyset$, then $\mathcal{M}^{B}[\Theta]$ is denoted
by $\mathcal{M}[\Theta ]$ and viewed as a submanifold of dimension $N_{
+ }+c+2$ in $\mathcal{M}$.

Local coordinates near any given subvariety in $\mathcal{M}^{B}[\Theta ]$
can be obtained in the following manner: Let $(C, L)  \in   \mathcal{M}^{B}[\Theta ]$ and let $\{z_1, \ldots , z_c\}
 \subset C_0$ denote a labeling of the non-extremal critical points of $\theta$'s
pull-back to $C_0$. This set denoted by $\Crit(C)$ in what follows. Let
$\phi \co  C_{0}   \to   \mathbb{R}  \times
(S^1   \times  S^2 )$ again denote the tautological map. There is an
open neighborhood of $C$ in $\mathcal{M}$ whose subvarieties enjoy the following
property: The model curve of each such subvariety can be viewed as the image
in the normal bundle $\phi^*T_{1,0}(\mathbb{R}  \times  (S^1   \times
 S^2 ))$ of a section with everywhere very small norm. This is in accord
with the story from \fullref{prop:2.4} and the discussion prior to \fullref{prop:2.9}.
If $C'$ is in such a neighborhood and comes from a point near $C$ in $\mathcal{M}^{B}[\Theta ]$,
then the non-extremal critical points of $\theta$'s
pull-back to the model curve of $C'$ can be put in 1--1 correspondence with the
points in the set $\{z_1, \ldots , z_c\}$ by associating any given
critical point of $\theta$ on $C_0'$ with the closest critical point
in $C_0$ as measured by distance in $\phi^*T_{1,0}(\mathbb{R}  \times
(S^1   \times  S^2 ))$. This understood, use $\{z_1', \ldots ,z_c'\}$ to denote the
corresponding labeled set of non-extremal
critical points of $\theta$'s pull-back to the model curve of $C'$. Note that
the degree of vanishing of $d\theta$ at any given $z_{k}'$ on the model
curve for $C'$ is identical to the degree that $d\theta$ vanishes on $C'$s
model curve at $z_{k}$.

To continue, suppose $z  \in  \{z_1, \ldots , z_n\}$.  Fix
respective $\mathbb{R}$--valued lifts, $\hat {t}$ and $\hat {\varphi }$, of the
$\mathbb{R}/(2\pi \mathbb{Z})$ valued functions $t$ and $\varphi$ that are
defined on a neighborhood in $\mathbb{R}  \times  (S^1   \times  S^2 )$
of the image of $z$. Then set
\begin{equation}\label{eq:2.21}
v  \equiv (1 - 3\cos^2 \theta )  \hat {\varphi }-\surd 6
\cos\theta   \hat {t}.
\end{equation}
The $c$ versions of~\eqref{eq:2.21} define $c$ functions, $\{v_{1}(\cdot ), \ldots ,
v_{c}(\cdot )\}$, on a neighborhood of $C$ in $\mathcal{M}^{B}[\Theta ]$
as follows: The value of $v_{k}$ on a given subvariety $C'$ is the value of
the $z_{k}$ version of~\eqref{eq:2.21} at the image in $\mathbb{R}  \times  (S^1
\times  S^2 )$ of the $C'$ version of the $\theta$--critical point
$z_{k}'$.

To obtain $N_{ + }+b+2$ additional functions, the map $\varpi_{ + }$ from~\eqref{eq:2.20}
pulls back an affine coordinate from each of the $S^1 $ factors from
its range space. Let $\{\varpi_{ + 1}, \ldots \}$ denote an ordering
of the resulting $N_{ + }$ element set of such functions. Use $\{\varpi
_{ - 1},\ldots $, $\varpi_{ - b}\}$ to denote an ordering of
the $b$ affine function that are pulled back by viewing $\times_{(\lambda
',e'):r' \in B} \varpi_{\lambda ',r'}$ as a map from $\mathcal{M}^{B}[\Theta ]$ to $\times_{b} S^1 $.
Finally, either choose one
of the following: A pair $(\lambda , r)$ with $\lambda    \in   \Lambda $
and $r$ some $(\pm 1,\ldots )$ element from $\hat{A}$. Or, a pair
$(\lambda , r)$ with $\lambda    \in   \Lambda $ and $r  \in  \hat{A}-B$
some $(0,-,\ldots  )$ element. Or, a point  $z \in C_0$
where $\theta$ is 0 or $\pi$. In the first two cases, use $\varpi
_{\lambda ,r}$ from~\eqref{eq:2.19} to pull-back the standard complex coordinate on
$\mathbb{C}$ and call this coordinate $\varpi'$. For the third case, note that
by virtue of~\eqref{eq:2.18}, each $C'$ from $\mathcal{M}^{B}[\Theta ]$ near $C$ has an
unambiguous point in its model curve that corresponds to $z$ and also maps
very near to $z$ in the $\theta    \in \{0, \pi \}$ locus. Let $z'$
denote the latter. Because $C'$ comes from $\mathcal{M}^{B}[\Theta ]$, the
contribution from $z'$ to the intersection number between $C'$ and the $\{0,
\pi \}$ locus is the same as that from $z$ to $C'$s intersection number with
the $\{0, \pi \}$ locus. Use $\varpi' $ in this case to denote the
complex valued function on $C'$s neighborhood that assigns to any given $C'$ the
value of the complex coordinate on the $\theta    \in \{0, \pi \}$
locus at the image of $z'$.

\begin{prop}\label{prop:2.13}
The functions $\{v_{j}: 1  \le  j  \le  c\}$, $\{\varpi_{ + \alpha }\}$, $\{\varpi_{ - \alpha }\}$
and $\varpi' $ together define local coordinates on a neighborhood of $C$  in $\mathcal{M}^{B}[\Theta ]$.
\end{prop}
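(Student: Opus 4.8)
The plan is to deduce the proposition from an injectivity statement, as with the proofs of Propositions~\ref{prop:2.7} and~\ref{prop:2.9}. Assemble the listed functions into one map $\Phi$ with components $v_{1},\dots,v_{c}$, the $N_{+}$ functions $\varpi_{+\alpha}$, the $b$ functions $\varpi_{-\alpha}$, and $\varpi'$; since $\varpi'$ contributes two real components, $\Phi$ has $c+N_{+}+b+2$ real components, which by \fullref{prop:2.12} equals $\dim\mathcal{M}^{B}[\Theta]$. It is therefore enough to prove that $d\Phi$ is injective at $C$ on $T_{C}\mathcal{M}^{B}[\Theta]$. Recall that $T_{C}\mathcal{M}^{B}[\Theta]$ lies in $T_{C}\mathcal{M}=\kernel(D_{C})$ when $C$ is immersed (by \fullref{prop:2.7} and~\eqref{eq:2.8}), and in $\kernel(D^{N})$ of the $W\oplus N$ splitting of \fullref{prop:2.9} in general; recall also that, being tangent to $\mathcal{M}^{B}[\Theta]$, such an $\eta$ has vanishing first-order variation of each of the $c$ prescribed critical values of $\theta$ and of the parameter $c_{E}$ from~\eqref{eq:1.8} on every end belonging to $B$.

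The first task is to rewrite each equation ``a component of $d\Phi$ annihilates $\eta$'' as a concrete vanishing condition on $\eta$, reading off the normal form~\eqref{eq:2.9} for $D_{C}$ near the ends and the local form~\eqref{eq:2.15} near the singular points. The equation $d\varpi_{+\alpha}(\eta)=0$ forces $\eta$ to have zero $|s|\to\infty$ limit on the corresponding $(0,+,\dots)$ end, which is exactly the codimension-one condition that cuts out the subspace $K_{0}$ in the proof of \fullref{prop:2.7}; imposing all $N_{+}$ of them puts $\eta\in K_{0}$, so if $\eta\neq0$ then $\delta_{E}(\eta)\geq1$ on every concave end. The equation $d\varpi_{-\alpha}(\eta)=0$ annihilates the variation of the angular parameter of the corresponding end of $B$; together with the vanishing of the variation of $c_{E}$ there, it forces the leading part of $\eta$ on that end to be carried by nonconstant Fourier modes, so $\delta_{E}(\eta)\geq1$. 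The equation $d\varpi'(\eta)=0$ imposes an analogous condition on one further end in the first two cases of $\varpi'$, and in the third case forces $\eta(z')=0$ at the chosen point over the $\theta\in\{0,\pi\}$ locus, hence $\deg_{z'}(\eta)\geq1$. Finally, the equations $dv_{j}(\eta)=0$ should be shown to force $\eta(z_{j})=0$ at the non-extremal $\theta$--critical points $z_{j}$. The mechanism is geometric: since the critical value $\theta(z_{j})$ is held fixed along $\mathcal{M}^{B}[\Theta]$, the image of $z_{j}$ moves inside the level set $\{\theta=\theta(z_{j})\}$; by the analysis in \fullref{sec:2e}, $T_{\phi(z_{j})}C$ coincides with the plane $\spann(\partial_{s},\hat{\alpha})$ tangent to the pseudoholomorphic disk through $\phi(z_{j})$; and the one-form $\beta\equiv(1-3\cos^{2}\theta)\,d\varphi-\surd6\cos\theta\,dt$, which restricts to any level set of $\theta$ as $dv$, annihilates $\spann(\partial_{s},\hat{\alpha})$ while being nonzero on the one remaining direction of $N|_{z_{j}}\cap\kernel(d\theta)$. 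Hence $dv_{j}(\eta)=\beta(\eta(z_{j}))$ with $\eta(z_{j})$ a priori lying in that one-dimensional space, so $dv_{j}(\eta)=0$ is equivalent to $\eta(z_{j})=0$, hence to $\deg_{z_{j}}(\eta)\geq1$.

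Granted these translations, a nontrivial $\eta$ in the kernel of $d\Phi$ has $\delta_{E}(\eta)\geq1$ on each concave end and on each of the $b$ ends of $B$, has $\deg_{z}(\eta)\geq1$ at $z_{1},\dots,z_{c}$, and contributes one further unit (a forced zero, or an end with $\delta_{E}\geq1$) from $\varpi'$, all remaining ends and zeros counting non-negatively in~\eqref{eq:2.13}. When $c=N_{-}+\hat{N}+c_{\hat{A}}-b-2$ these forced contributions already sum to more than $\Deg(N)=N_{+}+N_{-}+\hat{N}+c_{\hat{A}}-2$ (genus zero, by \fullref{lem:2.8}), contradicting~\eqref{eq:2.13} unless $\eta\equiv0$. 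In the non-saturated cases one closes the count instead by combining \fullref{prop:2.10}, the analogous identity with $k_{C}$ in place of $c_{\hat{A}}$, with \fullref{cor:2.11}, so as to see that the deficit $c_{\hat{A}}-k_{C}$ carried by $\Deg(N)$ is matched by higher-multiplicity zeros of $\eta$ already forced at the non-transverse intersections with the $\theta\in\{0,\pi\}$ cylinders. Either way $d\Phi$ is injective, and the dimension count upgrades this to the statement that $\Phi$ restricts near $C$ as a diffeomorphism of a neighborhood of $C$ in $\mathcal{M}^{B}[\Theta]$ onto an open subset of $\mathbb{R}^{c+N_{+}+b}\times\mathbb{C}$.

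The step I expect to be the main obstacle is the equivalence $dv_{j}(\eta)=0\Leftrightarrow\eta(z_{j})=0$ and the associated bookkeeping: one must establish the equivalence carefully, treat critical points where $d\theta$ vanishes to order larger than one (where the single scalar $dv_{j}$ must still subtract the correct amount from the zero count), and, via \fullref{cor:2.11}, pin down how the zeros of a kernel element distribute among the non-transverse intersections with the $\theta\in\{0,\pi\}$ cylinders so that the Euler-class counts of $N$ and of $d\theta$ are reconciled. A lesser point: \fullref{prop:2.12} records its submersion property only with $\varpi'$ equal to one of its own $\varpi_{\lambda,r}$ factors, so when $\varpi'$ is instead a coordinate at a $\theta\in\{0,\pi\}$ point one must rerun the pertinent part of the proof of \fullref{prop:2.12} with this replacement --- which is the computation just sketched.
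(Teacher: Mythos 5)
Your strategy --- prove the listed functions give coordinates by showing that the differential of the assembled map $\Phi$ is injective on $T_{C}\mathcal{M}^{B}[\Theta]$, via the degree count \eqref{eq:2.13} against $\Deg(N)$ --- is the same strategy the paper uses. The translation of the end conditions ($\varpi_{+\alpha}$, $\varpi_{-\alpha}$, $\varpi'$) into forced degree contributions $\delta_{E}(\cdot)\geq1$ and the mechanism you describe for why $dv_{j}(\eta)=0\Rightarrow\eta(z_{j})=0$ at a \emph{nondegenerate} critical point are both correct, and in the generic case (all $\deg(d\theta|_{z_{j}})=1$, all ends with $\deg_{E}(d\theta)\leq1$, all $\theta\in\{0,\pi\}$ intersections transverse, $C$ immersed) your count closes: $c$ saturates the bound $N_{-}+\hat N+c_{\hat A}-b-2$ and the forced sum $N_{+}+b+c+1$ exceeds $\Deg(N)$.

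The gap you flag at the end is, however, a genuine one and the sketch you give for closing it does not work as stated. Your reduction yields only $\deg_{z_{j}}(\eta)\geq1$ and $\delta_{E}(\eta)\geq1$; what the argument actually needs are the sharper bounds $\langle d\theta,\lambda\rangle=o(|u|^{k})$ with $k\geq\deg(d\theta|_{z_{j}})$ at each non-extremal critical point, $\delta_{E}(\lambda^{N})\geq\deg_{E}(d\theta)$ on each end with $\deg_{E}(d\theta)>0$, and $r(\lambda)=o(|u|^{k})$ with $k\geq p_{z}-1$ at each $\theta\in\{0,\pi\}$ intersection with multiplicity $p_{z}\geq2$. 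These come from combining $d\Phi=0$ with the linearised constraints of membership in $T_{C}\mathcal{M}^{B}[\Theta]$ --- the very data the paper lists as \eqref{eq:2.23} --- and it is precisely the first-order preservation of $\deg(d\theta|_{z_{j}})$, $\deg_{E}(d\theta)$ and $p_{z}$, as already counted in the constraint tally leading to \eqref{eq:2.22}, that supplies the extra vanishing. Without importing those constraints your degree count cannot reach $\Deg(N)$ when some degrees or multiplicities exceed one. Your proposal to close the gap by matching the ``deficit $c_{\hat A}-k_{C}$'' against higher-multiplicity zeros near the $\theta\in\{0,\pi\}$ locus only sees one of the three sources of deficit; higher $\deg(d\theta|_{z})$ at a critical point and higher $\deg_{E}(d\theta)$ at an end contribute too, and the paper's inequality \eqref{eq:2.24} handles all three at once by feeding the full list of forced orders directly into \eqref{eq:2.18}.

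A secondary issue: once $\lambda^{N}=0$ you are not yet done when $C$ is not immersed, because $\lambda$ can still have a $W$-component. The paper's Part 5 shows --- using the two observations about $W$-sections from Part 3, applied to the third and fourth conditions of \eqref{eq:2.23} --- that a $K^{*}$ element with trivial $N$-projection is the pushforward by $\phi_{*}$ of a section of $T_{1,0}C_{0}$, and then the triviality of $D^{W}$'s kernel intersected with $\phi_{*}TC_{0}$ (established inside the proof of \fullref{prop:2.9}) forces $\lambda=0$. Your one-line remark that the tangent space ``lies in $\kernel(D^{N})$'' skips this; it is not that the tangent space is contained in $\kernel(D^{N})$, but that you must project to $N$, show the projection vanishes, and then separately kill the $W$-component.
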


\begin{proof}[Proof of Propositions~\ref{prop:2.12} and~\ref{prop:2.13}]

The proof is given in five parts.

\step{Part 1}
This part provides some comments on the dimension count for $\mathcal{M}^{B}[\Theta ]$. For this purpose, let $C  \in
\mathcal{M}^{B}[\Theta ]$ and let $\Crit(C)$ denote the set of non-extremal critical
points of $\theta$'s pull-back to $C'$s model curve. As noted earlier, if $C'
 \in   \mathcal{M}^{B}[\Theta ]$ is near $C$, then corresponding critical
points of $\theta$'s pull-back to the model curves for $C$ and $C'$ must have
the same values for $\theta$ and also for $\deg(d\theta |_{(\cdot)})$.
This places $2\sum_{z \in \Crit(C)} \deg(d\theta |_{z})- c$
constraints on the subvarieties that are near $C$ in $\mathcal{M}^{B}[\Theta ]$.
Requiring that they also have the same values for the
functions $v_{j}$ adds c more constraints.

There are more constraints that come from $C'$s ends. To see these, let  $E \subset C$
denote an end that corresponds to some $(0,\ldots)$ element in $\hat{A}$. A given $C'$ near to $C$ in $\mathcal{M}$
has a corresponding
end $E'$. It follows from~\eqref{eq:2.17} and~\eqref{eq:2.18} that $\theta$'s pull-back to the
model curve of $C'$ has critical points at large values of $|s|$
on $E'$ in the case that $\deg_{E'}(d\theta ) < \deg_{E}(d\theta )$.
Taking this into account finds an additional set of $2(\sum_{E}\deg_{E}(d\theta )-N_{ + })-b$
constraints on the elements in $\mathcal{M}^{B}[\Theta ]$.

Even more constraints arise from the local intersection numbers of the
subvarieties with the $\theta    \in \{0, \pi \}$ locus. To
elaborate, let $z$ denote a point in $C'$s model curve that maps to a point in
this locus, and let $p$ denote $z'$s contribution to the intersection number
$c_{\hat{A}}$. If $C'  \in   \mathcal{M}^{B}[\Theta ]$ is near $C$, then~\eqref{eq:2.18}
requires $C'$ to have a corresponding point very near $z$ in its model curve
which contributes $p$ to the intersection number between $C'$ and the $\theta
 \in  \{0, \pi \}$ locus. All such intersection points thus account
for an additional $2(c_{A}- k_C)$ constraints on the subvarieties in
$\mathcal{M}^{B}[\Theta ]$.

When totalled, the number of constraints that must be satisfied for placement
in $\mathcal{M}^{B}[\Theta ]$ is
\begin{equation}\label{eq:2.22}
2\Bigl(\sum_{E} \deg_{E}(d\theta )+\sum_{z \in \Crit(C)}
\deg(d\theta |_{z}) + c_{\hat{A}} - k_C\Bigr) - 2N_{ + } - b -c.
\end{equation}
What with~\eqref{eq:2.18}, this number is $N_{ + }+b+c$ less than the dimension of
$\mathcal{M}$.

\step{Part 2}
Granted this count, the assertion that $\mathcal{M}^{B}[\Theta ]$ is a
manifold of the asserted dimension with the given local coordinates is
proved using an application of the implicit function theorem. The
application requires the introduction of the linearized constraints on the
tangent space to $C$ in $\mathcal{M}$. For this purpose, identify $T\mathcal{M}|
_{C }= \kernel(D_{C})$ and let $K^*   \subset  \kernel(D_C)$
denote the subspace of vectors that satisfy all of the linear constraints and
also annihilate all of the functions that are listed in \fullref{prop:2.13}.
Both Propositions~\ref{prop:2.12}  and~\ref{prop:2.13} follow from the implicit function theorem if
$K^*$ is trivial.

The identification given below of $K^*$ requires a preliminary
digression to introduce some notation that concerns a given $\lambda    \in
 \kernel(D_C)$. First, $\lambda ^{N}$ is used below to denote $\lambda$'s image in $N$ via the
 projection from $\phi^*T_{1,0}(\mathbb{R}  \times
(S^1   \times  S^2 ))$. Second, $\langle d\theta , \lambda \rangle $
is used to denote the pairing on the complement of the $\theta = 0$
and $\pi$ points between $\lambda $ and $d\theta$ when the latter
form is viewed in $\phi^*T^{1,0}(\mathbb{R}  \times  (S^1   \times
S^2 ))$. Finally, when $z$ is a point in $C'$s model curve where
$\theta$ is 0 or $\pi$, then $r(\lambda )$ denotes the projection
of $\lambda $ to the holomorphic tangent bundle of the $t =
\constant$ pseudoholomorphic subvariety through the given point.
These subvarieties are described in   \cite[Section 4a]{T3}, and for
the present purposes, it is enough to know that these subvarieties
are embedded, they foliate $\mathbb{R}  \times  (S^1   \times
 S^2 )$, and the tangent bundle to any such subvariety on the $\theta  =0$ or $\pi$ locus is normal to the locus.

What follows are the conditions for membership in $K^*$:
\itaubes{2.23}
\textsl{If $\varpi$' is defined by an element $r  \in  \hat{A}$, then $\deg_{E}(\lambda ^N) > 0$ on the end $E  \subset
C$ that corresponds to $r$.
}

\item \textsl{If $E  \subset C$ is an end where $\deg_{E}(d\theta ) > 0$, then $\delta_{E}(\lambda
^{N})   \ge  \deg_{E}(d\theta )$.
}

\item \textsl{If $z  \in C_0$ is a non-extremal critical point of the pull-back of $\theta$ and $u$ is a local
holomorphic coordinate for a disk in $C_0$ centered at $z$, then $\langle d\theta , \lambda
\rangle  = o(| u| ^{k})$ near $z$ with $k  \ge  \deg(d\theta |_{z})$.
}

\item \textsl{If $z  \in C_0$ is a point where $\theta =0$ or $\pi$, let $p$ denote $z'$s contribution to $c_{\hat{A}}$. Let $u$
denote a complex coordinate for a disk in $C_0$ centered at $z$. If $p  \ge  2$, then $r(\lambda
) = o(| u| ^{k})$ near $z$ with $k  \ge  p-1$.
}

\item \textsl{Suppose that $z  \in C_0$ is a point where $\theta =0$ and that $z$ is used to define $\varpi$'. If
$\phi_*|_{z}$ is zero, then $\lambda |_{z}$ must also vanish.  If $\phi_*|_{z}$ is non-zero, then $\eta
   \sim   | u| ^{k}$ near $z$ where $k  \ge  p$.
}
\end{itemize}

Note that the final three constraints only involve $\eta $ where $\phi_*$ is non-zero.

\step{Part 3}
As there are $\dim(\kernel(D_C))$ conditions in~\eqreft2{23}, a proof that they
are linearly independent proves that $K^* = \{0\}$. The argument
for linear independence invokes two observations that concern a section,
$\xi $, of the $W$ summand in $\phi^*T_{1,0}(\mathbb{R}  \times  (S^1
\times  S^2 ))$. To set the stage, let $z$ denote a given point in $C_0$
and let $q_{z}$ denote the integer that appears in $z'$s version of~\eqref{eq:2.15}.
Thus, $q_{z}=0$ if $\phi_*|_{z}   \ne 0$  and $q_{z}> 0$ otherwise.

Here is the first observation: If $z  \in  \Crit(C)$ and $u$ is a complex
coordinate for a disk centered in $C_0$ with center $z$, then $\langle
d\theta , \xi \rangle =o(| u| ^{k})$ near $z$
with $k  \ge  \deg(d\theta |_{z})-q_{z}$. Moreover, if $\langle
d\theta , \xi \rangle  = o(| u| ^{k})$ with $k  \ge
\deg(d\theta |_{z})$, then $\xi $ near $z$ is the image via $\phi
_*$ of a section of $T_{1,0} C_0$. This observation follows
from~\eqref{eq:2.15} and~\eqref{eq:2.16}.

The second observation concerns a point $z  \in  C_0$ where $\theta$ is
0 or $\pi$, and it involves the integer, $p_{z}$, that $z$ contributes to
$c_{\hat{A}}$. Here is the observation: Let $u$ denote a complex coordinate for
a disk in $C_0$ centered at $z$. Then $r(\xi )=o(| u|
^{k})$ where $k  \ge  p_{z}-q_{z}-1$ near $z$, and if $\xi |
_{z}=0$, then $r(\xi ) = o(| u| ^{k})$ with $k  \ge
p_{z}-q_{z}$. Moreover, if $r(\xi ) = o(| u| ^{k})$ with $k
 \ge  p_{z}-1$, then $\xi $ near $z$ is the image via $\phi_*$
of a section of $T_{1,0} C_0$. This observation follows from~\eqref{eq:2.15}.

\step{Part 4}
The analysis of the conditions in~\eqreft2{23} begins by considering those that
involve only the projection $\lambda ^{N}$ of $\lambda $. In particular,
such is the case for the first two. As is explained next, some of the others
also involve only $\lambda ^{N}$. In particular, suppose first that $z
\in  \Crit(C)$. As noted previously, $\deg(d\theta |_{z})   \ge
q_{z}$ and so, by virtue of \refstep{Part 3}'s first observation, the vanishing to
order $\deg(d\theta |_{z})$ of $\langle d\theta , \lambda
\rangle $ at $z$ puts $2(\deg(d\theta |_{z})-q_{z})$ constraints
on $\lambda ^{N}$ as it requires that $\lambda ^{N} = o(| u|
^{k})$ near $z$ with $k  \ge  (\deg(d\theta |_{z})-q_{z})$.

Consider next a point $z  \in C_0$ where $\theta$ is 0 or $\pi$. As
noted previously, $p_{z}   \ge  q_z+1$, and so by virtue of \refstep{Part 3}'s
second observation, the vanishing to order $p_{z}-1$ of $r(\lambda )$ at $z$
forces $\lambda ^{N}$ to be $o(| u| ^{k})$ near $z$ where $k
\ge p_z -q_z-1$. Of course, this is a constraint only in the case
that $p_{z} > q_z+1$. In the case that $z$ is used to define $\varpi' $,
the final point in~\eqreft2{23} forces $\lambda ^{N}$ to be $o(| u|
^{k})$ near $z$ with $k  \ge p_{z} -q_z$.

If $\lambda ^{N}$ is to satisfy all of these constraints, then
\begin{multline}\label{eq:2.24}
\sum_{E}  \delta_{E}(\lambda ^{N})+\sum_{z}
\deg_z(\lambda ^{N})   \\
\qquad \ge   \sum_{E} \deg_E (d\theta ) +
\hspace{-5pt}\sum_{z \in \Crit(C)} (\deg(d\theta |_{z})-q_{z})
+\hspace{-5pt}\sum_{z:\theta = 0 or \pi } (p_{z}-q_{z}-1) + 1,
\end{multline}
and this, according to~\eqref{eq:2.18}, is greater than $\deg(N) = N_{ + }+N_{ -
}+\hat {N}+c_{\hat{A}}-2-\wp $. As a consequence, $\lambda ^ N = 0$ if $\lambda    \in K^*$.

\step{Part 5}
As just noted, any $\lambda \in K^*$ is a section of $W$.
Granted the first observation from \refstep{Part 3}, the third condition in~\eqreft2{23}
implies that such $\lambda $ is the image via $\phi_*$ near each
$z  \in  \Crit(C)$ of a section of $T_{1,0} C_0$. What with second
observation from \refstep{Part 3}, the fourth condition in~\eqreft2{23} implies that
$\lambda $ is also in the image of $\phi_*$ near each point in
$C_0$ where $\theta$ is 0 or $\pi$. Thus, $\lambda$ is in the
image of $\phi_*$ on the whole of $C_0$ since any $\theta
\in  (0, \pi )$ zero of $\phi_*$ is a zero of $\phi^*d\theta$.
However, as noted in the proof of \fullref{prop:2.9}, the kernel of $D^W$
has only 0 from the image of $\phi_*$. Thus, $K^* = \{0\}$ as required.
\end{proof}

\subsection{Slicing curves by $\theta$ level sets}\label{sec:2g}

This subsection constitutes a digression of sorts to discuss some algebraic
and geometric issues that arise in conjunction with the use of the critical
points of $\theta$'s pull-back to construct coordinates on $\mathcal{M}_{\hat{A}}$.
Some of these issues appear both implicitly and explicitly in
the subsequent sections of this article, and they play a central role in the
sequel to this article. In any event, the subsection starts by examining the
nature of the $\theta$--level sets in any given subvariety from $\mathcal{M}$.
These level sets are then used to associate to each such variety a certain
connected, contractible graph with labeled vertices and labeled edges. The
discussion of the constant $\theta$ loci is contained in \refstep{Part 1}--\refstep{Part 3} of this
subsection, while \refstep{Part 4} contains the definition of the associated graph. In
all of what follows, it is assumed that the subvariety $C$ in question is not
an $\mathbb{R}$--invariant cylinder, thus not of the form $\mathbb{R}  \times
\gamma$ where $\gamma    \subset  S^1   \times  S^2 $ is a Reeb
orbit.

\step{Part 1}
To begin the story here, suppose that
$\hat{A}$ is an asymptotic data set, $J'$ is an admissible almost complex
structure, and $C$ is a subvariety from the $J'$--version of $\mathcal{M}_{\hat{A}}$.
Let $C_0$ again denote the model curve for $C$. Introduce now the locus,
$\Gamma    \subset C_0$, which is defined as follows: The components
of this set consist of the level sets of $\theta$ on $C_0$ that are
either zero dimensional, singular or non-compact. In particular, $\Gamma$
contains all of the critical points of $\theta$ on $C_0$.

To continue, note that any given component of $\Gamma$ can be viewed as the
embedded image in $C_0$ of an oriented graph with labeled edges and
vertices. To elaborate, the zero dimensional components of $\Gamma$ are the
points in $C_0$ where $\theta$ is 0 or $\pi$. In particular each zero
dimensional component is a graph with a single vertex, the latter labeled by
a non-zero integer whose absolute value is the contribution of the given
$\theta =0$ or $\pi$ point to $c_{\hat{A}}$ . The sign of the integer is
positive when the $\theta$ value is 0 and the integer is negative when the
$\theta$ value is $\pi$.

Each singular point in a non-point like component of $\Gamma$ is a critical
point of $\theta$. These points constitute the vertices of the
corresponding graph. The components of the complement of these singular
points constitute the edges in the graph. In this regard, these edges are
henceforth referred to as `arcs' so as not to confuse them with the edges in
the graph that is defined subsequently in \refstep{Part 4} from $C$. These arcs are
oriented by the pull-back of the 1--form $x  \equiv  (1-3\cos^2 \theta )
d\varphi -\surd 6\cos \theta  dt$. Note that this 1--form is nowhere
zero on the smooth portion of any given $\theta$ level set in $C_0$ for
the following reason: The differential of the contact form in~\eqref{eq:1.1} is
$\surd 6 \sin \theta d\theta    \wedge  x$ and because $J'$ is admissible
and $C$ is $J'$--pseudoholomorphic, this form is positive on $TC_0$ save at the
critical points of $\theta$ where it vanishes.

Because a given vertex in a non-point like component of $\Gamma$ is a
critical point of $\theta$, it has an even number, at least 4, of incident
arcs. This follows from the form of $d\theta$ in~\eqref{eq:2.16}. Moreover, half of
the incident arcs are oriented to point towards the vertex and half are
oriented to point away. Indeed, a circumnavigation of a small radius circle
about the critical point will alternately meet inward pointing and outward
pointing arcs. For example, if $\Gamma_*   \subset   \Gamma$
is a compact, singular component with a single, non-degenerate, non-extremal
critical point, then the associated graph has a single vertex and looks like
the figure `8'.

Meanwhile, the complement of the $\theta$ critical points in a given
non-compact component of $\Gamma$ has an even number of unbounded arcs.
Indeed, this follows from~\eqref{eq:2.17}. In particular, any given end of $C$ where
the $| s|    \to   \infty $ limit of $\theta$ is neither 0 nor
$\pi$ has the following property: Let n denote the integer that appears in~\eqref{eq:2.17}
 for the given end. Then any sufficiently large and constant $|s| $
 slice of the end intersects precisely 2n components of $\Gamma$
and this intersection is transverse. Moreover, a circumnavigation of the
constant $|s|$ slice meets components whose orientations
alternate towards increasing $|s|$ and towards decreasing $|s|$.

By way of an example, suppose that $E$ is a concave side end of $C_0$ where
$\lim_{| s| \to \infty }  \theta    \in  (0, \pi )$, and suppose
that this limit is distinct from all other $|s|    \to   \infty$
limits of $\theta$ on $C$. Suppose as well that this limit is distinct from
all of the critical values of $\theta$ on $C_0$. Then the large $|s|$
portion of $E$ will intersect precisely one component of $\Gamma$,
the latter a smooth, properly embedded copy of $\mathbb{R}$ whose large $|s|$
portions are properly embedded in the large $|s|$
part of $E$.

\step{Part 2}
By virtue of the definition of $\Gamma$,
any given component $K  \subset  C_{0}-\Gamma$ is a cylinder to
which $d\theta$ and $x$ pullback without zeros. In fact, $\theta$ and
the restriction of $x$ to the constant $\theta$ level sets of $K$ can be used
to give coordinates to such a cylinder. To elaborate, let $(\theta_{o},
\theta_{1})$ denote the range of $\theta$ on $K$. Next, let $q$ and $q'$
denote the respective integrals around the constant $\theta$ slices of $K$
(as oriented by $x$) of the closed forms $\frac{1}{2\pi}dt$ and
$\frac{1}{2\pi}d\varphi$. Then $K$ can be parametrized by the
open cylinder $(\theta_{o}, \theta_{1})  \times   \mathbb{R}/(2\pi
\mathbb{Z})$ so that the restriction to $K$ of the tautological immersion of
$C_0$ into $\mathbb{R}  \times  (S^1   \times  S^2 )$ has a rather
prescribed form. To be more specific, let $\sigma    \in  (\theta
_{0}, \theta_{1})$ and $v  \in \mathbb{R}/(2\pi \mathbb{Z})$
denote the coordinates for the cylinder. Written using these coordinates,
the tautological immersion involves two smooth functions on $(\theta
_{0}, \theta_{1})  \times   \mathbb{R}/(2\pi \mathbb{Z})$, these
denoted by $a$ and $w$; and it sends any given point $(\sigma , v)$ to the point
where
\begin{multline}\label{eq:2.25}
\bigl(s = a, \ t = q v + (1-3\cos^2 \theta ) w \mod (2\pi \mathbb{Z}),  \\ \theta  =
\sigma , \ \varphi  = q'v + \surd 6\cos \theta  w \mod(2\pi
\mathbb{Z})\bigr).
\end{multline}
Note that the $J'$--pseudoholomorphic nature of the immersion of $K$ requires
that the pair $(a, w)$ obey a certain non-linear differential equation. For
example, in the case where $J' = J$, this equation reads
\begin{equation}\label{eq:2.26}
\begin{aligned}
&\alpha_{Q} a_{\sigma }{-}\surd 6 \sin\sigma  (1{+}3 \cos^2 \sigma
) w a_{v} = -\frac{1{+}3\cos ^4\sigma }{\sin \sigma}
\biggl(w_{v}{-}\frac{1}{1{+}3\cos ^4\sigma }
\beta \biggr) \\
&(\alpha_{Q}w)_{\sigma }-\surd 6 \sin\sigma  (1 + 3
\cos^2 \sigma ) w w_{v}=\frac{1}{\sin \sigma }
a_{v},
\end{aligned}
\end{equation}
Here, $\alpha_{Q}=\alpha_{Q}(\sigma )$ is the function
\begin{equation}\label{eq:2.27}
\alpha_{Q} = (1 - 3 \cos^2 \sigma ) q' - \surd 6 \cos \sigma  q ,
\end{equation}
and $\beta  = q (1 - 3\cos^2 \sigma ) + q' \surd 6 \cos\sigma
\sin^2\sigma$. In these equations and below, $Q$ denotes the pair $(q, q')$.

By the way, $\alpha_{Q}$ is necessarily positive on $(\theta_{o},
\theta_{1})$ by virtue of the fact that the parametrization in~\eqref{eq:2.25} of
$K$ pulls back the exterior derivative of the contact form $\alpha $ as
\begin{equation}\label{eq:2.28}
\surd 6\sin\sigma   \alpha_{Q}(\sigma )d\sigma    \wedge  dv .
\end{equation}
In this regard, keep in mind that the form $d\alpha $ is non-negative on
$J'$--pseu\-do\-ho\-lo\-mor\-phic 2--planes in $\mathbb{R}\times(S^1\times
S^2)$. Moreover, $d\alpha $ is zero on such a plane only if the latter is
spanned by $\partial_{s}$ and the Reeb vector field $\hat {\alpha }$.

This last conclusion has the following converse: Suppose that $(a, w)$ are any
given pair of functions on $(\theta_{o}, \theta_{1})  \times
\mathbb{R}/(2\pi \mathbb{Z})$. Then, the resulting version of~\eqref{eq:2.25} immerses
the points in its domain where $\alpha_{Q}$ is positive.

Here is one final remark about any map having the form given in~\eqref{eq:2.25}:
Suppose that $\alpha_{Q}$ is positive on $(\theta_{o}, \theta_{1})$.
Now, let $(a, w)$ denote any given pair of functions on the cylinder
$(\theta_{o}, \theta_{1})  \times   \mathbb{R}/(2\pi \mathbb{Z})$.
By virtue of the fact that the coordinates $t$ and $\varphi$ are defined only
modulo $2\pi \mathbb{Z}$, the image cylinder in $\mathbb{R}  \times  (S^1
\times  S^2 )$ via the map in~\eqref{eq:2.25} is unchanged under the action of
$\mathbb{Z}  \times   \mathbb{Z}$ on the space of function pairs $(a, w)$ whereby
a given integer pair $N = (n, n')$ acts to send $(a, w)$ to $(a^N, w^N)$
with the latter given by
\begin{equation}\label{eq:2.29}
\begin{aligned}
a^{N}(\sigma , v) &= a\bigg(\sigma , v - 2\pi
\frac{\alpha_N (\sigma )}{\alpha_{Q} (\sigma )}\bigg)
\\
\text{and}\qquad w^{N}(\sigma , v) &= w(\sigma , v - 2\pi   \frac{\alpha_N (\sigma
)}{\alpha_{Q} (\sigma )}) + 2\pi   \frac{qn' - q' n}{\alpha_{Q} (\sigma )}.
\end{aligned}
\end{equation}

\step{Part 3}
This part of the subsection discusses the
behavior of the parametrization in~\eqref{eq:2.25} at points near the boundary of the
closure of the parametrizing cylinder.

To start, remark that if a given $\theta_*   \in \{\theta
_{0}, \theta_{1}\}$ is not achieved by $\theta$ on the closure of
$K$, then there exists $\varepsilon >0$ such that the portion of $K$ where
$| \theta -\theta_*|    \le   \varepsilon $ is
properly embedded in an end of $C$. In particular, the constant $\theta$
slices of this portion of $K$ are isotopic to the constant $|s|$
slices when $\theta$ is very close to $\theta_*$. Moreover, if
$\theta_* \notin \{0, \pi \}$, then such an end
is on the convex side of $C$ and the associated integer $n$ that appears in~\eqref{eq:2.17} is zero.

On the other hand, if $\theta_*   \in \{\theta_{o},
\theta_{1}\}$ is neither 0 nor $\pi$ and if $\theta_*$ is
achieved on the closure of $K$, then the complement of the $\theta$ critical
points in the $\theta =\theta_*$ boundary of this closure is
the union of a set of disjoint, embedded, open arcs. The closures of each
such arc is also embedded. However, the closures of more than two arcs can
meet at any given $\theta$--critical point. Conversely, every arc in any
given component of $\Gamma$ is entirely contained in the boundary of the
closures of precisely two components of $C_{0} -\Gamma$.

This decomposition of the $\theta =\theta_*$ boundary of $K$
into arcs is reflected in the behavior of the parametrizations in~\eqref{eq:2.25}  as
$\sigma$ approaches $\theta_*$. To elaborate, each critical
point of $\theta$ on the $\theta =\theta_*$ boundary of the
closure of $K$ labels one or more distinct points on the $\sigma =\theta
_*$ circle in the cylinder $[\theta_{o}, \theta_{1}]
\times   \mathbb{R}/(2\pi \mathbb{Z})$. These points are called `singular
points'. Meanwhile, each end of $C$ that intersects the $\theta =\theta
_*$ boundary of the closure of $K$ in a set where $|s|$
is unbounded also labels one or more distinct points on this same circle.
The latter set of points are disjoint from the set of singular points. A
point from this last set is called a `missing point'.

The complement of the set of missing and singular points is a disjoint set
of open arcs. Each point on such an arc has a disk neighborhood in $(0, \pi
)  \times   \mathbb{R}/(2\pi \mathbb{Z})$ on which the parametrization in~\eqref{eq:2.25}
 has a smooth extension as an embedding into $\mathbb{R}  \times
(S^1   \times  S^2 )$ onto a disk in $C$. This last observation is
frequently used in subsequent arguments from this article and from the
sequel.

As might be expected, the set of arcs that comprise the complement of the
singular and missing points are in 1--1 correspondence with the set of arcs
that comprise the $\theta =\theta_*$ boundary of the closure
K. In particular, the extension to~\eqref{eq:2.25} along any given arc in the $\sigma
=\theta_*$ boundary of $[\theta_{o}, \theta_{1}]
\times   \mathbb{R}/(2\pi \mathbb{Z})$ provides a smooth parametrization of
the interior of its partner in the $\theta =\theta_*$
boundary of the closure of $K$.

By way of an example, consider the case that $\theta_*$ is a
critical value of $\theta$ on $C_0$ that is realized by a single critical
point with the latter non-degenerate. Assume further that $\theta_* $ is not an $| s| \to \infty $
limit of $\theta$
on $C$. Thus, the critical locus is a `figure 8'. In this case, there are
three components of $C_{0} -\Gamma$ with boundary on this locus,
one whose boundary maps to the top circle in the figure 8, another whose
boundary maps to the lower circle, and a third whose boundary traverses the
whole figure 8. The first two have but one singular point on the $\sigma  =
\theta_*$ boundary of any parametrizing domain, while the
third has two singular points.

For a second example, suppose that $\theta_*$ is neither 0 nor
$\pi$ and is the $| s|    \to   \infty $ limit of $\theta$ on
a convex side end, $E$, for which $\deg_{E}(d\theta ) = 1$. Suppose, in
addition, that $\theta_*$ is not the $| s|
\to   \infty $ limit of $\theta$ on any other end of $C$. In this case, the
corresponding $\theta =\theta_*$ component of $\Gamma$ is a
properly embedded copy of $\mathbb{R}$. Furthermore, there are two components
of $C_{0} -\Gamma$ whose closures lie in this $\theta =\theta
_*$ component of $\Gamma$, and both have just a single missing
point on the $\sigma =\theta_*$ boundaries of any of their
parametrizing domains.

In the case that $\theta_*   \in \{\theta_{o}, \theta
_{1}\}$ is either 0 or $\pi$ and $\theta$ takes value $\theta_*$ on the closure of $K$,
then the map in~\eqref{eq:2.25} extends to the $\sigma  =
\theta_*$ boundary of the cylinder as a smooth map that sends
this boundary to a single point. This extended map factors through a
pseudoholomorphic map of a disk into $\mathbb{R}  \times  (S^1   \times
S^2 )$ with the $\sigma =\theta_*$ circle being sent to the
disk's origin.

\step{Part 4}
The graph assigned to a given $C$ from $\mathcal{M}_{\hat{A}}$ is denoted here by $T_C$.
As remarked at the outset, this is
a connected, contractible graph with labeled edges and vertices. In this
regard, the edges of $T_C$ are in 1--1 correspondence with the components
of $C_{0}-\Gamma$. If $e$ denotes an edge, then $e$ is labeled by an
integer pair, $Q_{e}   \equiv  (q_{e}, {q_e}')$, these being the
respective integrals of $\frac{1}{2\pi}dt$ and $\frac{1}{2\pi}d\varphi$
about the constant $\theta$ slices of the
corresponding component of $C_{0}-\Gamma$. Here, as in \refstep{Part 1},
these slices are oriented using the pull-back of the 1--form
$(1-3\cos^2 \theta )d\varphi -\surd 6\cos \theta  dt$.

The multivalent vertices of $T_C$ are in 1--1 correspondence with the
subsets of a certain partition of the components of $\Gamma$. To define
this partition, first introduce a new graph, $G$, as follows: The vertices of
$G$ are in 1--1 correspondence with the components of $\Gamma$. Meanwhile, an
edge connects two vertices of $G$ when there is an end of $C$ with the following
property: Every sufficiently large and constant $|s|$ slice of
the end intersects at least one arc from each of the corresponding
components of $\Gamma$. With $G$ understood, then the components of $G$
naturally partition the set of components of $\Gamma$. For example, every
compact component of $\Gamma$ defines its own, single point set in this
partition.

The set of multivalent vertices in $T_C$ are in 1--1 correspondence with
this partition of $\Gamma$. If $o$ is a multivalent vertex of $T_C$, then
the incident edges to $o$ label the components of $C_{0}-\Gamma$
whose closure intersects that part of $\Gamma$ that is assigned to $o$.

The monovalent vertices in $T_C$ are in 1--1 correspondence with the
elements in the union of three distinct sets. The first set consists of
points where $C$ intersects the $\theta =0$ and $\theta =\pi$ loci. In
this regard, a given $\theta =0$ or $\theta =\pi$ point can label
more than one monovalent vertex of $T_C$. To elaborate, suppose that a
small ball about such a point intersects $C$ in some $k  \ge 0$ irreducible
components that all meet at the given point. Then this point labels $k$
monovalent vertices of $T_C$. The second set consists of the ends of $C$
where the $| s|    \to   \infty $ limit of $\theta$ is either 0
or $\pi$. The third set consists of the convex side ends of $C$ where the
$| s|    \to   \infty $ limit of $\theta$ is neither 0 nor $\pi
$ and where the integer n in~\eqref{eq:2.17} is zero. Said differently, the second
and third sets consist of those ends of $C$ where the $| s|    \to
  \infty $ limit of $\theta$ is not achieved at any sufficiently large
value of $|s|$. A given monovalent vertex lies on the most
obvious edge.

Note that various versions of $T_C$ will be defined here and in the sequel
to this article that differ in the complexity of the labels that are
assigned to the vertices. In all versions, the vertex label contains an
angle in $[0, \pi ]$, this the obvious one available.
Elements from $\hat{A}$
are part of labels that are used in the next subsequent sections of this
article. The most sophisticated labeling occurs in the sequel to this
article where any given multivalent vertex label is a certain sort of graph,
this defined from the components of $\Gamma$ that are contained in the
corresponding partition subset.

%
%

\setcounter{theorem}{0}

\section{Existence}\label{sec:3}

This and the remaining sections derive necessary and sufficient conditions
that insure that any given $\mathcal{M}_{\hat{A}}$ is non-empty. The result is a
proof of \fullref{thm:1.3}. The strategy used here is to construct proper
immersions of multi-punctured spheres with the correct $|s| \to \infty $ asymptotics and
then deform them so that the result is pseudoholomorphic.

It is assumed here that an asymptotic data set $\hat{A}$  has been specified that
obeys a certain set of auxiliary constraints that differ from those stated
in \fullref{thm:1.3}. \fullref{sec:5} explains why the constraints listed here are
satisfied if and only if $\hat{A}$  satisfies the conditions in  \fullref{thm:1.3}.

\subsection{An associated graph}\label{sec:3a}

Granted that $\hat{A}$  has been specified, fix a partition of the set of
$(0,+,\ldots )$ elements in $\hat{A}$  subject to the following
constraint: The integer pairs from any two elements in the same partition
subset define the same angle via~\eqref{eq:1.7}. Let $\wp $ denote the given
partition. What follows is a description of a contractible graph, $T$, with
labeled vertices and labeled edges that is defined using $\hat{A}$  and $\wp $.
This graph is used in the subsequent construction as a blueprint of sorts
for constructing the initial subvariety in $\mathbb{R}    \times (S^1
\times S^{2})$.

The story on $T$ begins with the remark that $T$ has $N_{ - }+\hat
N+\mc_{\hat{A} }$ monovalent vertices, $N_{\wp }$ bivalent vertices and
$N_{-}+\hat N+\mc_{\hat{A} }-2$ trivalent vertices. Here $N_{\wp }$ denotes
the number of sets in the partition $\wp $. A subset of monovalent vertices
with $N_{ - }+\hat {N}$ elements are labeled by assigning a 1--1
correspondence between the vertices in the subset and the subset of $\hat{A}$
whose elements are either of the form $(\pm 1,\ldots)$ or
$(0,-,\ldots)$. Of those that remain, $c_+ $ are labeled
(1) and $c_- $ by $(-1)$.

The bivalent vertices are labeled by assigning a 1--1 correspondence between
the set of such vertices and elements in $\wp $. Thus, each bivalent vertex
is labeled by a partition subset.

The labeling just described associates an angle in $[0, \pi ]$ to each
monovalent and each bivalent vertex; this is the angle 0 when the vertex has
label either $(1,\ldots )     \in \hat{A}$  or (1), the angle
$\pi$ when the label is either $(-1,\ldots)     \in \hat{A}$
or $(-1)$. Meanwhile, a monovalent vertex with label $(0,-,\ldots)$ from $\hat{A}$  is assigned the angle that is defined
 via~\eqref{eq:1.7} by the
integer pair from this element. Finally, a bivalent vertex is assigned the
angle that is defined via~\eqref{eq:1.7} by the integer pair from any element in its
corresponding partition subset.

As for the trivalent vertices, each is labeled by an angle in $(0, \pi )$ so
that no two vertices are assigned the same angle, and none are assigned an
angle that is associated to any monovalent or bivalent vertex.

There are three further constraints on the angle assignments to the vertices
of $T$. Here is the first:

\begin{constraint}\setobjecttype{Con}\label{cons:1}\hfill
\begin{enumerate}
\item[(a)] The vertices that share an edge have distinct angle assignments.

\item[(b)]
The angle assigned to any given multivalent vertex is neither a largest
nor a smallest angle in the set of angles that are assigned those vertices on its incident edges.
\end{enumerate}
\end{constraint}

Each edge of $T$ is labeled by a non-trivial, ordered pair of integers. If $e$
denotes an edge, then its assigned pair is denoted here by $Q_e$ or
$(q_e, {q_e}')$. What follows is the second constraint.

\begin{constraint}\setobjecttype{Con}\label{cons:2}

These integer pair assignments to the edges are constrained as follows:
\itaubes{3.1}
If $o$ is a monovalent vertex on $e$ with a 4--tuple label from $\hat{A}$ , then $Q_e=\pm P_{o}$ where $P_o$
is the integer pair from the label. Here, the $+$ sign appears if and only if one of the following hold:

\begin{enumerate}\leftskip 25pt

\item[(a)] $o$'s angle is in $(0, \pi )$ and it is the smaller of $e'$s vertex angles.

\item[(b)] $o$ is labeled by either a $(1,-,\ldots)$ or a $(-1,+,\ldots)$ element in $\hat{A}$.
\end{enumerate}

\item
If $o$ is a monovalent vertex with label $\delta      \in  (\pm 1)$, then its incident edge, $e$,
has $q_e=0$ and ${q_e}' = -1$.

\item
If $o$ is a bivalent vertex with incident edges $e$ and $e'$ with the convention that $e$ connects $o$
to a vertex with smaller angle label, then $Q_e - Q_{e'} = P_{o}$ where the integer pair
$P_{o}$ is the sum of the pairs from the elements that comprise $o'$s partition subset.

\item
If $o$ is a trivalent vertex with incident edges $e$, $e'$ and $e''$, then $Q_e - Q_{e'} - Q_{e''}=0$ given that
the angle labels of the vertices opposite $o$ on $e'$ and $e''$ lie on the same side of the angle
that labels $o$ in $(0, \pi )$.
\end{itemize}
\end{constraint}

With the collection $\{Q_e\}$ now defined, here is the third
constraint on the angle assignments to the vertices in $T$:

\begin{constraint}\setobjecttype{Con}\label{cons:3}
 Let $e$ denote any given edge of $T$ and let $\theta_{o} < \theta
_{1}$ denote the angles that are assigned the vertices on $e$. Then
\begin{equation*}
q_e'(1-3\cos^2 \theta )- q_e\surd 6\cos (\theta )     \ge  0
\end{equation*}
at all $\theta      \in  [\theta_{o}, \theta_{1}]$ with equality if and only if $\theta$ is either $\theta
_{0}$ or $\theta_{1}$ and the corresponding vertex is monovalent with label $(0,-,\ldots)$ from $\hat{A}$.
\end{constraint}

A graph $T$ that obeys all of the preceding constraints is said here to be a
`moduli space graph' for $\hat{A}$ .

Two moduli space graphs are said here to be isomorphic when there is an
isomorphism of the unlabeled graphs that preserves the labelings of the
vertices and edges.

To explain the relevance of such a graph to $\mathcal{M}_{\hat{A}}$, remember
that any $C  \in     \mathcal{M}_{\hat{A} }$ defines a graph $T_C$ as described
in \refsteps{2.G}{Part 4} of \fullref{sec:2g}. In particular, if $\mathcal{M}_{\hat{A}}$ is non-empty,
then according to Propositions~\ref{prop:2.12} and~\ref{prop:2.13}, the version of $T_C$ that is
defined by any sufficiently generic choice of $C  \in     \mathcal{M}_{\hat{A}}$
defines just such a moduli space graph for a particular choice of $\wp $. To
elaborate, the aforementioned propositions guarantee an open and dense
subset in $\mathcal{M}_{\hat{A}}$ whose subvarieties have the following
property: All of the critical points of the function $\cos(\theta )$ on the
corresponding model curve are non-degenerate, and there are $N_{ - }+\hat
N+c_{\hat{A}}-2$ non-extremal critical values with no two identical and
none equal to an $|s|      \to     \infty $ limit of $\theta$. If
$C$ is such a generic subvariety, then the corresponding graph $T_C$ has
only monovalent, bivalent and trivalent vertices. The angles of the vertices
satisfy the requirements in \fullref{cons:1}. Meanwhile, the requirements of
\fullref{cons:2} are also met when the partition assigned to any given bivalent
vertex consists of the 4--tuples in $\hat{A}$  that label those ends of the
subvariety that contain the very large $|s| $ parts of the
corresponding components of the locus $\Gamma$. As explained in \fullref{sec:2g},
the third constraint is met automatically.

Henceforth $T_C$ denotes the just described moduli space version of the
graph from \refsteps{2.G}{Part 4} of \fullref{sec:2g} in the case that $C  \in \mathcal{M}_{\hat{A}}$
satisfies the stated genericity requirement.

As just indicated, if $\mathcal{M}_{\hat{A}}$ is non-empty, then it has a moduli
space graph. The theorem that follows states this fact and its converse:

\begin{theorem}\label{thm:3.1}
The space $\mathcal{M}_{\hat{A}}$ is non-empty if and only if $\hat{A}$  has a
moduli space graph. Moreover, if $T$
is a moduli space graph, then there is a subvariety in $\mathcal{M}_{\hat{A}}$ whose version of $T_{(\cdot)}$
is isomorphic to $T$.
\end{theorem}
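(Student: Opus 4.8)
The plan is to prove the two implications of the theorem separately: the forward direction (that a nonempty $\mathcal{M}_{\hat{A}}$ forces $\hat{A}$ to have a moduli space graph) is essentially a reading of what precedes, while the converse, together with the ``moreover'' clause, is the substance of the remainder of \fullref{sec:3} and of \fullref{sec:4}. For the forward direction, Propositions~\ref{prop:2.12} and~\ref{prop:2.13} supply an open dense subset of $\mathcal{M}_{\hat{A}}$ whose members have only nondegenerate critical points of $\cos\theta$, with $N_{-}+\hat{N}+c_{\hat{A}}-2$ distinct non-extremal critical values, none of which is an $|s|\to\infty$ limit of $\theta$. For such a $C$ the graph $T_{C}$ of \fullref{sec:2g} has only monovalent, bivalent and trivalent vertices, and one checks Constraints~\ref{cons:1}--\ref{cons:3} by inspection: \fullref{cons:1} because the critical values are distinct and avoid the limit angles, which is precisely the content of \fullref{sec:2e}; \fullref{cons:2} because the edge labels are the periods of $\frac{1}{2\pi}dt$ and $\frac{1}{2\pi}d\varphi$ over $\theta$--level circles, so that Stokes' theorem on the cylinders comprising $C_{0}-\Gamma$ ties them to the 4--tuples of $\hat{A}$, the partition at a bivalent vertex being the 4--tuples of the ends carrying the large--$|s|$ part of the corresponding component of $\Gamma$; and \fullref{cons:3} because $d\alpha$ is nonnegative on $J'$--holomorphic planes and vanishes only on $\spann\{\partial_{s},\hat\alpha\}$, which via~\eqref{eq:2.28} forces $\alpha_{Q_{e}}\ge0$ on each edge with equality exactly at a $(0,-,\ldots)$ monovalent vertex.

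For the converse, fix a moduli space graph $T$. The strategy is to build an initial immersed, symplectic multi--punctured sphere $C_{*}$ modeled on $T$ and carrying \emph{exactly} the large--$|s|$ asymptotics prescribed by $\hat{A}$, and then to deform $C_{*}$ to a member of $\mathcal{M}_{\hat{A}}$. Over the interior of an edge $e$ with vertex angles $\theta_{0}<\theta_{1}$ one uses a map of the form~\eqref{eq:2.25} with integer pair $Q_{e}$; \fullref{cons:3} makes $\alpha_{Q_{e}}>0$ on $(\theta_{0},\theta_{1})$, so for any choice of $(a,w)$ the map immerses, and by~\eqref{eq:2.28} its image is symplectic, hence tamed by the relevant $d(e^{-rs}\alpha)$. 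One then picks $(a,w)$ near the vertex circles to fit the edge pieces together: at a trivalent vertex the three incident edges are glued across a single nondegenerate critical point of $\theta$ in figure--eight fashion, which is possible precisely because \fullref{cons:2} gives $Q_{e}-Q_{e'}-Q_{e''}=0$ so that the level--set classes cancel, and \fullref{cons:1} places the vertex angle strictly between the opposite ones; at a bivalent vertex the relation $Q_{e}-Q_{e'}=P_{o}$ merges the two edges into one end whose $|s|\to\infty$ limit angle is the vertex angle and whose 4--tuples are those of the partition subset; at a monovalent vertex one attaches the appropriate model end --- a half--cylinder of the form~\eqref{eq:1.12} when the vertex angle is $0$ or $\pi$ with a $(\pm1,\ldots)$ label, a transverse local intersection with the $\theta=0$ or $\pi$ cylinder when the label is $(\pm1)$, and a convex--side end with $n=0$ in~\eqref{eq:2.17} when the label is a $(0,-,\ldots)$ element, for which \fullref{cons:3} is an equality. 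Summing \fullref{cons:2} over the contractible graph $T$ reproduces the homological identities~\eqref{eq:1.15}, so $C_{*}$ is genuinely a multi--punctured sphere whose ends are in bijection with $\hat{A}$.

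Granted $C_{*}$, one invokes the deformation theorem occupying the remainder of \fullref{sec:3}: an immersed symplectic multi--punctured sphere that is $C^{\infty}$--close to $J'$--holomorphic and carries the large--$|s|$ asymptotics of an element of $\mathcal{M}_{\hat{A}}$ can be isotoped --- by a contraction--mapping (Newton iteration) argument built on a right inverse to the Fredholm operator $D_{C_{*}}$ of~\eqref{eq:2.5}, which takes the explicit form~\eqref{eq:2.9} on the ends and whose cokernel vanishes by the degree--counting arguments proving Propositions~\ref{prop:2.7} and~\ref{prop:2.9} --- to an honest $J'$--pseudoholomorphic $C\in\mathcal{M}_{\hat{A}}$. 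Because this isotopy can be taken small on compact sets with uniform exponential control on the ends, the nondegenerate critical points of $\theta$ and the combinatorial type of the level--set locus $\Gamma$ survive it, while the edge labels $Q_{e}$, the vertex angles, and the bivalent partitions are locally constant along it; hence $T_{C}$ is isomorphic to $T$, which is the ``moreover'' clause.

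The step I expect to be the main obstacle is the construction of $C_{*}$ near the multivalent vertices: assembling the edge cylinders~\eqref{eq:2.25} across the $\theta$--critical points (trivalent vertices) and across the interior--limit ends (bivalent vertices) into a \emph{globally} immersed, symplectic surface with exactly the required decay~\eqref{eq:1.8}, \eqref{eq:1.12}, \eqref{eq:2.17} on every end --- not merely the right topology. The integer bookkeeping of \fullref{cons:2} is what makes the gluings homologically possible, and the angle ordering of \fullref{cons:1} is what forces the critical points that appear to be of figure--eight type and to be extrema of $\theta$ only where $\theta\in\{0,\pi\}$; turning these combinatorial facts into an honest smooth, asymptotically correct symplectic surface is the delicate part. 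A secondary subtlety is that the deformation of \fullref{sec:3} must not collide two critical values of $\theta$ nor slide one onto an asymptotic limit angle, so that $C$ remains generic enough for $T_{C}$ to equal $T$ on the nose; this should follow from the smallness of the deformation, since $T$ is already of generic type.
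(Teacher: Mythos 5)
Your forward direction is a faithful reading of the paper: Propositions~\ref{prop:2.12} and~\ref{prop:2.13} give a generic $C$ whose graph $T_C$ is a moduli space graph, with \fullref{cons:3} enforced by positivity of $d\alpha$ on $J'$--holomorphic planes via~\eqref{eq:2.28}, and \fullref{cons:2} by Stokes' theorem on the pieces of $C_0-\Gamma$. That matches what the paper does.

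The converse is where you have a genuine gap. You write that the constructed immersed surface $C_{*}$ is ``$C^{\infty}$--close to $J'$--holomorphic'' and can therefore be corrected by a single contraction--mapping step using a right inverse to $D_{C_{*}}$. But $C_{*}$ is \emph{not} close to $J$--holomorphic, and the almost complex structure $J'$ that the contraction in \fullref{lem:3.3} produces is only an \emph{admissible} one, constructed ad hoc to make $C_{*}$ approximately holomorphic; it may be far from $J$ on a compact set. So the contraction argument lands you in the $J'$ version of $\mathcal{M}_{\hat{A}}$, not the $J$ version. The paper's actual route from there is a continuation argument (\fullref{sec:3d}): pick a path $\{J^{a}\}_{a\in[0,1]}$ of admissible almost complex structures from $J'$ to $J$ and try to follow a path $\{C^{a}\}$ of $J^{a}$--holomorphic subvarieties. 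Openness is \fullref{prop:3.4} (implicit function theorem, using the automatic regularity of Propositions~\ref{prop:2.7} and~\ref{prop:2.9}). Closedness is the real work: you need a Gromov/HWZ--type compactness result (\fullref{prop:3.7}) for sequences of punctured pseudoholomorphic curves in the symplectization, and then a lengthy argument (\fullref{prop:3.8}, occupying Sections~\ref{sec:3e}, \ref{sec:3f}, \ref{sec:3g}) that the limit data set $\Xi$ is a \emph{single} subvariety $(S,1)$ with no multiplicity, no bubbling into $\mathbb{R}$--invariant cylinders, and graph $T_S$ isomorphic to $T$. None of this is captured by ``the isotopy can be taken small,'' because the total deformation from $J'$ to $J$ is not small.

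A closely related omission: the paper continues inside the constrained submanifold $\mathcal{M}[\Theta,\vartheta]$ of \fullref{prop:2.12} precisely to freeze the critical values of $\theta$ and the $\iota_E$ parameters of the concave ends along the homotopy, which is what prevents the graph type from jumping. Your remark that the deformation ``must not collide two critical values'' identifies the right worry, but attributing its resolution to smallness of the deformation does not work for the $J'$--to--$J$ continuation; the paper instead tracks $\mathcal{M}[\Theta,\vartheta]$ and shows (in \fullref{prop:3.8}) that the compactness limit stays in it. Also, the operator $D_{C_{*}}$ for the initial immersed $C_{*}$ can have nontrivial cokernel --- that is why~\eqref{eq:3.10} carries the extra finite--dimensional correction term $\hat\imath(\lambda)$ --- so appealing to ``the degree--counting arguments proving Propositions~\ref{prop:2.7} and~\ref{prop:2.9}'' to kill the cokernel is premature; those apply to genuine pseudoholomorphic subvarieties, not to $C_{*}$ itself.

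Your instinct that constructing $C_{*}$ near the multivalent vertices is delicate is correct, and the paper indeed spends all of \fullref{sec:4} on the explicit choice of $\{(a_e,w_e)\}$; but the harder conceptual obstacle, which your proposal elides, is the compactness/continuation argument from $J'$ to $J$.
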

The criteria here for $\mathcal{M}_{\hat{A}}$ were suggested by observations of
Michael Hutchings.

The proof starts by assuming the existence of a moduli space graph for $\hat{A}$
and ends with the conclusion that $\mathcal{M}_{\hat{A}}$ is non-empty. This
proof occupies the remainder of \fullref{sec:3} and all of \fullref{sec:4}.

\subsection{Parametrizations of cylinders in $\mathbb{R} \times (S^{1}\times S^{2})$}\label{sec:3b}

As remarked above, the graph $T$ is used as a blue-print for the construction
of a properly immersed, multi-punctured sphere in $\mathbb{R} \times (S^{1} \times S^{2})$.
All of the monovalent vertices with 4--tuple
labels from $\hat{A}$  correspond to ends of the surface; here, the label from
$\hat{A}$  on a given vertex is used to specify the asymptotic behavior of the
corresponding end. The monovalent vertices with either (1) or $(-1)$ labels
correspond to intersection points between the surface and the respective
$\theta =0$ or $\theta =\pi$ cylinders. Meanwhile, the trivalent
vertices label the index one critical points of the restriction to the
surface of the function $\cos(\theta )$. The angle label of a vertex gives
the value of $\theta$ at the corresponding critical point. The only local
maxima or minima of $\cos(\theta )$ on the surface are relegated to the
intersection with the respective $\theta =0$ and $\theta =\pi$
cylinders.

Meanwhile, any given edge of $T$ labels an open, cylindrical component of the
surface where $\theta$ ranges between the values given by the edge's end
vertices. The ordered integer pair that is associated to the edge specifies
the respective integrals of $\frac{1}{2\pi }dt$ and
$\frac{1}{2\pi }d\varphi$ around any constant $\theta$ slice
as oriented by $(1-3\cos^2\theta ) d\varphi -\surd 6\cos\theta
dt$. The incidence relations at the vertices direct the manner in which the
edge labeled cylinders attach to form a closed surface. In this regard, the
closure of any component cylinder whose edge label ends in a (1) or $(-1)$
labeled monovalent vertex is a disk that intersects the respective $\theta =0$ or $\theta =\pi$ locus.

The basic building blocks for the surface are thus the edge labeled
cylinders. When $e  \subset T$ denotes an edge, its corresponding cylinder
is denoted by $K_e$. Let $o$ and $o'$ denote the angles that label the end
vertices of $e$ with the convention that the $\theta$--label of $o$ is less than
that of $o'$. These labels are respectfully denoted here by $\theta_{o}$
and $\theta_{o'}$.

The parametrization of $K_e$ is via a map from its `parametrizing
cylinder', this being the interior of $[\theta_{o}, \theta_{o'}]
\times     \mathbb{R}/(2\pi \mathbb{Z})$. With $\sigma$ denoting the
coordinate on $[\theta_{o}, \theta_{o'}]$ and $v$ an affine coordinate
on $\mathbb{R}/(2\pi \mathbb{Z})$, the parametrizing map can be written using
two functions on the open cylinder, $(a_e, w_e)$. To be precise, the
parametrizing map sends any given point $(\sigma , v)$ to the $Q  \equiv
Q_e$ and $(a, w)  \equiv (a_e, w_e)$ version of~\eqref{eq:2.25}. Thus,
\begin{multline}\label{eq:3.2}
\bigl(s = a_{e}, \
t = q_e v + (1 - 3\cos^{2}\sigma ) w_{e} \mod(2\pi ),\\
\theta =\sigma,\ \varphi  = q_e'v + \surd 6\cos\sigma
w_{e} \mod(2\pi )\bigr)
\end{multline}
Unless specified to the contrary, a `parametrization' is one given as in~\eqref{eq:3.2}.

For future reference, note that the 2--form $d\sigma      \wedge  dv$ orients
$K_e$. Also, note that the map into $\mathbb{R}    \times  (S^1    \times
S^2)$ as defined by~\eqref{eq:3.2} defines an immersion. As is explained next,
this is a consequence of the positivity of the $Q = Q_{e}     \equiv
(q_e, q_e')$ version in~\eqref{eq:2.27} of the function $\alpha_{Q}$. To
see why, first let $\phi$ denote the parametrizing map to $\mathbb{R}    \times  (S^1    \times  S^2)$
and reintroduce the contact 1--form $\alpha $
from~\eqref{eq:1.1} and~\eqref{eq:1.2}. Then $\phi^*d\alpha $ is the 2--form that appears in~\eqref{eq:2.28}.
Granted that such is the case, the rank of $\phi^*$ is two on the
parametrizing cylinder.

The next section provides a specific version of each $(a_e, w_e)$.
These versions are chosen to meet the following five criteria:
\itaubes{3.3}
\textsl{The collection $\{(a_e, w_e)\}_{e \subset T}$ are constrained near the boundaries of their
corresponding parametrizing cylinders so as to insure that the closure of $ \cup_{e \subset T}
K_e$ is the image in $\mathbb{R}    \times  (S^1    \times S^2)$ via a proper immersion of an oriented,
multiply punctured sphere.
}

\item \textsl{The singularities of this immersion are transversal double points with positive local intersection number.
}

\item \textsl{The critical points on the multi-punctured sphere of the pull-back of $\theta$ are non-degenerate,
the 1--form $J\cdot d\theta$ pulls back as zero at these critical points, and the symplectic form
$\omega $ pulls back as a positive form at these critical points. Moreover,
no critical point maps to an immersion point of $C$.
}

\item \textsl{The subvariety has the asymptotics of a subvariety from $\mathcal{M}_{\hat{A}}$.
}

\item \textsl{The level sets of the pull-back of $\theta$ to the multiply punctured sphere defines a
moduli-space graph that is isomorphic to the given graph $T$.
}
\end{itemize}
The precise meaning of the fourth point is given in the definition that
follows.

\begin{deff}
\label{deff:3.2}

A subvariety $C  \subset     \mathbb{R}    \times  (S^1\times  S^2)$ is said to have the
asymptotics of a subvariety from $\mathcal{M}_{\hat{A}}$ when the requirements listed below are met.

\item{{\rm Requirement 1:}}
There is a compact subset in $C$ whose complement is a disjoint union
of embedded cylindrical submanifolds in $\mathbb{R} \times(S^1 \times S^2)$ that are
in 1--1 correspondence with the elements in $\hat{A}$. Such a cylinder is called an `end' of $C$.

\item{{\rm Requirement 2:}}
Let $E  \subset  C$ denote any given end and let $(\delta , \varepsilon , (p, p'))$ denote its
label from $\hat{A}$. Then the following conditions are satisfied:

\begin{enumerate}
\item[(a)] The function $s$ restricts to $E$ as a smooth function without critical points with
$\varepsilon s$ bounded from below on the closure of $E$. Moreover, the 1--form
$\alpha $ in~\eqref{eq:1.1} has nowhere zero pull-back on each constant $|s| $ slice of $E$.

\item[(b)] The restriction to $E$ of the function $\theta$ has a unique $|s|      \to     \infty $
limit; and the latter equals $0$ when $\delta  = 1$, it equals $\pi$ when $\delta $ is $-1$,
and it is given by $P$ via~\eqref{eq:1.7} when $\delta =0$.
Moreover, when $\delta =\pm 1$, this convergence makes the
$\kappa      \equiv   | \delta \sqrt{
\frac{3}{2}} +\frac{p' }{p}|$ version of $e^{\kappa | s| } \sin\theta$ converge to a
unique limit as $|s|      \to     \infty $.

\item[(c)] The integers $p$ and $p'$ are the respective integrals of $\frac1 {2\pi }dt$
and $\frac1 {2\pi
}d\varphi$ about any
given constant $|s| $ slice of $E$ when the latter is oriented by the 1--form $\alpha $.

\item[(d)] Any given anti-derivative on $E$ for the restriction of the 1--form $p'dt - pd\varphi$
has a unique $|s|      \to     \infty $ limit.

\item[(e)] Let $N_{E}     \to  E$ denote the normal bundle to $E$. Define
$\prod_{J}\co  TE  \to N_{E}$ to be the
composition of the map $J\co  TE  \to  T(\mathbb{R}    \times  (S^1    \times S^{2}))|
_{E}$ with the projection to $N_{E}$. Define the norm of $\prod_{J}$, the covariant derivative of
$\prod_{J}$, and the latter's norm using the metrics and connections on $TE$ and $N_{E}$ that
are induced by the metric on $\mathbb{R}    \times  (S^1    \times S^{2})$. Then the norm of
$\prod_{J}$ and that of its covariant derivative limit to zero as $|s|      \to \infty $ on $E$.
\end{enumerate}
\end{deff}

As is explained subsequently, a collection of pairs $\{(a_e,w_e)\}_{e \subset T}$
that meet the criteria in~\eqreft33 will serve as a
starting point for the deformation to a pseudoholomorphic subvariety. The
remainder of this section assumes that such a collection has been specified.

\subsection{A preliminary deformation}\label{sec:3c}

Let $C$ denote the closure of $ \cup_{e} K_{e}$ as defined using the
given collection of pairs, $\{(a_{e}, w_{e})\}$. This
subsection begins the construction a family of deformations of $C$ whose end
member is a subvariety in $\mathcal{M}_{\hat{A}}$. In particular, a preliminary
deformation of $C$ is constructed here so that the result is pseudoholomorphic
with respect to an admissible almost complex structure.

To start the task, return to the observation that $d\alpha $ pulls back as a
non-zero 2--form to any given version of $K_{e}$. This, being the case, it
follows that $d\alpha $ is non-negative on $TC$. With the third point in~\eqreft33,
the last observation has the following consequence: There exists $r >
0$ such that the symplectic form $d(e^{-rs}    \alpha )$ is uniformly
positive on $TC$. This is to say that its pull-back to the multi-punctured
sphere is a multiple of the induced area form that is positive and uniformly
bounded away from zero. Indeed, by virtue of the third point, any positive $r$
version of this form is positive on $TC$ near the images of the critical
points of the pull-back of $\theta$. Meanwhile, the form is positive for
small $r$ on any given compact subset in the complement of these same critical
points. At large $|s| $ on any given end of $C$, both $-ds  \wedge \alpha $ and
$d\alpha $ are positive.

Now, specify a positive real number, $\varepsilon $. Granted the second and
fourth points of~\eqreft33, there are standard constructions that provide
$\mathbb{R}    \times  (S^1    \times S^{2})$ with an almost complex structure,
$J_{0}$, with the following properties:
\itaubes{3.4}
The subvariety $C$ is $J_{0}$--pseudoholomorphic.

\item
$J_{0} = J$ where the distance to $C$ is greater than $\varepsilon $.

\item
$J_{0}\partial_{s}=\frac{1}{(1 + 3\cos ^4\theta )^{1 /2}}\hat {\alpha }$ where the distance
to any singular point of $C$ or critical point of the restriction of $\theta$ is greater than
$\varepsilon$.

\item
Both $J_{0}- J$ and its covariant derivative converge uniformly with limit zero as
$|s| \to \infty $ on $\mathbb{R} \times  (S^1    \times S^{2})$.

\item
All sufficiently small but positive $r$ versions of $d(e^{-rs}    \alpha )$ tame $J_{0}$.
\end{itemize}
Having specified $J_{0}$, fix a Riemannian metric on $\mathbb{R} \times(S^1 \times S^2)$
to be called $g_0$, one with the following
three properties: First $J_{0}$ acts as a $g_0$--isometry. Second, $g_0$
converges uniformly as $|s|      \to     \infty $ to the metric
$ds^2 + dt^2 + d\theta ^2 + \sin^2\theta  d\varphi^2$,
and its covariant derivative (as defined by the latter metric)
converges uniformly to zero as $|s|      \to     \infty $. Finally,
$g_0$ agrees with the latter metric where $J_0 = J$.

The almost complex structure $J_{0}$ would be admissible in the sense given
prior to \fullref{deff:2.1} were the third point to hold on the whole of
$\mathbb{R} \times  (S^1    \times S^{2})$, and were $J_{0}$ and $J$ to agree
on the nose outside of some compact subset of $\mathbb{R}    \times (S^1
\times S^{2})$. This part of the subsection describes how to move $C$
slightly so that the result is an immersed subvariety that is
pseudoholomorphic for an admissible almost complex structure.
The
construction of such a deformation is done in three steps. A fourth step
explains how this can be done so that the version of \fullref{sec:2g}'s graph
$T_{(\cdot )}$ for the resulting subvariety gives the starting moduli
space graph $T$.

\step{Step 1}
This first step specifies $J_{0}$ in a more precise manner near the images
of the critical points of the pullback of $\theta$. To start, note that if
$z  \in  C_{0}$ is a critical point of the pullback of $\theta$, then
there is a small radius, embedded disk, $D  \subset     \mathbb{R}    \times
(S^{1}    \times  S^{2})$, that is contained in $C$ and centered at the
image of $z$. By virtue of the third point in~\eqreft33, the vectors $\partial
_{s}$ and $\hat {\alpha }$ span $TD$ at the image of $z$. Thus, $J_{0}$ must
map one to a multiple of the other at this particular point. This being the
case, there is a constant, $c$, with the following significance: For all
sufficiently small yet positive $\varepsilon $, an almost complex structure
$J_{0}$ can be found that obeys~\eqreft34 and is such that
\begin{equation}\label{eq:3.5}
| J-J_{0}|  < C \varepsilon  \qquad\text{and} \qquad | \nabla
(J-J_{0})|  < C
\end{equation}
at points with distance $\varepsilon $ or less from the image in $\mathbb{R}
\times  (S^{1}    \times S^{2})$ of any critical point of the pullback
of $\theta$.

\step{Step 2}
This step modifies both $C$ and $J_{0}$ near the singular points of $C$ so
that~\eqref{eq:3.5} is also obeyed at each of the latter points. To explain how this
is done, let $D  \subset C$ for the moment denote an embedded disk whose
closure is disjoint from the image of any critical point of the pullback of
$\theta$. Let $z$ denote the center point of $D$. Fix some $J$--pseudoholomorphic
disk, $D'  \subset     \mathbb{R}    \times  (S^{1}    \times S^{2})$, with
center $z$ whose tangent space at $z$ is spanned by $\partial_{\theta }$ and
$J\cdot \partial_{\theta }$. Having chosen such a disk, there exists
$\rho  > 0$ and complex coordinates $(x, y)$ centered at $z$, defined for $|
x|  < \rho $ and $| y|  < \rho $, such that $D'$ is the $y
= 0$ disk, and such that $\partial_{s}$ and $\hat {\alpha }$ are tangent
to each $x = \constant$ disk. Thus $\theta$ is constant on each of the latter.
In these coordinates, the disk $D$ can be viewed as the image of a
neighborhood of the origin in $\mathbb{C}$ to $\mathbb{C}^{2}$ that maps
the complex coordinate $u$ on $\mathbb{C}$ as
\begin{equation}\label{eq:3.6}
u  \to  \big(x = u, y = a u + b \bar {u} + o(| u| ^{2})\big),
\end{equation}
where $a$ and $b$ are complex numbers. Note that the $x$--coordinate of the map can
be defined in this way by virtue of the fact that $\theta$ is a function
only of $x$.

Consider now deforming $D$ in a manner that will now be described. To start,
pick some small, positive $\delta $ with the property that the disk of
radius 4$\delta $ in $\mathbb{C}$ is mapped via~\eqref{eq:3.6} some distance from the
boundary of $D$. Let $\beta $ denote a favorite, smooth function on $[0,
\infty )$ that is identically $1$ on $[0, 1]$, vanishes on $[2, \infty )$ and
is non-increasing. With $\beta $ chosen, consider the deformed disk,
$D(\delta )$, that is defined by the image of the map
\begin{equation}\label{eq:3.7}
u  \to  \bigr(x = u, y = a u + \beta \bigl(\tfrac1 {\delta }|
u| \bigr) b\bar {u} + o(| u| ^{2})\bigl).
\end{equation}
The image of this new map agrees with the old where $|u|  >2\delta $.
The new subvariety will be immersed and pseudoholomorphic for an
almost complex structure that also obeys the constraints on $J_{0}$ in~\eqreft34.
However, such an almost complex structure exists that agrees with $J$
near the point $z$.

Now suppose that $z$ is a singular point of $C$. Thus, there are two versions of
$D$ with center at $z$, these now denoted by $D_{1}$ and $D_{2}$. No generality
is lost by assuming here the respective closures of $D_{1}$ and $D_{2}$ are
disjoint save for the shared point $z$. Each such disk is described by a map
as in~\eqref{eq:3.6} using respective $(a_1, b_1)$ and $(a_2, b_2)$
versions of the pair $(a, b)$ of complex numbers. For sufficiently small
$\delta $, each of $D_{1}$ and $D_{2}$ has their corresponding deformation
as given in~\eqref{eq:3.7}. The claim here is that the resulting disks, $D_{1*}$ and $D_{2*}$,
intersect only at z, transversely, and with
positive intersection number. To explain, remark that any point in
$D_{1*}     \cap D_{2*}$ is the image of a point $u \in  \mathbb{C}$
where
\begin{equation}\label{eq:3.8}
(a_{1} - a_{2}) u = (b_{1} - b_{2})    \beta \bar {u} + o(|
u| ^{2}) .
\end{equation}
Since $0  \le     \beta      \le 1$, this can happen at non-zero $u$ when
$\delta $ is small only if $| b_1- b_2|      \ge     | a_1- a_2| $.
However, the latter inequality is forbidden by
the fact that the $D_{1}$ and $D_{2}$ have transversal intersection at $z$
with positive intersection number.

Thus, the new disks, $D_{1*}$ and $D_{2*}$, intersect
transversely only at $z$ with positive intersection number. Moreover, each is
$J$--pseudoholomorphic at $z$. This understood, if such a deformation is made for
each singular point of $C$, then the result is the image (henceforth named $C$)
of $C_{0}$ via an immersion that is pseudoholomorphic for a new version of
the almost complex structure $J_{0}$, one that obeys~\eqreft34 and also obeys~\eqref{eq:3.5}
at each of the singular points of the immersion and at the image of
each of the critical points of the pull-back of $\theta$.

\step{Step 3}
At this point, the stage is set to deform the newest version of $C$ so that
the result is pseudoholomorphic for an admissible almost complex structure.

To begin describing the latter deformation, let $\phi \co  C_{0}     \to
\mathbb{R}    \times  (S^1    \times  S^{2})$ now denote the tautological
immersion with image $C$. Introduce the bundle $N  \to C_{0}$ to
denote the pull-back normal bundle; this defined so that the fiber over any
given $z  \in C_{0}$ is the $g_0$--normal bundle at $\phi (z)$ to the
$\phi$ image of any given sufficiently small radius disk in $C_{0}$ with $z$
as center. As in the case for $J$--pseudoholomorphic subvarieties, the almost
complex structure $J_{0}$ and the metric $g_0$ together endow $N$ with the
structure of a complex line bundle with a Hermitian and thus holomorphic
structure. In addition, there exists $\delta  > 0$, a disk subbundle,
$N_{1}     \subset  N$ of $g_0$--radius $\delta $, and an `exponential' map
$e\co  N_{1}     \to     \mathbb{R}    \times (S^{1}    \times S^{2})$
with the following properties: First, $e$ is an immersion that restricts to
the zero section as the map $\phi$. Second, $e$ is a $g_0$--isometry along
the zero section. Third, the differential of $e$ is uniformly bounded.
Finally, $e$ embeds each fiber disk in $N_{1}$ as a $J_{0}$--pseudoholomorphic
disk.

With $e$ chosen, there exists some $\delta_{0}     \in (0, \delta
)$ with the following significance: If $\eta$ is a section of
$N_{1}$ with suitable decay at large $|s| $, and if both $|
\eta | $ and $| \nabla \eta | $ are both everywhere
less than $\delta_0$, then $\phi '  \equiv  e \circ \eta $
will immerse $C_{0}$ as a pseudoholomorphic subvariety for a complex
structure, $J_{0}'$, that also obeys the constraints in~\eqref{eq:3.5}. Moreover, if
the norms of $| \eta | $ and $| \nabla \eta | $
are small, then the critical values and critical points of the pullback of
$\theta$ via the new immersion will hardly differ from those of the
original. This is an important point in subsequent arguments, so keep it in
mind.

In any event, the plan is to find such a section $\eta $ with a
corresponding $J_{0}'$ that is admissible. For this purpose, remark that
there exists a constant, $c$, and, given some very small, but positive
$\constant \varepsilon $, there exists an admissible complex
structure, $J'$, that has the following properties:
\itaubes{3.9}
\textsl{$J' = J_{0}$ except where $|s|  > \frac{1}{\varepsilon }$ and where the distance to any
singular point of $C$ or image of a critical point of the pullback of $\theta
$ is less than $\varepsilon$.
}

\item \textsl{$| J_{0}-J'|  < c | J-J_{0}| $ and $| \nabla (J-J')|
\le c (| \nabla (J-J_{0})| +|J-J_{0}| )$ where
$|s|  > \frac1 {\varepsilon }$.
}

\item \textsl{$| J_{0}-J'|      \le  c\cdot     \varepsilon $ and $| \nabla (J_{0}-J')|      \le  c$
where the distance is less than $\varepsilon $ to any singular point of $c$ or to the image
of any critical point $\theta$'s pullback.
}
\end{itemize}

With $\varepsilon $ now chosen very small (an upper bound appears below),
fix an admissible $J'$ that obeys the constraints in~\eqreft39. This done, the
plan here is to search for an immersion, $\phi '\co  C_{0}     \to     \mathbb{R}
\times  (S^{1}    \times S^{2})$, whose image is a $J'$--pseudoholomorphic
subvariety. Thus, $\prod J'\cdot d\phi ' = 0$, where $\prod $ is the
projection to the normal bundle of the immersion. The sought for deformation
of $C$ is obtained by composing e with a suitable section of the bundle
$N_{1}$. In particular, if $\eta $ is a section of $N_{1}$, then the
condition on $\eta $ can be written schematically as
\begin{equation}\label{eq:3.10}
D_{C}\eta +\mathcal{R}_{0}(\eta )+\mathcal{R}_{1}(\eta )\cdot
\partial \eta +\gamma (\eta ) + \hat{\imath} = 0,
\end{equation}
where the notation is as follows: First, $D_{C}$ is the $(C, J_{0})$ version
of the operator that is depicted in~\eqref{eq:2.5}  while $\mathcal{R}_{0}$ and $\mathcal{R}_{1}$
are the $(C, J_{0})$ versions of their namesakes from~\eqref{eq:2.3} and~\eqref{eq:2.5}.
Meanwhile, $\gamma$ is a smooth, fiber preserving map from $N_{1}$
to $N_{1}     \otimes  T^{0,1} C_{0}$ that obeys
\begin{equation}\label{eq:3.11}
| \gamma (\eta )|      \le  c'| J'-J_{0}|
(1+| \nabla \eta | )+| \nabla (J'-J_{0})|
| \eta |  ,
\end{equation}
where $c'$ is a constant that can be taken to be independent of the choice of
$\varepsilon $ and $J'$. Note that $\mathcal{R}_{0}$, $\mathcal{R}_{1}$ and
$\gamma$ are defined on some small, positive and constant radius disk
subbundle of $N$, and the latter can be taken equal to $N_{1}$ with no loss of
generality. Finally, $\hat{\imath}$ is a linear map from a certain finite
dimensional vector subspace of $C^{\infty }(N  \otimes  T^{0,1}C)$ back
into the latter space whose image has compact support where the distance to
any singular point of $C$ or image of a critical point of $\theta$'s
pull-back is large. The form of $\hat{\imath}$ is described momentarily.

The operator $D_{C}$ in~\eqref{eq:3.10} has the same sort of Fredholm extension as a
bounded linear operator between the $C$ versions of the range and domain
spaces that appear in~\eqref{eq:2.7}. Note that the index of this Fredholm version of
$D_{C}$ is the integer $\hat{I}$ in~\eqref{eq:2.2}. In the present context, it may well
be the case that $D_{C}$ has a non-trivial cokernel.

If $\cokernel(D_{C})=0$, then $\hat{\imath}$ in~\eqref{eq:3.10} can be discarded. If
$\cokernel(D_{C})$ has positive dimension, then $\hat{\imath}$ is necessary. To
elaborate, $\hat{\imath}$ can be any linear map from $\cokernel(D_{C})$ into
$C^{\infty }(N  \otimes  T^{0,1}C_{0})$ with the following properties:
First, the support of the image of $\hat{\imath}$ is compact and with all points in
the support mapped to points with distance at least one from singular point
of $C$ or image of a critical point of $\theta$'s pullback. Second, the
orthogonal projection in $D_{C}$'s range Hilbert space composes with $\hat{\imath}$
to give the identity map on $\cokernel(D_{C})$.

With the preceding understood, let $H_{R}$ denote the orthogonal complement
in the range space of $D_{C}$ to the $D_{C}$'s cokernel and let $\prod $
denote the orthogonal projection in this range Hilbert space onto $H_R$.
Meanwhile, use H$_{D}$ to denote the orthogonal complement in the domain
space of $D_{C}$ to its kernel. Note that $D_{C}$ restricts to $H_{D}$ to
define a bounded, invertible map onto $H_R$. The inverse of the latter map
is denoted below as $(D_{C})^{-1}$.

To continue, let $H'  \subset  H_{D}$ denote the subset of smooth elements
with pointwise norm no greater than half the radius of the disk
bundle $N_{1}$. Finally, define the smooth map $Y\co  H' \times
\cokernel(D_{C})     \to  H_{D}$ by the rule
\begin{equation}\label{eq:3.12}
Y(\eta , \lambda ) = -(D_{C})^{-1}\prod \big[\mathcal{R}_{0}(\eta )
+\mathcal{R}_{1}(\eta )\cdot \partial \eta  +
\gamma (\eta ) + \hat{\imath}(\lambda )\big].
\end{equation}
By design, if $\eta  = Y(\eta , \lambda )$, then $\eta $ solves~\eqref{eq:3.10}
provided that
\begin{equation}\label{eq:3.13}
\lambda    =    - \Big(1-\prod \Big)\big[\mathcal{R}_{0}(\eta )+\mathcal{R}_{1}(\eta )\cdot \partial \eta
+\gamma (\eta )\big].
\end{equation}
The existence of such a pair $(\eta , \lambda )$ is guaranteed when
$\varepsilon $ is very small. Indeed, the analysis used in
\cite[Section 3c and the proof of its Proposition~3.2]{T3} can be used here to construct a
version of the contraction mapping theorem to prove the following:

\begin{lemma}\label{lem:3.3}
Given small $\varepsilon ' > 0$, then all sufficiently small $\varepsilon $
versions of the fixed point equation $\eta  = Y(\eta , \lambda )$ have a unique solution with
$\lambda $ given by~\eqref{eq:3.13} and with $\varepsilon' $ bounding both the Hilbert space norm
and pointwise $C^{1}$--norm of $\eta $.
\end{lemma}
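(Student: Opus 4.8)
The plan is to realize $\eta = Y(\eta,\lambda)$ as a fixed point of a contraction on a closed ball in $H_D$, with $\lambda$ treated as a function of $\eta$ via the substitution \eqref{eq:3.13}. First I would fix the Hilbert space norm $\|\cdot\|$ from \eqref{eq:2.7} on $H_D$ and the corresponding range norm on $H_R$, and record the standing estimates I need: $(D_C)^{-1}\co H_R\to H_D$ is bounded, say with operator norm $c_0$; the quadratic nonlinearities obey the bounds in \eqref{eq:2.4}; the zeroth order term obeys \eqref{eq:3.11}; and, crucially, the map $\hat\imath$ has compact support, so that $\|\hat\imath(\lambda)\|\le c_1\|\lambda\|$ with $c_1$ independent of $\varepsilon$. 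The one new feature relative to the proof of \cite[Proposition~3.2]{T3} is the term $\gamma(\eta)$ and the auxiliary variable $\lambda$ determined by \eqref{eq:3.13}; everything else is a cosmetic rewrite of the argument there. The key smallness input is that, by \eqreft39, the quantities $|J'-J_0|$ and $|\nabla(J'-J_0)|$ that control $\gamma$ are pointwise $O(\varepsilon)$ on the compact region where $J'\ne J_0$, and $|J'-J_0|\le c\,|J-J_0|$ with $|J-J_0|$ and $|\nabla(J-J_0)|$ decaying at large $|s|$ by \eqreft34. Combined with the weight $e^r$ in \eqref{eq:2.7}, this gives $\|\gamma(\eta)\| \le c\varepsilon(1+\|\eta\|)$, and likewise a contraction estimate $\|\gamma(\eta_1)-\gamma(\eta_2)\|\le c\varepsilon\|\eta_1-\eta_2\|$ on the ball of radius $\varepsilon'$.

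Next I would close the loop on $\lambda$. Because the right side of \eqref{eq:3.13} is $(1-\prod)$ applied to the same nonlinear expression, one has $\|\lambda\| \le \|\mathcal{R}_0(\eta)\| + \|\mathcal{R}_1(\eta)\cdot\partial\eta\| + \|\gamma(\eta)\| \le c(\|\eta\|^2 + \varepsilon(1+\|\eta\|))$ on $H'$, and similarly $\lambda$ is Lipschitz in $\eta$ with small constant on the small ball; here I use the Sobolev multiplication estimates in the relevant weighted spaces to handle $\mathcal{R}_1(\eta)\cdot\partial\eta$, exactly as in \cite[Section~3c]{T3}. Substituting this into \eqref{eq:3.12} and using boundedness of $(D_C)^{-1}$ and of $\hat\imath$, one gets, for $\eta$ in the ball $B_{\varepsilon'}\subset H'$ and $\lambda=\lambda(\eta)$ defined by \eqref{eq:3.13},
\begin{equation*}
\|Y(\eta,\lambda(\eta))\| \le c_0 c\big(\varepsilon'^2 + \varepsilon(1+\varepsilon')\big)
\end{equation*}
and
\begin{equation*}
\|Y(\eta_1,\lambda(\eta_1))-Y(\eta_2,\lambda(\eta_2))\| \le c_0 c\big(\varepsilon' + \varepsilon\big)\|\eta_1-\eta_2\|.
\end{equation*}
Now one chooses $\varepsilon'$ small enough that $c_0 c\,\varepsilon' < 1/4$, and then, with $\varepsilon'$ fixed, chooses $\varepsilon$ small enough (this is the promised upper bound on $\varepsilon$) that $c_0 c\,\varepsilon < 1/4$ and $c_0 c(\varepsilon'^2+\varepsilon(1+\varepsilon')) < \varepsilon'$. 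With these choices $\eta\mapsto Y(\eta,\lambda(\eta))$ maps $B_{\varepsilon'}$ into itself and is a contraction there, so the Banach fixed point theorem produces a unique $\eta\in B_{\varepsilon'}$ solving $\eta=Y(\eta,\lambda(\eta))$, and with it the $\lambda$ from \eqref{eq:3.13}; by the design of $Y$ this pair solves \eqref{eq:3.10}.

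Finally I would upgrade the norm control from the Hilbert space norm to the pointwise $C^1$-norm. This is the standard elliptic bootstrap: $\eta$ satisfies the equation $D_C\eta = -\prod[\mathcal{R}_0(\eta)+\mathcal{R}_1(\eta)\cdot\partial\eta+\gamma(\eta)+\hat\imath(\lambda)]$, whose right side lies in the range Hilbert space with norm bounded by a constant times $\varepsilon'$; since $D_C$ is an elliptic first order operator with the symbol of $\bar\partial$, interior (and, on the ends, weighted) Schauder or $L^p$ estimates give $\eta\in C^1$ with $C^1$-norm bounded by a constant multiple of $\varepsilon'$ plus the Hilbert norm, which after possibly shrinking the constant absorbed into the choice of $\varepsilon'$ yields the stated $C^1$-bound by $\varepsilon'$. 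I expect the main obstacle to be purely bookkeeping: making sure the weighted Sobolev norm on the range is strong enough that $\gamma(\eta)$ lands in it despite the factor $|\nabla\eta|$ in \eqref{eq:3.11}, and that the $\varepsilon$-dependence of the various constants genuinely decouples so that $\varepsilon'$ can be fixed first and $\varepsilon$ chosen afterward — but all of this is a transcription of the corresponding argument in \cite[Section~3c]{T3}, with the single new term $\gamma(\eta)$ handled by the $O(\varepsilon)$ bounds of \eqreft39 and \eqreft34.
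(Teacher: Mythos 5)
Your proof is correct and is essentially the argument the paper has in mind; the paper simply delegates to the contraction mapping scheme in \cite[Section~3c]{T3}, and you have spelled it out faithfully, including the correct handling of the two new ingredients $\gamma(\eta)$ and $\hat\imath$. One small refinement: you locate the bookkeeping issue in the $|\nabla\eta|$ factor of \eqref{eq:3.11}, but that term carries the everywhere-$O(\varepsilon)$ factor $|J'-J_0|$ and is harmless; the delicate term is $|\nabla(J'-J_0)|\,|\eta|$, since by the third point of \eqreft39 one only has $|\nabla(J'-J_0)| = O(1)$ (not $O(\varepsilon)$) on the radius-$\varepsilon$ balls about the singular points and images of $\theta$-critical points. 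There the smallness of $\|\gamma(\eta)\|$ comes from the smallness of the measure of those balls combined with the two-dimensional embedding $L^2_1 \hookrightarrow L^p$, giving $O(\varepsilon^{1-\delta})\|\eta\|$ for any $\delta>0$ rather than $O(\varepsilon)\|\eta\|$ --- which is still ample for your scheme, since all that is needed is that the constant vanishes as $\varepsilon\to 0$ so that $\varepsilon'$ can be fixed first and $\varepsilon$ chosen afterward.
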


\step{Step 4}
This last step explains how to make the preceding construction result in a
subvariety whose version of \fullref{sec:2g}'s graph $T_{(\cdot )}$ is the
given moduli space graph $T$.  To start, take note that given $T$,  there exists a
positive constant, $\delta_{T}$, with the following significance: Suppose
that $T'$  is a labeled graph, isomorphic to $T$ save for the fact that its
trivalent vertex angle assignments differ. Even so, suppose that there is a
`quasi' isomorphism that identifies the underlying graphs so as to pair like
labeled monovalent and bivalent vertices, match edge labels and pair
trivalent vertices only if their respective angle assignments differ by less
than $\delta_{T}$. This graph $T'$  is also a moduli space graph for $\hat{A}$.
If $\delta      \in $ (0, $\delta_{T})$, say that a graph $T'$  is ``$\delta $
close to $T$''  when such a quasi-isomorphism pairs trivalent angles so that all
such pairs differ by less than $\delta $.

If $\delta $ is small, then graphs that are $\delta $ close to $T$ can be
parametrized by a cube of side length $2\delta $ in the product of $(N_{ -
}+\hat N+c_{\hat{A}}-2)$ copies of $(0, \pi )$; this the cube centered
on the angle assignments for the trivalent vertices of $T$.  Let $B_{\delta }$
denote this cube.

There are now three remarks to make: First, with both $\varepsilon $ from~\eqreft39
and $\varepsilon'$ from \fullref{lem:3.3} taken to be very small, the
constructions just given in \refstep{Step 1}--\refstep{Step 3} can be made for any graph that comes
from a point in $B_{\delta }$. In this way, each point in $B_{\delta }$
produces a subvariety, and thus a version of \fullref{sec:2g}'s graph $T_{(\cdot
)}$. Second, with $\varepsilon_{1} > 0$ fixed and then both $\varepsilon
$ from~\eqreft39 and $\varepsilon' $ from \fullref{lem:3.3} even smaller, any chosen
point from $B_{\delta / 2}$ provides a version of $T_{(\cdot )}$ that
is isomorphic to $T$ via an isomorphism with the following additional
property: It pairs trivalent vertices so that the resulting angle
assignments define a point in $B_{\delta }$ with distance $\varepsilon
_{1}$ or less from the initially chosen point. Finally, the constructions
in \refstep{Step 1}--\refstep{Step 3} can be made so that the result of all this is a continuous map
from $B_{\delta / 2}$ to $B_{\delta }$.

Now take $\varepsilon_{1}$ very small. Granted the preceding three
observations, some starting point in $B_{\delta / 2}$ gives $T_{(\cdot )}= T$
for the simple reason that $B_{\delta/2}$ doesn't
retract onto its boundary.

\subsection{The deformation to a $J$--pseudoholomorphic subvariety}\label{sec:3d}

Let $J'$ be an admissible almost complex structure, let $\vartheta$ denote an unordered set of
$N_{ + }$ points in $S^{1}$, and let $\mathcal{M}_{\hat{A}}[\Theta , \vartheta ]$ denote the
subset in the $J'$--version
of $\mathcal{M}_{\hat{A}}[\Theta ]$ that consists of subvarieties whose
inverse images in $\mathcal{M}^{\Lambda }$ are sent to the points in
$\vartheta$ by~\eqref{eq:2.21}'s map $\varpi_{ + }$. The respective sets $\Theta
$ and $\vartheta$ are deemed `generic' when the following conditions apply:
\itaubes{3.14}
\textsl{The set $\theta$ has $N_{ - }+\hat N+c_{\hat{A} }-2$ elements, and these elements are
pairwise disjoint and none arises via~\eqref{eq:1.7}  from an integer pair of any $(0,\ldots)$
element in $\hat{A}$.
}

\item \textsl{The set $\vartheta$ contains $N_{ + }$ distinct elements.
}
\end{itemize}

According to \fullref{prop:2.12}, any generic $\theta$ and $\vartheta$
version of $\mathcal{M}[\Theta , \vartheta ]$ is a submanifold of $\mathcal{M}_{\hat{A}}$.

Now, if $T$ is given by the graph from \fullref{sec:2g} of a subvariety from the $J'$
version of $\mathcal{M}_{\hat{A}}$, then the subvariety is in the version of
$\mathcal{M}[\Theta ]$ where $\theta$'s angles are those assigned to the
trivalent vertices in $T$.  Thus, $\theta$ is generic if $T$ is generic. More to
the point, \fullref{prop:2.12} finds a subvariety in a generic $\vartheta$
version of $\mathcal{M}[\Theta , \vartheta ]$ whose graph is also isomorphic
to $T$.

With the preceding understood, and granted what has been said in the
previous subsections, there exists an admissible almost complex structure
$J'$, a generic pair $(\Theta , \vartheta )$ and a subvariety $C$ in the $J'$
version of $\mathcal{M}[\Theta , \vartheta ]$ whose graph from \fullref{sec:2g} is
isomorphic to the graph $T$.

The remainder of this subsection explains how $C$ is used to construct a
$J$--pseudoholo\-morphic subvariety in the $J$--version of $\mathcal{M}[\Theta , \vartheta ]$
whose version of $T_{(\cdot )}$ is isomorphic to the
given moduli space graph $T$.  The description of such a deformation is broken
into six steps.

\step{Step 1}
This step explains the strategy for obtaining the desired subvariety. To
begin, choose a continuously parametrized family, $\{J^{a}\}_{a \in
[0,1]}$, in the space of admissible almost complex structures whose initial
element, $J^{0}$, is $J'$, and whose final element, $J^{1}$, is $J$. Having made
such a choice, an attempt is made to construct a corresponding family,
$\{C^{a}\}_{a \in [0,1]}$, of subvarieties in $\mathbb{R}\times  (S^{1}    \times S^{2})$
that has $C^{0} = C$ and is such
that any given $C^{a}$ is a $J^{a}$--pseudoholomorphic subvariety in the
$J^{a}$ version of $\mathcal{M}_{\hat{A}}$. In particular, the goal is to
construct such a family where each  $a \ge  0$ version of $C^{a}$ is in the
submanifold $\mathcal{M}[\Theta , \vartheta ]$ from the $J^{a}$ version of
$\mathcal{M}_{\hat{A}}$ and whose corresponding version of $T_{(\cdot )}$
is the given moduli space graph $T$.

To proceed, introduce the set, $\mf$, of points $r  \in  [0, 1]$ for which
$C^{a}$ exists for every value of  $a \le r$. This $\mf$ is non-empty since it
contains 0. The next step explains why $\mf$ is open. Modulo a technical
proposition, an argument is given in the third step that proves the
following: Either $\mf$ is closed, or else the submanifold $\mathcal{M}[\Theta , \vartheta ]$ in
the $J$--version of $\mathcal{M}_{\hat{A}}$ contains the desired
subvariety. If $\mf$ is closed, then $1  \in  \mf$ and the submanifold $\mathcal{M}[\Theta,\vartheta]$
in the $J$--version of $\mathcal{M}_{\hat{A}}$
contains the desired subvariety. Thus, the desired conclusion follows in
either case.

A bit more work will establish that $\mf$ is, in fact, closed. Moreover, the
resulting parametrized family $\{C^{a}\}_{a \in [0,1]}$, can be
constructed so that the parametrization varies continuously with the
parametrization, or smoothly in the case that the parametrization $a  \to
J^{a}$ is smooth. However, this extra work is left to the reader.

(By the way, the terms `continuous' and `smooth' for the parametrization
that sends $a  \to C^{a}$ are defined as follows: The parametrization is
continuous if there exists a multi-punctured sphere, $C_{0}$, with a
continuous map $\Phi \co  C_{0}    \times  [0, 1]  \to     \mathbb{R}\times(S^{1}\times S^{2})$
such that each $\Phi (\cdot , a)$ is a smooth,
proper immersion with image $C^{a}$ that is 1--1 to its image on the
complement of a finite set. The parametrization is smooth when there is such
a map $\phi$ that is smooth.)

\step{Step 2}
The proof that $\mf$  is open makes fundamental use of the generalization of
\fullref{prop:2.6} that follows. The proof of this proposition, like that of
\fullref{prop:2.6}, is much like that of  \cite[Proposition~3.2]{T3} and thus is
omitted.

\begin{prop}\label{prop:3.4}
Let $J'$  be an admissible almost complex structure, let $\hat{A}$  be any given
asymptotic data set, and let $C'$  be a subvariety in the $J'$  version of
$\mathcal{M}_{\hat{A}}$.  Let $C_{0}$
denote the model curve for $C'$  and let $\phi \co  \mathbb{R}    \times  (S^1    \times S^{2})$
denote its attending $J'$--pseudoholomorphic map. Then, there exists a constant $\kappa
\ge 1$, a ball $B  \subset \kernel(D_{C'})$, an open neighborhood, $\mathcal{U} $, of $J'$
in the space of admissible almost complex structures, and a smooth map $F$, from $\mathcal{U}
\times  B$ to $C^{\infty }(\phi ^{*}T_{1,0}(\mathbb{R}    \times
 (S^{1}    \times  S^{2}))$ with the following properties:
\begin{itemize}

\item
$| F(J', \eta )-\eta | +| \nabla (F(J', \eta ) -\eta |      \le     \kappa     | | \eta | |^{2}$.

\item
The exponential map on the $C'$ version of $\phi ^{*}T_{1,0}(\mathbb{R}
\times  (S^{1}    \times
S^{2}))$ composes with $F$ to give a smooth map, $\Phi \co  \mathcal{U}\times B\times  C_{0}     \to
\mathbb{R}    \times  (S^1    \times S^{2})$.

\item
With $(J'', \eta )     \in     \mathcal{U}    \times  \kernel(D_{C'})$ fixed, then $\Phi (J'',
\eta , \cdot  )$ maps $C_{0}$ onto a $J''$--pseudoholomorphic subvariety.

\item
As $\eta $ varies in $B$ with $J''$  fixed, the resulting family of subvarieties
defines an embedding,
$\psi_{J''}$, from $B$  onto an open set in the $J''$  version of $\mathcal{M}_{\hat{A}}$.
In particular, if $C''$ is in the $J''$ version of $\mathcal{M}_{\hat{A}}$ and if
\begin{equation}\label{eq:3.15}
\sup_{z \in C'} \dist(z, C'') + \sup_{z \in C''} \dist(C', z) < \frac1 {\kappa },
\end{equation}
then $C''$ is in the image of $\psi_{J''}$.
\end{itemize}
\end{prop}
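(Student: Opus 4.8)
The plan is to follow verbatim the structure of the proof of \cite[Proposition~3.2]{T3}, adapting it only to accommodate the extra parameter $J''$ that ranges over the neighborhood $\mathcal{U}$ of $J'$. The starting point is the schematic equation for a section $\eta$ of the disk subbundle $N_{1}$ of $\phi^{*}T_{1,0}(\mathbb{R}\times(S^{1}\times S^{2}))$ to give, via the exponential map, a $J''$--pseudoholomorphic subvariety. As in \fullref{sec:2d}, this equation has the form $D_{C'}\eta+\mathcal{R}_{0}(\eta)+\mathcal{R}_{1}(\eta)\cdot\partial\eta+\gamma_{J''}(\eta)=0$, where now $\gamma_{J''}$ is the term that measures the failure of $J''$ to equal $J'$; it obeys a bound of the shape in \eqref{eq:3.11}, namely $|\gamma_{J''}(\eta)|\le c'|J''-J'|(1+|\nabla\eta|)+c'|\nabla(J''-J'')|\,|\eta|$, and $\gamma_{J''}$ together with its first derivative in $\eta$ depends continuously (smoothly) on $J''\in\mathcal{U}$. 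One then splits the range Hilbert space of the Fredholm operator $D_{C'}$ as $\kernel(D_{C'})\oplus H_{R}$ as in \fullref{sec:3c}, lets $(D_{C'})^{-1}$ denote the inverse of $D_{C'}$ restricted to the orthogonal complement of its kernel, and writes the equation as a fixed point problem $\eta=\kappa_{0}+Y_{J''}(\kappa_{0},\zeta)$ for $\kappa_{0}\in B\subset\kernel(D_{C'})$ and $\zeta$ in the orthogonal complement, with $Y_{J''}$ given by the obvious analogue of \eqref{eq:3.12}.

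The next step is to invoke the contraction mapping theorem with parameters: the bounds \eqref{eq:2.4} on $\mathcal{R}_{0},\mathcal{R}_{1}$ together with the bound on $\gamma_{J''}$ show that, after shrinking $\mathcal{U}$ and the radius of $B$, the map $\zeta\mapsto Y_{J''}(\kappa_{0},\zeta)$ is a contraction on a small ball in the Hilbert norm, uniformly in $(J'',\kappa_{0})\in\mathcal{U}\times B$, and that the fixed point $\zeta=\zeta(J'',\kappa_{0})$ depends smoothly on its arguments. Standard elliptic bootstrapping (as in the cited sections of \cite{T3}) upgrades the Hilbert-space solution to one that is smooth on $C_{0}$ with the asserted $C^{1}$ bound $|F(J',\eta)-\eta|+|\nabla(F(J',\eta)-\eta)|\le\kappa\|\eta\|^{2}$, where $F(J'',\kappa_{0})\equiv\kappa_{0}+\zeta(J'',\kappa_{0})$; composing with the exponential map yields the map $\Phi\co\mathcal{U}\times B\times C_{0}\to\mathbb{R}\times(S^{1}\times S^{2})$. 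By construction $\Phi(J'',\kappa_{0},\cdot)$ parametrizes a $J''$--pseudoholomorphic subvariety, and since $D_{C'}$ has index $\hat{I}$ and (in the genus zero case, by \fullref{prop:2.7}) trivial cokernel, the local structure result \fullref{prop:2.4} identifies the resulting family with an open neighborhood in the $J''$ version of $\mathcal{M}_{\hat{A}}$; this gives the embedding $\psi_{J''}$.

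The last point to establish is the surjectivity statement: that any $C''$ in the $J''$ version of $\mathcal{M}_{\hat{A}}$ satisfying \eqref{eq:3.15} lies in the image of $\psi_{J''}$. This is the one place where something beyond the parametrized contraction mapping argument is needed. The argument is the usual one: if $C''$ is $C^{0}$--close to $C'$ in the Hausdorff sense of \eqref{eq:3.15}, then by \fullref{prop:2.2} its ends are uniformly close to those of $C'$ and (using that the asymptotic data set $\hat{A}$ is fixed) it has the same ends; a monotonicity/compactness argument then shows $C''$ lies in the $e$--tubular neighborhood of $C'$ on which the exponential map is a diffeomorphism onto a regular neighborhood, so $C''$ is the image of a section $\eta''$ of $N_{1}$ of small $C^{1}$ norm, which therefore solves the fixed point equation and hence equals $F(J'',\kappa_{0})$ for the $\kappa_{0}$ that is its $\kernel(D_{C'})$--component. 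The main obstacle is precisely this surjectivity claim, and more specifically controlling the behavior on the noncompact ends: one must verify that the Hausdorff closeness in \eqref{eq:3.15}, together with the a priori exponential decay estimates \eqref{eq:2.9} and the definition of the weighted norms in \eqref{eq:2.7}, forces $\eta''$ to lie in the correct weighted Sobolev completion with small norm. Everything else is a routine transcription of the arguments in \cite[Section~3c]{T3}, which is why the detailed proof can be omitted.
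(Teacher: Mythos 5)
Your approach is the one the paper itself points to: the paper omits this proof with the remark that it "is much like that of \cite[Proposition~3.2]{T3}," and your proposal is a routine parametrized version of exactly that fixed-point construction, so it matches the paper's intent. Two small slips worth correcting: you write that one "splits the range Hilbert space of the Fredholm operator $D_{C'}$ as $\kernel(D_{C'})\oplus H_R$," but $\kernel(D_{C'})$ lives in the \emph{domain}; what you want is the domain split $\kernel(D_{C'})\oplus H_D$ together with the fact that $\cokernel(D_{C'})=\{0\}$ (Propositions~\ref{prop:2.7} and~\ref{prop:2.9}), so that $D_{C'}\co H_D\to$ range is invertible and no $\hat{\imath}$ term is needed; and your displayed bound on $\gamma_{J''}$ has a typo $\nabla(J''-J'')$ where $\nabla(J''-J')$ is meant. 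You are also right to flag the surjectivity claim in the last bullet of the proposition as the one non-routine point --- showing that Hausdorff closeness \eqref{eq:3.15} forces the representing section to lie in the weighted Sobolev completion of \eqref{eq:2.7} with small norm on the noncompact ends --- but this is the same end-control issue already handled in \cite[Section~3c]{T3} via the a priori exponential decay of \fullref{prop:2.2}, so your deference to that source is appropriate.
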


With \fullref{prop:3.4}  in hand, what follows explains why $\mf$  is open. To begin,
suppose that $\tau  \in \mf$  and that $\tau  < 1$. Let $e\co  \phi ^{*}T_{1,0}(\mathbb{R}
\times  (S^{1}
\times S^{2})) \to     \mathbb{R}    \times  (S^{1}    \times S^{2})$
denote an exponential map of the
sort that is described in \fullref{sec:2d}. For values of $a$ that are somewhat
greater than $\tau $, the subvariety $C^{a}$ is the image of the composition
of $e$  with a suitably chosen, smooth section, $\eta ^{a}$, of $\phi
^{*}T_{1,0}(\mathbb{R}    \times  (S^{1}    \times  S^{2}))$. The
section $\eta ^{a}$ has very small norm when  $a \sim \tau $
and, in any event, is an element in the domain Hilbert space for the
$C^{\tau }$ version of the operator $D_{C}$ as described in \fullref{sec:2d}.
The properties of $\eta ^{a}$ are summarized by the next lemma. An
immediate corollary is that $f$  is open.

To prepare for the lemma, first note that it refers to the functions
$$\{v_{j}: 1  \le  j  \le  c\}, \{\varpi_{ + \alpha }: 1
\le     \alpha      \le  N_{ + }\}
\text{ and } \varpi_{\lambda ,r}$$
that
appear in the $C^{\tau }$ version of \fullref{prop:2.13}. In this regard, be
aware that the domain of definition of these functions extends in a
straightforward manner to include any subvariety in
$\mathbb{R}    \times  (S^1    \times S^{2})$ that has the asymptotics of a subvariety from
$\mathcal{M}_{\hat{A}}$ and is the image via the exponential map of a pointwise
small section of $\phi ^{*}T_{1,0}(\mathbb{R}    \times  (S^{1} \times S^{2}))$
with pointwise small covariant derivative. For example,
these functions are defined for the $a < \tau $ versions of $C^{a}$ when $a$
is sufficiently close to $\tau $.

\begin{lemma}\label{lem:3.5}
Given  $c \in  \{0, \ldots , N_{ - }+\hat N+c_{\hat{A} }-2\}$ and a $c$ element subset
$\Theta \subset  (0, \pi )$, suppose that $\tau \in
[0, 1)$ and that $C^{\tau }$ is any element of the $J^{\tau }$ version of
$\mathcal{M}[\Theta ]$. Then, there exists some $\delta  > 0$ and a continuous map from
$(-\delta, \delta )$ into the intersection of the domain of $D_{C}$ with the
$C^{\tau }$ version of
$C^{\infty }(\phi ^{*}T_{1,0}(\mathbb{R} \times  (S^{1}    \times S^{2})))$
such that the image of any given $\tau ' \in  (-\delta , \delta )$ is a section,
$\eta ^{\tau+\tau' }$,
whose composition with the exponential map sends $C_{0}$ onto a
$J^{\tau + \tau'}$--pseudoholomorphic
subvariety from the $J^{\tau + \tau '}$ version of the space $\mathcal{M}[\Theta ]$.
Moreover, the following is also true: Suppose that the 1--parameter family $\{C^{a}\}$
is continuously defined along the interval $[0, \tau ]$.

\begin{itemize}

\item
The map $\tau '  \to     \eta ^{\tau + \tau '}$ can then be constructed as a
continuous map with domain $(-\delta , \delta )$ such that each $a  \in
(\tau -\delta , \tau ]$ version of $C^{a}$ is the image of the composition of the exponential
map with the corresponding $\eta ^{a}$.

\item
If the family $\{J^{a}\}_{a \in [0,\tau + \delta ]}$ is smoothly parametrized,
and if the original family $\{C^{a}\}_{a \in [0,\tau
]}$ is smoothly parametrized on $[0, \tau ]$, then the map $\tau '  \to \eta ^{\tau + \tau '}$
can be constructed to be smooth on the whole of $(-\delta ,\delta )$.

\item
If a given subset of the functions $\{v_{j}: 1  \le  j  \le  c\}, \{\varpi_{ + \alpha }: 1
\le     \alpha      \le N_{ + }\}$ and $\varpi_{\lambda ,r}$ are constant on $C^{a}$ for values of
$a$ near to but less than $\tau $. Then the map $\tau '  \to \eta ^{\tau + \tau '}$
can be constructed so that the same subset of these functions are constant on the
$a > \tau $ subvarieties
as well.
\end{itemize}

In any event, the given 1--parameter family, $\{C^{a}\}_{a \in [0,\tau ]}$,
has an extension that is parametrized by the points in the interval $[0, \tau +\delta ]$.

\end{lemma}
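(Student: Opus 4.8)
The plan is to prove \fullref{lem:3.5} by combining \fullref{prop:3.4} with the local coordinate structure of $\mathcal{M}[\Theta]$ from Propositions~\ref{prop:2.12} and~\ref{prop:2.13}, together with an implicit function theorem argument that tracks the constraint equations cutting out $\mathcal{M}[\Theta, \vartheta]$ inside $\mathcal{M}_{\hat{A}}$. First I would fix $\tau \in [0,1)$ and the subvariety $C^\tau$ in the $J^\tau$ version of $\mathcal{M}[\Theta]$, and let $\phi\co C_0 \to \mathbb{R}\times(S^1\times S^2)$ be its attending $J^\tau$--pseudoholomorphic map. Apply the $J' = J^\tau$, $C' = C^\tau$ version of \fullref{prop:3.4} to obtain the ball $B \subset \kernel(D_{C^\tau})$, the neighborhood $\mathcal{U}$ of $J^\tau$ in the space of admissible almost complex structures, and the smooth map $F\co \mathcal{U}\times B \to C^\infty(\phi^*T_{1,0}(\mathbb{R}\times(S^1\times S^2)))$. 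For $\tau'$ small, $J^{\tau+\tau'} \in \mathcal{U}$, so $\psi_{J^{\tau+\tau'}}$ embeds $B$ onto an open neighborhood of $C^\tau$ in the $J^{\tau+\tau'}$ version of $\mathcal{M}_{\hat{A}}$. This already produces $J^{\tau+\tau'}$--pseudoholomorphic subvarieties near $C^\tau$; the work is to pick out the one-parameter subfamily landing in $\mathcal{M}[\Theta]$ (and, when the relevant functions are prescribed constant, in the common level set of those functions).

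Next I would set up the implicit function theorem on the finite-dimensional model. The composite $\Phi(J^{\tau+\tau'}, \cdot\,, \cdot)$ from $B\times C_0$ identifies a neighborhood of $C^\tau$ in the $J^{\tau+\tau'}$ moduli space with $B$, and in these coordinates the conditions defining membership in $\mathcal{M}[\Theta]$ are precisely the constraint equations analyzed in the proof of \fullref{prop:2.12}: equality of the $c$ critical values of $\theta$ to the prescribed elements of $\Theta$, the vanishing of $d\theta$ to the correct orders at the critical points, the matching of local intersection numbers with the $\theta\in\{0,\pi\}$ locus, and the vanishing of the $c_{(\cdot)}$ parameters from \eqref{eq:1.8} on the appropriate convex side ends. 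The proof of \fullref{prop:2.12} shows that the differentials of these constraint functions, evaluated on $\kernel(D_{C^\tau}) = T\mathcal{M}|_{C^\tau}$, have full rank — this is exactly the statement that the subspace $K^*$ there is trivial. Crucially, that rank computation is stable under the perturbation $J^\tau \to J^{\tau+\tau'}$ for $\tau'$ small, because $D_{C}$ and all the relevant degree and order-of-vanishing data vary continuously (the operator $D_{C^{\tau+\tau'}}$ is a small perturbation of $D_{C^\tau}$, and a regular point stays regular). Hence the zero locus of the constraint map, as a function on $B\times(-\delta,\delta)$ with $\tau'$ the parameter, is a one-dimensional submanifold through the origin, and projecting to the $\tau'$ axis is a submersion; this yields the continuous (and, under smooth parametrization of $\{J^a\}$, smooth) family $\tau' \mapsto \eta^{\tau+\tau'}$ with $\eta^{\tau+\tau'}$ mapping $C_0$ onto a subvariety in the $J^{\tau+\tau'}$ version of $\mathcal{M}[\Theta]$.

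For the matching with the pre-existing family $\{C^a\}_{a\in[0,\tau]}$, I would invoke the last property in \fullref{prop:3.4}: since $C^a \to C^\tau$ in the sense of \eqref{eq:3.15} as $a\uparrow\tau$, each $C^a$ for $a$ near $\tau$ lies in the image of $\psi_{J^a}$, so it is already of the form $e\circ\eta^a$ for a uniquely determined small section $\eta^a$; uniqueness forces this to agree with the section produced by the implicit function theorem construction, which gives a single continuous (or smooth) extension across $a = \tau$. The third bulleted refinement — keeping a chosen subset of $\{v_j\}$, $\{\varpi_{+\alpha}\}$ and $\varpi_{\lambda,r}$ constant for $a > \tau$ when they were constant for $a < \tau$ — follows by simply adjoining those functions to the constraint map before applying the implicit function theorem; \fullref{prop:2.13} guarantees they are independent coordinates, so the augmented constraint map still has full rank on $\kernel(D_{C^\tau})$ (equivalently, $K^*$ stays trivial with the extra conditions imposed), and continuity of this rank in $\tau'$ finishes it. The final sentence of the lemma, that $\{C^a\}_{a\in[0,\tau]}$ extends to $[0,\tau+\delta]$, is then immediate from the construction.

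The main obstacle I anticipate is the persistence of the transversality (the triviality of $K^*$) as $J^\tau$ is deformed to $J^{\tau+\tau'}$: one must be careful that the discrete data entering the constraint equations — the degrees $\deg_E(d\theta)$, the local degrees $\deg(d\theta|_z)$, the integers $p_z$ contributing to $c_{\hat{A}}$, and the count $\wp$ of singular points of $\phi_*$ — do not jump along the deformation. For $\tau'$ genuinely small and the parametrization continuous, these are locally constant (a small perturbation cannot create or destroy the relevant critical points or change their orders, and $C^\tau$ itself is a regular point by \fullref{prop:2.12} or \fullref{prop:2.9}), so the estimate $\dim\kernel(D_{C^{\tau+\tau'}}) = \dim\kernel(D_{C^\tau}) = \hat{I}$ persists and the argument of \fullref{prop:2.12}'s proof (Parts 3--5) applies verbatim. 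Everything else is a routine packaging of the contraction-mapping and implicit-function-theorem machinery already developed, drawing on \cite[Section 3c]{T3} exactly as \fullref{prop:3.4} does.
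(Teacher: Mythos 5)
Your proposal is correct and is essentially a detailed expansion of the paper's own (very terse) proof, which simply states that "Granted Propositions 2.7, 2.9, 2.13 and 3.4, all of the assertions are proved by using various straightforward applications of the implicit function theorem." You have correctly unpacked precisely how those ingredients combine: Proposition 3.4 supplies the $J$--dependent local chart, the triviality of $K^*$ established in the proof of Propositions 2.12/2.13 supplies the surjectivity needed for the implicit function theorem, the final bullet of Proposition 3.4 handles the matching with the pre-existing family, and adjoining the chosen coordinate functions to the constraint map handles the third bullet.
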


\begin{proof}[Proof of \fullref{lem:3.5}]
Granted Propositions~\ref{prop:2.7},~\ref{prop:2.9},~\ref{prop:2.13}
and~\ref{prop:3.4}, all of the assertions are proved by using various straightforward
applications of the implicit function theorem.
\end{proof}

\step{Step 3}
This step explains why the submanifold $\mathcal{M}[\Theta , \vartheta ]$
in the $J$--version of $\mathcal{M}_{\hat{A}}$ contains the desired subvariety when
 $\mf$  is not closed. To begin, suppose that $\tau      \le 1$ and that the
family  $a \to  C^{a}$ has been defined for $a < \tau $ with each $C^{a}$ in
the appropriate version of the submanifold $\mathcal{M}[\Theta , \vartheta ]$.
The issue here is whether the domain of definition for the 1--parameter
family extends to the parameter value $a = \tau $ as well. The focus here is
thus on the convergence or lack there of for the sequence
$\{C^{a}\}_{a < \tau }$ as  $a \to     \tau $.

The next proposition asserts some facts about sequences of subvarieties of
the sort that is under consideration. The following is a direct corollary:
Either $\mf$  is closed or else the subset $\mathcal{M}[\Theta , \vartheta ]$ in
the $J$--version of $\mathcal{M}_{\hat{A}}$ is non-empty and contains a subvariety
whose corresponding graph from \fullref{sec:2g} gives the graph $T$.

\begin{prop}\label{prop:3.6}
Assume here that $\theta$ and $\vartheta$ are generic in the sense of~\eqreft3{14}.
Let $J'$ be an admissible almost complex structure, and suppose that $\{(J_{j},
C_{j})\}_{j = 1\ldots }$ is a sequence of pairs of the following sort: First, $\{J_{j}\}$
is a sequence of admissible almost complex structures that converges to $J'$.
Meanwhile, each $C_{j}$ is in the subset $\mathcal{M}[\Theta , \vartheta ]$
from the $J'$  version of
$\mathcal{M}_{\hat{A}}$ and each has graph $T_{(\cdot )}$ giving $T$.
Then one of the following two assertions hold:

\begin{enumerate}
\item[\textbf{A}]
There exists a subvariety $C'$  in the subset $\mathcal{M}[\Theta , \vartheta ]$ of the $J'$
version of $\mathcal{M}_{\hat{A}}$
with graph $T_{C'} = T$,  a subsequence of $\{C_{j}\}$ and a corresponding sequence,
$\{\eta_{j}\}$,
of sections of a fixed radius ball subbundle in the $C'$ version of
$\phi ^{*}T_{1,0}(\mathbb{R} \times
(S^{1}    \times S^{2}))$ such that composition of the exponential map with any given $\eta_{j}$
sends the model curve of $C'$ onto $C_{j}$. Moreover, the sequence of supremum norms over $C'$
of the elements in $\{\eta_{j}\}$ limits to zero as $j  \to     \infty $,
as do the analogous sequences of norms of the higher derivatives.

\item[\textbf{B}] There exists a subvariety $C$ in the subset $\mathcal{M}[\Theta , \vartheta ]$
of the $J$ version of $\mathcal{M}_{\hat{A}}$ with graph
$T_C = T$ a subsequence of $\{C_{j}\}$ and a corresponding sequence, $\{\eta_{j}\}$,
of sections of a fixed radius ball subbundle in the $C$ version of
$\phi ^{*}T_{1,0}(\mathbb{R} \times
(S^{1}    \times S^{2}))$ such that composition of the exponential map with any given $\eta_{j}$
sends the model curve of $C$ onto a translate of $C_{j}$ along the $\mathbb{R}$ factor of
$\mathbb{R}    \times  (S^1 \times S^{2})$. As before, the sequence of supremum norms over $C$
of the elements in $\{\eta_{j}\}$ limits to zero as $j  \to \infty $, as do the analogous
sequences of norms of the higher derivatives.
\end{enumerate}

\end{prop}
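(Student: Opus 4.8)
The plan is to run the standard compactness machinery for pseudoholomorphic curves with cylindrical ends, adapted to the present situation. First I would establish uniform a priori bounds on the sequence $\{C_j\}$: the areas $\int_{C_j\cap K}\omega$ are uniformly bounded because the homology/asymptotic data is fixed by $\hat{A}$, and $\int_{C_j}d\alpha$ is likewise bounded by the data in $\hat{A}$. The genericity hypothesis~\eqreft3{14} guarantees that for each $j$ the model curve is a multi-punctured sphere with exactly $N_-+\hat N+c_{\hat A}-2$ nondegenerate non-extremal critical points of $\theta$, at the prescribed critical values, so the graph $T_{C_j}$ really is $T$ and in particular the combinatorial type does not degenerate along the sequence. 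The next step is to rule out bubbling: since all the $C_j$ lie in the same $\mathcal{M}[\Theta,\vartheta]$ with fixed asymptotics and the index $\hat I$ computed in~\eqref{eq:2.2} is fixed, any bubble or broken component would force an extra positive contribution to the area or to the count of $\theta=0,\pi$ intersections (via positivity of intersections, cf.~McDuff~\cite{M}) and hence would change $c_{\hat A}$ or the genus, contradicting membership in $\mathcal{M}[\Theta,\vartheta]$. The large $|s|$ control is handled by \fullref{prop:2.2}: there is a uniform $R$ beyond which every $C_j$ is a disjoint union of embedded exponentially-convergent cylinders with the prescribed Reeb-orbit limits, and the constants $\kappa$ in the exponential decay can be taken uniform in $j$ because they depend only on $\hat A$.

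Granted these bounds, Gromov–type compactness (in the form used in~\cite[Section 2]{T3} and in~\cite{MS}) produces a limiting $J'$-pseudoholomorphic subvariety, which a priori is obtained from the $C_j$ by allowing translations along the $\mathbb{R}$ factor. Here is where the dichotomy enters: either the sequence can be arranged with no net $\mathbb{R}$-translation drifting to infinity, in which case the limit $C'$ is a genuine $J'$-pseudoholomorphic subvariety in $\mathcal{M}[\Theta,\vartheta]$ for the $J'$ version — this is alternative \textbf{A} — or the translations $\{r_j\}\subset\mathbb{R}$ needed to extract a convergent subsequence diverge. In the latter case one must track which family of translates actually converges. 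Because $J_j\to J'$ only in the $C^\infty$-on-compacts-plus-fixed-outside-a-compact sense, and because every admissible almost complex structure agrees with $J$ outside a compact set, after translating by a divergent amount the relevant almost complex structures all equal $J$ on the region where the translated curves live; the limit is therefore $J$-pseudoholomorphic, giving a subvariety $C$ in the $J$ version of $\mathcal{M}_{\hat A}$, and this is alternative \textbf{B}. In either case the limiting curve lies in $\mathcal{M}[\Theta,\vartheta]$ and has graph $T$: the critical values of $\theta$ are pinned at the points of $\Theta$ by construction, the $N_+$ points of $\vartheta$ are preserved under the convergence since $\varpi_+$ from~\eqref{eq:2.20} is continuous, and the combinatorial structure of the level sets is locally constant, so $T_{(\cdot)}$ cannot jump.

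The final step is the passage from Hausdorff convergence of the subvarieties to the graph-exponential statement: once the limit $C'$ (resp.\ $C$) is identified and one knows $C_j$ converges to it (resp.\ to its translate) in the sense of~\eqref{eq:1.13}, the last bullet of \fullref{prop:3.4} applies — for $j$ large the distance bound~\eqref{eq:3.15} is met, so $C_j$ lies in the image of $\psi_{J_j}$, which exhibits $C_j$ as the exponential-map image of a section $\eta_j$ of a fixed-radius ball subbundle of $\phi^*T_{1,0}(\mathbb{R}\times(S^1\times S^2))$ over the model curve of the limit; and since $\psi_{J_j}$ is a uniformly bi-Lipschitz parametrization near the origin (the constant $\kappa$ is uniform), the Hausdorff convergence forces the $C^0$ norm of $\eta_j$ to zero, with the higher-derivative decay following from elliptic bootstrapping on the $J_j$-pseudoholomorphic equation together with the uniform convergence $J_j\to J'$. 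The main obstacle I expect is the second step — systematically excluding bubbling, boundary breaking, and the emergence of extra $\theta=0,\pi$ intersection points or a nonzero $c_E$ on a convex-side end in the limit — since one must show the fixed discrete data of $\mathcal{M}[\Theta,\vartheta]$ (genus, $c_{\hat A}$, the values in $\Theta$, the points in $\vartheta$, the vanishing pattern of the $c_E$) is stably inherited by the Gromov limit, which is exactly the content that makes the dichotomy clean rather than a messier statement about stable maps.
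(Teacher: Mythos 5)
Your outline captures the correct overall architecture — a compactness theorem giving a limit data set of pairs $(S,n)$, a dichotomy depending on whether an $\mathbb{R}$-translation is needed, and a final passage to the graph-exponential statement via \fullref{prop:3.4}. That skeleton matches the paper: Proposition~3.6 is deduced from the compactness statement in \fullref{prop:3.7} plus the sharpening in \fullref{prop:3.8}, and the last step of your proof (invoking the last bullet of \fullref{prop:3.4} once the Hausdorff distance is small) is exactly right.

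The gap is in the middle, where you compress what the paper takes two full subsections (\fullref{sec:3f} and \fullref{sec:3g}) to establish into the claim that bubbling, breaking, and multiple covering are ruled out by index and area considerations together with positivity of intersection. This is not correct as stated. SFT-type degeneration of a sequence of finite-energy curves produces a limit building whose total $d\alpha$-energy and total $\omega$-area equal those of the $C_j$; nothing overshoots, so the area bound alone detects no obstruction. Likewise the Fredholm index of a multiply covered curve is not $n$ times that of the underlying curve, so the index $\hat I$ being fixed does not exclude limit pairs $(S,n)$ with $n>1$; indeed the paper devotes Steps~6--8 of \fullref{sec:3g} to ruling out $n>1$ precisely in the delicate cases (disks meeting the $\theta=0$ or $\theta=\pi$ cylinder once, cylinders with a convex end), and the arguments there are geometric intersection-number arguments against an auxiliary family of holomorphic cylinders, not index counts. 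Similarly, the assertion that ``the critical values of $\theta$ are pinned at the points of $\Theta$ by construction'' and that ``the combinatorial structure of the level sets is locally constant'' is exactly what must be proved, not assumed: the paper's Step~1 of \fullref{sec:3g} has to produce, for each non-extremal critical point of $\theta$ on the limit, a nearby critical point on each large-$j$ curve (using the $(\theta,\hat v)$ coordinate argument with $\theta$-preserving preimages and a clever loop embedding), and Step~2 must separately establish nondegeneracy and the count $N_-+\hat N+c_{\hat A}-2$. Even the single-element claim for $\Xi$ (\fullref{sec:3f}) needs a mountain-pass argument to show the critical values cannot drift into $(0,2\delta)$ or $(\pi-2\delta,\pi)$ nor collide with one another, and a path-lifting argument to show two putative components of $\Xi$ would force an illegal critical value. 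None of this falls out of positivity of intersections or an index count, and an argument that tries to leap over it will not close.

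Your remark that after a divergent $\mathbb{R}$-translation the almost complex structures become $J$ on the relevant region, giving alternative~\textbf{B}, is correct and matches Step~6 of \fullref{sec:3d}. But to make the whole proposal go through you would need to replace the paragraph ``rule out bubbling'' with the substance of the paper's Parts~1--3 of the proof of \fullref{prop:3.8}: the global version of~\eqref{eq:3.16}, the uniqueness of the element of $\Xi$, the identification of the multiplicity as~$1$, and the verification that the limit lands in $\mathcal{M}[\Theta,\vartheta]$ with graph~$T$.
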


The proof of \fullref{prop:3.6} exploits convergence theorems that are modified
versions of assertions from Hofer, Wysocki and Zehnder \cite{HWZ1} about the behavior of limits of
pseudoholomorphic curves. (See also Bourgeois, Eliashberg, Hofer, Wysocki
and Zehnder \cite{BEHWZ}, which appeared during the
preparation of this article.) The next proposition summarizes the needed
results. Note that it makes no assumptions about $\theta$ and $\vartheta$
or any given moduli space graph such as $T$.

\begin{prop}\label{prop:3.7}
 Let $\{J_{j}\}$ denote a sequence of admissible almost complex structures with the following
 two properties: First, the derivatives of each such endomorphism to any fixed, non-negative order
 are bounded over the whole of $\mathbb{R}    \times (S^1 \times S^{2})$ by a
 $j$--independent constant.
 Second, there is an admissible almost complex structure, $J'$, such that the
 restriction of $\{J_{j}\}$
 to any given compact set in $\mathbb{R}    \times (S^1 \times S^{2})$
 converges in the $C^{\infty }$
 topology to the corresponding restriction of $J'$.  Next, let $\{C_{j}\}$
 denote a sequence where each
 $C_{j}$ is in the corresponding $J_{j}$
version of $\mathcal{M}_{\hat{A}}$. Then, there exists a subsequence of $\{C_{j}\}$
(hence renumbered by consecutive integers starting from 1) and a finite set, $\Xi $,
of pairs of the form
$(S, n)$ where $n$ is a positive integer and $S$ is an irreducible,
$J'$--pseudoholomorphic multi-punctured sphere; and these have the following properties:

\begin{itemize}

\item
$\lim_{j \to \infty }    \int_{C_j } \varpi =\sum_{(S,n) \in
\Xi } n \int_{S }\varpi$ for each compactly supported 2--form $\varpi$.

\item
If $K  \subset     \mathbb{R}    \times  (S^{1}    \times S^{2})$ is compact,
then the following limit exists and is zero:
\begin{equation}\label{eq:3.16}
\lim_{j \to \infty } \bigg(\sup_{z \in C_j \cap K} \dist(z,  \cup_{\Xi
} S) + \sup_{z \in ( \cup_\Xi S) \cap K} \dist(z, C_{j})\bigg)
\end{equation}

\end{itemize}

\end{prop}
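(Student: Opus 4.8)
The plan is to run the Gromov--Hofer compactness machinery of Hofer--Wysocki--Zehnder \cite{HWZ1} and Bourgeois--Eliashberg--Hofer--Wysocki--Zehnder \cite{BEHWZ}, adapted to the present situation by the estimates developed in \cite{T3}. Because the two asserted conclusions concern only convergence on compact subsets of $\mathbb{R}\times(S^1\times S^2)$ and pairing against compactly supported $2$--forms, it suffices to analyze the naive (unrescaled) Gromov limit of $\{C_j\}$ inside the fixed manifold $\mathbb{R}\times(S^1\times S^2)$; the full broken--building refinement of SFT compactness is not needed, and in particular any mass that escapes to $|s|=\pm\infty$ is simply irrelevant to the statement.

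The first step is a $j$--independent energy bound. Since each $C_j$ lies in the $J_j$ version of $\mathcal{M}_{\hat A}$, \fullref{prop:2.2} describes the $|s|\ge R$ portion of $C_j$, and Stokes' theorem then shows that $\int_{C_j}d\alpha$ equals a fixed number (an alternating sum of the actions of the limiting Reeb orbits prescribed by $\hat A$) and that $\int_{C_j}d(\varphi(s)\alpha)$ is bounded by a constant depending only on $\hat A$ and on the range of $\varphi$, for every nondecreasing $\varphi$. Admissibility of the $J_j$, together with the hypothesis that $J_j\to J'$ with uniformly bounded derivatives and equals $J$ off a compact set, supplies a uniform $r_0>0$ such that every $J_j$ is tamed by $d(e^{-rs}\alpha)$ for $0<r<r_0$. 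The monotonicity lemma for curves tamed by these forms then gives uniform local area bounds for the $C_j$ over every compact subset of $\mathbb{R}\times(S^1\times S^2)$.

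With uniform energy and area bounds in hand, the second step extracts the limit in the usual way. Elliptic estimates for $J_j$--holomorphic maps, uniform in $j$ by the hypothesis on $\{J_j\}$, yield uniform bounds on all derivatives away from a finite ``bubbling'' set of target points, the number of such points being controlled by energy quantization. On the complement of the bubbling set one gets $C^\infty$ subconvergence of the parametrizing maps, and at each bubbling point one rescales to split off a nonconstant $J'$--holomorphic sphere; the procedure terminates after finitely many iterations since each bubble carries at least a fixed quantum of energy. Assembling the pieces produces a finite collection of irreducible $J'$--pseudoholomorphic subsets, each satisfying \fullref{deff:2.1}. Each such piece is the image of a (possibly punctured) sphere: Gromov degeneration of genus zero curves produces only trees of spheres, so no higher genus can arise, and there are no closed components, since a closed $J'$--holomorphic curve would have positive area for an exact taming form, which is impossible. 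The positive integers $n$ appear when several domain components, or multiple covers, of the $C_j$ converge onto the same image subvariety $S$; this produces the set $\Xi$.

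Finally, the two displayed conclusions follow once one rules out loss of mass into the ends, and this uniform control of the ends is the main obstacle. What is required is a version of \fullref{prop:2.2} that is uniform in $j$: the constant $R$ beyond which the $|s|\ge R$ part of $C_j$ is a disjoint union of exponentially thin cylinders near fixed Reeb orbits, and the decay rate $\kappa$, must both be chosen independently of $j$. This uniformity holds because all the $C_j$ share the asymptotic data $\hat A$, so the asymptotic Cauchy--Riemann operators on their ends are uniformly close to a fixed model operator with a uniform spectral gap; here one invokes the $j$--uniform forms of the estimates from \cite[Sections 2 and 3]{T3}. Granted this, outside a fixed compact set each $C_j$ contributes a vanishing amount to $\int_{C_j}\varpi$ for compactly supported $\varpi$ and stays within a fixed small neighborhood of the relevant copies of $\mathbb{R}\times(\text{Reeb orbit})$; combined with the $C^\infty_{\mathrm{loc}}$ convergence on the smooth part and the finite list of bubbling points, this gives the varifold--type convergence $\lim_j\int_{C_j}\varpi=\sum_{(S,n)\in\Xi}n\int_S\varpi$. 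The Hausdorff--type convergence \eqref{eq:3.16} over a compact $K$ then follows from the same convergence together with the monotonicity lemma, which forbids any portion of $C_j$ from lingering in $K$ far from $\cup_\Xi S$.
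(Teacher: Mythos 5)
Your first three paragraphs track the paper's own argument closely: the paper likewise cites Hofer \cite{H1}, Hofer--Wysocki--Zehnder \cite{HWZ1}, and Bourgeois--Eliashberg--Hofer--Wysocki--Zehnder \cite{BEHWZ}, and likewise identifies the $j$-- and $s$--independent local area bound (deduced from the finiteness of $\int_{C_j}d\alpha$) as the key observation from which existence of the limit data set $\Xi$ follows by standard arguments. So far, so good.

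Where the proposal is genuinely incomplete is precisely the step that the paper takes the trouble to spell out: the claim that each $S$ appearing in $\Xi$ is a multi-punctured sphere. You dispose of this with ``Gromov degeneration of genus zero curves produces only trees of spheres, so no higher genus can arise.'' That argument is not available here as stated. You have explicitly chosen to work with the naive in-target Gromov limit rather than the full SFT building, and in that setting the genus-zero conservation law is not an off-the-shelf theorem: components can disappear to $|s|=\infty$, a pair of ends of $C_j$ can neck down onto a single Reeb orbit and pinch off, and the dual-graph bookkeeping that makes the ``tree of spheres'' statement true for closed genus-zero degenerations is no longer automatic. Indeed, the paper explicitly notes that its alternate route to the convergence statement via \cite[Proposition~3.3]{T4} does \emph{not} yield the sphere conclusion, which signals that this step needs a genuine argument. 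The paper supplies one: it shows that for every irreducible, non-$\mathbb{R}$-invariant component $S\subset\Sigma_*$, some nonzero multiple of any class in $H_1$ of the model curve of $S$ is ``end-homologous,'' by taking an embedded loop $\tau\subset S$ avoiding $Y$, lifting it via the $\theta$--preserving preimage construction of \hyperlink{3.D:Step 4}{Step~4} to $\tau_i\subset C_i$, using that $\tau_i$ is end-homologous in the multi-punctured sphere $C_i$, and pushing that homology back down into a regular neighborhood of $S$. To make your proof complete you would need either this homological argument or a careful justification that the naive limit you take inherits the genus-zero conclusion from SFT compactness -- neither is one line. A secondary remark: your final paragraph's call for a $j$--uniform version of \fullref{prop:2.2} is both unnecessary and in mild tension with your own opening observation that only compact subsets matter; the local area bound already forbids mass from lingering in a fixed compact $K$ away from $\cup_\Xi S$, which is all that \eqref{eq:3.16} requires.
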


The proof is given momentarily.

\step{Step 4}
A portion of the proof of the previous proposition, as well as subsequent
arguments in this article and in the sequel, require the lifting of certain
submanifolds of $\Sigma      \equiv      \cup_{(S,n) \in \Xi }S$  to large
$j$ versions of $C_{j}$. This step explains how these liftings are done.

To start, it is necessary to first pass to a subsequence of $\{C_{j}\}$
where the corresponding sequence of sets of critical points of $\theta$ and
sequence of sets of singular points converge in $\mathbb{R}    \times (S^1
\times S^{2})$. Let $Y_{j}     \subset C_{j}$ denote the set of
critical points of $\theta$ and singular points of $C_{j}$, and let $Y_{o}
 \subset     \sigma$ denote the limit of $\{Y_{j}\}$. Next, let $\Sigma
_{*}     \subset     \sigma$ denote the union of the irreducible
components that are not of the form $\mathbb{R}    \times     \gamma$ where
$\gamma      \subset S^{1}    \times S^{2}$ is a Reeb orbit. Now, let
$Y_{*}     \subset     \Sigma_{*}$ denote the union of the set
$Y_{o}$, the critical points of $\theta$ on the subvarieties that comprise
$\Sigma_{*}$ and the singular points of $\sigma$ that lie in
$\Sigma_{*}$. Note that the latter set may contain points that are
not points of $Y_{o}$. In any event, $Y_{*}$ is a finite set.

Suppose next that $K  \subset     \Sigma_{*}-Y_{*}$
is a given compact set. Such a set $K$ has a tubular neighborhood, $U_{K}
\subset     \mathbb{R}    \times  (S^{1}    \times S^{2})$ with projection
$\pi \co  U_{K}     \to  K$ whose fibers are disks on which $\theta$ is
constant and that are pseudoholomorphic for any admissible almost complex
structure. Indeed, the fiber of the projection to any given  $p \in  K$ is a
disk centered on $p$ inside $\mathbb{R}    \times     \gamma ^{p}$, where $\gamma
^{p}     \subset S^{1}    \times S^{2}$ is a small segment of the
integral curve of the Reeb vector field through the image of $p$. In this
regard, note that $\mathbb{R}    \times     \gamma ^{p}$ intersects $K$
transversely at $p$ by virtue of the fact that $p$ is not a critical point of
$\theta$ on $\sigma$. With $K$ fixed, then each sufficiently large $j$ version
of $C_{j}$ will have proper intersection with $U_{K}$ and intersect each
fiber precisely $n$ times, with each a transversal intersection and local
intersection number $+1$. Such is the case precisely because the large $j$
version of $C_{j}$ has no $\theta$ critical points in $U_{K}$. In any
event, here is a fundamental consequence: The projection, $\pi \co  C_{j}
\cap  U_{K}     \to  K$ defines a smooth, proper, degree $n$ covering map. In
particular, any compact, embedded arc in $\Sigma_{*} -Y_{*}$ has lifts to $C_{j}$
under the projection $\pi$.

The lifts just described can be extended as lifts of arcs in a somewhat
larger set in $\Sigma_{*}$. To define this set, let $Y  \subset
Y_{*}$ denote the subset of points that are either singular points of
$\sigma$, critical points of $\theta$ on $\sigma$, or limits of
convergent sequences of the form $\{p_{j}\}$ where $p_{j}$ is either a
critical point of $\theta$ on $C_{j}$ or a non-immersion singular point of
$C_{j}$. Then a smooth arc in a compact subset of $\Sigma -Y$ has a
well defined lift to all large $j$ versions of $C_{j}$ that extends the lift
defined in the preceding paragraph. In the discussion that follows, such a
lift is deemed a `$\theta$--preserving preimage' in $C_{j}$.

\step{Step 5}
Here is the proof of \fullref{prop:3.7}:

\begin{proof}[Proof of \fullref{prop:3.7}]

This result is essentially from Hofer \cite{H1} or Hofer--Wysocki--Zehnder
\cite{HWZ1} (see also the article by Bourgeois, Eliashberg,
Hofer, Wysocki and Zehnder \cite{BEHWZ} that appeared during the writing
of this article). Here is the basic idea: There is a bound on the area
of the intersection of $C_{j}$ with any given $[s-1, s+1] \times  (S^{1}
\times S^{2})$ subcylinder that is independent of both $s$ and $j$. The
existence of such a bound can be deduced from the fact that the integral
of $d\alpha $ on $C_{j}$ is finite. The existence of this local area
bound is the key observation. The existence of the asserted limit data
set $\Xi $ is deduced from the latter using arguments that are very
similar to those used for the convergence theorems about sequences of
pseudoholomorphic curves on compact symplectic manifold. Given the local
area bound, a somewhat different proof of the convergence assertion can
be obtained using  \cite[Proposition~3.3]{T4}, but without the observation
that each subvariety from $\Xi $ is a multipunctured sphere.

What follows explains why each subvariety from $\Xi $ is a sphere with
punctures. In this regard, it is enough to consider only the non--$\mathbb{R}$
invariant subvarieties. To start this chore, take any given irreducible
component $S$ from $\Sigma_{*}$, and note that it is enough to prove
that some non-zero multiple of any class in the first homology of the model
curve for $S$ is generated by loops on the ends of $S$, thus `end-homologous'.
For this purpose, fix a generator of the first homology of the model curve
for $S$, and let $\tau \subset S$  be the image of an embedded representative
that is disjoint from $Y  \cap  S$. In addition, fix $R \gg 1$ so that the
$|s|  > R$ part of $S$ lies out on the ends of $S$ and is disjoint
from $\tau $. In this regard, remark that given $R$, there exists $\varepsilon
 > 0$ such that the $|s|      \le R$  portion of the ends of $S$
have pairwise disjoint, radius $\varepsilon $ tubular neighborhoods.

Take the index $i$ to be very large, and let $\tau_{i}     \subset C_{i}$
denote a connected, $\theta$--preserving preimage of $\tau $. Then $\tau
_{i}$ is homologous in a regular neighborhood of $S$ to some non-zero
multiple of $\tau $. However, $\tau_{i}$ is also homologous in $C_{i}$ to
a union of curves on the ends of $C_{i}$. Intersect this homology with the
$|s|      \le R$  part of $C_{i}$ and then deform the latter back
to $S$ in a small radius, regular neighborhood. The result is a homology
between a non-zero multiple of $\tau $ and a union of curves on the ends of
 $S$ as well as a union of circles that are very close to points in $S \cap
Y$. As the inverse image of the latter circles are null-homologous in the
model curve, so the chosen generator of the model curve's first homology is
end-homologous.
\end{proof}

\step{Step 6}
With the proof of \fullref{prop:3.7}  now complete, remark that it may well be
the case that each subvariety from $\Xi $ is an $\mathbb{R}$--invariant
cylinder, thus of the form $\mathbb{R}    \times     \gamma$ where $\gamma
\subset S^{1}    \times S^{2}$ is an orbit of the Reeb vector field.
Item B  of \fullref{prop:3.6} holds when all subvarieties from $\Xi $
are $\mathbb{R}$--invariant cylinders, and Item A of \fullref{prop:3.6} holds when such is
not the case. To explain how this dichotomy comes about, suppose first that
there exists some subvariety from $\Xi $ that is not $\mathbb{R}$--invariant. In
this case, the proof of \fullref{prop:3.6}  proceeds to establish that $\Xi $
consists of a single pair, and that the latter has the form $(S, 1)$ with $S$ in
the submanifold $\mathcal{M}[\Theta , \vartheta ]$ from the $J'$  version of
$\mathcal{M}_{\hat{A}}$. This implies that the graph $T_S$ from \fullref{sec:2g} can
be labeled as a moduli space graph. The fact that the $T_S$ is isomorphic
to $T$ is seen as an automatic consequence of the strengthened versions of~\eqref{eq:3.16}
that appear in the next three subsections.

In the case that all subvarieties from $\Xi $ are $\mathbb{R}$--invariant, the
argument for Item $B$  of \fullref{prop:3.6}  proceeds as follows: The original
sequence $\{C_{i}\}$ is now replaced by a new sequence,
$\{C'_{i}\}$, where each $C'_{i}$ is obtained by translating the
corresponding $C_{i}$ along the $\mathbb{R}$ factor in $\mathbb{R} \times (S^1 \times S^{2})$.
It should be evident from the description given
below that the resulting sequence of translations (as elements in $\mathbb{R})$
does not have convergent subsequence. In any event, each $C'_{i}$ is
pseudoholomorphic for the translated almost complex structure, this denoted
by $J'_{i}$. The latter sequence has a subsequence that converges on compact
sets in $\mathbb{R}    \times  (S^1 \times S^{2})$ to the almost
complex structure $J$. This understood, the sequence of translations is chosen
so as to insure that $\{C'_{i}\}$ converges as described in \fullref{prop:3.7},
but with a limit data set of pairs that contains one whose
subvariety component is not $\mathbb{R}$--invariant. The argument proceeds from
here as in the previous case: It demonstrates that \fullref{prop:3.7}'s limit
data set of pairs is a single pair, this of the form $(S, 1)$, where $S$ is in
the submanifold $\mathcal{M}[\Theta , \vartheta ]$ of the $J$--version of
$\mathcal{M}_{\hat{A}}$ whose corresponding graph is isomorphic to $T$.

The translation, $s_{j}     \in     \mathbb{R}$, that takes $C_{j}$ to
$C'_{j}$ is defined as follows: Chose a fixed angle, $\theta_{*}
\in  (0, \pi )$, with the following properties: First, $\theta_{*}$ is strictly
between the minimal and maximal angles that are defined by
the elements of $\hat{A}$. Thus the $\theta =\theta_{*}$ locus in
$C_{j}$ is non-empty. Second, no pair $(p, p')$ makes the $\theta  =
\theta_{*}$ version of~\eqref{eq:1.7}  hold. Now, define $s_{j}$ so that the
translate  $s \to  s + s_{j}$ moves $C_{j}$ so that the result, $C'_{j}$,
has a point on its $\theta =\theta_{*}$ locus where $s = 0$.

Granted this definition, it then follows from the $\{C'_{j}\}$ version
of~\eqref{eq:3.16} that the resulting limit data set, $\{(S, n)\}$, contains some $S$
on which $\theta$ takes value $\theta_{*}$. Such a subvariety can
not be an $\mathbb{R}$--invariant cylinder.

\subsection{Convergence}\label{sec:3e}

With \fullref{prop:3.7}  and with what has been said so far, \fullref{prop:3.6}
follows directly as a corollary to

\begin{prop}\label{prop:3.8}

Assume that $\theta$ and $\vartheta$ are generic. Let $\{J_{i}\}$
denote a sequence of admissible, almost complex structures with the following three properties:
First, the derivatives of each $J_{j}$ to any fixed, non-negative order are bounded over
$\mathbb{R}    \times  (S^1 \times S^{2})$ by a $j$--independent constant. Second,
there is a constant,
$L$, such that $J_{j} = J$ on the complement of some length $L$ subcylinder in
$\mathbb{R}    \times  (S^1 \times S^{2})$. Finally, there is an admissible, almost complex
structure, $J'$, such that the restriction of $\{J_{j}\}$ to any compact set in
$\mathbb{R}    \times  (S^1 \times S^{2})$ converges in the $C^{\infty }$ topology to the
corresponding
restriction of $J'$. Let $\{C_{j}\}$ denote a sequence where each $C_{j}$ is in the submanifold
$\mathcal{M}[\Theta , \vartheta ]$ of the $J_{j}$--version of $\mathcal{M}_{\hat{A}}$,
and where each has its graph from \fullref{sec:2g} giving $T$. Now suppose that $\{C_{j}\}$ converges
as described in \fullref{prop:3.7}
with limit data set $\Xi $. In this regard, assume that $\Xi $ contains at least one
subvariety that is not of the form $\mathbb{R}    \times     \gamma$ where $\gamma$ is a
Reeb orbit in $S^{1}    \times S^{2}$. Then $\Xi$ consists of a single pair, this pair has the
form $(S, 1)$, $S$ is in the submanifold $\mathcal{M}[\Theta , \vartheta ]$ from the
$J'$--version of
$\mathcal{M}_{\hat{A}}$, and the graph of $S$ is isomorphic to $T$.

\end{prop}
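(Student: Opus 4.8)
The plan is to argue in four stages: pin down the ends, collapse $\Xi$ to a single multiplicity-one pair, recognise the surviving component as a member of the $J'$-version of $\mathcal{M}[\Theta,\vartheta]$, and finally match its graph to $T$.

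\textbf{Stage 1: the ends are rigid.} The crucial structural input, absent from \fullref{prop:3.7}, is the hypothesis that $J_{j}=J$ off a fixed length-$L$ subcylinder. Because of this, the a priori asymptotic analysis behind \fullref{prop:2.2}, together with the exponential-decay estimates imported from \cite[Section 2]{T3}, applies uniformly in $j$: there is an $R$ such that for every $j$ the $|s|\ge R$ portion of $C_{j}$ is the standard embedded union of cylinders with asymptotic data $\hat{A}$, and these cylinders lie where $J_{j}=J$. First I would show that, after passing to a subsequence, these embedded end-cylinders converge to a corresponding collection of embedded ends inside $\Sigma\equiv\bigcup_{(S,n)\in\Xi}S$; since each is embedded and occurs with multiplicity one in $C_{j}$, its limit is a single embedded cylinder lying in one irreducible component of $\Sigma$ with multiplicity one, while the uniform area bound on $\{|s|\le R\}$ forbids any further ends. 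Hence the ends of $\Sigma$ are in bijection with $\hat{A}$, they are partitioned among the components, and \emph{every} non-$\mathbb{R}$-invariant component carries at least one end and so appears in $\Xi$ with multiplicity one.

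\textbf{Stage 2: no breaking.} Write $\Xi=\{(S_{1},1),\dots,(S_{m},1)\}\cup\{(\mathbb{R}\times\gamma_{l},n_{l})\}$ with the $S_{i}$ non-$\mathbb{R}$-invariant. I would exclude $m\ge 2$ and the invariant cylinders using the quantities that $\hat{A}$ pins down: $\int d\alpha$, the pairings $\langle e,[\cdot]\rangle_{*}$ and $\langle c_{1},[\cdot]\rangle_{*}$, and — through \eqref{eq:2.10} — the Euler characteristic and the integer $\hat{I}$ of \eqref{eq:2.2}. Each of these is ``sub-conserved'' under the limit, in the sense that the corresponding sum over $\Xi$ (weighted by multiplicity, with corrections localised at the finite collision set $Y$) reproduces the value for $C_{j}$. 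Feeding these into \fullref{prop:2.10} and \fullref{cor:2.11} applied component-by-component, and combining with positivity of the local intersection numbers of $C_{j}$ with the fixed pseudoholomorphic surfaces $\mathbb{R}\times\gamma$, I expect to force $m=1$, to eliminate the $\mathbb{R}$-invariant components, and to show that $Y\cap S_{1}$ contains only the genuine singular and $\theta$-critical points of $S_{1}$. I expect this to be the main obstacle: the $\mathbb{R}$-invariant cylinders carry no $d\alpha$-energy, so their exclusion cannot come from an energy count alone; it has to be deduced from the end-accounting of Stage 1 together with the rigidity of $T$, since an invariant cylinder surviving in the limit would produce in $C_{j}$ a long, nearly $\mathbb{R}$-invariant neck, in conflict with $C_{j}$ having exactly the $N_{-}+\hat{N}+c_{\hat{A}}-2$ critical values of $\theta$ forced by $T$, all confined to the fixed compact set $\Theta$.

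\textbf{Stage 3: identifying $S$.} Set $S:=S_{1}$. By Stage 1 it is an irreducible $J'$-pseudoholomorphic multi-punctured sphere whose ends realise all of $\hat{A}$, so $S$ lies in the $J'$-version of $\mathcal{M}_{\hat{A}}$. By \fullref{prop:2.10} together with the genericity in \eqref{eq:3.14}, the pull-back of $\cos\theta$ to $S$ has exactly $N_{-}+\hat{N}+c_{\hat{A}}-2$ non-extremal critical points; their values are limits of the corresponding critical values of $C_{j}$, hence lie in the finite set $\Theta$, and genericity forces them to be pairwise distinct and therefore to exhaust $\Theta$. Likewise the $\mathbb{R}/(2\pi\mathbb{Z})$-parameters of the $(0,+,\dots)$ ends vary continuously and converge, so the image of $S$ under $\varpi_{+}$ is $\vartheta$. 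By \fullref{prop:2.12}, $S$ lies in the $J'$-version of $\mathcal{M}[\Theta,\vartheta]$. Finally, over any compact subset disjoint from $Y$ the refined convergence established in the next subsections (the strengthened forms of \eqref{eq:3.16}) realises $C_{j}$ as the image of a pointwise small normal section over $S$'s model curve, so the topology of the $\theta$-level sets of $C_{j}$ there coincides with that of $S$; combined with the decay of these sections on the ends and the control near $Y$ from Stage 2, this identifies $T_{C_{j}}=T$ with $T_{S}$.
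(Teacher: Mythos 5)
Your proposal follows a plausible high-level outline but has two genuine gaps where the paper's own argument does essential work that your plan takes for granted.

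\textbf{Multiplicity one is not a consequence of end-rigidity.} In Stage~1 you claim that because the ends of $C_j$ are embedded cylinders occurring with multiplicity one in $C_j$, each non-$\mathbb{R}$-invariant component $S$ of $\Sigma$ ``carries at least one end and so appears in $\Xi$ with multiplicity one.'' This inference fails. An end of $C_j$ with data $(p,p')$ need not be a degree-one cover of a tubular neighbourhood of the limit Reeb orbit, since $(p,p')$ need not be relatively prime; and the integer $n$ in a pair $(S,n)\in\Xi$ is not determined by counting ends but by the degree of convergence near $S$, which could well be larger than one even when the ends converge individually. The paper devotes most of Section 3.G (Steps 2, 3, 6, 7, 8) to the $n=1$ conclusion, using quite delicate arguments: lifting and projecting loops between $C_j$ and $S_0$, showing $\theta$-preserving preimages are nested or pairwise distinct, and — when $S$ is a disk or cylinder — producing auxiliary families of $J'$-pseudoholomorphic curves and pushing them along $\partial_s$ or $-\partial_\theta$ to obtain intersection-number contradictions. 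None of this is captured in your outline.

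\textbf{Stage 2's energy/index bookkeeping is not the paper's mechanism and likely won't close.} You propose to distribute $\langle e,[\cdot]\rangle_*$, $\langle c_1,[\cdot]\rangle_*$, $\chi$, and the index $\hat I$ over the pieces of $\Xi$ and derive $m=1$. The paper never does this — the pairings in \eqref{eq:2.10} are defined using a compactly supported perturbation and a choice of end-section whose large-$|s|$ behaviour is tied to $C$ itself, and there is no straightforward additivity statement over the (possibly overlapping, possibly $\mathbb{R}$-invariant) components with corrections at $Y$. You correctly suspect the $\mathbb{R}$-invariant cylinders are the obstruction; the paper handles them — and the exclusion of a second non-invariant component — entirely by topological arguments: producing, via the mountain pass lemma, a non-extremal critical value of $\theta$ on the large-$j$ curves that either lies in a forbidden interval or is too close to a value coming from a $(0,\cdot,\cdot)$ element of $\hat{A}$, contradicting the genericity in \eqreft3{14}, or producing a pair of paths from $p_{0j}$ to $p_{1j}$ with incompatible behaviour near $Y$. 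Your fallback paragraph gestures at this (``long, nearly $\mathbb{R}$-invariant neck ... in conflict with $C_j$ having exactly $N_-+\hat{N}+c_{\hat{A}}-2$ critical values''), which is the right intuition, but the actual argument requires the global version of \eqref{eq:3.16} — established in Part 1 of the paper's proof via translation limits and the integral identity \eqref{eq:3.19} — and the concrete mountain-pass construction, neither of which you supply. As written, Stage~1 also assumes the global convergence rather than proving it: the uniform area bound prevents new ends from being created but does not by itself prevent the constant-$|s|$ slices of $C_j$'s ends from drifting away from the limiting Reeb orbits over a growing $s$-range, which is precisely the failure mode the paper rules out in \hyperlink{3.E:Case 1}{Case~1} and \hyperlink{3.E:Case 2}{Case~2} of Section 3.E.
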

The remainder of this subsection and the next two subsections are occupied
with the proof of this proposition. In this regard, note that there
are various ways to prove this proposition, in particular, some using mostly
differential equation techniques such as can be found in Hofer
\cite{H1,H2,H3} and Hofer--Wysocki--Zehnder \cite{HWZ1,HWZ2,HWZ3}
and the very recent Bourgeois--Eliashberg--Hofer--Wysocki--Zehnder \cite{BEHWZ}. The proof offered below relies almost entirely
on arguments that are of a topological nature. In any event, the arguments
used below are exploited in various modified forms in the sequel to this
article.

The proof starts with a proof that the convergence assertion in~\eqref{eq:3.16} holds
even in the case that $K = \mathbb{R}    \times  (S^1 \times S^{2})$.
This first part of the proof occupies the remainder of this subsection.

\begin{proof}[Part 1 of the Proof of \fullref{prop:3.8}]

The proof that the $K = \mathbb{R}    \times  (S^1 \times S^{2})$ version of~\eqref{eq:3.16} holds
starts here by making the assumption that the $K = \mathbb{R}    \times  (S^1 \times S^{2})$
version of~\eqref{eq:3.16} is false. The proof proceeds
to derive a patently nonsensical conclusion.

To start this derivation, let $\Sigma      \equiv      \cup_{(S,n) \in \Xi
}S$, let $E$ denote any end from $\sigma$, and let $\mathbb{R}    \times
\gamma_{*}$ denote the translationally invariant cylinder with the
same large $|s| $ asymptotics of as $E$. This is to say that the
$|s|      \to     \infty $ limits of $E$ converge in $S^{1}    \times
S^{2}$ to the Reeb orbit $\gamma_{*}$. Now, fix some very small
but positive number $\varepsilon $ and there exists a value, $s_{0}$, of $s$
on $E$ and a pair of sequences, $\{s_{j + }\} \subset  [s_{0},
\infty )$ and $\{s_{j - }\} \in  (-\infty , s_{0}]$ with the
following properties:
\itaubes{3.17}
\textsl{If $E$ is on the concave side of $\sigma$, then $\{s_{j + }\}$ has no convergent subsequences;
and if $E$ is on the convex side of $\sigma$, then $\{s_{j - }\}$ has no convergent subsequences.
}

\item \textsl{For each index $j$, the intersection of $C_{j}$ with the  $s \in  [s_{j - }, s_{j + }]$
portion of the radius $\varepsilon $ tubular neighborhood of $\mathbb{R}\times     \gamma_{*}$
has an irreducible component, $C_{j*}$, where $|s| $ takes both the values $s_{j - }$
and $s_{j + }$ and whose points have distance $\frac{1}{4}\varepsilon $ or less from $\mathbb{R}
\times  \gamma_{*}$.
}

\item \textsl{For each index $j$, there exists a subinterval, $I_{j} \subset  [s_{j - }, s_{j + }]$ such that
}
\begin{enumerate}

\item[(a)] \textsl{The sequence whose $j'$th element is the length of $I_{j}$ diverges as $j\to\infty$.}

\item[(b)] \textsl{The sequence whose $j'$th element is the maximum distance from the
$s \in  I_{j}$ portion
of $C_{j*}$ to $\mathbb{R}    \times     \gamma_{*}$ limits to zero as $j  \to     \infty $.}
\end{enumerate}

\end{itemize}
The fact that all of this can be arranged is a straightforward consequence
of the manner of convergence that is dictated by \fullref{prop:3.7}.

Now comes a key point: Because each $C_{j}$ is irreducible, when
$\varepsilon $ is small, there must exist an infinite sequence of positive
integers $j$ (hence renumbered consecutively from 1) and at least one end $E
\subset     \sigma$ with the following properties:
\qtaubes{3.18}
\textsl{%
Values for $s_{j + }$ and $s_{j - }$ can be chosen for use in $E'$s
version of~\eqreft3{17} so that both
the $s = s_{j + }$ and $s = s_{j - }$ loci in $C_{j*}$ contain some point with distance
$\frac{1}{4}\varepsilon $ from $\mathbb{R}    \times     \gamma_{*}$.}
\endqtaubes
With~\eqreft3{18} understood, there are two cases to consider.
\end{proof}

\step {Case 1}
There is an end $E  \subset     \sigma$
where~\eqreft3{18} holds and where the value, $\theta_{*}$, of $\theta$
on $\gamma_{*}$ is neither 0 nor $\pi$. The derivation of nonsense
in this case is a four step affair.

\substep{Step 1}
This step starts with a crucial lemma.

\begin{lemma}\label{lem:3.9}

There exists $\delta  > 0$ such that for all large $j$, the angle $\theta$ takes both the values
$\theta_{*}+\delta $ and $\theta_{*}-\delta $ on the $s = s_{j - }$ locus in $C_{j*}$.
Meanwhile, $\theta$ takes at least one of these values on the $s = s_{j + }$ locus in $C_{j*}$.
\end{lemma}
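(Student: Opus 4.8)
The plan is to argue by contradiction and to extract the needed two‑sided oscillation of $\theta$ from a combination of three ingredients: the strong convergence dictated by \fullref{prop:3.7}, positivity of local intersection numbers between $C_{j*}$ and the pseudoholomorphic cylinders $\mathbb{R}\times\gamma_\iota$ that foliate $\mathbb{R}\times\{\theta=\theta_*\}$, and the fact (from \refsteps{2.E}{Part 1}, i.e.\ \fullref{sec:2e}) that $\theta$ has no interior local extrema on a pseudoholomorphic subvariety. First I would fix coordinates on the solid‑torus neighborhood $N_\varepsilon(\mathbb{R}\times\gamma_*)$: besides the $\partial_s$ and Reeb directions, the remaining two normal directions are $\partial_\theta$ and the in‑torus direction in which $\beta\equiv p't-p\varphi-\iota_*$ varies, so that the distance to $\mathbb{R}\times\gamma_*$ is comparable to $\sqrt{(\theta-\theta_*)^2+\beta^2}$, and the leaves $\mathbb{R}\times\gamma_\iota$ are exactly the loci $\{\theta=\theta_*,\ \beta=\iota-\iota_*\}$. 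Each $\mathbb{R}\times\gamma_\iota$ is $J_j$--pseudoholomorphic for every $j$, so by \eqreft1{9} it meets $C_{j*}$ in finitely many points, all with positive local intersection number.

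Next I would record what the convergence of \fullref{prop:3.7} buys over the long interval $I_j$: there $C_{j*}$ lies within $o(1)$ of $\mathbb{R}\times\gamma_*$, hence $\theta=\theta_*+o(1)$ and $\beta=o(1)$, and a constant‑$s$ slice of $C_{j*}$ over $I_j$ is a closed braid in $N_\varepsilon(\mathbb{R}\times\gamma_*)$ whose class in $H_1$ of the solid torus is a fixed positive multiple $m$ of $[\gamma_*]$ (the covering multiplicity of the relevant end in $\Xi$). For $j$ large, the finitely many $\theta$--critical and non‑immersion points of $C_{j*}$, as well as the $s$--critical points, all lie near the corresponding points of the limit $\Sigma$; by \fullref{prop:2.2} and the genericity of $\Theta$ these can be kept away from a fixed portion of the $s$--range adjacent to $s_{j-}$ and from $\mathbb{R}\times\gamma_*$ itself, so this braid class is unchanged as $s$ is decreased from $I_j$ down to $s_{j-}$.

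The heart of the matter is to show that the $s=s_{j-}$ slice straddles $\{\theta=\theta_*\}$, and quantitatively that it meets both values $\theta_*\pm\delta$ for a single $\delta>0$ valid for all large $j$. I would argue by contradiction against uniformity: passing to a subsequence along which, say, $\sup\theta$ on the $s=s_{j-}$ slice tends to $\theta_*$, the point $x_{j-}$ on that slice with $\dist(x_{j-},\mathbb{R}\times\gamma_*)=\tfrac14\varepsilon$ supplied by \eqreft3{18} must then have $|\beta(x_{j-})|\ge\tfrac15\varepsilon$; since $\beta=o(1)$ over $I_j$, the connected piece $C_{j*}$ must cross the cylinder $\mathbb{R}\times\gamma_\iota$ with $|\iota-\iota_*|=\tfrac18\varepsilon$ at an intermediate $s$, and a positive local intersection point there forces $\theta$ to pass through $\theta_*$ transversally nearby; feeding this back through the braid class $m[\gamma_*]$ and the absence of interior extrema of $\theta$ shows $\theta$ must take values on both sides of $\theta_*$ already on the $s=s_{j-}$ slice, contradicting $\sup\theta\to\theta_*$. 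The weaker claim for the $s=s_{j+}$ slice — that $\theta$ reaches at least one of $\theta_*\pm\delta$ — is the one‑sided version of the same argument, using only that $C_{j*}$ reaches distance $\tfrac14\varepsilon$ there together with the $o(1)$ control over $I_j$. The main obstacle is precisely this last step: homology of the braid alone does not forbid a curve from lying on one side of $\{\theta=\theta_*\}$, so pseudoholomorphicity must enter through the positive intersections with the foliating cylinders $\mathbb{R}\times\gamma_\iota$, and the resulting oscillation estimate must be made uniform in $j$; the bookkeeping needed to place all $s$-- and $\theta$--critical behavior of $C_{j*}$ away from the relevant window is the secondary technical point.
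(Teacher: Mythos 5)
Your sketch identifies the right tension but does not actually close it: the step you flag as ``the main obstacle'' is in fact a genuine gap, not merely a technical loose end.

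Concretely, you argue that because $|\beta(x_{j-})|\ge\tfrac15\varepsilon$ while $\beta=o(1)$ over $I_j$, the connected piece $C_{j*}$ ``must cross the cylinder $\mathbb{R}\times\gamma_\iota$ with $|\iota-\iota_*|=\tfrac18\varepsilon$.'' But that cylinder is the codimension--two locus $\{\theta=\theta_*,\ \beta=\iota-\iota_*\}$, not the codimension--one slab $\{\beta=\iota-\iota_*\}$. Connectedness of $C_{j*}$ forces $\beta$ to pass through $\tfrac18\varepsilon$ somewhere along $C_{j*}$, but says nothing about $\theta$ equalling $\theta_*$ at the same point. So no intersection with $\mathbb{R}\times\gamma_\iota$ has been produced, and the positive--intersection machinery has nothing to act on. (Your follow-up appeal to the braid class has the same problem: the class in $H_1$ of the solid torus is insensitive to which side of $\{\theta=\theta_*\}$ the slice lives on.) You noticed this yourself; the point is that it is not a matter of bookkeeping --- this is where a new idea is required.

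The paper's argument for \fullref{lem:3.9} takes a different route that avoids this issue entirely. It translates the whole sequence by $s\mapsto s+s_{j-}$, applies \fullref{prop:3.7} to the translated sequence to get a new limit data set $\Xi'$, and identifies a concave--side end $E'$ of a subvariety from $\Xi'$ whose large--$|s|$ limit Reeb orbit is shown to be $\gamma_*$ (this identification is the content of Steps 3--4, carried out via the exactness of $p\,d\varphi - p'\,dt$ near $\gamma_*$ and a Stokes'--theorem comparison of integrals of $f(p\,dt+p'\sin^2\theta_*\,d\varphi)$ over two slices of a pseudoholomorphic curve). Then the two--sided oscillation is read off structurally: either $E'$ lies in a non--$\mathbb{R}$--invariant component of $\Xi'$, in which case~\eqref{eq:2.17} with $n\ge1$ (forced for concave ends) gives $\theta$ values on both sides of $\theta_*$ on every large slice of $E'$, which is inherited by the $s=s_{j-}$ slices by pointwise convergence; or $E'\subset\mathbb{R}\times\gamma_*$, in which case the $\tfrac14\varepsilon$--distance point forces $\Xi'$ to contain a non--invariant subvariety with a disk centered on $\mathbb{R}\times\gamma_*$, and the non--extremality of $\theta$ (Part 1 of \fullref{sec:2e}) on that disk gives the two--sided oscillation. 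For the $s=s_{j+}$ slice one translates by $s_{j+}$ instead and obtains a convex end, where the $n=0$ case of~\eqref{eq:2.17} is possible; hence only one--sided oscillation is guaranteed, which is exactly why the lemma's conclusion is weaker at $s_{j+}$. If you want to proceed with a direct intersection--theoretic argument, you would need a separate mechanism to locate a $\theta=\theta_*$ point on $C_{j*}$ near $s_{j-}$; the paper does this implicitly by passing to the limit and reading it off from the end structure there rather than producing it $j$ by $j$.
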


The proof of this lemma is given below.

Granted \fullref{lem:3.9}, the `mountain pass' lemma with the third point in~\eqreft3{17}
implies that there is a critical point of $\theta$ on each large $j$ version
of $C_{j*}$ with critical value equal to $\theta_{*}$.

As is explained next, the relatively prime integer pair $(p, p')$ that is
defined by $\theta_{*}$ via~\eqref{eq:1.7}  must be proportional to either
$Q_{e'}$ or $Q_{e''}$ where $e'$, $e''$ with $e$  label the three edges in $T$ that
are incident to the vertex that labels the critical point with critical
value $\theta_{*}$. Here, the convention for distinguishing $e$  from
$e'$ and $e''$ is as follows: The respective vertices on $e'$ and $e''$ that lie
opposite that with angle label $\theta_{*}$ have angle labels on
the same side of $\theta_{*}$ in $(0, \pi )$. This last conclusion
exhibits the required nonsense since it requires the vanishing at $\theta
= \theta_{*}$ of either the $Q = Q_{e'}$ or $Q = Q_{e''}$ version
of $\alpha_{Q}$.

To see why $(p, p')$ are proportional to one of $Q_{e}$ or $Q_{e'}$, let $j$ be
very large and let $K_{e}$, $K_{e'}$ and $K_{e''}$ denote the components in
the complement of the $C_{j}$ version of $\Gamma$ in $C_{j}$'s model curve.
Slice $C_{j*}$ into two pieces near the $s = \frac{1}{2}s_{j}$ locus.
This then slices $C_{j}$ into two parts, where one part,
$C_{j + }$, contains the larger $s$ portion of the sliced component of
$C_{j*}$. Let $C_{j - }$ denote the other part. By virtue of the fact
that $\theta$ spreads uniformly to both sides of $\theta_{*}$ on $C_{j - }$,
the latter must contains most of both $K_{e}$ and one of
$K_{e'}$ or $K_{e''}$. Agree to distinguish the latter as $K_{e'}$.
Meanwhile, $C_{j + }$ contains most of $K_{e''}$. In this regard, the
portions that are missing in either of the three cases are portions where
$\theta$ is everywhere very close $\theta_{*}$.

Now, recall from \fullref{sec:2g} that the $\theta =\theta_{*}$ part
of the $\Gamma$--locus in $C_{j}$'s model curve has the form of a `figure
8', where one of the circles is the $\theta =\theta_{*}$
boundary of the closure of $K_{e'}$ and the other that of the closure of
$K_{e''}$. This implies that any given constant $\theta$ circle in
$K_{e''}$ is homologous to the union of a constant $\theta$ circle in
$K_{e'}$ and a constant $\theta$ circle in $K_{e}$. Take these circles to
have $\theta$ value that differ by order one from $\theta_{*}$.
This the case, the obvious `pair of pants' in $C_{j}$'s model curve with
these three constant $\theta$ circles as boundary provides a homology. Now,
this pair of pants is sliced in two pieces by the  $s \sim \frac{1}{2}s_{j}$ cut.
In particular, the $C_{j + }$ piece
contains one of the boundary circle in $K_{e''}$ and the $C_{j - }$ piece
contains the other two boundary circles. This understood, it then follows
from the definitions of $Q_{e''}$ and $(p, p')$ as integrals of $\frac1
{2\pi }dt$ and $\frac1 {2\pi }d\varphi$ that these
integer pairs are proportional.

\substep{Step 2}
This step, \refstep{Step 3} and \refstep{Step 4} contain the following proof.

\begin{proof}[Proof of \fullref{lem:3.9}]

Translate each $C_{j}$ in $\mathbb{R}    \times  (S^1 \times S^{2})$ by sending
$s$ to $s + s_{j - }$ in the $\mathbb{R}$
factor, and let ${C_{j}}'$ denote the corresponding subvariety. The sequence
$\{{C_{j}}'\}$ converges in the manner dictated by \fullref{prop:3.7} with
some limit data set $\Xi' $. It follows from~\eqreft3{18} that $\Xi' $ contains an
irreducible subvariety with a concave side end, $E'$, with the following
property: Given some very large $R$, a value of $s$ on $E'$, there is an infinite
subsequence from $\{C_{j}\}$ (hence renumbered from 1) such that the  $s\to  s + s_{j - }$
translates of the $s_{j - }+R$ slices of $C_{j*}$
converge pointwise to the $s = R$ slice of $E'$. Let $\gamma$' denote the Reeb
orbit that is the limit of the the $|s|      \to     \infty $
slices of $E'$. \refstep{Step 3} proves that $\gamma ' = \gamma_{*}$.

Granted that $\gamma ' = \gamma_{*}$, there are two
possibilities: Either $E'$ sits as a sub-cylinder in $\mathbb{R}    \times
\gamma_{*}$ or not. If not, then $E'$ is a concave side end of some
subvariety from $\Xi '$ that is not $\mathbb{R}$--in\-var\-i\-ant. This understood, it
follows from~\eqref{eq:2.17} that $\theta$ takes values both above and below $\theta
_{*}$ on any constant $|s| $ slice of $E'$. (Remember that
the $n = 0$ case of~\eqref{eq:2.17}
is reserved solely for convex side ends.) Thus
$\theta$ must take values on the $s = s_{j - }$ slice of each large $j$
version of $C_{j*}$ that differ from $\theta_{*}$ in both
directions by some $j$--independent, non-zero amount.

Suppose, on the other hand that $E'$ is contained in $\mathbb{R}    \times
\gamma_{*}$. As there are points on $C_{j*}$ where s $ \sim
 s_{j - }$ with distances at least $\frac{1}{4}\varepsilon $
from $\mathbb{R}    \times     \gamma_{*}$, the convergence described by
\fullref{prop:3.7}  requires a subvariety from $\Xi'$ that is not
$\mathbb{R}$--invariant and contains a disk with the following property: The center
point is on $\mathbb{R}    \times     \gamma_{*}$ and all other points
are limit points of sequences whose $j'$th element is in the  $s \to  s +
s_{j - }$ translate of $C_{j*}$. Since $\theta$ has no local maximum
or minimum on such a disk, it thus follows that $\theta$ must take values
on the $s = s_{j - }$ slice of each large $j$ version of $C_{j*}$ that
differ from $\theta_{*}$ in both directions by some $j$--independent,
non-zero amount.

To establish the asserted behavior of $\theta$ where $s$ is near $s_{j
+ }$ on $C_{j}$, translate each $C_{j}$ in $\mathbb{R} \times (S^1
\times S^{2})$ by sending $s \to s + s_{j + }$ in the
$\mathbb{R}$--factor. Let ${C_{j}}'$ now denote the result of this new
translation. Invoke \fullref{prop:3.7} once again to describe the
convergence of this new version of $\{{C_{j}}'\}$, using $\Xi'$ to
denote the new limit data set. In this case, there is a convex side
end, $E'$, with the following property: Given some very large $R$, a
value of $s$ on $E'$, there is an infinite subsequence from
$\{C_{j}\}$ (hence renumbered from 1) such that the $s \to s + s_{j +
}$ translates of the $s_{j + }- R$ slices of $C_{j*}$ converge
pointwise to the $s = -R$ slice of $E'$. Let $\gamma'$ denote the Reeb
orbit that is the limit of the the $|s| \to \infty $ slices of
$E'$. \refstep{Step 3} and \refstep{Step 4} prove that $\gamma ' =
\gamma_{*}$.  Granted this, then the argument for the desired
conclusion that $\theta$ takes some value on the $s = s_{j + }$ slice
of $C_{j*}$ that differs from $\theta_{*}$ by a $j$--independent
amount is much the same as that given in the preceding paragraph. In
fact, here is the only substantive difference: The argument now only
finds values of $\theta$ on the $s \sim s_{j + }$ locus in $C_{j*}$
that differ in at least one direction from $\theta_{*}$ by a non-zero,
$j$--independent amount. This is because the integer $n$ that appears
in a convex side end version of~\eqref{eq:2.17} can vanish.

\substep{Step 3}
But for one claim, this step proves that with $\varepsilon $ small, the
Reeb orbits $\gamma' $ and $\gamma_{*}$ agree in all of their Step
2 incarnations. To start, note first that with $\varepsilon $ small, the
Reeb orbit $\gamma '$ sits in a tubular neighborhood of $\gamma_{*}$, and so
$\gamma '$ must be a translate of $\gamma_{*}$ by some
element in the group $T = S^{1}    \times S^{1}$ whose distance is
$o(\varepsilon )$ from the identity.

To make further progress, let $(p, p')$ again denote the relatively prime pair
of integers that $\theta_{*}$ defines via~\eqref{eq:1.7}. As can be readily
verified, the 1--form $p d\varphi  - p' dt$ is exact on a tubular
neighborhood of $\gamma_{*}$, and this form pulls back as
zero on any translate of $\gamma_{*}$ by the group $T$.  Moreover, on
such a tubular neighborhood, the values of any chosen anti-derivative of
this 1--form distinguish the $T$--translates of $\gamma_{*}$. In this
regard, let $f$ be an anti-derivative with value zero on $\gamma_{*}$. Then
\begin{equation}\label{eq:3.19}
\int_{\gamma }f\big(pdt + p'\sin^{2}\theta_{*} d\varphi \big) =
f|_{\gamma } 2\pi  \big(p^{2} + p'^{2} \sin^{2}\theta
_{*}\big),
\end{equation}
when $\gamma$ is any translate of $\gamma_{*}$ by an element from
a small radius ball about the identity in $T$.  Thus, the value of the integral
on the left side here will distinguish $\gamma' $ from $\gamma_{*}$
if these two orbits are not one and the same.

With the preceding understood, take $j$ large, and let $s_{j0}$ and $s_{j1}$
denote any two regular values of $s$ on $C_{j*}$, chosen so that $J_{j}
= J$ on the cylinder where  $s \in  [s_{j0}, s_{j1}]$. As is explained in
\refstep{Step 4}, the respective integrals of $f (pdt + p'\sin^{2}\theta_{*} d\varphi )$ over the
$s = s_{j0}$
and $s = s_{j1}$ slices of $C_{j*}$ agree by virtue of the fact that $C_{j*}$
is pseudoholomorphic.
The use of this last fact is simplest in the case that there exists some
$j$--independent, positive number, $R$, such that $J_{j} = J$ where  $s \in
[s_{j - }+R, s_{j + }-R]$. If such is the case, then take any fixed $r > R$
such that each large $j$ version of $s_{j - } + r$ is a regular value of $s$ on
$C_{j*}$. Granted this, take $s_{j0}$ to be $s_{j - } + r$. For $r$
large and then $j$ very large, the convergence as described in \fullref{prop:3.7}
guarantees that the integral of $f (pdt + p'\sin^{2}\theta_{*}d\varphi )$ over the $s = s_{j0}$
slice of $C_{j*}$ is very close to
the $\gamma =\gamma '$ version of the right hand side in~\eqref{eq:3.19}.
Meanwhile, take $s_{j1}$ to be some generic value of s from the interval
$I_{j}$ from the third point of~\eqreft3{17}. As the latter slice of
$C_{j*}$ is very close to $\mathbb{R}    \times     \gamma_{*}$, the
integral of the form $f (pdt + p'\sin^{2}\theta_{*} d\varphi)$
about such a slice will be very close to the $\gamma =\gamma_{*}$ version of the right
hand side of~\eqref{eq:3.19}. As $r$ can be as large as
desired, and likewise $j$, it thus follows that $\gamma ' = \gamma_{*}$.

Of course, it may well be the case that there is no $j$--independent choice of
$R$ that excludes points with $J_{j } \ne  J$ from where  $s \in  [s_{j -
}+R, s_{j + }-R]$. There exists in this case, some fixed $R > 0$ such that
$J_{j} = J$ save where $s  \in  [-R, R]$. If the lower bound for $s$ on all
large $j$ versions of the interval $I_{j}$ contain points where $s < -R$, then
the argument given in the previous paragraph works just fine. The situation
is different if the lower bound of $s$ on all large $j$ versions of $I_{j}$ is
greater than $-R$. Assuming that such is the case, first fix some large value
of $r$ so that for every sufficiently large $j$, each of $s_{ - j} + r$, $-(R+r)$ and $R+r$
are regular values of $s$ on $C_{j*}$. This done, first
take $s_{0j} = s_{ - j} + r$ and $s_{1j}=- (R+r)$ to establish
that the integral of $f (pdt + p'\sin^{2}\theta_{*} d\varphi)$
about the $s = - (R+r)$ slice of $C_{j*}$ is very close to the
$\gamma =\gamma '$ version of the right hand side of~\eqref{eq:3.19}. Next,
take $s_{0j}$ to equal $R+r$ and take $s_{1j}$ to be a generic value in
$I_{j}$ so as to establish that the integral of $f (pdt +
p'\sin^{2}\theta_{*} d\varphi )$ about the $s = (R+r)$ slice of
$C_{j*}$ is nearly the $\gamma =\gamma_{*}$ version of
the right hand side of~\eqref{eq:3.19}.

It remains now to establish that the integrals of $f (pdt +
p'\sin^{2}\theta_{*} d\varphi )$ along the respective
$s = - (R+r)$ and $s = (R+r)$ slices of $C_{j*}$ are, for very large $j$, very
close to each other. To argue for this, note that the sequence
$\{C_{j}\}$ converges according to \fullref{prop:3.7}  to a limiting
$J'$--pseudoholomorphic subvariety. The sequence $\{ C_{j*}\}$ thus
has a subsequence that converges on every subcylinder of the form $[-R+r,
R+r] \times  (S^{1}    \times S^{2})$ to a component, $C'_{*}$,
of this subvariety. In this regard, all points of $C'_{*}$, must have
distance less than $\frac{1}{4}\varepsilon $ from $\mathbb{R}
\times     \gamma_{*}$. Furthermore, the constant $|s| $
slices of the convex side ends of $C'_{*}$ must converge as $|s| \to \infty $ to $\gamma_{*}$.
Meanwhile, those of its concave side ends must converge as $|s| \to \infty $
to $\gamma' $. But, this then implies that $\gamma ' = \gamma_{*}$ since the angle
$\theta$ is the same on $\gamma'$ as on $\gamma_{*}$ and so has to be constant on $C'_{*}$.

\substep{Step 4}
To tie up the final loose end, suppose that $\gamma_{*}$ is a Reeb
orbit where the value, $\theta_{*}$, of $\theta$ is neither 0 nor
$\pi$. Let $(p, p')$ denote the relatively prime pair of integers that
$\theta_{*}$ defines via~\eqref{eq:1.7}. Let  $U \subset S^1
\times S^2$ denote a tubular neighborhood of $\gamma_{*}$ and
let $f$ denote an anti-derivative of $p d\varphi  - p'dt$ on $U$. Now, suppose
that $s_{ + } > s_{ - }$, and that $C_{*}$ is closed,
$J$--pseudoholomorphic subvariety inside $[s_{ - }-1, s_{ + }+1] \times
U$. Thus, $C_{*}$ has compact intersection with the subcylinder $[s_{ -
}, s_{ + }] \times U$. Suppose that both $s_{ - }$ and $s_{ + }$ are
regular values of the restriction of $s$ to $C_{*}$. Proved here is the
assertion that the respective integrals of the 1--form $f (pdt +
p'\sin^{2}\theta_{*}d\varphi )$ over the $s = s_{ - }$ and $s =
s_{ + }$ slices of $C_{*}$ agree.

For this purpose, use Stokes' theorem to write the difference of the two
integrals as the integral over $C_{*}$'s intersection with $[s_{ - },
s_{ + }] \times  U$ of the form $df     \wedge  (pdt + p'\sin^2\theta
_{*}d\varphi )$. Written out, the latter is
$-(p^{2}+p'^{2}\sin^{2}\theta_{*}) dt  \wedge  d\varphi$.
Now, as $C_{*}$ is $J$--pseudoholomorphic, the latter integral is
identical to that of the 2--form $-(p^{2}+p'^{2}\sin^{2}\theta_{*})
ds \wedge \frac{1}{\sin \theta }d\theta$.
A second application of Stokes' theorem establishes the assertion.
\end{proof}

\step{Case 2}
This case assumes that the $|s| \to \infty $ limits of $\theta$ is 0 or $\pi$ on every end in
$\sigma$ where~\eqreft3{18} holds. Note that the arguments below consider only
the case where the aforementioned limit of $\theta$ is 0. The argument for
the case where the limit is $\pi$ is identical but for some minor
notational modifications.

To start the derivation of nonsense in this case, remark that the Reeb orbit
$\gamma_{*}$ in~\eqreft3{17} is the $\theta =0$ locus. Let $\theta
_{j - }$ denote the maximum value of $\theta$ on the $s = s_{j - }$ slice
of $C_{j*}$, and let $\theta_{j + }$ denote the maximum of $\theta
$ on the $s = s_{j + }$ slice. Since both of these slices have points with
distance $\frac{1}{4}\varepsilon $ from the $\theta =0$ locus,
it follows that for fixed, small $\varepsilon $ and large $j$, both $\theta
_{j - }$ and $\theta_{j + }$ are greater than $\frac{1}{100}\varepsilon $.
This understood, let $\theta_{j*}$ denote
the minimum value of the function on the interval $[s_{j - }, s_{j + }]$
that assigns to any given $a  \in [s_{j - }, s_{j + }]$ the maximum
value of $\theta$ on the $s = a$ slice of $C_{j*}$. In this regard,
$\theta_{j*} > 0$, but by virtue of the third point in~\eqreft3{17},
$\lim_{j \to \infty }    \theta_{j*}$ is zero.

Granted the preceding, the mountain pass lemma dictates that all large $j$
versions of $C_{j*}$ have a critical point of $\theta$ where $\theta
=\theta_{j*}$. Of course, the latter conclusion is nonsense as
all critical values of $\theta$ that are neither 0 nor $\pi$ lie in the
fixed, $j$--independent set $\theta$.

\subsection{Part 2 of the proof of Proposition \ref{prop:3.8}}\label{sec:3f}

This second part of the proof establishes that the set $\Xi $ contains just
one element. The argument here is presented in four steps.

\substep{Step 1}
The first point to make is that the set of $|s|      \to \infty$ limits of $\theta$ on $\sigma$
are identical to the set of such
limits on any given $C_{j}$. Of course, this follows from the fact that the
$K = \mathbb{R}    \times  (S^1 \times S^{2})$ version of~\eqref{eq:3.16} is
valid here. In the case that no angle from any $(0,-,\ldots)$ element in $\hat{A}$  is the same as
that from a $(0,+,\ldots )$ element, this last conclusion rules out an
$\mathbb{R}$--invariant component
of $\sigma$ where $\theta$ differs from either 0 or $\pi$.

To rule out such components in any case, suppose for the sake of argument
that one were present. Denote the latter as $\mathbb{R}    \times     \gamma
_{*}$ where $\gamma_{*}$ is a Reeb orbit, and use $\theta
_{*}$ to denote the value of $\theta$ on $\gamma_{*}$.
Keep in mind that $\theta_{*}$ comes both from a $(0,+,\ldots )$ and a $(0,-,\ldots)$
element in $\hat{A}$.
Let $\varepsilon  > 0$ be very small and let $U  \subset S^{1}    \times S^{2}$ denote
the radius $\varepsilon $ tubular neighborhood of $\gamma
_{*}$. The argument that follows proves that $\theta$ on all large
 $j$ versions of $C_{j}$ has a critical value that differs by at most a uniform
multiple of $\varepsilon $ from $\theta_{*}$. Of course, this is
conclusion is nonsense for small $\varepsilon $ since \fullref{prop:3.8}
assumes that no angle from $\theta$ coincides with an angle from a
$(0,\ldots)$ element in $\hat{A}$.

To start the argument, note that \fullref{prop:3.7} dictates that there is an
irreducible component of each sufficiently large $j$ version of $C_{j}     \cap
 (\mathbb{R}    \times  U)$ with one rather specific property. To describe
this property, let $C_{j*}$ denote the component in question. Here is
the property: This $C_{j*}$ contains two ends of $C_{j}$ where the
$|s| \to \infty $ limit of $\theta$ is $\theta_{*}$, one on the convex side end and the
other on the concave side.

To see where these observations lead, remark that by virtue of the
assumptions in \fullref{prop:3.8}, the integer $\deg_{E}(d\theta )$
from~\eqref{eq:2.16} is zero on each convex side end of each $C_{j}$
where the $|s|\to \infty $ limit of $\theta$ is neither 0 nor $\pi$.
This is to say that the invariant $c_{E}$ that appears
in~\eqref{eq:1.8} is non-zero on any such end. This understood, it
follows that there exists some $s_{j0} > 0$ such that either $\theta >
\theta_{*}$ or else $\theta < \theta_{*}$ where $|s| > s_{j0}$ on any
convex side end in any large $j$--version of $C_{j*}$. Meanwhile, by
virtue of the fact that the integer $n$ that appears
in~\eqref{eq:2.17} is non-zero on any concave side end of $C_{j}$
where $\lim_{| s| \to \infty } \theta = \theta_{*}$, so $\theta$ takes
on the value $\theta_{*}$ at points in $C_{j*}$.

Now, suppose that $\nu \co  [0, \infty )     \to  C_{j*}$ is a smooth,
proper map with $\nu (0)$ on the $\theta =\theta_{*}$ locus and
with the image of $\nu $ on a convex side end in $C_{j*}$ at all
sufficiently large values of its domain. Associate to $\nu $ the value of
$\theta$ where $| \theta -\theta_{*}| $ is maximal.
Next, minimize the latter over all such $\nu $. By virtue of what was said
in the previous paragraph, the resulting min-max angle is not equal to
$\theta_{*}$. Moreover, the mountain pass lemma guarantees that the
latter is a critical value of $\theta$ on $C_{j}$ and so an angle in
$\theta$.

However, since $C_{j*}$ is connected, and $\theta$ takes value
$\theta_{*}$ on $C_{j*}$, the large $j$ versions of this
critical value can differ by at most some $j$--independent multiple of
$\varepsilon $ from $\theta_{*}$ since the paths in $C_{j*}$
stay in the tubular neighborhood $U$.

\substep{Step 2}
This step rules out the existence of either a $\theta =0$ or a $\theta
= \pi$ cylinder as an irreducible component of $\sigma$ by again
producing nonsense, a positive critical value of $\theta$ that is either
too small or too large to be in $\theta$. Suppose, for the sake of
argument, that the $\theta =0$ cylinder is an irreducible component of
$\sigma$. Only this case is discussed, as the $\theta =\pi$ argument
is identical save for some notation.

To see why no $\theta= 0$ cylinder can appear, fix some small,
positive $\varepsilon $ such that $100\varepsilon $ is less than the
smallest angle in $\theta$, and let $U \subset S^{1}    \times S^{2}$
denote the radius $\varepsilon $ tubular neighborhood of the $\theta
=0$ Reeb orbit.

In this case, \fullref{prop:3.7} provides an irreducible component of each
large $j$ version of $C_{j}     \cap  (\mathbb{R}    \times  U)$ with one very
particular property. To state the latter, let $C_{j*}$ again denote
the component. Here is the salient property: This $C_{j*}$ contains
two ends of $C_{j}$ where the $|s|      \to     \infty $ limit of
$\theta$ is 0, one on the convex side of $C_{j}$ and the other on the
concave side.

Granted the preceding, take $j$ very large. If $C_{j*}$ intersects the
$\theta =0$ cylinder, let $s_{j}$ denote the largest value that is taken
on by $s$ at any $\theta =0$ points in $C_{j*}$. If there are no such
points, set $s_{j}$ to equal $-\infty $. Set ${C_{j*}}'  \subset
C_{j*}$ to denote that portion where $s > s_{j}$.

Now, let $\tau \co  \mathbb{R}     \to {C_{j*}}'$ be any smooth map with
the following two properties: First, $\lim_{r \to \infty }    \tau^*s|
_{r}=\infty $ and $\lim_{r \to - \infty }    \tau^*s|_{r}
= s_{j}$. Second, $\lim_{r \to - \infty }    \tau^*\theta =0$.
Associate to $\tau $ the maximum value of $\tau^*\theta$, and then let
$\theta_{*j}$ denote the infimum of these maxima over the set of
all such maps from $\mathbb{R}$ to ${C_{j*}}'$. The mountain pass lemma now
guarantees that $\theta_{*j}$ is a non-extremal critical value of
$\theta$. As $\theta_{*j} > 0$ it is a point in $\theta$.
However, it is also the case that $\theta_{*j}$ is bounded by
$100\varepsilon $ if $\varepsilon $ is small, and this is nonsense as
$100\varepsilon $ is smaller than the minimal angle in $\theta$.

\substep{Step 3}
This step and the next complete the proof that $\Xi $ contains but a single
element. The argument begins by assuming, to the contrary, that $\Xi $ has
more than one component so as to derive some nonsense. In this case, the
nonsensical conclusion finds distinct critical values of $\theta$ on each
large $j$ version of $C_{j}$ that are closer than the minimal separation
between the points in $\theta$.

To start this derivation, reintroduce the set  $Y \subset     \sigma$ as
defined in \refsteps{3.D}{Step 4} of \fullref{sec:3d}. As remarked earlier, $Y$ is a finite set. Fix
a pair of points in $\Sigma - Y$ that lie on distinct irreducible
components and choose a $\theta$--preserving preimage of each point and so
obtain, for each large $j$, a pair, $z_{j}$ and ${z_{j}}'$, of points in
$C_{j}$. Associate to each path in $C_{j}$ between $z_{j}$ and ${z_{j}}'$ the
supremum of $|s| $ along the path, and let $r_{j}$ denote the
infimum of the resulting subset of $[0, \infty )$. As is explained next, the
sequence $\{r_{j}\}$ is bounded.

To see that there must be such a bound, suppose to the contrary that this
sequence diverges. Since neither $\{z_{j}\}$ nor $\{{z_{j}}'\}$
diverges, there is a path in each large $j$ version of $C_{j}$ between $z_{j}$
and ${z_{j}}'$ that avoids all convex side ends of $C_{j}$ where the
$|s| \to \infty $ limit of $\theta$ is neither 0 nor $\pi$.
Indeed, if each such end is defined so that the values of $|s| $
are everywhere greater than its value on either $z_{j}$ or ${z_{j}}'$, then
any path between these points must exit any such end that it enters.

Meanwhile, let $E  \subset C_{j}$ denote a concave side end where the
$|s|      \to     \infty $ limit of $\theta$ is neither 0 nor $\pi
$, and let $\theta_{*}$ denote said limit. A path between $z_{j}$
and ${z_{j}}'$ can also be chosen to avoid $E$ even if it must cross the $\theta
=\theta_{*}$ locus. This is a consequence of the following
observations: First, as is described in \fullref{sec:2g}, the intersection of the
$\theta =\theta_{*}$ locus with $E$ contains just two connected
components. These are either the two ends of a single component of the
$\theta =\theta_{*}$ locus, a properly embedded copy of
$\mathbb{R}$, or ends of distinct components. In the latter case, the other
ends of the corresponding copies of $\mathbb{R}$ are in other ends of $C_{j}$.
In any event, if there is no path from $z_{j}$ to ${z_{j}}'$ that crosses the
$\theta =\theta_{*}$ locus at reasonable values of $s$, it must
be the case that the infimum of $s$ on some component of the large $j$ version
of this locus is very large. Were such to occur, then the whole of this
component would lie very close to some end of $\sigma$, thus in $\mathbb{R}
\times U$ where $U$ is a small radius tubular neighborhood in $S^{1}
\times S^{2}$ of a $\theta =\theta_{*}$ Reeb orbit.
Since $\vartheta$ is generic, this means that the two components of the
intersection of the $\theta =\theta_{*}$ locus with $E$ lie on
the same component of this locus. This understood, let $(p, p')$ denote the
relatively prime pair of integers that $\theta_{*}$ defines via~\eqref{eq:1.7}.
As remarked previously, the 1--form $p d\varphi  - p'dt$ is exact on
 $U$. In particular, the integral of $p d\varphi  - p'dt$ from one end to the
other of all large $j$ versions of the $\theta =\theta_{*}$
component in question must then be zero. However, the latter integral can
not be zero because the pointwise restriction of $p d\varphi  - p'dt$ to
such a component is nowhere zero.

With the preceding understood, the only way that $\{r_{j}\}$ can diverge
is if all paths between the large $j$ versions of $z_{j}$ and ${z_{j}}'$ have
their large values of $|s| $ where $\theta$ is nearly 0 or
nearly $\pi$. However, such an event is ruled out using the mountain pass
lemma. Indeed, under the circumstances just described, this lemma would
provide a non-extremal critical point of $\theta$ on every large $j$ version
of $C_{j}$, one whose critical value was either too close to 0 or too close
to $\pi$ to come from $\theta$.

\substep{Step 4}
This part of the proof argues that $\Xi $ has but one element, and makes
use of the following auxiliary lemma:

\begin{lemma}\label{lem:3.10}

Let $Q  \equiv  (q, q')$ denote a pair of integers and let $\theta_{o} < \theta_{1}$
denote a pair of angles such that the function $\alpha_{Q}(\cdot )$ 
from~\eqref{eq:2.27}
is positive on
$[\theta_0, \theta_1]$.  Given $\varepsilon  > 0$, there exists
$\varepsilon '  \in (0, \varepsilon )$
such that the following is true: Let $J_{*}$ denote an admissible almost complex structure,
and let
$\phi$ denote an immersion of $(\theta_{o}, \theta_{1})    \times     \mathbb{R}/(2\pi
\mathbb{Z})$ into $\mathbb{R}    \times  (S^1 \times S^{2})$ that is $J_{*}$--pseudoholomorphic
and defined using a pair of functions, $(a_{*}, w_{*})$, by the rule
\begin{multline*}
\bigl(s = a_{*},\ t = q v + (1 - 3\cos^{2}\sigma ) w_{*}
\mod(2\pi ), \\
\theta =\sigma , \varphi  = q'v + \surd 6\cos\sigma
w_{*}  \mod(2\pi )\bigr)
\end{multline*}
Then, any two points in $(\theta_{o}, \theta_{1})    \times     \mathbb{R}/(2\pi \mathbb{Z})$
with $\phi$--image in the complement of any given, radius $\varepsilon $ ball in
$\mathbb{R}    \times  (S^1 \times S^{2})$ are the endpoints of a continuous path whose
$\phi$ image lies in the complement of the concentric, radius $\varepsilon'$ ball.
Moreover, if the two points have the same $\sigma$--coordinate, then such a path exists on which
$\sigma$ is constant, and if they have the same $v$--coordinate, such a path exists on which $v$
is constant.
\end{lemma}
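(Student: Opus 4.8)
The plan is to exploit the explicit coordinate description of $\phi$ together with the positivity of $\alpha_Q$ on the closed interval $[\theta_0,\theta_1]$. First I would observe that because $\phi$ is $J_*$-pseudoholomorphic and defined by the displayed formula in terms of $(a_*,w_*)$, the pull-back $\phi^*d\alpha$ equals $\surd6\sin\sigma\,\alpha_Q(\sigma)\,d\sigma\wedge dv$ exactly as in~\eqref{eq:2.28}, so the immersion condition holds everywhere on the open cylinder. More importantly, the coordinates $(\sigma,v)$ on the parametrizing cylinder and the coordinates $(s,t,\theta,\varphi)$ on $\mathbb{R}\times(S^1\times S^2)$ are related by a map that, on the compact slab $[\theta_0+\delta,\theta_1-\delta]\times\mathbb{R}/(2\pi\mathbb{Z})$ for any fixed small $\delta$, is uniformly Lipschitz with a Lipschitz inverse in the $\sigma$-direction and the $v$-direction separately: $\theta=\sigma$ outright, while $t$ and $\varphi$ depend on $\sigma$, $v$ and $w_*$, and $s=a_*$. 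Thus the $\phi$-image of a ball of radius $\varepsilon$ in $\mathbb{R}\times(S^1\times S^2)$ is contained in a region of the parametrizing cylinder of bounded diameter — comparable to $\varepsilon$ times a constant depending only on $Q$, $\theta_0$, $\theta_1$, and the $C^1$ bounds on $(a_*,w_*)$ dictated by the $J_*$-pseudoholomorphic equation.

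The core of the argument is then a connectivity statement in the parametrizing cylinder. Given two points $x_1,x_2\in(\theta_0,\theta_1)\times\mathbb{R}/(2\pi\mathbb{Z})$ whose $\phi$-images avoid the radius-$\varepsilon$ ball $B$ about a point $p$, I want a path from $x_1$ to $x_2$ avoiding the (slightly larger, or rather: I reverse the quantifiers — avoiding the concentric radius-$\varepsilon'$ ball for suitable $\varepsilon'<\varepsilon$) preimage $\phi^{-1}(B')$. By the preceding paragraph $\phi^{-1}(B')$, for $\varepsilon'$ small enough in terms of $\varepsilon$, is contained in a small topological disk $D\subset(\theta_0,\theta_1)\times\mathbb{R}/(2\pi\mathbb{Z})$. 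Since a cylinder minus a small disk is connected — indeed path-connected — and since one can route around $D$, a path exists. For the refined statements: if $x_1,x_2$ share the same $\sigma$-coordinate $\sigma_0$, then since $\phi^{-1}(B')$ meets the circle $\{\sigma=\sigma_0\}$ in an arc of length $o(1)$ (in fact of length bounded by a multiple of $\varepsilon'$), the complement of that arc in the circle is still connected, giving a path on which $\sigma$ is constant; symmetrically, if $x_1,x_2$ share the same $v$-coordinate, the set $\phi^{-1}(B')$ meets the interval $\{v=v_0\}\times[\theta_0,\theta_1]$ in an interval not containing either endpoint — here one uses that $\phi$ is an embedding on such vertical segments, which follows because $\theta=\sigma$ is injective along them — and the complement of that interval in $\{v=v_0\}\times(\theta_0,\theta_1)$ has $x_1,x_2$ in the same component as long as the obstruction segment does not separate them, which one arranges by taking $\varepsilon'$ small relative to the distance from $p$ to $\phi(x_i)$.

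The main obstacle I anticipate is making the "$\phi^{-1}(B')$ is a small disk" claim genuinely uniform: the functions $(a_*,w_*)$ are not fixed, only constrained to be $J_*$-pseudoholomorphic, so their $C^1$-norms on the closed slab are not a priori bounded by a universal constant. The remedy is to quote the interior elliptic estimates for pseudoholomorphic immersions (as in~\cite{MS}) together with the fact that $\alpha_Q>0$ on the \emph{closed} interval $[\theta_0,\theta_1]$, which is precisely the hypothesis ensuring the immersion is non-degenerate up to the boundary of the relevant slab and hence that a uniform modulus-of-continuity estimate for $\phi$ holds on $[\theta_0+\delta,\theta_1-\delta]$ with $\delta$ depending only on $\varepsilon$; one must also handle the two ends $\sigma\to\theta_0,\theta_1$ separately, but there the vertical segments $\{v=\text{const}\}$ degenerate harmlessly and the argument that a slab-neighborhood of a ball-preimage is a disk goes through by the same estimate. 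Once this uniformity is secured, the topological routing-around-a-disk step is routine, and the $\sigma$-constant and $v$-constant refinements follow by restricting the same picture to a circle or to a vertical interval.
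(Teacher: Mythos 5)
Your overall strategy -- show that $\phi^{-1}$ of a small ball is contained in a small region of the parametrizing cylinder, then route paths around it -- is the right one and matches the spirit of the paper's proof. But the justification you give for the ``small region'' claim is where the argument actually lives, and your version has a genuine gap that your proposed remedy does not close. You assert the coordinate map is ``uniformly Lipschitz with a Lipschitz inverse'' with constants depending on ``the $C^1$ bounds on $(a_*,w_*)$ dictated by the $J_*$-pseudoholomorphic equation,'' then correctly flag that no such bounds are available a priori, and propose interior elliptic estimates to supply them. This cannot work: elliptic regularity bootstraps higher derivative control from $C^0$ control, and there is no uniform $C^0$ control on $a_*$ over the class of $J_*$-pseudoholomorphic immersions of the open cylinder (for a start, $a_*\mapsto a_*+\text{const}$ preserves solutions; more importantly, derivatives of $(a_*,w_*)$ genuinely blow up for sequences of immersions approaching ones with critical points of $\theta$). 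There is no uniform modulus of continuity for $\phi$, and the hypothesis that $\alpha_Q>0$ on $[\theta_0,\theta_1]$ does not supply one.

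What you miss is that no bound on $(a_*,w_*)$ is needed. Two facts hold identically in $(a_*,w_*)$ and $J_*$. First, $\theta\circ\phi=\sigma$ outright, so the $\sigma$-spread of $\phi^{-1}$ of any radius-$\varepsilon$ ball is at most the oscillation of $\theta$ on that ball, which is $O(\varepsilon)$. Second, a direct substitution of the displayed parametrization gives
\[
(1-3\cos^2\sigma)\bigl(q'v+\surd6\cos\sigma\,w_*\bigr)-\surd6\cos\sigma\bigl(qv+(1-3\cos^2\sigma)w_*\bigr)=\alpha_Q(\sigma)\,v,
\]
the $w_*$-terms cancelling identically; that is, the local anti-derivative of the ambient 1--form $(1-3\cos^2\theta)\,d\varphi-\surd6\cos\theta\,dt$ pulls back under $\phi$ precisely to $\alpha_Q(\sigma)\,v$. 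Since that anti-derivative oscillates by $O(\varepsilon)$ on a radius-$\varepsilon$ ball, and since $\alpha_Q$ is bounded away from zero on the closed interval $[\theta_0,\theta_1]$, the $v$-spread of $\phi^{-1}$ of the ball is also $O(\varepsilon)$, uniformly in $(a_*,w_*)$ and $J_*$. This is precisely what the paper's one-line proof says: the variation of $\theta$ and the integral of $(1-3\cos^2\theta)\,d\varphi-\surd6\cos\theta\,dt$ over the ball are both $O(\varepsilon)$. With that established, the routing argument and its constant-$\sigma$ and constant-$v$ refinements proceed essentially as you outline; the elliptic-estimates machinery is both unavailable and unnecessary.
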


\begin{proof}[Proof of \fullref{lem:3.10}]

This is the case simply because the variation
of $\theta$ in a radius $\varepsilon $ ball and the integral of $(1 -
3\cos^2\theta ) d\varphi -\surd 6 \cos\theta  dt$ on any constant
$\theta$ slice of such a ball are $O(\varepsilon )$ for small $\varepsilon$.

To resume the proof that $\Xi$ has a single element, the next point
to make is that if such isn't the case, then $\sigma$ contains a pair of
points that have the same $\theta$ value but lie in distinct, irreducible
components. Indeed, were there no such pair, then the subvarieties that
comprise $\sigma$ would be pairwise disjoint, and this last conclusion is
incompatible with the results of the previous steps. This understood, fix
some very small, but positive $\varepsilon $ and then fix points $p_{0}$ and
$p_{1}$ in $\sigma$ that lie on distinct irreducible components, have the
same $\theta$ value and have distance at least $2\varepsilon $ from any
point in the set $Y$. In particular, choose the $\theta$ value to be
different than any $|s|\to \infty $ limit of $\theta$ on
$\sigma$. When the index $j$ is very large, these two points are very close
to respective $\theta$--preserving preimages, $p_{0j}$ and $p_{1j}$ in
$C_{j}$. As the model curve for $C_{j}$ is connected, there exists a smooth
path, $\gamma_{j}     \subset C_{j}$, that connects $p_{0j}$ to
$p_{1j}$.
\end{proof}
Next, introduce, as in \fullref{sec:2g}, the $C_{j}$ version of the model curve,
$C_{0}$, and the corresponding locus $\Gamma      \subset C_{0}$. Suppose
now that $p_{0j}$ and $p_{1j}$ lie in the same component of the $C_{j}$
version of $C_{0}- \Gamma$. According to \fullref{lem:3.10}, the path
$\gamma_{j}$ can be chosen to be an arc on the constant $\theta$ locus
that avoids all points of $Y$ by some fixed, $j$--independent amount. Moreover,
as this $\theta$ value is not one of the $|s|      \to     \infty$
limits of $\theta$ on $\sigma$, the large $j$ versions of such a path must
lie everywhere very close to $\sigma$. Thus, any large $j$ version of $\gamma
_{j}$ has a well defined projection to give a path in $\sigma$ that
avoids all points in $Y$ and runs from $p_{0}$ to $p_{1}$. As this is
impossible, it must therefore be the case that $p_{0j}$ and $p_{1j}$ lie on
distinct components of the $C_{j}$ version of $C_{0}- \Gamma$.

To see that the latter case is also impossible, fix some very small but
positive $\varepsilon $ and use \fullref{lem:3.10}  to construct, for each large
index $j$, a path $\gamma_{j} \subset C_{j}$ that runs from $p_{0j}$
to $p_{1j}$ and is an end to end concatenation of two kinds of paths. Paths
of the first kind avoid the radius $\varepsilon $ balls about the points of
$Y$. Meanwhile, a path of the second kind is an arc on some constant $\theta$
locus that is contained in a radius $\varepsilon $ ball about a point in $Y$
and passes through a critical point of $\theta$ on $C_{j}$.

Remember now that each critical point on $C_{i}$ is non-degenerate and the
critical values of distinct critical points are distinct. Thus, if an arc
portion of $\gamma_{j}$ contains a critical point of $\theta$, there is
a unique component of the $C_{j}$ version of $C_{0}- \Gamma$ whose
image in $C_{j}$ contains this arc in its closure. This understood, it
follows from \fullref{lem:3.10}  that there exists an $j$--independent choice for its
constant $\varepsilon '$ such that the endpoints of this same arc are
connected by a path that also avoids the radius $\varepsilon '$ balls about
the points of $Y$. Thus, each large $j$ version of $C_{j}$ contains a second
path, $\upsilon_{j}$, that connects $p_{0j}$ to $p_{1j}$ and avoids the
radius $\varepsilon '$ balls about the points of $Y$. Of course, no such path
exists if $p_{0}$ and $p_{1}$ are in distinct, irreducible components of
$\sigma$.

\subsection{Part 3 of the proof of Proposition \ref{prop:3.8}}\label{sec:3g}

This part of the proof establishes that the one element in $\Xi $ has the
form $(S, 1)$ with $S \in \mathcal{M}[\Theta , \vartheta ]$. A such, the
graph $T_S$ from \fullref{sec:2g} can be labeled as a moduli space graph and it
is explained here why the latter is isomorphic to $T$.  The argument for all of
this is given below in nine steps.

\step{Step 1}
Let $S$ denote the subvariety from $\Xi'$s one pair. This step establishes
that every non-extremal critical value of $\theta$ on the model curve of $S$
is in the set $\theta$. In fact, the argument below proves that every
non-extremal critical value of $\theta$ on the model curve of $S$ maps to a
point in $S$ that is a limit point of a sequence whose $j'$th element is the
image of a critical point on the model curve of $C_{j}$.

To start the argument, let $S_{0}$ denote the model curve for $S$ and let $z
\in S_{0}$ be a non-extremal critical point of $\theta$ and let $\theta
_{*}$ denote the associated critical value. Also, set $k  \equiv
\deg(d\theta |_{z}) + 1$. Now, if $D  \subset S_{0}$ is a small
radius disk that is centered at $z$, then the $\theta =\theta_{*}$ locus in $S_{0}$
will intersect the boundary of $D$ transversely in $2k$
points. This can be seen, for example, using the local coordinate on $D$ that
appears in~\eqref{eq:2.16}. If the radius of $D$ is sufficiently small, then the
tautological map from $S_{0}$ to $\mathbb{R}    \times  (S^1 \times S^{2})$ will
embed the closure of $D$. Furthermore, the image of $z$ will be
the only point from $Y$ in the image of this closure. Take any such small
radius for $D$.

Let $B\subset     \mathbb{R}    \times  (S^{1}    \times S^{2})$ be a ball
that contains the closure of the image of $D$ with center at the image of $z$.
Introduce $\hat {t}$ and $\hat {\varphi }$ to denote the respective
anti-derivatives on $B$  of $dt$ and $d\varphi$ that vanish at $z$. Then, let
$\hat {v} \equiv  (1-3\cos^2\theta )    \hat {\varphi }-\surd 6
\cos\theta \hat {t}$. As can be seen using the parametrizations provided
by~\eqref{eq:2.25}, the pair $(\theta , \hat {v})$ pulls-back to $D$ as bona fide
coordinates on the complement of $z$. Likewise, they pull back to the model
curve of any large $j$ version of $C_{j}$ as coordinates on the complement of
the $\theta$--critical points in the inverse image of $B$.

Note for use below that $\hat {v}$ is annihilated by $\partial_{s}$ and
the Reeb vector field $\hat {\alpha }$. As a consequence, the values of both
$\theta$ and $\hat {v}$ are constant on the set of $\theta$--preserving
preimages of any given point in $S \cap  B$.

As can be seen using the parametrization from~\eqref{eq:2.16}, there exists an
embedded circle, $\gamma      \subset  D- z$, with the following
properties: First, $\gamma$ intersects the $\theta =\theta_{*}$
locus transversely in $2k$ points.
Second, $\theta$ has $k$ local maxima
and $k$ local minima on $\gamma$, and the values of $\theta$ at the local
maxima are identical, as are the values at the local minima.

When the index $j$ is very large, then $\gamma$ has a $\theta$--preserving
preimages in $C_{j}$. Let $\gamma_{j}$ denote the embedded loop in
${C_{j}}'$s model curve that maps to one of these preimages. Suppose, for the
sake of argument that there is no $\theta$--critical point in the component
in ${C_{j}}'$s model curve of the preimage of $B$  that contains $\gamma_{j}$.
Thus, $(\theta , \hat {v})$ provide local coordinates on an open set in
${C_{j}}'$s model curve that contains $\gamma_{j}$.

To continue, start at the local maximum of $\theta$ on $\gamma_{j}$ with
smallest $\hat {v}$ value and traverse $\gamma_{j}$ in the direction of
increasing $\hat {v}$. Since $\gamma_{j}$ is embedded, successive local
maxima must occur at successively larger values of $\hat {v}$. Such must also
be the case for the successive minima. This is impossible if $\gamma_{j}$
is embedded, for when the largest value of $\hat {v}$ is attained, the
traverse must then return to its start without crossing itself even as it
crosses values of $\hat {v}$ that are achieved at the local maxima and local
minima.

Thus, ${C_{j}}'$s model curve has a critical point that maps to $B$. Since the
radius of $D$, thus that of $B$, can be as small as desired,
so $\theta_{*}$ must come from $\theta$.

\step{Step 2}
As will now be explained, an argument much like the one just given proves
that each critical point of $\theta$ on $S_{0}$ is non-degenerate. A
modification is also used here to prove that $\Xi$ is $(S, 1)$ as
opposed to $(S, n)$ with $n > 1$ in the case that $\theta$ has a non-extremal
critical point on $S_{0}$. Finally, a slightly different modification proves
that every element of $\theta$ is a critical value of $\theta$ on $S_{0}$
in the case that $n = 1$.

Here is the proof that $n = 1$: Suppose first that $z \in S_{0}$ is a
critical point of $\theta$, and let $D  \subset S_{0}$ be as before, a
very small radius disk that contains $z$. Let $\gamma$ be as before. Suppose
that $j$ is large and that $\gamma$ has more than one $\theta$--preserving
preimage in the model curve of $C_{j}$. As is explained next, this
assumption leads to a contradiction. To start, remark that the $\theta$--preserving
preimage of $\gamma$ in $C_{j}$ is contractible. Indeed, this
can be seen using the parametrizations provided in \fullref{sec:2g} with the fact
that each such preimage maps to a small radius ball in $\mathbb{R}\times (S^1 \times S^{2})$.
Being contractible, each preimage of $\gamma$
is the boundary of an embedded disk in the model curve of $C_{j}$. In this
regard, the argument given in \refstep{Step 1} implies that each such disk contains
the $\theta =\theta_{*}$ critical point. This implies that the
preimages are nested. In particular, one such preimage, call it $\gamma
_{0}$, bounds a disk that contains all of the others. Let $D_{0}$ denote
the latter disk. Note that the function $\theta$ must take its maxima and
minima on the boundary of $D_{0}$ since its only critical point is in the
interior. Now, let $\gamma_{1} \ne \gamma_{o}$ denote a
hypothetical second $\theta$--preserving preimage of $\gamma$. Since
$\gamma_{1}     \subset  D_{0}$ and since the maximum value of $\theta$
on $\gamma_{1}$ is also the maximum value of $\theta$ on $\gamma
_{0}$, it follows that $\gamma_{1}$ must intersect $\gamma_{o}$.

Up now is the proof that the non-extremal critical points of $\theta$ on
$S_{0}$ are non-degenerate. To start, let $z$ denote the critical point in
question and let $\gamma$ again be as before. Reintroduce the integer $k$
from \refstep{Step 1}. Thus, $k = 1$ if and only if $z$ is a non-degenerate critical
point. In any event, a circumnavigation of $\gamma$ meets $k$ local maxima of
$\theta$ and $k$ local minima with all local maxima having the same
$\theta$--value,
and likewise all local minima. For large values of $j$,
let $\gamma
_{j}$ denote the $\theta$--preserving preimage of $\gamma$ in $C_{j}$. As
argued in the preceding paragraph, $\gamma_{j}$ bounds an embedded disk
in the model curve of $C_{j}$ that contains the $\theta =\theta_{*}$ critical point.
Let $D_{0}$ denote the latter

To continue, use the descriptions from \fullref{sec:2g} to find an embedded disk,
$U$, in the model curve of $C_{j}$ with the following properties: The disk
contains the $\theta =\theta_{*}$ critical point, it contains
$\gamma_{j}$, and its intersection with the $\theta =\theta_{*}$ locus consists of
four properly embedded, half open arcs that meet only
at their common endpoint, this being the $\theta =\theta_{*}$
critical point. These arcs are called `legs' of the $\theta =\theta
_{*}$ locus. In any event, here is one more requirement for $U$: The
complement of the $\theta =\theta_{*}$ locus in this disk
consists of four open sets, each an embedded disk.

Now, let $U'  \subset U$ denote any one of the four components of the
complement of the $\theta =\theta_{*}$ locus. The closure of $U'$
in $U$ intersects $\gamma_{j}$ in some number of properly embedded,
disjoint, closed arcs. In this regard, there are $2k + 2$ such arcs amongst
the four components, with at least one in each. Now, any given such arc in
$U'$ together with the stretch of the boundary of $U'$ between its two endpoints
defines a piecewise smooth circle in $U$ which is the boundary of the closure
of an embedded disk. If the interior of a second arc lies in this disk, then
the second arc is said to be nested with respect to the first. Of course, no
arc in $U'$ can be nested with respect to another by virtue of the fact that
all local maxima of $\theta$ on $\gamma_{j}$ have the same $\theta$--value,
as do all local minima.

There is one more point here to keep in mind: One and only one component arc
in $\gamma_{j}     \cap  U'$ `encircles' the $\theta =\theta_{*}$ critical point in
the following sense: This critical point is contained
in the segment of the boundary of the closure of $U'$ that lies between the
arc's two endpoints. Indeed, this is because the disk $D_{0}$ contains the
$\theta =\theta_{*}$ critical point. Note that the endpoints of
the latter arc lie on distinct legs of the $\theta =\theta_{*}$
locus. Such an endpoint on a given leg of the $\theta =\theta_{*}$ locus is nearer to the
$\theta =\theta_{*}$ critical point
then any other arc endpoint on the given leg. This is a consequence of the
fact that the arcs that comprise $\gamma_{j}     \cap  U'$ are not
nested.

Granted all of the above, each component of the complement of $\theta  =
\theta_{*}$ locus in $U$ has its one arc that encircles the $\theta
=\theta_{*}$ critical point, so there are four such
`encircling' arcs in all. Moreover, by virtue of what is said about
endpoints in the preceding paragraph,
these four arcs concatenate to define
a closed loop in $U$. This loop must thus be $\gamma$, and so $k = 1$ as
claimed.

What follows is the proof that every angle from $\theta$ is a critical
value of $\theta$ on $S_{0}$ in the case that the integer $n$ that is paired
with $S$ in $\theta$ is equal to 1. To start, let $\theta_{*}     \in
\theta$ and assume that $\theta_{*}$ is not a critical value of
$\theta$ on $S_{0}$ so as to derive some nonsense.

To start the derivation, choose a very small, but positive constant $\delta
$, subject to the following constraints: First, neither the $\theta  =
\theta_{*}+\delta $ nor $\theta_{*}-\delta $ loci in
$S_{0}$ should contain points of $Y$. Second, no critical values of $\theta$
on $S_{0}$ lie in the interval $[\theta_{*}-\delta , \theta
_{*}+\delta ]$.

The parametrizations described in \fullref{sec:2g} can now be used to first find a
positive constant, $\varepsilon $, and then construct for any sufficiently
large $j$, an embedded circle in ${C_{j}}'$s model curve with four special
properties: First, the circle bounds a disk in the model curve that contains
the $\theta =\theta_{*}$ critical point. Second, $\theta$ has
two local maxima on the circle, both where $\theta =\theta_{*}+\delta $; and $\theta$ has two
local minima on the circle both where
$\theta =\theta_{*}-\delta $. Finally, the tautological map
to $\mathbb{R}    \times  (S^1 \times S^{2})$ embeds the circle.
Finally, all points in the circle's image have $|s|  <
\frac{1}{\varepsilon }$ and distance $\varepsilon $ or more from
all points of the set $Y$.

Any given large $j$ version of such a circle has its $\theta$--preserving
projection as an embedded circle in $S$. (This is where the $n = 1$ assumption
is used. If $n > 1$, then this circle will not be embedded.) The latter has
its inverse image circle, $\gamma$, in $S_{0}$. Now, $\gamma$ is
null-homotopic since the integrals of $dt$ and $d\varphi$ over the original
circle in the model curve of $C_{j}$ are zero. Meanwhile, $\gamma$ is
embedded, it lies where $| \theta -\theta_{*}|  <
2\delta $, and $\theta$'s restriction to $\gamma$ has two local maxima,
both with the same $\theta$--value, and two local minima, also with the same
$\theta$--value. A repetition of one of \refstep{Step 1}'s arguments now proves that
there is no such loop. This nonsense thus proves that $\theta_{*}$
must be a critical value of $\theta$ on $S_{0}$.

\step{Step 3}
This step investigates the concave side ends of $S$ where the $|s| \to \infty $
limit of $\theta$ is neither 0 nor $\pi$. In
particular, this step establishes $\deg_{(\cdot )}(d\theta ) = 1$ for all
such ends. A variation of the latter argument is then used to prove that the
integer $n$ that $\Xi $ associates to $S$ is equal to 1 if $S$ has a concave side
end where $\lim_{| s| \to \infty }    \theta      \notin  \{0, \pi\}$.
A second variation of the argument proves that there is precisely one
such end of $S$ for every $(0,+,\ldots )$ element in $\hat{A}$.
With regards to this last point, keep in mind that the ends of $S$ define an
unordered set of points in $S^{1}$ via~\eqref{eq:2.19}'s map $\varpi_{ + }$, and
the latter is the same as $\vartheta$ up to multiplicity. This follows
directly from the $K = \mathbb{R}    \times  (S^1 \times S^{2})$
version of~\eqref{eq:3.16}.

To start the analysis, let $E  \subset S$ denote a concave side end where
the $|s|      \to     \infty $ limit of $\theta$ is neither 0 nor
$\pi$. As will now be explained, $\deg_{E}(d\theta ) = 1$. To prove that
such is the case, suppose $\deg_{E}(d\theta ) > 1$ so as to derive
nonsense. Thus, let $k > 1$ denote $\deg_{E}(d\theta )$. Let $\theta_{*}$ now denote the
$|s| \to \infty $ limit of $\theta$
on $E$. As can be seen using~\eqref{eq:2.17}, there exists $s_{0}     \ge 0$  such that
the $\theta =\theta_{*}$ locus intersects the $s \ge s_{0}$
portion of $E$ as a disjoint union of $2k$ properly embedded copies of $[s_{0},
\infty )$, with the diffeomorphism given by the function $s$ itself.
Moreover, it follows from~\eqref{eq:2.17} that if $\delta> 0$  and is
sufficiently small, there exists an embedded circle, $\nu      \subset  E$,
with the following properties: First, this loop $\nu $ has transversal
intersections with the $\theta =\theta_{*}$ locus. Second,
$| \theta -\theta_{*}|  < \delta $ on $\nu $.
Third, $\theta$'s restriction to $\nu $ has precisely $k$ local maxima and $k$
local minima. Finally, all local maxima have the same $\theta$ value, this
greater than $\theta_{*}$, and all local minima have the same
$\theta$ value, this less than $\theta_{*}$.

The circle $\nu $ has $\theta$--preserving preimages in every large $j$
version of $C_{j}$. Let $\nu_{j}$ denote one of the latter. Because the
variation of $\theta$ on $\nu_{j}$ is small, the $\theta  < \theta
_{*}$ portion of $\nu_{j}$ is contained in a single component of
the $C_{j}$ version of $C_{0}-\Gamma$. Call this component $K$. For the
same reason, the $\theta  > \theta_{*}$ part of $v_{j}$ is
entirely in a single component also. Use $K'$ for the latter. Note that the
closure of the $| \theta -\theta_{*}|  < 2\delta $
portion of $K  \cup  K'$ is diffeomorphic to a closed cylinder with some
number of punctures, all on the $\theta =\theta_{*}$ circle.

To proceed, now view the loop $\nu_{j}$ sitting in this abstract
cylinder. Here, it sits as an embedded, null-homotopic loop. To explain,
introduce the relatively prime pair of integers, $(p, p')$, that $\theta
_{*}$ defines via~\eqref{eq:1.7}. The 1--form $pd\varphi  - p'dt$ is exact
near $E$ and so integrates to zero around $\nu_{j}$. Meanwhile, this form
has non-zero integral around any essential loop in the unpunctured cylinder
since this form restricts as a positive form on the $\theta =\theta
_{*}$ locus in $C_{j}$. To summarize: As a loop in the abstract
cylinder, $\nu_{j}$ is embedded, it is null homotopic, it intersects the
$\theta =\theta_{*}$ locus in $2k$ points, it has $k$ local maxima
all with the same value of $\theta$, this greater than $\theta
_{*}$, and $k$ local minima, all with the same value of $\theta$,
this less than $\theta_{*}$. Granted all of this, the argument from
\refstep{Step 1}'s second to last paragraph can be borrowed with only minor cosmetic
changes to obtain a contradiction unless $k = 1$.

Given that there exists a concave side end $E  \subset S$ where $\lim_{|
s| \to \infty }    \theta      \notin  \{0, \pi \}$, what follows
proves that the integer $n$ that $\Xi $ associates to $S$ is equal to 1. For
this purpose, construct a loop, $\nu $, as just described. The loop $\nu $
again has $\theta$--preserving preimages in every large $j$ version of
$C_{j}$. If $n > 1$ and if there are less than $n$ such preimages, then one of
them has the following properties: The restriction of $\theta$ to the loop
has more than one local maximum, and more than one local minimum. Moreover,
all local maxima have the same $\theta$ value, and all local minima have
the same $\theta$ value. Finally, the form $pd\varphi  - p'dt$ integrates
to zero over this loop. The argument given in the preceding paragraph shows
that this is impossible. Thus, there are $n$ disjoint, $\theta$--preserving
preimages of $\nu $ in every large $j$ version of $C_{j}$, each mapping via
the $\theta$--preserving projection to $\nu $ as a diffeomorphism.

To see that $n = 1$, first note that all $\theta$--preserving preimages of
$\nu $ must lie in the closure of the union of the same two components of
the $C_{j}$ version of $C_{0}- \Gamma$. Indeed, such is the case
because the $K = \mathbb{R}    \times  (S^1 \times S^{2})$ version of~\eqref{eq:3.16}
 holds and because the set $\vartheta$ is both $j$--independent and has
$N_{ + }$ elements.

To continue, let $K$ and $K'$ again denote the two relevant components of the
$C_{j}$ version of $C_{0}- \Gamma$ and once again view the $\theta
     \in  [\theta_{*}-2\delta , \theta_{*}+2\delta
]$ part of the union of the closures of $K$ and $K'$ as a closed,
multi-punctured cylinder. Viewed in this cylinder, any $\theta$--preserving
preimage of $\nu $ must encircle one or more of the punctures. Were this
otherwise, then the preimage would be null-homotopic in the $\theta      \in
 (\theta_{*}-2\delta , \theta_{*}+2\delta )$ part
of $C_{j}$ and thus the same part of $\mathbb{R} \times (S^1 \times S^{2})$.
But such a loop represents the same homotopy class as $\nu $, a
non-zero class in the $\theta \in  (0, \pi )$ portion of $\mathbb{R}
\times  (S^{1}    \times S^{2})$.

With the preceding understood, pick a point on the $\theta =\theta
_{*}-2\delta $ boundary circle of the punctured cylinder, and then
draw a smooth path from this chosen point to any given puncture. A loop in
the interior of the punctured cylinder that encircles the given puncture has
non-zero intersection number with this path. Draw a specific path as
follows: Let $s_{0}$ denote the maximum of the function $s$ on $\nu $. Start
the path at the puncture and draw it to decrease $s$ until the latter is equal
to $2s_{0}$. Call this path $\gamma_{o}$. Note that when $j$ is very large,
the whole of this path is in a very small radius tubular neighborhood of a
single end of S, and thus it is far from any other end of $C_{j}$. This
follows because the $K = \mathbb{R}    \times  (S^1 \times S^{2})$
version of~\eqref{eq:3.16} holds and because the set $\vartheta$ is both
 $j$--independent and has $N_{ + }$ elements. When $j$ is very large, the $s =
2s_{0}$ endpoint of $\gamma_{o}$ will be very close to one
particular end, $E'  \subset  S$, and so its $\theta$--preserving projection
to $E'$ is well defined. Draw an $s$--decreasing path in $E'$ from the latter point
to a point where both $s$ and $\theta$ are less than their minimal values on
$\nu $. If $E = E'$, then have this path intersect $\nu $ transversely at a
single point. In any event, call this path $\gamma_{E'}$. When $j$ is
large, one of the $\theta$--preserving preimages of $\gamma_{E'}$
attaches to the $s = 2s_{0}$ endpoint of $\gamma_{o}$ This understood,
continue the concatenation of the latter preimage with $\gamma_{o}$ as a
path from where $\theta$ is less than its minimum on $\nu $ to the chosen
point on the $\theta =\theta_{*}-2\delta $ boundary circle of
the cylinder. Make $\theta$ decrease monotonically on this continuation.

Only one of the paths just described will intersect any $\theta$--preserving
preimage of $\nu $. This is the path that at large $s$ is very close to $E$.
Moreover, only one $\theta$--preserving preimage of $\gamma_{E}$ can
intersect any given $\theta$ preserving preimage of $\nu $. Thus, only one
$\theta$--preserving preimage of $\nu $ encircles a puncture, and so $n = 1$.

Here is why $S$ does not have a pair of ends whose constant $s$ slices limit as
 $s \to     \infty $ to the same $\theta =\theta_{*}$ closed
Reeb orbit in $S^{1}    \times S^{2}$: Were this otherwise, let $E$ and $E'$
denote the two ends involved, and define loops $\nu \subset E$ and $\nu '
 \subset  E'$ as just described. Each has a $\theta$--preserving
preimage in the same multi-punctured cylinder in $C_{j}$. However, only one
will intersect a path as described above from a puncture to the $\theta  =
\theta_{*}-2\delta $ boundary circle of the multi-punctured
cylinder.

\step{Step 4}
This step proves that $\deg_{(\cdot )}(d\theta )=0$ for each convex
side end of $S$ where the $|s|      \to     \infty $ limit of $\theta
$ is neither 0 nor $\pi$. A variation of the latter argument also proves
that there is only one such end for each $(0,-,\ldots)$
element in $\hat{A}$.

To start, suppose that $E  \subset S$ is a convex side end of the sort in
question, and suppose that $\deg_{E}(d\theta )$ is non-zero. Let $\theta
_{*}$ denote the $|s|      \to     \infty $ limit of
$\theta$ on $E$. Now~\eqref{eq:2.17} guarantees that given any $\delta  > 0$, there
exists $R$ with the following significance: First, the function $|
\theta -\theta_{*}| $ is less than $\delta $ on the
$|s|  > R$ part of $E$. Second, $\theta$ takes values both greater
than $\theta_{*}$ and less than $\theta_{*}$ on any
constant $|s|      \ge  R$ slice of $E$.

To see that such an event is nonsensical, take $\delta $ very
small. Granted this, take $j$ very large, and there is but one
component of the $C_{j}$ version of $C_{0}- \Gamma$ that maps very
close to the $s \ge R$ portion of $E$. Indeed, such is the case
because small $\delta $ guarantees that there are no components of the
$C_{j}$ version of $\Gamma$ where $\theta \in (\theta_{*},
\theta_{*}+2\delta )$.  Use $K$ to denote the component of the $C_{j}$
version of $C_{0}- \Gamma$ in question.  This component must contain a
convex side end of $C_{j}$ where the $|s| \to \infty $ limit of
$\theta$ is $\theta_{*}$. Since $\deg_{(\cdot )}(d\theta )=0$ on such
an end, the description of $K$ offered in \fullref{sec:2g} finds that
the function $\theta$ must be either strictly less than $\theta_{*}$
or strictly greater than $\theta_{*}$ on $K$. Of course, this is
impossible when $j$ is large. Indeed, when $j$ is large, then $K$ maps
very close to the $s \ge R$ portion of $E$ and so to points where
$\theta$ is greater than and to points where $\theta$ is less than
$\theta_{*}$.

To see that there is but one end of $S$ for each $(0,-,\ldots)$ element in $\hat{A}$,
suppose for the sake of argument that there were two, $E$
and $E'$. Let $\theta_{*}$ denote the common $|s|      \to
    \infty $ limit of $\theta$ on $E$ and $E'$. Fix some $\delta  > 0$ and very
small, and then let $\nu      \subset E$ and $\nu '  \subset E'$ denote
the respective loci where $| \theta -\theta_{*}|  =
\delta $. By virtue of~\eqref{eq:2.17},
these loci are embedded circles. Each has
its $\theta$--preserving preimage in any sufficiently large $j$ version of
$C_{j}$. An argument from the preceding paragraph can be readily modified to
prove that these preimages must lie in the same component of the $C_{j}$
version of $C_{0}- \Gamma$. As such, they must coincide.

Note that the same argument proves the following: Suppose that the integer $n$
that $\Xi $ pairs with $S$ is greater than 1, and suppose that $E  \subset S$
is a convex side end on which the $|s|      \to     \infty $ limit
of $\theta$ is neither 0 nor $\pi$. Let $\nu      \subset E$ denote an
embedded circle in $E$ that is homologically non-trivial. Then there is but
one $\theta$--preserving preimage of $\nu $ in every sufficiently large $j$
version of $C_{j}$; and the latter maps back to $\nu $ as an $n$ to 1 covering
map.

\step{Step 5}
This step investigates the nature of the ends of $S$ where the $|s|\to\infty $
limit of $\theta$ is either 0 or $\pi$. In
particular, it is proved here that such ends are naturally in 1--1
correspondence with the set of $\{\pm 1,\ldots\}$
elements in $\hat{A}$  unless $S$ is either a disk or a cylinder and the integer
that $\Xi $ pairs with $S$ is greater than 1.

The discussion starts with the following claim:
\qtaubes{3.20}
\textsl{%
There exists $R  \ge 0$  such that when $j$ is large, the intersections of $C_{j}$
with the $\theta      \in  \{0, \pi \}$ cylinders occur where $|s|      \le  R$.}
\endqtaubes
This claim is proved momentarily. Note first that it has the following
corollary: In the case that the integer that $\Xi $ pairs with $S$ is 1, the
respective intersection numbers between $S$ and the $\theta =0$ and $\theta
=\pi$ cylinders are those prescribed by $c_+ $ and $c_- $.
Indeed, this corollary follows using the parametrizations from~\eqref{eq:2.25} given
\fullref{prop:3.7} and given that $\theta$ contains all non-extremal critical
values of $\theta$ on the model curve of $S$.

To see why~\eqreft3{20} holds, fix a very small but positive number, $\delta $,
chosen so that there are no elements of $\theta$ that lie where
$\theta  < 2\delta $ and none where $\pi -\theta  < 2\delta $. Also,
choose $\delta $ so that no angle as defined via~\eqref{eq:1.7} from the integer pair
of any $(0,\ldots)$ element in $\hat{A}$  lies either between
$2\delta $ and 0 or between $\pi -2\delta $ and $\pi$.

Having chosen $\delta $, then choose $R$ so that the $|s|\ge\frac{1}{2}R$ part of
$S$ is contained in the ends of $S$. Moreover,
choose $R$ so that the variation of $\theta$ on the $|s|\ge \frac{1}{2}R$ part of any
end of $S$ is very much smaller than
$\delta $. Granted this, suppose that $j_{0}$ is such that the $|s| =\frac{1}{2}R$
locus in any end of $S$ has its full set
of $\theta$--preserving preimages in all $j \ge j_{0}$ versions of
$C_{j}$.

Now suppose, for the sake of argument, that some $j \ge j_{0}$ version
of $C_{j}$ intersects the $\theta =0$ locus at a point where $|s|  > R$.
This point is the image of a point in the closure of a
particular component of the $C_{j}$ version of $C_{0}- \Gamma$.
Let $K$ denote the latter. The 1--form $dt$ must pull back to $K$ as an exact form.
However, as indicated in the preceding paragraph, there is some end of $S$
where $\lim_{| s| \to \infty }    \theta =0$ whose $|s| = \frac{1}{2}R$ slice has a
$\theta$--preserving preimage in $K$.
Since the 1--form $dt$ is not exact on such a slice, so its pull-back to $K$ can
not be exact. Thus, there is no such $K$.

A very minor modification of the argument just given also proves the
following: Any given integer pair that appears in some $(1,+,\ldots)$ element in $\hat{A}$
is $n$ times that of an integer pair that is
defined by a concave side end of $S$ where the $|s|      \to \infty $ limit of $\theta$ is 0,
and vice-versa. Here, $n$ is the integer
that $\Xi $ pairs with $S$. Of course, the analogous assertion holds for
$(1,-,\ldots)$ elements and convex side ends where the
$\lim_{| s| \to \infty }    \theta =0$. Likewise, a similar
assertion holds for $(-1,\ldots)$ elements in $\hat{A}$  and ends
of $S$ where the $|s|      \to     \infty $ limit of $\theta$ is
$\pi$.

Note that this correspondence assigns precisely one end of $S$ to each end of
every large $j$ version of $C_{j}$. Indeed, if not then there exists some very
small $\varepsilon  > 0$ and two disjoint $\theta =\varepsilon $ circles
in $S$, or two disjoint $\theta =\pi -\varepsilon $ circles in $S$ whose
$\theta$--preserving preimages in all sufficiently large $j$ versions of
$C_{j}$ lie in the same component of the $C_{j}$ version of $C_{0}- \Gamma$.
This means that the preimages coincide, an impossibility when $j$ is large.

The next point to make is that this correspondence is a 1--1 correspondence
unless $S$ is either a disk or a cylinder of a certain sort. The assertion
that the correspondence is 1--1 follows from the following claim: Two ends of
any large $j$--version of $C_{j}$ can not both lie very close to the same end
of $S$. To see why the latter claim holds, remark first that the occurrence of
two ends very close to the same end of $S$ can occur only in the case that the
integer $n$ is greater than 1. This is because distinct ends of $C_{j}$ that
are convex or have $|s|\to\infty $ limit of $\theta$
either 0 or $\pi$ lie in distinct components of the $C_{j}$ version of
$C_{0}- \Gamma$. Now, if $n  \ne 1$, then it follows from~\eqref{eq:2.18}
and from what has been said in previous steps that $S$ has at most two ends,
and neither is a concave side end unless the corresponding $|s|  \to     \infty $
limit of $\theta$ is 0 or $\pi$. In particular, $S$ is
either a cylinder or a disk.

By the way, if $S$ is a disk, then~\eqref{eq:2.18} requires that $S$ have a single,
transversal intersection with one but not both of the $\theta =0$ or
$\theta =\pi$ cylinders. In this case, the integral of $dt$ over any
constant $\theta$ circle in $S$ must be zero, and so the large $|s| $ slices of $S$
converge in $S^{1}    \times S^{2}$ to one of the
two $\cos^2\theta =\frac{1}{3}$ Reeb orbits. In particular,
the sign of $\cos\theta$ on this orbit is the same as its sign at the zero
of $\sin\theta$. In any event, any large and constant $|s| $
slice of $S$ is isotopic to the $|s|      \to     \infty $ limit Reeb
orbit.

On the other hand, if $S$ is a cylinder, then~\eqref{eq:2.18} requires that it be
disjoint from both the $\theta =0$ and $\theta =\pi$ cylinders. In
this case, it must have at least one convex side end where the $|s|\to\infty $
limit of $\theta$ avoids 0 and $\pi$. Indeed,
if not, then the fact that the restriction of $\theta$ to $S$ has no extremal
critical values in $(0, \pi )$ would require the $|s|      \to $
$\infty $ limit of $\theta$ to be 0 on one end and $\pi$ on the other.
Were this the case, the whole of $S$ could be parametrized as in~\eqref{eq:2.25} by
$(0, \pi )    \times \mathbb{R}/(2\pi \mathbb{Z})$. However, this is
impossible because the corresponding function $\alpha_{Q}$ as defined in~\eqref{eq:2.27}
 would then vanish at some value of $\sigma$ that is realized
on the parametrizing cylinder.

\step{Step 6}
This step proves that the integer $n$ that $\Xi $ pairs with $S$ is equal to 1
in the cases that $S$ is a disk as described in the preceding step. The cases
where $S$ is a cylinder are discussed in \refstep{Step 7} and \refstep{Step 8}.

The argument has five parts, with the first four constituting a digression
to set the stage. In what follows, keep in mind that $S$ is a
$J'$--pseudoholomorphic disc, an element in the $J'$--version of the moduli space
$\mathcal{M}_{\hat{A} '}$ where $\hat{A}'$  has only the 4--tuple $(0,-,(0, 1))$. In
particular, $S$ intersects the $\theta =0$ cylinder transversely in a single
point, there are no non-extremal critical points of $\theta$ on $S$, and the
$|s|      \to     \infty $ limit of the constant $|s| $
slices on $S$ converge to a Reeb orbit where $\cos\theta =\sqrt{\unfrac{1}{3}} $.

\substep{Part 1}
For $j$ large, \fullref{prop:3.4} provides
$C^{\infty }$--small deformations of $S$ that results in a
$J_{j}$--pseudoholomorphic subvariety in the $J_{j}$--version of $\mathcal{M}_{\hat{A} '}$.
Any such subvariety intersects the $\theta =0$ cylinder
transversely, also at a single point and there are no non-extremal critical
points of $\theta$ on any such $S_{j}$.

\fullref{prop:2.12}  and \fullref{prop:2.13}  in
conjunction with \fullref{prop:3.4}
provide two different parametrizations of the subsets of the respect $J'$  and,
for large $j$, $J_{j}$ versions of $\mathcal{M}_{\hat{A}}$ whose subvarieties are
everywhere close to $S$ in $\mathbb{R}    \times  (S^1 \times S^{2})$.
The first parametrizes the constituent subvarieties by the point where they
intersect the $\theta =0$ cylinder, and the other by their large $|s| $ asymptotics on
their one end.

To be more explicit about the parametrization by points in the $\theta =0$
cylinder, let $z_{0}$ denote the point where $S$ intersects this cylinder.
Then respective neighborhoods that contain the subvarieties that are
pointwise near $S$ in the $J'$  version and, for large $j$, in the $J_{j}$ version
of $\mathcal{M}_{\hat{A} '}$ can be parametrized as follows: The coordinates for
the parametrization are the points that lie in a $j$--independent disk centered
at $z_{0}$ in the $\theta =0$ cylinder. The parametrization provides a 1--1
correspondence that assigns a subvariety in the relevant moduli space to the
point where it intersects the $\theta =0$ cylinder.

\substep{Part 2}
The second parametrization uses the large
$|s| $ asymptotics on the subvariety. To be more precise, first
note that the $\cos\theta =\sqrt {\unfrac{1}{3}} $ Reeb orbits
are parametrized by the constant value on the Reeb orbit of the coordinate,
 $t$,  on the $S^{1}$ factor in $S^{1}    \times S^{2}$. Let $\tau_{0}$
denote the value for the orbit that is obtained as the $|s|
\to     \infty $ limit of the constant $|s| $ slices of $S$.
Meanwhile, let $c_{0}$ denote the constant that appears in the version of~\eqref{eq:1.8}
 that is relevant for the one end of $S$. In this regard, note that
$c_{0} < 0$.

According to the aforementioned propositions, there exists some $\delta  > 0$
such that respective neighborhoods of subvarieties that are pointwise near
 $S$ in the $J'$  version and, for large $j$, in the $J_{j}$ version of
 $\mathcal{M}_{\hat{A} '}$ can be parametrized as follows: The parametrization uses 
those $(c, \tau )     \in  (-\infty , 0) \times     \mathbb{R}/(2\pi \mathbb{Z})$
where $| c - c_{0}| ^{2}+| \tau -\tau
_{0}| ^{2} < \delta ^{2}$. In particular, the parametrization
provides a 1--1 correspondence that assigns a subvariety to a pair $(c, \tau)$
when the subvariety's one end provides $c_{E} = c$ in~\eqref{eq:1.8}, while the
large $|s| $ slices of the subvariety converge as $|s|
     \to     \infty $ to the Reeb orbit where $\cos\theta =\sqrt
{\unfrac{1}{3}} $ and $t = \tau $. In this regard, note that the
assigned value for $\tau $ is simply the value on the end in question of
\fullref{sec:1}'s parameter ${\iota}_{(\cdot )}$.

\substep{Part 3}
Both of the preceding parametrizations are
compatible with the notion of convergence as given in \fullref{prop:3.7}.
Indeed, suppose that either a point in the $\theta =0$ cylinder is fixed
in the parametrizing disk about $z_{0}$, or else a point $(c, \tau )$ is
fixed with distance less than $\delta $ from $(c_{0}, \tau_{0})$. Now,
in either case, let $S_{j}$ denote the subvariety in the $J_{j}$ version of
$\mathcal{M}_{\hat{A}}$ that is parametrized by the given point. Meanwhile, let
$S'$ denote the corresponding $J'$--version. Then the sequence $\{S_{j}\}$
converges in the sense of \fullref{prop:3.7} with $(S', 1)$ in the role of $\Xi$,
and with~\eqref{eq:3.16} valid for $K = \mathbb{R}    \times  (S^1 \times S^{2})$.

\substep{Part 4}
The part of the argument is summarized by the following lemma:

\begin{lemma} \label{lem:3.11}

There exists $\varepsilon  > 0$ with the following significance: If $j$ is large and if
 $(c, \tau )$ has distance less than
$\delta $ from $(c_{0}, \tau_{0})$ but $| \tau -\tau_{0}|
> \frac{1}{4}\delta $, then all points in the $(c, \tau )$ subvariety from the
$J_{j}$--version of $\mathcal{M}_{\hat{A} '}$ have distance at least $\varepsilon $ from $S$.
\end{lemma}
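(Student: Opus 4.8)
The plan is to argue by contradiction using the convergence machinery of \fullref{prop:3.7} together with the two parametrizations established in \refstepp{Part 1} and \refstepp{Part 2}. Suppose no such $\varepsilon$ exists. Then there is a sequence of indices $j_i \to \infty$ and a sequence of pairs $(c_i,\tau_i)$, each with distance less than $\delta$ from $(c_0,\tau_0)$ but with $|\tau_i - \tau_0| > \frac14\delta$, such that the corresponding subvariety $S_{j_i}$ from the $J_{j_i}$--version of $\mathcal{M}_{\hat A'}$ contains a point $z_i$ with $\dist(z_i, S) \to 0$. By passing to a subsequence we may assume $(c_i,\tau_i) \to (c_*,\tau_*)$ with $|\tau_* - \tau_0| \ge \frac14\delta$, in particular $\tau_* \ne \tau_0$ (modulo $2\pi\mathbb Z$, shrinking $\delta$ so that the $\frac14\delta$--arc does not wrap around).

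First I would apply \fullref{prop:3.7} to the sequence $\{(J_{j_i}, S_{j_i})\}$ to extract a limit data set $\Xi_*$. Since each $S_{j_i}$ lies in the compact moduli space $\mathcal{M}_{\hat A'}$ — which by \fullref{prop:2.12} and the disk classification in \refstep{Step 5} consists of disks meeting the $\theta=0$ cylinder once, all close to $S$ — the limit must also be a disk of this type. More precisely, the arguments already carried out in \refstep{Step 5} and \refstep{Step 6} (which do not circularly invoke this lemma) show that $\Xi_*$ consists of a single pair $(S_*, 1)$ where $S_*$ is again a $J'$--pseudoholomorphic disk in the $J'$--version of $\mathcal{M}_{\hat A'}$; alternatively, since $\mathcal{M}_{\hat A'}$ for the data set $\hat A'$ with the single $4$--tuple $(0,-,(0,1))$ is explicitly a one-complex-dimensional space with the $(c,\tau)$ parametrization, and convergence is to an element of this same space, one reads off directly that the limit is the $(c_*,\tau_*)$ subvariety $S_*$. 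The key point is then that the $|s|\to\infty$ asymptotics pass to the limit: the parameter $\iota_{(\cdot)} = \tau$ on the single end of $S_{j_i}$ converges to the corresponding parameter of $S_*$, so $S_*$ has $t$--asymptotic parameter $\tau_*$ on its limiting Reeb orbit.

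Now I would derive the contradiction from the existence of the point $z_i$. Because $\dist(z_i, S) \to 0$, the limit subvariety $S_*$ must contain a limit point $z_*$ of $\{z_i\}$ that also lies in $S$ (using \eqref{eq:3.16} with $K$ a large compact set containing a neighborhood of $S$, noting that the $z_i$ cannot escape to $|s|=\infty$ since the ends of all these disks limit to $\cos\theta = \sqrt{1/3}$ Reeb orbits and the asymptotics are controlled uniformly). Hence $S_* \cap S \ne \emptyset$. But $S$ has $t$--asymptotic parameter $\tau_0$ while $S_*$ has $t$--asymptotic parameter $\tau_* \ne \tau_0$, so $S_* \ne S$ as subvarieties. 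Since $S$ and $S_*$ are distinct, irreducible $J'$--pseudoholomorphic subvarieties, the positivity of local intersection numbers (McDuff \cite{M}) gives that $S \cap S_*$ is a finite set, each intersection contributing positively to $\langle e,[S]\rangle_*$; equivalently, their homological intersection number is strictly positive. However, the explicit description of $\mathcal{M}_{\hat A'}$ — all of whose members are mutually disjoint disks, each cutting the $\theta=0$ cylinder once at a distinct point, with distinct limiting Reeb orbits when their $\tau$ parameters differ — shows that two such disks with different $\tau$ values are in fact disjoint (this is where one uses that the $(c,\tau)$ parametrization of \refstepp{Part 2} is a bijection onto disjoint subvarieties, a fact from \cite[Section 4]{T3} about the structure of $\mathcal{M}_{\hat A'}$, or else a direct check that the $\theta=0$ intersection point depends injectively on $(c,\tau)$ and that disjointness of the limit Reeb orbits forces the large-$|s|$ parts apart while positivity of intersection then forbids interior intersections). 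This contradicts $S \cap S_* \ne \emptyset$.

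The main obstacle I anticipate is making rigorous the claim that the relevant $J'$--moduli space $\mathcal{M}_{\hat A'}$ really does consist of pairwise disjoint disks, and that distinct $\tau$ values force disjointness rather than merely distinctness. This rests on the explicit analysis of disks and cylinders in \cite[Section 4]{T3} and on the rigidity coming from positivity of intersections: any two distinct members, being homologous and distinct, would have to intersect positively, yet each is a once-punctured disk whose only possible intersection with the $\theta=0$ cylinder is accounted for by the prescribed intersection number $c_+ = 1$, leaving no room for an additional mutual intersection unless the disks coincide. Once that disjointness is in hand, the compactness-and-limit argument above closes the gap: a sequence of disks staying within $\delta$ of $S$ in the $(c,\tau)$ sense but with $\tau$ bounded away from $\tau_0$ cannot have points limiting onto $S$, since the limit disk is disjoint from $S$. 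The remaining steps — extracting the limit data set, identifying it as a single unmultiplied disk, and passing the $\tau$--asymptotics to the limit — are routine applications of \fullref{prop:3.7}, \fullref{prop:3.4}, and the uniform exponential decay estimates of \fullref{prop:2.2}.
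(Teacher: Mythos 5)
The reduction to the case of the $J'$--version of $\mathcal{M}_{\hat A'}$ via \fullref{prop:3.7} (or, as the paper does, via Part 2/Part 3 of the digression) is fine, and you have correctly identified that everything hinges on proving that distinct members of the $J'$--version of $\mathcal{M}_{\hat A'}$ are pairwise disjoint. But you then candidly say this is the point you can't quite close, and indeed both of the arguments you sketch for it have problems.

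Your first suggestion --- ``any two distinct members, being homologous and distinct, would have to intersect positively, yet each has $c_+ = 1$, leaving no room for an additional mutual intersection'' --- does not hold up. Positivity of intersections says that \emph{if} $S$ and $S'$ intersect, each local intersection contributes $+1$; it does not say that distinct homologous curves must intersect. And the intersection number of $S$ with the $\theta=0$ cylinder is a separate quantity from the intersection number of $S$ with $S'$; $c_+ = 1$ places no bound on the latter. Your second suggestion --- ``the large--$|s|$ parts are forced apart, and positivity forbids interior intersections'' --- gestures at the right idea but omits the crucial middle step: positivity only forbids intersections once you know the total intersection number is zero, and to get that zero you need an honest homotopy argument, not just the observation that the ends diverge.

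The paper supplies exactly that missing homotopy argument. Push $S'$ along $\partial_s$ (as a smooth homotopy, not a pseudoholomorphic one). During the homotopy the intersections with $S$ stay in a fixed compact set, and this uses \emph{both} of the facts you noticed: $\tau \neq \tau_0$ means the ends of $S$ and $S'$ limit to distinct Reeb orbits (so no intersections escape through the common end), and $s$ is bounded from above on the disk $S$ (so pushing $S'$ to very large $s$ eventually moves it off the range of $s$ on $S$). When $S'$ is pushed far enough, the part of the pushed $S'$ lying in the range of $s$ achieved on $S$ is uniformly close to the $\mathbb{R}$--invariant cylinder over the $((0,1),\tau)$ Reeb orbit, which sits at $\cos\theta = \sqrt{1/3}$; but $\theta$ has no non-extremal critical point on $S$, so $\theta < \arccos\sqrt{1/3}$ everywhere on $S$, hence $S$ avoids that cylinder. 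Therefore the pushed $S'$ is disjoint from $S$, the homological intersection number is zero, and positivity of local intersections forces $S \cap S' = \emptyset$. That step is the essential content of the lemma and is what your proposal leaves as a gap.
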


The digression ends with the following proof.

\begin{proof}[Proof of \fullref{lem:3.11}]
Granted the contents in \refstep{Part 2} of this
digression, it is sufficient to prove the following: If $(c, \tau )$ is
$\delta$ close to $(c_{0}, \tau_{0})$ and if $\tau      \ne
\tau_{0}$, then the $(c, \tau )$ subvariety in the $J'$--version of
$\mathcal{M}_{\hat{A} '}$ is disjoint from $S$. To establish this claim, let $S'$ denote
the $(c, \tau )$ subvariety. If $S'$ intersects $S$, then it has positive
intersection number with $S$. Thus, any deformation of $S'$ has positive
intersection number with $S$ if the intersections with $S$ along the way remain
in a fixed, compact subset of $\mathbb{R}    \times  (S^1 \times S^{2})$.
This understood, push $S'$ along the vector field $\partial_{s}$.
In this regard, there is a compact subset of $\mathbb{R}    \times (S^1
\times S^{2})$ that contains all the putative intersections between this
deformation and $S$. Moreover, the same subset suffices no matter how far $S'$
is pushed along $\partial_{s}$. Indeed, such is the case because $\tau
 \ne     \tau_{0}$ and $s$ is bounded from above on $S$.

If pushed far enough, the resulting subvariety at values of $s$ that are
achieved on $S$ is very close to the pseudoholomorphic cylinder that is
defined by the $((0, 1), \tau )$ Reeb orbit. As $\theta$ has no local
maximum on $S$, it follows that $S$ must be disjoint from this Reeb orbit, and
so disjoint from a large push of $S'$ along $\partial_{s}$.

\substep{Part 5}
With the preliminaries over, what follows
are the final arguments to prove that the integer paired to $S$ by $\Xi $ is
1. To start, let $\varepsilon $ denote the constant from \fullref{lem:3.11}. When $j$
is large, then every point in $C_{j}$ has distance less than $\frac{1}{100}\varepsilon $ from $S$.
Also, when $j$ is large, the
parametrizations from \refstep{Part 1} and \refstep{Part 2} above provide some pair $(c, \tau )$
that is very close to $(c_{0}, \tau_{0})$ and have the following
significance: This pair parametrizes a disk in the $J_{j}$ version of $\mathcal{M}_{\hat{A} '}$
that intersects $C_{j}$ at a point on the $\theta =0$
cylinder. Let $S_{j}$ denote the latter disk. Now, the $|s|
\to     \infty $ limit of the constant $|s| $ slices of $C_{j}$
converge as a multiple cover to some $\cos\theta =\frac{1}{{\surd 3}}$
Reeb orbit with parameter $\tau_{j}$ very close to $\tau
_{0}$. If $\tau_{j }=\tau $ and if the integer paired to $S$ in $\Xi $
is greater than 1, then use Propositions~\ref{prop:2.12}  and~\ref{prop:2.13}  to find a new
subvariety, ${C_{j}}'$, from the $J_{j}$--version of $\mathcal{M}_{\hat{A}}$ that
intersects $S_{j}$ on the $\theta =0$ cylinder, has asymptotic parameter
${\tau_{j}}'  \ne     \tau $, and lies entirely in the radius
$\frac{1}{100}\varepsilon $ tubular neighborhood of $S$. In
particular, $S_{j}$ and ${C_{j}}'$ have positive intersection number. Now,
move $S_{j}$ in its moduli space to some ${S_{j}}'$ whose corresponding
parameters $(c', \tau ')$ obey $| \tau ' - \tau_{0}|  >
\frac{1}{2}\delta $. Do so by a path $r \to  (c, \tau (r))$
where $\tau (0) = \tau $ and where $| \tau (r) - {\tau_{j}}'| $ is strictly increasing. The intersection number between
${S_{j}}'$ and ${C_{j}}'$ is thus the same as that between $S_{j}$ and ${C_{j}}'$.
However, according to \fullref{lem:3.11}, the subvarieties ${S_{j}}'$ and ${C_{j}}'$ are
disjoint. This is a contradiction and so $\Xi $ must pair $S$ with 1.
\end{proof}

\step{Step 7}
This step and \refstep{Step 8} prove that the integer paired with $S$ by $\Xi $ is 1 in
the case that $S$ is a cylinder with no concave side ends where $\lim_{|
s| \to \infty }    \theta$ is not in $(0, \pi )$ and at least one convex
side end where the analogous limit is in $(0, \pi )$. In this regard, the
arguments are, with minor modifications, a reprise of those given previously
in \refstep{Step 6}.

This step considers the case that neither 0 nor $\pi$ is the $|s|\to\infty $
limits of $\theta$ on $S$. In this case, $S$ is a
subvariety in the $J'$  version of the moduli space $\mathcal{M}_{\hat{A} '}$ where
$\hat{A}'$ consists of two elements, $(0,-, (p, p'))$ and $(0,-, (-p, -p'))$. Here,
$p$ and $p'$ are relatively prime integers and such that
$\frac{p' }{p}\ge\sqrt{\unfrac{3}{2}} $.
No generality is lost by
taking $p$ and $p'$ to be positive. The argument here has three parts.

\substepp{Part 1}
As in the case considered by \refstep{Step 6}, there
are two parametrizations for neighborhoods that contain the subvarieties that
are pointwise near $S$ from the $J'$  version and, for large $j$, from the $J_{j}$
version of $\mathcal{M}_{\hat{A} '}$. In this case, a description of these
parametrizations requires the use of the respective pairs $(c_{ + 0}, \tau
_{ + 0})     \in  (0, \infty )    \times     \mathbb{R}/(2\pi \mathbb{Z})$
and $(c_{ - 0}, \tau_{ - 0})     \in  (-\infty , 0) \times     \mathbb{R}/(2\pi \mathbb{Z})$
to parametrize the asymptotics of the $(p, p')$ and
$(-p, -p')$ ends of $S$. Here, the parameters $\tau_{\pm 0}$ parametrize the
respective $(p, p')$ and $(-p, -p')$ Reeb orbits that are obtained as $|s| \to \infty $
limits of the constant $|s| $ slices
of $S$. Meanwhile, $c_{\pm 0}$ are the respective versions of the parameter
$c_{E}$ from~\eqref{eq:1.8}.

It then follows from Propositions~\ref{prop:2.13} and~\ref{prop:3.4} that there exists
$\delta  > 0$ such that one of the parametrizations in question is by the subset in $(0,
\infty )    \times     \mathbb{R}/(2\pi \mathbb{Z})$ of points $(c, \tau )$
where $| c - c_{ + 0}| ^{2}+| \tau -\tau_{ +
0}| ^{2} < \delta $, and the other is by the subset in
$(-\infty, 0) \times\mathbb{R}/(2\pi\mathbb{Z})$ that consists of the points
$(c, \tau )$ where $| c -c_{ - 0}| ^{2}+| \tau  -\tau_{ - 0}| ^{2} < \delta $.
The first parametrization
assigns a subvariety to the pair $(c, \tau )$ that describe the asymptotics
of its $(p, p')$ end, and the second those of its $(-p, -p')$ end. Thus, $\tau $
is again the value of \fullref{sec:1}'s parameter ${\iota}_{(\cdot )}$ on the end in
question. Call the first parametrization the `plus' parametrization and call
the second one the `minus' parametrization.

\substepp{Part 2}
The following observation is the analog of
that made in \fullref{lem:3.11}: There exists $\varepsilon  > 0$ with the following
significance: If $(c, \tau )$ has distance less than $\delta $ from $(c_{ +
0}, \tau_{ + 0})$ and if $| \tau -\tau_{ + 0}| $ is
greater than $\frac{1}{4}\delta $, then the large $j$ version of
the subvariety that is parametrized by $(c, \tau )$ via the plus
parametrization lies outside the radius $\varepsilon $ tubular neighborhood
of $S$. Of course, the analogous statement holds for the minus parametrization
when $(c, \tau )$ is $\delta $ close to $(c_{ - 0}, \tau_{ - 0})$ and
$| \tau -\tau_{ - 0}|  > \frac{1}{4}\delta$.

To prove the assertion in this case, it is enough to consider, as in the
proof of \fullref{lem:3.11}, the intersections between $S$ and the $J'$  subvariety, $S'$,
that is parametrized by $(c, \tau )$ via the appropriate parametrization.
For this purpose, note that if $(c, \tau )$ gives $S'$ by the plus
parametrization and if $(c', \tau ')$ gives $S'$ via the minus one, then $\tau
'  \ne     \tau_{ - 0}$ if and only if $\tau      \ne     \tau_{ + 0}$.

Having said this, consider deforming $S'$ by pushing it along the vector field
$\partial_{s}$. Such a deformation keeps the intersections with $S$ in a
compact set of $\mathbb{R}    \times  (S^1 \times S^{2})$ because $\tau
     \ne     \tau_{ + 0}$ and $\tau '  \ne     \tau_{ - 0}$. Thus, the
intersection number between the resulting subvariety and $S$ is that between
 $S'$ and $S$. Of course, the latter is zero if and only if $S$ is disjoint from
$S'$. Now, by virtue of the fact that the function $s$ is bounded from above on
$S$, if $S'$ is pushed far enough along the vector field $\partial_{s}$, then
the portion of the resulting subvariety where $s$ has values that are also
achieved on $S$ has two components, each very close to an $\mathbb{R}$--invariant
cylinder. Of course, one of these cylinders is the product of $\mathbb{R}$ with
the Reeb orbit parametrized by $((p, p'), \tau )$, and the other the product
of $\mathbb{R}$ with the Reeb orbit parametrized by $((-p, -p'), \tau ')$. Now,
as $\theta$ has neither maxima nor minima on $S$, it follows that $S$ stays a
uniform distance from both of these constant $\theta$ cylinders. Thus, the
deformation of $S'$ is disjoint from $S$ and so $S'$ is also disjoint from $S$.

\substepp{Part 3}
Granted all of the proceeding, take $j$ very
large, and in particular, large enough so that $C_{j}$ is contained in the
radius $\frac{1}{100}\varepsilon $ tubular neighborhood of $S$.
The point now is that there exists some $S_{j}$ in the $J_{j}$ version of
$\mathcal{M}_{\hat{A} '}$ that intersects $C_{j}$. This follows
using \fullref{prop:3.4},
the lead observation in \refstepp{Part 2}, and the observation made towards the
end of \refstepp{Part 2} that distinct subvarieties in any given version of $\mathcal{M}_{\hat{A} '}$
are disjoint.

If the integer paired with $S$ by $\Xi $ were greater than 1, then any
subvariety $S_{j}$ from the $J_{j}$ version of $\mathcal{M}_{\hat{A} '}$ that
intersects $C_{j}$ must do so in a finite set of points. This understood,
pick such an $S_{j}$ that is parametrized by some $(c, \tau )$ via the plus
parametrization, and some $(c', \tau ')$ via the minus parametrization. Now
use \fullref{prop:2.13} to find a subvariety ${C_{j}}'  \in\mathcal{M}_{\hat{A}}$
with the following properties: First, it lies in the radius
$\frac{1}{100}\varepsilon $ tubular neighborhood of $S$ and it
intersects $S_{j}$. Second, the $|s|      \to     \infty $ limit of
the constant $|s| $ slices converge to Reeb orbits that are
distinct from both the $((p, p'), \tau )$ and $((-p, -p'), \tau )$ Reeb
orbits. Note that by virtue of $S_{j}$ and ${C_{j}}'$ intersecting in a finite
set of points, the latter have positive intersection number between them.

Having chosen ${C_{j}}'$, now deform $S_{j}$ by pushing it along the vector
field $\partial_{s}$. The argument given at the end of the previous part
works as well here to establish that result of a large push has the same
intersection number with ${C_{j}}'$ as does $S_{j}$, but is also disjoint from
${C_{j}}'$. As these two constraints are mutually exclusive, it follows that
$\Xi $ assigns 1 to $S$.

\step{Step 8}
This step proves that the integer paired with $S$ by $\Xi $ is 1 in the case
where $S$ is a cylinder with one end where the $|s|      \to
\infty $ limit of $\theta$ is in $\{0, \pi \}$ and where the other end
is a convex side end where the $|s| \to \infty $ limit of
$\theta$ is neither 0 nor $\pi$. Granted the discussion in the previous
step, the simplest case to consider is that where both ends of $S$ are convex
side. In this case, the argument from the previous step translates with
almost no essential changes to handle this case. In fact, the only slight
substantive difference arises in from, the different meanings of \fullref{sec:1}'s
parameters $(c_{E}$, ${\iota}_{E})$ in the cases that $E$ is an end where
$\lim_{| s| \to \infty }    \theta$ is or is not one of 0 or $\pi$.
In any event, the details for this case are left to the reader.

Turn instead to the case where $S$ has a concave side end where the
$|s|\to\infty $ limit of $\theta$ is either 0 or $\pi$.
Again, save for notation, no generality is lost by taking this limit to be
0. The argument for this case differs somewhat from that in the preceding
case and in \refstep{Step 7} because the function $s$ on $S$ ranges over the whole of
$\mathbb{R}$. In particular, a somewhat different argument must be used to
establish that distinct subvarieties in any given version of $\mathcal{M}_{\hat{A}'}$
are disjoint. In particular, where in \refstep{Step 7} (and in the
proof of \fullref{lem:3.11}), the subvariety $S'$ was pushed along the vector field
$\partial_{s}$, the argument now pushes $S'$ along the vector field
-$\partial_{\theta }$ to values of $\theta$ very near zero. Make this
change and then the rest of the argument amounts to little more than a
notationally changed version of that given previously.

\step{Step 9}
Here is a summary of what has been established by the preceding steps:
First, the one element in $\Xi $ has been shown, in all cases, to have the
form $(S, 1)$. As for $S$, its ends are known to be canonically in 1--1
correspondence with the elements in $\hat{A}$. In addition, the value of
$\deg_{(\cdot )}(d\theta )$ on all $(0,+,\ldots )$ ends of $S$
has been shown to be 1, and its value on all $(0,-,\ldots)$
ends has been shown to be zero. The set $\vartheta$ arises from the
asymptotic data for the ends of $S$ that correspond to $(0,+,\ldots )$ elements of $\hat{A}$.
Meanwhile, the angles in $\theta$ are now
known to be in 1--1 correspondence with the critical values of $\theta$ on
$S$, and each non-extremal critical point of $\theta$ has been proved
non-degenerate. The arguments given above also prove that there are $N_{ -
}+\hat N+c_{\hat{A} }-2$ such critical points in all. Finally, the
respective numbers, counting multiplicity, of the intersections between $S$
and the $\theta =0$ and $\theta =\pi$ cylinders are $c_+ $ and
$c_- $.

Granted all of this, it follows directly that $S\in\mathcal{M}_{\hat{A}}[\Theta,\vartheta ]$.
Moreover, since the $K = \mathbb{R}\times  (S^{1}    \times S^{2})$
version of~\eqref{eq:3.16} holds, it follows
directly that the graph $T_S$ from \fullref{sec:2g} when labeled as a moduli
space graph is isomorphic to the graph $T$.

%
%

\setcounter{theorem}{0}

\section{Constrained punctured spheres}\label{sec:4}

This section completes some unfinished business from \fullref{sec:3} by finishing
the proof of \fullref{thm:3.1}. This is done with the specification of a
collection $\{(a_{e}, w_{e})\}_{e\subset{T}}$ that meets the
criteria that are laid out in \fullref{sec:3b} and~\eqreft33.

What follows is a brief outline of the manner in which $\{(a_{e},w_{e})\}$ are specified.
\fullref{sec:4a} starts the story with a
description of any given pair $(a_{e}, w_{e})$ at points in the
parametrizing cylinder that are comparatively far from the boundary circles.
This description involves a set of two positive but very small numbers,
$\{\rho_{e0}, \rho_{e1}\}$, that are constrained in the
subsequent subsections plus a function, $\varepsilon_{e}$, of the
coordinate $\sigma$ on the closed parametrizing cylinder. In this regard,
$\varepsilon_{e}$ is strictly positive. Keep in mind throughout that all
of the subsequent constraints involve only upper bounds on $\varepsilon_{e}$, $\rho_{e0}$ and $\rho_{e1}$.
No positive lower bounds arise.

The mid-cylinder definition of $(a_{e}, w_{e})$ also involves three
additional functions of the coordinate $\sigma$ on the closed parametrizing
cylinder, these denoted by $a^{0}_{e}$, $w^{0}_{e}$ and $v_{e}$. These
three have no essential role until \fullref{sec:4e}, and until then,
they are unconstrained save for their boundary values. However, substantive
constraints do arise on $a^{0}_{e}$ and $w^{0}_{e}$ in the final section
so as to insure that distinct versions of $K_{(\cdot)}$ intersect
transversely with $+1$ local intersection numbers.

Sunsections~\ref{sec:4b}, \ref{sec:4c} and~\ref{sec:4d} 
specify $\{(a_{e}, w_{e})\}$ near the
boundaries of the parametrizing cylinders. In this regard, \fullref{sec:4b}
specifies these pairs near boundaries that correspond to the monovalent
vertices in $T$. \fullref{sec:4c} does this same task near the boundaries that
correspond to the bivalent vertices in $T$; and \fullref{sec:4d} gives the
specifications for boundaries that correspond to the trivalent vertices in
$T$. The criteria in \fullref{deff:3.2} are addressed in 
Subsections~\ref{sec:4b} and~\ref{sec:4c}. In
this regard, note that the definitions in \fullref{sec:4c} are relevant only to
the case where partition for the graph $T$ has only single element subsets.

These subsections also provide constraints on the relevant versions of
$(\varepsilon_{e}, \rho_{e0}, \rho_{e1})$, but all are of the
following sort: An upper bound appears for the values of $\varepsilon_{e}$ near each boundary
circle of the parametrizing domain. A particular choice for $\varepsilon_{e}$ then determines upper
bounds for $\rho_{e0}$ and $\rho_{e1}$. As remarked above, no positive
lower bounds arise. Mild constraints on $\{(a^{0}_{e}, w^{0}_{e}, v^{0}_{e})\}$ occur in these subsections.

Sections~\ref{sec:4b}, \ref{sec:4c} and~\ref{sec:4d} also address the nature of the singular points in the
resulting versions of $K_{(\cdot )}$. In particular, they prove that any
singular point in the closure of any given version of $K_{(\cdot )}$ arises
as the transversal intersections of two disks with $+1$ local intersection
number. Note that these subsections do not address the nature of the
intersections between versions of $K_{(\cdot )}$ with distinct edge labels.

\fullref{sec:4e}, addresses this last issue by explaining how to modify the
original choices for $\{(\varepsilon_{e}, \rho_{e0}, \rho_{e1}, a^{0}_{e}, w^{0}_{e}, v^{0}_{e})\}$ subject to all
previously noted constraints to guarantee that distinctly labeled versions
of $K_{(\cdot )}$ have transversal intersections with $+1$ local intersection number.

Granted the results from \fullref{sec:3}, the discussion in \fullref{sec:4e} completes the proof of
\fullref{thm:3.1} in the case that the partition for $T$ has only single element subsets. \fullref{sec:4f}
completes the proof of \fullref{thm:3.1} in the cases where the latter assumption does not hold.

\subsection{Parametrizations in the mid-cylinder}\label{sec:4a}

To start, fix a number, $\delta$ that is positive but less than
$\frac{1}{1000}$ times the difference between the maximal and
minimal angle labels of the vertices on every edge of $T$.

Now, let $e$ denote a given edge in $T$, and let $\theta_{o}$ and $\theta_{1}> \theta_{o}$ denote the
angles that are assigned to the vertices of $T$ that lie on $e$. Fix a positive numbers
$\rho_{0}\equiv\rho_{e0}$,
$\rho_{1}\equiv \rho_{e1}$ but constrained so that both are much smaller than $\delta$. In
addition, choose a similarly small, strictly positive function $\varepsilon\equiv\varepsilon_{e}$ on
$[\theta_{o}, \theta_{1}]$. The constructions that follow assume that $\rho_{0}$, $\rho_{1}$ and
$\varepsilon $ are all very small.

Let $\sigma$ denote the coordinate on $[\theta_{o}, \theta_{1}]$
and let $v$ denote the usual affine coordinate on $\mathbb{R}/(2\pi\mathbb{Z})$. At values of
$\sigma\in[\theta_{o}+2\rho_{0}, \theta_{1}-2\rho_{1}]$, the pair $(a_{e}, w_{e})$ are given by
\begin{equation}\label{eq:4.1}
\begin{aligned}
a_{e}(\sigma, v) &= a^{0}_{e}(\sigma)+\varepsilon (\sigma )
\cos\bigl(v + v^{0}_{e}(\sigma)\bigr)\\
\text{and}\quad w_{e}(\sigma, v) &=
w^{0}_{e}(\sigma )-\varepsilon (\sigma ) \sin\bigl(v +v^{0}_{e}(\sigma
)\bigr)
\end{aligned}
\end{equation}
where $a^{0}_{e}$, $w^{0}_{e}$ and $v^{0}_{e}$ functions on $[\theta_{o}, \theta_{1}]$. These functions are
as yet unconstrained.

One point to verify at the outset is whether the use of~\eqref{eq:4.1} leads via~\eqref{eq:3.2} to
an embedding in $\mathbb{R}\times(S^{1}\times S^{2})$ of
the $\sigma\in [\theta_{o}+2\rho_{0}, \theta_{1}-2\rho_{1}]$ portion of the parametrizing cylinder. That such is
the case when $\varepsilon $ is small is one consequence of the following
lemma.

\begin{lemma}\label{lem:4.1}
Suppose that $\theta_{o} < \theta_{1}$ are angles in $[0, \pi ]$ and that $Q=(q,q')$ is an integer pair
such that $\alpha_{Q}(\sigma)> 0$ when $\sigma\in(\theta_{o},\theta_{1})$. Now, suppose that $a^{0}$, $w^{0}$
and $v^{0}$ are smooth functions on $[\theta_{o}, \theta_{1}]$, and suppose that $\varepsilon$
is a strictly positive function of $\sigma$ and constrained so that $\varepsilon\alpha_{Q}< \frac{1}{2}$
at all points. Use this data to define the functions
\begin{equation*}
a \equiv a^{0}+\varepsilon  \cos(v+v^{0}) \qquad\text{and}\qquad  w \equiv w^{0}-\varepsilon \sin\big(v+v^{0}\big)
\end{equation*}
on the cylinder $(\theta_{o}, \theta_{1})\times \mathbb{R}/(2\pi\mathbb{Z})$. The pair $(a, w)$
then define an embedding of the cylinder $(\theta_{o}, \theta_{1})\times \mathbb{R}/(2\pi{\mathbb {Z}})$
into $\mathbb{R}\times(S^{1}\times S^{2})$ via the map in~\eqref{eq:3.2}.
\end{lemma}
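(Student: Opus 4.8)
The plan is to show that the map $\phi$ defined by \eqref{eq:3.2} from $(a,w)$ is both an immersion and injective on the open cylinder $(\theta_o,\theta_1)\times\mathbb{R}/(2\pi\mathbb{Z})$. For the immersion claim, I would invoke the observation already recorded just after \eqref{eq:2.28} in \fullref{sec:2g}: any map of the form \eqref{eq:2.25} (equivalently \eqref{eq:3.2}) immerses exactly at the points of its domain where the associated function $\alpha_Q$ from \eqref{eq:2.27} is positive. Since the hypothesis is precisely that $\alpha_Q(\sigma)>0$ on $(\theta_o,\theta_1)$, and the $\sigma$--coordinate of $\phi$ is just $\sigma$, the rank of $d\phi$ is two everywhere on the open cylinder. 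Thus $\phi$ is an immersion regardless of the size of $\varepsilon$; the smallness hypothesis $\varepsilon\alpha_Q<\tfrac12$ is not needed for this part and will be used only for injectivity.

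For injectivity, first note that $\theta=\sigma$ is one of the coordinate functions, so two points with the same $\phi$--image must lie on the same circle $\sigma=\constant$. Fix such a value $\sigma_0\in(\theta_o,\theta_1)$ and compare two points $(\sigma_0,v_1)$ and $(\sigma_0,v_2)$ with equal image. Writing $Q=(q,q')$ and abbreviating $\alpha_Q=\alpha_Q(\sigma_0)$, the difference of the images must be trivial in $\mathbb{R}$ and in $\mathbb{R}/(2\pi\mathbb{Z})\times\mathbb{R}/(2\pi\mathbb{Z})$, which gives three conditions on $(v_1,v_2)$. The first, coming from the $s=a$ coordinate, reads
\begin{equation*}
\varepsilon(\sigma_0)\bigl(\cos(v_1+v^0(\sigma_0))-\cos(v_2+v^0(\sigma_0))\bigr)=0,
\end{equation*}
and since $\varepsilon>0$ this forces $\cos(v_1+v^0)=\cos(v_2+v^0)$, hence either $v_1=v_2$ or $v_1+v_2+2v^0\equiv0\pmod{2\pi}$. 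In the second case, the $w$--terms satisfy $\sin(v_1+v^0)=-\sin(v_2+v^0)$, so $w(\sigma_0,v_1)=w(\sigma_0,v_2)$, and it remains to use the remaining two conditions, namely that
\begin{equation*}
q(v_1-v_2)\equiv 0 \pmod{2\pi}\qquad\text{and}\qquad q'(v_1-v_2)\equiv 0\pmod{2\pi}.
\end{equation*}
Since $q$ and $q'$ are not both zero (the pair is nontrivial), write $v_1-v_2=2\pi k/m$ in lowest terms; the two congruences force $m\mid q$ and $m\mid q'$. Here is where the size hypothesis enters: I would combine the relation $v_1+v_2\equiv-2v^0$ with $v_1-v_2=2\pi k/m$ and feed it back into the $t$--coordinate equation, which involves $qv+(1-3\cos^2\sigma_0)w$; using the explicit value $w(\sigma_0,v_i)=w^0(\sigma_0)-\varepsilon(\sigma_0)\sin(v_i+v^0)$ together with the trigonometric identity and the bound $\varepsilon\alpha_Q<\tfrac12$, one gets a strict inequality that rules out $v_1\ne v_2$. (Concretely, the point is that over a full $v$--circle the map winds nontrivially only through the linear-in-$v$ terms $qv,q'v$, and the $\varepsilon$--sized oscillation is too small to create an extra self-coincidence once $\varepsilon\alpha_Q<\tfrac12$; the winding data forbids a coincidence with $v_1-v_2\not\equiv 0$ unless the oscillatory term compensates, which the smallness forbids.)

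I expect the main obstacle to be the bookkeeping in this last step: one must verify carefully that the three scalar equations coming from $(s,t,\varphi)$ — two of them only modulo $2\pi\mathbb{Z}$ — together with the sign constraint $\varepsilon>0$ and the bound $\varepsilon\alpha_Q<\tfrac12$ leave only the solution $v_1=v_2$. A clean way to organize it is to first use the $s$--equation to reduce to the alternative $v_1=v_2$ or $v_1+v_2\equiv-2v^0$, then in the second branch substitute into the combination $q'\cdot(t\text{-eqn})-q\cdot(\varphi\text{-eqn})$, which eliminates the linear term $qv_1-qv_2$ and $q'v_1-q'v_2$ up to $2\pi\mathbb{Z}$ and leaves a purely oscillatory expression proportional to $\varepsilon\alpha_Q\cdot(\text{something bounded by }1)$ that must lie in $2\pi\mathbb{Z}$ — forcing it to vanish, hence $v_1+v^0\equiv v_2+v^0$, hence $v_1=v_2$. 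Everything else (smoothness of the resulting embedding, the immersion statement) is immediate from the earlier material in \fullref{sec:2g}.
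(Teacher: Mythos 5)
Your closing ``clean way to organize it'' is sound and, modulo packaging, is the same mechanism as the paper's own argument: the paper compares two preimages via \eqref{eq:4.2}--\eqref{eq:4.4}, decomposes the integer pair $N=(n,n')$ as $\hat{x}(z,z')+\frac{\hat{y}}{m}(q,q')$, and invokes the hypothesis $\varepsilon\alpha_Q<\tfrac12$ to force $\hat{x}=0$; the integer appearing in your combination $q'\cdot(t\text{-equation})-q\cdot(\varphi\text{-equation})$ is precisely $q'n-qn'=m\hat{x}$, and the bound forces the quantity $\alpha_Q\bigl(w(\sigma,v_1)-w(\sigma,v_2)\bigr)=-2\varepsilon\alpha_Q\sin(v_1+v^0)$ to vanish because it must lie in $2\pi\mathbb{Z}$ yet has magnitude at most $2\varepsilon\alpha_Q<1$. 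The immersion assertion is also disposed of identically in both arguments, via the remark just after \eqref{eq:2.28}.

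That said, your middle paragraph contains a false assertion that you should excise. From $\cos(v_1+v^0)=\cos(v_2+v^0)$ with $v_1\neq v_2$, the relation $\sin(v_1+v^0)=-\sin(v_2+v^0)$ yields
\[
w(\sigma_0,v_1)-w(\sigma_0,v_2)=-2\,\varepsilon(\sigma_0)\sin\bigl(v_1+v^0(\sigma_0)\bigr),
\]
which is nonzero unless $\sin(v_1+v^0)=0$. That vanishing is exactly what the size bound must be invoked to produce; it cannot be taken for granted at this point in the argument. So the statement ``$w(\sigma_0,v_1)=w(\sigma_0,v_2)$'' is wrong, and the congruences $q(v_1-v_2)\equiv 0$ and $q'(v_1-v_2)\equiv 0\pmod{2\pi}$ do not follow from the $t$-- and $\varphi$--equations, since each of the latter carries a non-vanishing $w$--contribution. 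Passing directly from the $s$--equation dichotomy to the $q'\cdot(t)-q\cdot(\varphi)$ combination, as you do in your final paragraph, repairs this.
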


\begin{proof}[Proof of \fullref{lem:4.1}]
Suppose for the moment that the functions $a$ and $w$ that are used in~\eqref{eq:2.25} are any given pair
of functions on $(\theta_{o},\theta_{1})$. As remarked in \fullref{sec:2g}, the resulting map then
defines an immersion of the parametrizing cylinder when $\alpha_{Q}>0$.
This understood, the issue is whether two distinct points in the domain are
mapped to the same point in the range. To analyze this last issue, note that
points $(\sigma, v)$ and $(\sigma',v')$ from the parametrizing cylinder
$(\theta_{o}, \theta_{1})\times \mathbb{R}/(2\pi\mathbb{Z})$
are mapped to the same point in $\mathbb{R}\times(S^{1}\times S^{2})$ if and only if both
$\sigma =\sigma'$ and there exists an integer pair $N=(n, n')$ such that
\begin{equation}\label{eq:4.2}
\begin{split}
& v'=v-2\pi\frac{\alpha_{N}(\sigma)}{\alpha_{Q_{e}}(\sigma)}\mod(2\pi\mathbb{Z}),\\
& a_{e}\bigg(\sigma,v-2\pi\frac{\alpha_{N}(\sigma)}{\alpha_{Q_{e}}(\sigma)}\bigg)= a_{e}(\sigma , v),\\
& w_{e}\bigg(\sigma, v-2\pi\frac{\alpha_{N}(\sigma)}{\alpha_{Q_{e}}(\sigma)}\bigg)=w_{e}(\sigma ,v) - 2\pi
\frac{nq_e'-n' q_e}{\alpha_{Q_e}(\sigma)}.
\end{split}
\end{equation}
Here, and below, $\alpha_{N}$ denotes the function $\sigma\to(1-3\cos^{2}\sigma)n'-\surd 6\cos\sigma n$.

To analyze the condition in~\eqref{eq:4.2}, fix a pair $Z\equiv(z, z')\in\mathbb{Z}\times\mathbb{Z}$
of integers such that $z\ {q_e}'- z'\ q_{e}= m$ with $m$ used here to denote the greatest common
divisor of the ordered pair of integers that comprise $Q_{e}\equiv(q_{e},{q_e}')$. With $Z$ fixed in
this way, then $N$ can be written as $N= \hat{x}(z, z')+\frac{\hat{y}}{m}(q_{e},{q_e}')$ with $\hat
{x}$ and $\hat {y}$ integers. This notation allows the third point in~\eqref{eq:4.2}
to be written as
\begin{equation}\label{eq:4.3}
\alpha_{Q_e}(\sigma) w_{e}\left(\sigma,v -2\pi\ \hat
{x}\frac{\alpha_Z(\sigma)}{\alpha_{Q_e}(\sigma)}-2\pi \frac{\hat{y}}{m}\right)=\alpha_{Q_e}(\sigma)
w_{e}(\sigma,v)- 2\pi\ m \hat{x}.
\end{equation}
In particular, this last condition implies that there are at most a finite
number of possible values for $\hat{x}$ that can appear at any given value
of $\sigma\in(\theta_{o}, \theta_{1})$. Meanwhile, the
second condition in~\eqref{eq:4.2} requires that
\begin{equation}\label{eq:4.4}
a_{e}\left(\sigma,v - 2\pi\ \hat{x}\frac{\alpha_{Z}(\sigma )}
{\alpha_{Q_e}(\sigma)}- 2\pi \frac{\hat{y}}{m}\right)
=a_{e}(\sigma,v).
\end{equation}
Together,~\eqref{eq:4.3} and~\eqref{eq:4.4} imply that only $\hat {y}$'s reduction modulo $m$ is
relevant. Thus, there is a finite set of possible values for $(\hat {x},\hat{y})$ that
need be considered for immersion points in any given
compact subset of $(\theta_{o}, \theta_{1})\times\mathbb{R}/(2\pi\mathbb{Z})$.

Now consider the additional ramifications of~\eqref{eq:4.3} and~\eqref{eq:4.4} in the case
that $a$ and $w$ are as described in the lemma. The first point to make is that
with $\varepsilon$ constrained as indicated, only $\hat{x}=0$ can appear
in~\eqref{eq:4.3}. This understood, it then follows that both~\eqref{eq:4.3} and~\eqref{eq:4.4} can
hold simultaneously only if $\hat{y}=0\mod(m\mathbb{Z})$. Thus $v$ and
$v'$ in~\eqref{eq:4.2} agree mod
$(2\pi\mathbb{Z})$ and~\eqref{eq:4.1} is an embedding.
\end{proof}

\subsection{Parametrizations near boundary circles with a monovalent vertex label}\label{sec:4b}

Suppose here that $e\subset T$ is an edge and that $\theta_{o}$ and
$\theta_{o}< \theta_{1}$ are the angles that are assigned to the
vertices on $e$. Suppose, in addition that $o\in e$ is a monovalent vertex
from $T$. For the sake of argument, suppose that the latter is assigned the
angle $\theta_{o}$. The story when the assigned angle is $\theta_{1}$
is identical save for notation and some sign changes and so the latter case
is not presented.

There are three separate cases to consider, these depending on the label
given the vertex $o$. These cases are considered in turn below. In what
follows, $\beta$ denotes a favorite smooth function on $[0, \infty)$ that
takes value 1 on $[0, 1]$, value 0 on $[2, \infty)$, and has negative
derivative on $(1, 2)$. Having chosen $\beta$, and granted that $\rho >0$
and $\theta_{*}\in [0, \pi]$, introduce the function
\begin{equation}\label{eq:4.5}
\beta' \equiv \beta \left(\frac{1}{\rho ^4}|
\sigma-\theta_{*}| \right)
\end{equation}

\begin{case}\label{cas:1}
\textsl{In this case, $o$ is assigned a $(1,\pm,\ldots)$ label in
$\hat{A}$. This is to say that in the respective $+$ and $-$ cases, the image in
$\mathbb{R}\times(S^{1}\times S^{2})$ of the $\sigma < 2\rho_{e0}$ part of the
parametrizing cylinder should have the asymptotics of a concave side or convex side end
of a $J$--pseudoholomorphic subvariety where the $| s|\to\infty$ limit of $\theta$ is 0.}
\end{case}

In the remainder of this Case~\ref{cas:1} discussion, $\rho$ denotes $\rho_{e0}$.
In this regard, $\rho\equiv\rho_{e0}$ along with $\theta_{*}=0$ are to be used for defining
the function $\beta '$ via~\eqref{eq:4.5}.

To extend the definition in~\eqref{eq:4.1} of $(a_{e}, w_{e})$ to the $\sigma <
2\rho$ portion of the parametrizing cylinder, first constrain the
functions $\varepsilon$, $a^{0}_{e}$, $w^{0}_{e}$ and $v^{0}_{e}$
that appear in~\eqref{eq:4.1} to be constant where $\sigma< 2\rho$. This
understood, extend the definition in~\eqref{eq:4.1} to the points where $\sigma<2\rho$ by setting
\begin{equation}\label{eq:4.6}
\begin{split}
&a_{e}=\frac{1}{\kappa}\beta' \ln\sigma +a^{0}_{e}+\big(\varepsilon (1-\beta') + \sigma
\beta')\cos(v + v^{0}_{e}).\\
&w_{e}=(1-\beta ') w^{0}_{e}-(\varepsilon (1-\beta') + \sigma\beta ') \sin(v + v^{0}_{e}).
\end{split}
\end{equation}
Here,
\begin{equation}\label{eq:4.7}
\kappa\equiv\frac{{q_e}'}{q_{e}}+\sqrt{\frac{3}{2}},
\end{equation}
According to \fullref{lem:4.1}, any small $\varepsilon$ version of~\eqref{eq:4.1} embeds the
$\sigma\in(0, \theta_{1}-2\rho ]$ portion of the
parametrizing cylinder in $\mathbb{R}\times (S^{1}\times S^{2})$.
Moreover, as $| a_{e}|\to\infty$ uniformly as $\sigma\to 0$, any such version of~\eqref{eq:4.1}
defines a proper embedding of this
same portion of the parametrizing cylinder. Thus, the only issue to consider
is whether the $\sigma\to 0$ asymptotics are correct. In particular,
the key point here is to verify~\eqref{eq:1.12}, and the latter task is
straightforward so left to the reader.

\begin{case}\label{cas:2}
\textsl{In this case, the vertex $0$ is assigned the element (1) from $\hat{A}$. To
start, once again set $\rho\equiv\rho_{e0}$ when referring to
the function $\beta '$ in~\eqref{eq:4.5}, also set $\theta_{*}=0$. It is
also to be understood here that $a^{0}_{e}$, $w^{0}_{e}$ and
$v^{0}_{e}$ from~\eqref{eq:4.1} are again constrained to be constant where $\sigma\leq 2\rho$.
Granted these conventions, the extension of the pair $(a_{e}, w_{e})$ to the points where
$\sigma< 2\rho$ is given by
\begin{equation}\label{eq:4.8}
\begin{aligned}
a_{e} &= a_{e}^{0}+\varepsilon (1-\beta')\cos(v +v^{0}_{e}) \\
\text{and}\quad
w_{e} &= w^{0}_{e}-\varepsilon (1-\beta')\sin(v + v^{0}_{e}).
\end{aligned}
\end{equation}
}
\end{case}

The reader is left to verify that the resulting extension of~\eqref{eq:4.1} to the
closed cylinder $[0,\theta_{1}-2\rho_{1}]\times\mathbb{R}/(2\pi\mathbb{Z})$ maps it onto an
embedded, closed disk in $\mathbb{R}\times(S^{1}\times S^{2})$ that intersects the $\theta = 0$
circle transversely and with intersection number $+1$ with respect to the
latter's symplectic orientation.

Granted that the extension given in~\eqref{eq:4.8} maps onto an embedded disk, note
that at the latter's intersection point with the $\theta= 0$ cylinder, the
restriction of the symplectic form on its tangent space is positive. Indeed,
this can be seen from the fact that the symplectic form pulls back along the
$\sigma= 0$ circle in the parametrizing cylinder to the form $\sigma d\sigma\wedge\,dv$.

\begin{case}\label{cas:3}
\textsl{In this case, the monovalent vertex is assigned some $(0,-,\ldots)$ element from $\hat{A}$. As
in the previous cases, set $\rho$ to equal $\rho_{e0}$. Use this value for $\rho$ and use
$\theta_{*}=\theta_{o}$ for defining the function $\beta'$.
}
\end{case}

The parametrization given below requires that $a^{0}_{e}$, $w^{0}_{e}$
and $v^{0}_{e}$ from~\eqref{eq:4.1} are constant where $\sigma\in [\theta_{o},\theta_{o}+2\rho]$.
Granted that such is the case, extend the definition of $(a_{e}$, $w_{e})$ to the $\sigma <\theta_{o}+2\rho$
portion of the parametrizing cylinder using the rule
\begin{equation}\label{eq:4.9}
\begin{split}
&a_{e}\equiv\frac{1}{\varsigma}\beta' \ln(\sigma-\theta_{o})+a^{0}_{e}+(\varepsilon (1-\beta') +
(\sigma -\theta_{o}) \beta') \cos(v + v^{0}_{e}),\\
&w_{e}=(1-\beta') w^{0}_{e}-(\varepsilon (1-\beta ')+(\sigma -\theta_{o})
\beta')\sin(v + v^{0}_{e}),
\end{split}
\end{equation}
where $\varsigma =\surd 6 \sin^{2}\theta_{o}(1+3\cos^{2}\theta
_{0})/(1+3\cos^{4}\theta_{o})$. It is left as another exercise for
the reader to verify that~\eqref{eq:4.9} and~\eqref{eq:3.2} together define a proper embedding
of the $\sigma\in (\theta_{o}, \theta_{1}-2\rho_{e1}]$
portion of $(\theta_{o}, \theta_{1})\times\mathbb{R}/(2\pi
\mathbb{Z})$ into $\mathbb{R}\times(S^{1}\times S^{2})$ as
submanifold whose large $| s| $ asymptotics meet the requirements
of \fullref{deff:3.2} to be those of a convex side end in some
$J$--pseudoholomorphic subvariety.

\subsection{Parametrizations near boundary circles with a bivalent vertex label}\label{sec:4c}

In this subsection, $o$ denotes a bivalent vertex in $T$ whose associated
partition subset has but a single element. In what follows, $e$ and $e'$ are the
two incident edges to $o$ with the convention that $o'$s angle label, $\theta_{o}$, is
the greater of the two angles that label the vertices on $e$, and
so the lesser of the two that label the vertices on $e'$.

The story starts with a preliminary digression to set the stage. To begin
the digression, note that $\rho_{e1}=\rho_{e'0}$ in what follows,
and $\rho$ denotes either. Take $\rho \ll\delta$. The function $\beta'$ now refers to the version in~\eqref{eq:4.5}
with this same value for $\rho $ and
with $\theta_{*}$ set to equal $\theta_{o}$.

Require that both the $e$ and $e'$ versions of $\varepsilon$ in~\eqref{eq:4.1} are
constant where $|\sigma-\theta_{o}| < \delta$ and
that these constants agree. Require that $a^{0}_{e}$ is constant where
$\sigma > \theta_{o}-2\rho$, that $a^0_{e'}$ is constant where
$\sigma <\theta_{o}+2\rho$, and that these two constants also have
the same value. Use $a_{0}$ for the latter. In addition, require that both
$w^{0}_{e}$ and $w^0_{e'}$ are zero where $\sigma$ is within $2\rho $
of $\theta_{o}$. Finally, require that both $v^{0}_{e}$ and
$v^{0}_{e'}$ are constant where $\sigma$ is respectively greater than
$\theta_{o}-2\rho$ and less than $\theta_{o}+2\rho$ with equal
value, and for notational convenience, take the constant to equal $0$. To
obtain the case where the constant value for $v^{0}_{e}$ and
$v^{0}_{e'}$ is non-zero, replace $v$ in what follows by $v-v^{0}_{e}$.

To proceed with the digression, introduce $P_{0}\equiv (p_{0},{p_0}')$ to denote the integer pair
from $o'$s element in $\hat{A}$, and set
\begin{equation}\label{eq:4.10}
x_{0}\equiv {q_e}' p_{0}- q_{e}{p_0}'.
\end{equation}

Note that $x_{0}$ is a positive integer. To explain, remember that $(p_{0},{p_0}')$ is a positive
multiple of the pair $(p, p')$ that is defined by
$\theta_{o}$ via~\eqref{eq:1.7}. This understood, positivity of $x_{0}$ is a
consequence of the positivity of the $Q = Q_{e}$ version of $\alpha_{Q}(\theta_{o})$. Note that
the formula for $x_{0}$ in~\eqref{eq:4.10} can be
written with the pair $Q_{e'}$ replacing $Q_{e}$; this a consequence of~\eqreft31.

The next task for this digression is to define certain `polar' coordinates
for respective neighborhoods of $(\theta_{o}, 0)$ in both the $e$ and $e'$
versions of the parametrizing cylinder. In both cases, the `radial'
coordinate is denoted as $r$; it takes values in $[0, 3\rho)$. Meanwhile, the
angular coordinate is denoted as $\tau$; it takes values in $[-\pi, 0]$ on
$e'$s version of the parametrizing cylinder, and it takes values in $[0, \pi]$ on the $e'$ version.
To describe the coordinate transformation from $(r,\tau)$ coordinates to the standard coordinates,
it is necessary to fix an $\mathbb{R}$--valued anti-derivative, $\hat{v}$, for $dv$ that is defined near 0
in $\mathbb{R}/(2\pi\mathbb{Z})$ and vanishes at 0. Thus, $v$ is the $mod(2\pi)$ reduction of $\hat{v}$.

With the preceding understood, here is the coordinate transformation between
the $(\sigma,\hat{v})$ and the variables $(r,\tau)$ for $e'$s
parametrizing cylinder:
\begin{equation}\label{eq:4.11}
\begin{split}
& \sigma=\theta_{*}+\varepsilon r\sin(\tau).\\
& \hat{v}= \left(1-\frac{\alpha_{Q_{e'}}(\sigma)}{\alpha_{Q_e}(\sigma )}\right)
\tau+\frac{1}{\alpha_{Q_e}(\sigma )}r \cos(\tau).
\end{split}
\end{equation}
In this regard, keep in mind that $\tau\in[-\pi,0]$. To verify
that $(r,\tau)$ are bona fide coordinates, use Taylor's theorem with
remainder while referring to~\eqreft31 and the first point in~\eqref{eq:4.11} to write
$\hat{v}=r(c_{0}\varepsilon\tau \sin(\tau)+\cos(\tau))+0(\rho r)$ with $c_{0}$ a positive constant
that is determined by $\theta_{o}$. In particular, the Jacobian of the map $(r, \tau)\to(\sigma, v)$
therefore has the form $-r (1- c_{0}\varepsilon\sin^{2}(\tau))+ 0(\rho r)$, and this is negative
if $\varepsilon$ is small and $\rho $ is very small.

Meanwhile, the coordinate transformation between the ($\sigma, \hat{v})$
and $(r,\tau)$ coordinates for the $e'$ version of the parametrizing
cylinder is given as follows:
\begin{equation}\label{eq:4.12}
\begin{split}
&\sigma=\theta_{*}+\varepsilon r\sin(\tau).\\
&\hat{v}=\left(\frac{\alpha_{Q_e}(\sigma)}{\alpha_{Q_{e'}}(\sigma )}- 1\right)
\tau +\frac{1}{\alpha_{Q_{e'}}(\sigma)}r \cos(\tau).
\end{split}
\end{equation}
In this case, the coordinate $\tau$ ranges in $[0,\pi]$.

The digression continues with the introduction of a certain function,
$v_{*}$, a function that is defined where $r>\frac{1}{8}\rho$ in the $|\sigma-\theta_{o}|< 3\rho$
portion of $(0, \pi )\times\mathbb{R}/(2\pi\mathbb{Z})$. To define
$v_{*}$, it is necessary to view $v$ as taking values in $[0, 2\pi]$.
This understood, set
\begin{equation}\label{eq:4.13}
v_{*}=\left((1-\beta ')+\frac{{\alpha_{Q_e}}(\sigma )}{\alpha_{Q_{e'}}(\sigma)}\beta '\right)v.
\end{equation}

As its final task, this digression introduces two versions of a function,
$\beta_{*}$, one on $e'$s version of the parametrizing cylinder and
the other on the $e'$ version. In both cases, $\beta_{*}$ is defined
to be zero on the complement of the set where $r$ is defined and less than
$3\rho$. Meanwhile, where $r\leq 3\rho$, this function is set equal to
$\beta(\frac{1}{\rho}r)$.

With the digression now over, what follows are the rules for extending the
definition of $(a_{e}$, $w_{e})$ to the $\sigma>\theta_{o}-2\rho$
portion of $e'$s version of the parametrizing cylinder. With $\tau$ viewed as
taking values in $[-\pi, 0]$, set
\begin{equation}\label{eq:4.14}
\begin{split}
&a_{e}=-\beta_{*}\ln(r)+a_{0}+\varepsilon\Big(\beta_{*}+(1-\beta_{*})\cos(v_{*})\Big).\\
&w_{e}= -\varepsilon (1-\beta_{*})\sin(v_{*})
\\ &\hspace{15mm}
+x_{0}\beta'\left(\frac{1}{\alpha_{Q_e}}
\beta_{*}\left(\tau-\frac{1}{2\alpha_{Q_{e'}}}r \cos(\tau)\right)-\frac{1}{2\alpha_{Q_e}}(1-\beta_{*})
v_{*}\right).
\end{split}
\end{equation}
Here, one must view $v$ and $\sigma$ as functions of $r$ and $\tau $ where $r<3\rho$.
As for $(a_{e'}, w_{e'})$, view $\tau $ as taking values in $[0,\pi]$ and set
\begin{equation}\label{eq:4.15}
\begin{split}
&a_{e'} = -\beta_{*}\ln(r)+a_{0}+\varepsilon\left(\beta_{*}+(1-\beta_{*})\cos(v)\right).\\
&w_{e'}= -\varepsilon(1-\beta_{*})\sin(v)\\
&\hspace{16mm}+ x_{0}\beta'\left(\frac{1}{\alpha_{Q_{e'}}}
\beta_{*}\left(\tau +\frac{1}{2\alpha_{Q_e}}r \cos(\tau)\right)+\frac{1}{2\alpha_{Q_e}}(1-\beta_{*})v\right).
\end{split}
\end{equation}
In this last equation, $v$ and $\sigma$ must again be viewed as functions of
$r$ and $\tau $ where $r < 3\rho$ with $v$ taking values in $[0, 2\pi]$.

As is proved below, these extensions have the following three special
properties: First, the union of the images in the $| \theta -\theta_{o}| < 3\rho$
portion of $\mathbb{R}\times (S^{1}\times S^{2})$ of the $e$ and $e'$ parametrizing cylinders
fit along the $\theta=\theta_{o}$ locus so as to define a smooth, properly immersed, thrice
punctured sphere. Second, the closure of this thrice punctured sphere in
${\mathbb{R}}\times(S^{1}\times S^{2})$ has the asymptotics as
dictated by \fullref{deff:3.2} of a $\lim_{| s|\to\infty}\theta=\theta_{o}$,
concave side end of a $J$--pseudoholomorphic subvariety.
Third, this thrice punctured sphere has a finite number of singular points,
all are transversal double points, and all have $+1$ local intersection
number.

The remainder of this subsection is divided into four parts, with the first two
containing the proofs of the first two of the preceding assertions. The
final two parts contain the proof of the third assertion.

\step{Part 1} This part addresses the manner in which the images of the two
parametrizing cylinders match along the $\theta=\theta_{o}$ locus in
$\mathbb{R}\times(S^{1}\times S^{2})$. In this regard, it
follows from~\eqref{eq:2.25} that these images define a smoothly immersed surface
near the $\theta =\theta_{o}$ locus provided that the following is
true: Let $v_{1}\in (0,2\pi)$. Then, there exists an integer pair,
$N = (n,n')$, and extensions of the definitions of $(a_{e},w_{e})$ and
$(a_{e'}, w_{e'})$ to some neighborhood in
$(0,\pi)\times\mathbb{R}/(2\pi\mathbb{Z})$ of $(\theta_{*}, v_{1})$ so that
\begin{equation}\label{eq:4.16}
\begin{split}
&a_{e}\left(\sigma,\frac{\alpha_{Q_{e'}}(\sigma)}{\alpha_{Q_e}(\sigma)}v
+ 2\pi\frac{\alpha_{N}(\sigma)}{\alpha_{Q_{e}}(\sigma)}\right)=a_{e'}(\sigma, v),\\
&
w_{e}\left(\sigma,\frac{\alpha_{Q_{e'}}(\sigma)}{\alpha_{Q_e}(\sigma)}v
+2\pi\frac{\alpha_{N}(\sigma)}{\alpha_{Q_{e}}(\sigma)}\right)\\
&\hspace{10mm}=w_{e'}(\sigma, v)+\frac{1}{\alpha_{Q_e}(\sigma)}({q_e}'q_{e'}-q_{e}{q_{e'}}')v
+\frac{2\pi}{\alpha_{Q_e}(\sigma)}({q_e}'n-q_{e}n').
\end{split}
\end{equation}
To verify that~\eqref{eq:4.16} holds, let $U\subset(0,\pi)\times\mathbb{R}/(2\pi\mathbb{Z})$ denote
the complement of the point $(\theta_{o}, 0)$. Now observe that the formulae in~\eqref{eq:4.14}
and~\eqref{eq:4.15} make perfectly good sense on some neighborhood in $U$ of $U'$s intersection with the
$\sigma =\theta_{o}$ circle. In particular, where $r\leq 3\rho$,
the formula in~\eqref{eq:4.14} makes good sense where $\tau\in(-\frac{3\pi}{2},\frac{\pi}{2})$ and that
in~\eqref{eq:4.15} makes good sense where $\tau\in(-\frac{\pi}{2},\frac{3\pi}{2})$. Meanwhile,
where $r$ is either undefined or greater than $2\rho$, both formula make good sense where $|\sigma
-\theta_{o}| <\rho^{4}$. Use these extensions of~\eqref{eq:4.14}
and~\eqref{eq:4.15} to provide extensions for use in~\eqref{eq:4.16} of the domains of
$(a_{e}, w_{e})$ and $(a_{e'}, w_{e'})$.

Consider now~\eqref{eq:4.16} at a point $(\theta_{o}, v_{1})$ where both
$|\sigma-\theta_{o}|<\rho^{4}$ and $r$ is either undefined
or greater than $2\rho$. In this case, the $N=(0,0)$ version of~\eqref{eq:4.16}
holds by virtue of two facts: First, $\beta' = 1$ where $|\sigma -\theta_{o}| < \rho ^{4}$ and thus
\begin{equation}\label{eq:4.17}
v_{*}\left(\frac{\alpha_{Q_{e'}}(\sigma)}{\alpha_{Q_e}(\sigma)}v\right) = v.
\end{equation}
Second,~\eqreft31 equates $-x_{0}$ with ${q_e}'q_{e'}-q_{e}{q_{e'}}'$.

Consider next the story for a point $(\theta_{o}, v_{1})$ where $r <3\rho$ that sits on
the $\cos(\tau ) = 1$ ray. To begin, note that~\eqref{eq:4.15}
and~\eqref{eq:4.16} require the evaluation of $(a_{e'}, w_{e'})$ at parameters $(r,
\tau)$ that give $\sigma$ in~\eqref{eq:4.12}, and that gives the
$\mathbb{R}$--valued
parameter $\hat{v}$ as it ranges over some interval of small length in $(0,
2\pi)$ that contains $v_{1}$. In particular, let $v'$ denote a point in such
an interval and take $\sigma$ to lie within $\rho^{4}$ of
$\theta_{o}$. Let $(r', \tau ')$ denote the values for $(r,\tau)$ that give
$(\sigma, v')$.

Now suppose that $N = (0, 0)$ again so that the pair $(a_{e}, w_{e})$
in~\eqref{eq:4.16} are to be evaluated at parameters $r = r_{*}$ and $\tau=\tau_{*}$ that
give the value of $\sigma$ used for $(a_{e'},w_{e'})$ and give $v'$ for
the $\mathbb{R}$--valued parameter $\hat{v}$. As
such,~\eqref{eq:4.11} and~\eqref{eq:4.12} require that $(r_{*}, \tau_{*})$ is
determined by $(r', \tau ')$ via the identities
\begin{equation}\label{eq:4.18}
\begin{split}
&r_{*} \sin(\tau_{*})= r'\sin(\tau '),\\
&\left(1-\frac{\alpha_{Q_{e'}}(\sigma)}{\alpha_{Q_e}(\sigma)}\right)
\tau_{*}+\frac{1}{\alpha_{Q_e}(\sigma)}r_{*}\cos(\tau_{*})\\&\qquad\qquad =\frac{\alpha_{Q_{e'}}(\sigma)}
{\alpha_{Q_e}(\sigma)}\left[\left(\frac{\alpha_{Q_e}(\sigma)}{\alpha_{Q_{e'}}(\sigma)}-1\right)
\tau ' + \frac{1}{\alpha_{Q_{e'}}(\sigma)}r'\cos(\tau')\right]
\end{split}
\end{equation}
In particular, if $\varepsilon$ is small and $\rho $ very small, then~\eqref{eq:4.19}
requires $r_{*}= r'$ and $\tau_{*}=\tau'$.

Granted this last conclusion, the top equality in~\eqref{eq:4.16} now follows from~\eqref{eq:4.14}
and~\eqref{eq:4.15}. Meanwhile, since $\beta'=1$ at the given value of
$\sigma$, the lower equality holds provided that
\begin{multline}\label{eq:4.19}
x_{0}\beta_{*}\left[\left(\frac{1}{\alpha_{Q_e}}-\frac{1}{\alpha_{Q_{e'}}}\right)\tau' -
\frac{1}{\alpha_{Q_e}\alpha_{Q_{e'}}}r \cos(\tau')\right]- x_{0}(1-\beta_{*})\frac{1}{2\alpha_{Q_e}}
(2v') \\
\qquad= \frac{1}{\alpha_{Q_e}}(-x_{0})v';
\end{multline}
and such is the case by virtue of~\eqref{eq:4.12}.

The last case to consider is that where the point $(\theta_{o}, v_{1})$
sits where $r < 3\rho$ on the $\cos(\tau )= -1$ ray. Supposing that $\sigma$ is within
$\rho^{4}$ of $\theta_{o}$ and $v'$ is very close to
$v_{1}$, then the pair $(a_{e'}, w_{e'})$ in~\eqref{eq:4.16} must be evaluated at
parameters values $r'$ for $r$ and $\tau'$ for $\tau$ that give the point
$(\sigma, v')$ via~\eqref{eq:4.12}. In this regard, note that $\tau '\sim \pi$.

Now suppose that $N = Q_{e'}$ in~\eqref{eq:4.16}. This being the case, then the pair
$(a_{e}, w_{e})$ are to be evaluated at parameters $r = r_{*}$ and
$\tau =\tau_{*}$ with $\tau_{*}\sim -\pi$ that give the value of
$\sigma$ used for $(a_{e'}, w_{e'})$, but
now give the $\mathbb{R}$--valued parameter $\hat {v}$ that obeys
\begin{equation}\label{eq:4.20}
\hat{v}=\frac{\alpha_{Q_{e'}}(\sigma)}{\alpha_{Q_e}(\sigma)}(v' + 2\pi )\mod(2\pi \mathbb{Z}).
\end{equation}
As such,~\eqref{eq:4.11} and~\eqref{eq:4.12} require that $(r_{*},\tau_{*})$
is determined by $(r', \tau')$ via the identities
\begin{equation}\label{eq:4.21}
\begin{aligned}
&r_{*}\sin(\tau_{*}) = r' \sin(\tau '),\\
&\bigg(1{-}\frac{{\alpha_{Q_{e' } } (\sigma )}} {{\alpha_{Q_e }
(\sigma )}}\bigg) \tau_{*}{+}
\frac{1}{\alpha_{Q_e }(\sigma )}
r_{*}\cos(\tau_{*}) = \\
&\hspace{50pt}\frac{\alpha_{Q_{e'}} (\sigma)}   {\alpha_{Q_e} (\sigma)}
\Bigg[\bigg(\frac{\alpha_{Q_e } (\sigma)}
  {\alpha_{Q_{e'}}(\sigma)}{-}1\bigg)\tau '
{+} \frac{1}   {\alpha_{Q_{e'}}(\sigma)}
r'\cos(\tau ')\Bigg] \\
&\hspace{200pt}{+}
\bigg(\frac{\alpha_{Q_{e'}} (\sigma)} {\alpha_{Q_e} (\sigma)}{-}1\bigg) 2\pi .
\end{aligned}
\end{equation}
When $\varepsilon $ is small and $\rho $ is very small, then~\eqref{eq:4.21} requires
$r_{*} = r'$ and $\tau_{*}=\tau '-2\pi$.

Granted the preceding, the top equality in~\eqref{eq:4.16} again follows from~\eqref{eq:4.14}
and~\eqref{eq:4.15} straight away. Meanwhile, the lower equality holds provided that
\begin{multline}\label{eq:4.22}
x_{0} \beta_{*}\Bigg[\bigg(\frac{1}   {\alpha_{Q_e }}-
\frac{1}   {\alpha_{Q_{e' } } }\bigg)\tau '-
\frac{1}   {\alpha_{Q_e } \alpha_{Q_{e' } } }r \cos(\tau
')\Bigg] \\
- 2\pi  x_{0} \beta_{*}  \frac{1}   {\alpha_{Q_e } } -
x_{0}(1-\beta_{*})  \frac{1}   {2\alpha_{Q_e } }(2v'+ 4\pi ) \\
\quad= \frac{1}   {\alpha_{Q_e } }(-x_{0}) (v'+2\pi).
\end{multline}
And this last equation does indeed hold; this is another consequence
of~\eqref{eq:4.12}.

\step{Part 2}
This part discusses the image of the points in the respective $e$ and $e'$
parametrizing cylinders that are close to the $(\sigma =\theta_{o},
v = 0)$ point. The result is a verification that the images of this portion
of the two parametrizing cylinders fit together so as to define a
submanifold that has the required large $|s|$ asymptotics as
dictated by \fullref{deff:3.2}.

To start the discussion, note that the respective images of the $\theta\ge \theta_{o}$
and $\theta  \leq \theta_{o}$ parts of the
complement of $(\theta_{o}, 0)$ in a closed, small radius disk about this
point define a properly immersed surface with boundary in $\mathbb{R} \times
 (S^{1}  \times  S^{2})$. In this regard, let $e$ denote the union of the
respective images in $\mathbb{R} \times  (S^{1} \times  S^{2})$ of the
portions of the $e$ and $e'$ versions of the parametrizing cylinder where $r$ is
defined and where $r  \leq   \rho ^{4}$.

Here is the first observation: With both $\rho $ and $\varepsilon $ chosen
to be very small, then the map from $(0, 3\rho ] \times   \mathbb{R}/(2\pi\mathbb{Z})$
to $\mathbb{R} \times  (S^{1} \times  S^{2})$ that uses the
appropriate pair of~\eqref{eq:4.11} and~\eqref{eq:4.14} or~\eqref{eq:4.12}
and~\eqref{eq:4.15} defines a smooth,
proper map. As is explained some paragraphs hence, this map embeds the
subset of the disk where $\beta_{*}=\beta ' = 1$. In
particular, $E$ is embedded.

To verify the second requirement, note that the 1--form $ds$ pulls back to the
$(r, \tau )$ disk via the parametrizing map as $-\frac{1}{r}dr$
where $\beta_{*} = 1$. Thus s has no critical points on $E$.
Moreover, as $r = e^{{-}s}$, so $\theta$ has a unique  $s \to \infty$ limit on $E$.
The condition on the pull-back of the contact 1--form in~\eqref{eq:1.1}
is considered momentarily. The fact that $p_{0}$ and ${p_0}'$ give the
respective integrals of the pull-backs of $\frac{1}{{2\pi }}dt$
and $\frac{1}{{2\pi }}d\varphi$ follow from the relationship
between $Q_{e}$ and $Q_{e'}$ given in~\eqreft31.

To see about the requirement for the 1--form $p_{0}d\varphi  - {p_0}'dt$,
first use~\eqref{eq:4.11} and~\eqref{eq:4.14} where $\beta_{*}=\beta ' = 1$ and
$\tau \in  [-\pi , 0]$ to write
\begin{multline}\label{eq:4.23}
p_{0}\varphi  - {p_0}'t =
x_{0}\Bigg[\bigg(1-\frac{\alpha_{Q_{e'}}(\sigma)} {\alpha_{Q_e}(\sigma )}\bigg) \tau
+\frac{r}{\alpha_{Q_e }(\sigma)} \cos(\tau )\Bigg] \\
\qquad-
\alpha_{P_o }
\frac{1}   {\alpha_{Q_e}}x_{0}\Bigg[\tau -\frac{1}
{2\alpha_{Q_{e'}}}r \cos(\tau )\Bigg] \mod(2\pi \mathbb{Z}).
\end{multline}
Now use~\eqreft31 to conclude that
\begin{equation}\label{eq:4.24}
p_{0}\varphi  - {p_0}'t = x_{0}  \frac{1}{2}\bigg(\frac{1}   {\alpha_{Q_e } (\sigma )}+\frac{1}
{\alpha_{Q_e ' } (\sigma )}\bigg) r \cos(\tau ) \mod(2\pi \mathbb{Z})
\end{equation}
where $\beta_{*}=\beta ' = 1$ and $\tau  \in  [-\pi , 0]$.
Meanwhile, the same expression for $p_{0}\varphi  - {p_0}'t$ appears
when~\eqref{eq:4.12} and~\eqref{eq:4.15} are used where $\beta_{*}=\beta ' = 1$
and $\tau  \in  [0, \pi ]$. Thus, the right hand side of~\eqref{eq:4.24} without
the `$\mod(2\pi \mathbb{Z})$' proviso provides an anti-derivative for
$p_{0}d\varphi  - {p_0}'dt$ with a unique  $s \to \infty $ limit on $E$.

Return next to the question of the contact form in~\eqref{eq:1.1}. The fact that it
has nowhere zero pull-back at large $|s|$ on $e$ can be readily
deduced from the following three facts: First, the vectors $({p_0}',
-p_{0})$ and $((1-3\cos^{2}\theta_{o}), \surd 6\cos\theta_{o}
\sin^{2}\theta_{o})$ are linearly independent in $\mathbb{R}^{2}$; this
a consequence of the $\theta_{o}$ version of~\eqref{eq:1.7}. Second,~\eqref{eq:4.24}
implies that the 1--form $p_{0}d\varphi  - {p_0}'dt$ is $o(e^{{-}s})$
on the large s slices of $E$. Finally, the $(q, q') = Q_{e}$ and $(q,
q') = Q_{e'}$ versions of $qd\varphi  - q'dt$ differ from $x_{0} d\tau $
by $o(e^{{-}s})$ on the respective portions of the large s slices in $E$
where $\tau \in  [-\pi , 0]$ and where $\tau \in  [0, \pi ]$.

Here is the promised explanation as to why the punctured $(r, \tau )$ disk
is embedded where $\beta_{*}=\beta ' = 1$. To see that such is
the case, note first that two points are mapped to the same point only if
the corresponding pull-backs of $\theta$ agree. Moreover, the corresponding
pull-backs of a chosen $\mathbb{R}$ lift of the right hand side of~\eqref{eq:4.24} must
also agree. These two requirements can be met only if the two points are one
and the same.

The final issue concerns the size of the projection $\prod_{J}$ as
defined in the last line of \fullref{deff:3.2}'s second requirement. The fact is
that this projection and its covariant derivative are both $o(e^{{-}s})$
at large $s$ on $E$, this by virtue of~\eqref{eq:4.24} and the formulae for $\theta$ in
the top lines in~\eqref{eq:4.11} and~\eqref{eq:4.12}.

\step{Part 3}
This and the remaining part of the discussion in this subsection analyze
the singular points of the $| \theta -\theta_{o}|  <3\rho $ portion of the closure of
$K_{e} \cup  K_{e'}$. In this
regard, it follows from what has been said so far that this closure is an
immersed surface with compact singular set. Indeed, this happens because
both the $e$ and $e'$ parametrizations extend across the $\sigma =\theta
_{0}$ circle save at the missing point where $\sigma =\theta_{o}$
and $v = 0$. With the preceding understood, the task at hand is to verify that
there are but a finite number of singular points, all regular double points
and all with $+1$ local intersection number.

To start the story, remark that points in $K_{e}$ and $K_{e'}$ are disjoint
as they have distinct $\theta$ values. Thus, a pair of points $(\sigma ,v)$ and
$(\sigma', v')$ in the extended domain of either the $e$ or $e'$
parametrizing maps are sent to the same point in $\mathbb{R}\times (S^{1} \times  S^{2})$ if
and only if $\sigma=\sigma '$ and
the respective $e$ or $e'$ versions of the conditions in~\eqref{eq:4.2} are satisfied.
This noted, the discussions that follows in this \refstep{Part 3} and in \refstep{Part 4} focus
exclusively on points in the closure of $K_{e}$; thus points in the image of
the complement of $(\theta_{o}, 0)$ in $e'$s version of the closed
parametrizing cylinder. The analysis for $K_{e'}$ is very much the same and
so omitted.

The rest of the story from this \refstep{Part 3} is summarized by the following lemma:

\begin{lemma}\label{lem:4.2}
There exists $\varepsilon_{0}$ and given $\varepsilon\in (0,\varepsilon_{0})$, there exist
positive constants $\delta $ and $\rho_{*}\ll \varepsilon$ such that when $\rho  < \rho_{*}$,
then the closure of $K_{e}$ is smooth near any point with an inverse image in the portion of
the closed parametrizing cylinder that lie where $\beta_{*}>0$, or where $\beta' < \delta$,
or where $\beta' > 1-\delta$.
\end{lemma}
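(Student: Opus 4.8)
\textbf{Proof proposal for \fullref{lem:4.2}.}
The plan is to dispose of the three regions separately, in each case reducing the smoothness assertion to a statement about the fixed-point equations~\eqref{eq:4.2} for the relevant parametrization. First I would handle the region where $\beta_* = 0$, i.e. the part of the parametrizing cylinder where $r$ is either undefined or exceeds $3\rho$; there the formula for $(a_e,w_e)$ in~\eqref{eq:4.14} reduces to the boundary-modified version of~\eqref{eq:4.1}, namely $a_e = a_0 + \varepsilon\cos(v_*)$ and $w_e = -\varepsilon\sin(v_*) - x_0\beta'(2\alpha_{Q_e})^{-1}(1-\beta_*)v_*$, with $v_*$ from~\eqref{eq:4.13}. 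The point is that for $\varepsilon$ small the map $v\mapsto v_*$ is a diffeomorphism of $\mathbb{R}/(2\pi\mathbb{Z})$ onto itself at each fixed $\sigma$, so a verbatim repetition of the argument in the proof of \fullref{lem:4.1} (the constraint $\varepsilon\alpha_Q < \tfrac12$ forces $\hat x = 0$, hence $\hat y \equiv 0 \bmod m\mathbb{Z}$, hence $v\equiv v'$) shows the relevant portion of the domain embeds once $\varepsilon$, and then $\rho$, are small enough. This handles both the ``$\beta' < \delta$'' and the ``$\beta' > 1-\delta$ together with $\beta_*=0$'' subcases once $\delta$ is fixed, since on those loci the extra $\beta'$-dependent terms in~\eqref{eq:4.14} are either absent ($\beta'$ small) or carry the controllable coefficient $x_0(2\alpha_{Q_e})^{-1}$.

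Next I would treat the region where $\beta_* > 0$, working in the $(r,\tau)$ polar coordinates of~\eqref{eq:4.11}. Here the key reduction is that, by the construction in \refstep{Part 1} and \refstep{Part 2}, the $(r,\tau)$ parametrization extends smoothly across the $\sigma=\theta_o$ circle away from $(r,\tau)=(0,\cdot)$, and where $\beta_*=\beta'=1$ it is a proper, smooth map whose pull-back of $s$ is $-\ln r$ and whose pull-back of an $\mathbb{R}$-lift of $p_0\varphi-{p_0}'t$ is given by~\eqref{eq:4.24}. Two points of the $(r,\tau)$ disk map to a common point only if their $\theta$-values and their lifted $p_0\varphi-{p_0}'t$ values agree; from the top lines of~\eqref{eq:4.11}–\eqref{eq:4.12} and from~\eqref{eq:4.24} this forces $r\sin\tau$, $\tau$ and $r\cos\tau$ to agree to leading order, with errors of size $O(\rho r)$ and $O(\varepsilon)$, so for $\varepsilon\in(0,\varepsilon_0)$ and then $\rho < \rho_*(\varepsilon)$ the two points coincide. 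This is the content already extracted at the end of \refstep{Part 2}, so the work here is mostly one of organizing the inequalities: choose $\varepsilon_0$ so that the Jacobian computation following~\eqref{eq:4.11} stays nonzero, then $\rho_*$ so that the leading-order identifications in~\eqref{eq:4.18} and~\eqref{eq:4.21} are forced.

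Finally I would glue the two analyses: smoothness of the closure of $K_e$ near an image point whose inverse image meets the overlap of the two coordinate systems (where $0 < \beta_* < 1$) follows because the two parametrizations patch there by \refstep{Part 1}, and a point of $K_e$ lying where $\beta_*>0$ has all its preimages in the $(r,\tau)$-chart region by the properness established in \refstep{Part 2}, so no unexpected self-intersection can occur between the ``polar'' part and the ``$\beta_*=0$'' part beyond those already accounted for. The main obstacle I expect is the middle region, where the polar-coordinate estimates in~\eqref{eq:4.18} and~\eqref{eq:4.21} must be upgraded from the formal leading-order statements used in \refstep{Part 1} to uniform estimates valid on the whole of $\{0<r<3\rho\}$: one must verify that the map $(r',\tau')\mapsto(r_*,\tau_*)$ defined implicitly by those identities has no solutions other than $r_*=r'$, $\tau_*=\tau'$ (respectively $\tau_*=\tau'-2\pi$) once $\varepsilon<\varepsilon_0$ and $\rho<\rho_*$, and this requires tracking how the $\tau$-dependence of $\alpha_{Q_{e'}}(\sigma)/\alpha_{Q_e}(\sigma)$ — recall $\sigma = \theta_o + \varepsilon r\sin\tau$ — feeds back into the equations. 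The remaining bookkeeping (choice of $\delta$ so that on $\{\beta'<\delta\}\cup\{\beta'>1-\delta\}$ the $\beta'$-terms in~\eqref{eq:4.14} do not spoil the \fullref{lem:4.1}-type argument) is routine and I would leave the details to the reader, as the paper does elsewhere.
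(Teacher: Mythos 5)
Your proposal has a genuine gap precisely where the paper's proof requires the most work, namely the region $\beta_{*}=0$, $\beta' > 1-\delta$. You assert that the argument from \fullref{lem:4.1} carries over ``verbatim'' there on the grounds that the extra term in $w_{e}$ carries ``the controllable coefficient $x_{0}(2\alpha_{Q_{e}})^{-1}$.'' But that term, $-x_{0}\,\beta'\,(2\alpha_{Q_{e}})^{-1}\,v_{*}$ with $v_{*}\in[0,2\pi]$, has magnitude up to $\pi x_{0}/\alpha_{Q_{e}}$ when $\beta'$ is near $1$, which is $O(1)$ and not $O(\varepsilon)$. The proof of \fullref{lem:4.1} forces $\hat{x}=0$ in~\eqref{eq:4.3} only because $|w|$ is bounded by $\varepsilon$, so that the required jump $2\pi m\hat{x}/\alpha_{Q_{e}}$ cannot occur; once $w_{e}$ acquires an $O(1)$ contribution this inference evaporates, and one must actually analyze~\eqref{eq:4.3}--\eqref{eq:4.4}. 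Indeed \refstep{Part 4} shows that $K_{e}$ does have double points, all of them solutions of~\eqref{eq:4.31} lying exactly in the region $\beta_{*}=0$, $\beta'\in(\delta,1-\delta)$, so the conclusion you want is false unless you separately rule out solutions of~\eqref{eq:4.31} with $1-\beta'<\varepsilon^{2}$. The paper's fourth step does this by writing the would-be singularity in terms of the number $\wp$ from~\eqref{eq:4.29} --- a rational number in $(0,1)$ with denominator $mx_{0}$ --- and observing that~\eqref{eq:4.31} would then require a cancellation of size $O(\varepsilon+\rho^{4})$ that is incompatible with $\wp$ being bounded away from $\{0,1\}$ by at least $1/(mx_{0})$. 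Deferring this to ``routine bookkeeping'' misses the key idea.

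A secondary issue is your handling of $\beta_{*}>0$. You propose to use the fact that $\theta$ and the $\mathbb{R}$--lift of $p_{0}\varphi-p_{0}'t$ from~\eqref{eq:4.24} together separate points, but~\eqref{eq:4.24} is established in \refstep{Part 2} only where $\beta_{*}=\beta'=1$, so it cannot by itself control the whole transition region $0<\beta_{*}<1$. You also flag~\eqref{eq:4.18} and~\eqref{eq:4.21} as the equations needing a uniform upgrade, but those identities govern how the $K_{e}$ and $K_{e'}$ parametrizations match across $\theta=\theta_{o}$; self-intersections of $K_{e}$ are governed by~\eqref{eq:4.2}--\eqref{eq:4.4}. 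The paper's Steps 2 and 3 work directly with the latter, in the two overlapping $r$--ranges $r<\tfrac{3}{4}\rho$ and $r\in(\tfrac{1}{2}\rho,3\rho)$, using the $\beta_{*}\ln r$ term in $a_{e}$ and the bound $|r\sin\tau|>\varepsilon^{-1}\rho^{4}$ forced by $\beta'\notin\{0,1\}$ to pin down $\tau$ near $\pi/2$ and hence force $\hat{x}=0$. Your first subcase, $\beta'<\delta$, is essentially correct and matches the paper's Step 1.
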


\noindent The proof of this lemma occupies the remainder of \refstep{Part 3}.

\begin{proof}[Proof of \fullref{lem:4.2}]
The proof is facilitated by introducing a relatively prime pair of integers, $Z \equiv (z, z')$,
such that $z{q_e}'- z'q_{e} = m$ where $m$ denotes the greatest common divisor
of $(q_{e}, {q_e}')$. With $Z$ so specified, any given integer pair $N = (n,
n')$ can be written as $N = \hat{x}(z, z') + \frac{\hat{y}}{m}(q_{e}, {q_e}')$ with
$\hat{x}$ and $\hat{y}$ integers. Writing $N$ in
this way makes the third point in $N'$s version of~\eqref{eq:4.2} into the condition
in~\eqref{eq:4.3} and the second point into the condition in~\eqref{eq:4.4}. Note that $\hat{x}$
does not depend on the choice for $Z$ but $\hat{y}$ does. In any event, only
values for $\hat{y}$ that lie in $\{0,\ldots,m-1\}$ need be considered.

The rest of the proof is broken into four steps.

\substep{Step 1}
This step derives the lower bound for
$\beta' $ on the inverse image of a singularity. For this purpose, note that
with $\varepsilon $ small in~\eqref{eq:4.14} and $\beta '  \leq     \varepsilon
^{2}$, then $| w_{e}|  < 2\varepsilon $ and so only the case
that $\hat {x} = 0$ case can possibly arise in~\eqref{eq:4.3}. However, this case
then requires $\hat {y} = 0$ as well since the form of $a_{e}$ and $w_{e}$
in~\eqref{eq:4.14} precludes any other $\hat {x} = 0$ solutions to both~\eqref{eq:4.3}
and~\eqref{eq:4.4}. The argument here is essentially the same as one given in \refstep{Part 1},
above.

\substep{Step 2}
This step proves the following assertion:
If $\varepsilon $ is small and then $\rho $ is very small, the closure of
$K_{e}$ is smooth at the image of any point in the closed parametrizing
cylinder that lies where $r$ is defined and less than $\frac{3}{4}\rho $.

To start the proof, suppose that $\zeta $ and $\zeta '$ are two points in
the closed parametrizing cylinder that map to the same point in $\mathbb{R}
\times  (S^{1} \times  S^{2})$ and are such that one lies in the
indicated region. As the respective values for $a_{e}$ must agree at the two
points,~\eqref{eq:4.14} demands that both points lie where $r < \rho $ when
$\varepsilon $ is small. In particular, $\beta_{*} = 1$ at both
points, so they both lie where $r$ is defined, and their respective $r$
coordinates must agree. As the respective values of $\sigma$ also agree at
the two points,~\eqref{eq:4.11} demands that their respective $\tau $ coordinates
either agree or are interchanged by the involution of $[-\pi , 0]$ that
sends $\tau $ to $-\pi -\tau $. The latter must be the case if the two
points are distinct.

Meanwhile,~\eqref{eq:4.3} and~\eqref{eq:4.4} requires that the respective $\mathbb{R}/(2\pi
\mathbb{Z})$ coordinates of the two points are related by
\begin{equation}\label{eq:4.25}
v(\zeta ') = v(\zeta ) - 2\pi     \hat {x}\frac{{\alpha_Z
(\sigma )}}{   {\alpha_{Q_e } (\sigma )}} - 2\pi \frac{\hat {y}}{m} \mod(2\pi ).
\end{equation}
with $\hat {x}$ and $\hat {y}$ integers and with $Z = (z, z')$ as in~\eqref{eq:4.3}. To
see what this implies, note that as $\beta '  \ne $ 0, it follows that
$| \sigma -\theta_{o}|  < 2\rho ^{4}$ and so
\begin{equation}\label{eq:4.26}
\frac{\alpha_Z (\sigma )}{\alpha_{Q_e } (\sigma )} =
\frac{{z' p_o - zp_o ' }}{   {x_o }}    + o(\rho
^{4}),
\end{equation}
where the term that is indicated by $o(\rho ^{4})$ is bounded by $\kappa
\rho ^{4}$ where $\kappa $ depends only on $Q_{e}$ and $P_{0}$.  As the
ratio on the right hand side is a rational number with denominator no larger
than $x_{0}$, the equality in~\eqref{eq:4.25} demands that
\begin{equation}\label{eq:4.27}
v(\zeta ') = v(\zeta ) + o(\rho ^{4}) \mod(2\pi )
\end{equation}
where the term $o(\rho ^{4})$ has the same significance as in~\eqref{eq:4.26}.

To continue, note next that~\eqref{eq:4.27} is consistent with the lower line in~\eqref{eq:4.11}
only in the case that the value of $| r\cos(\tau )| $ at
the two points is bounded by $\kappa \rho ^{4}$ where $\kappa $ again
depends only on $P_{0}$ and $Q_{e}$. On the other hand, as $\beta '$ is
neither 0 nor 1, the top line in~\eqref{eq:4.11} requires that $| r\sin(\tau
)|  > \varepsilon ^{{-}1}\rho ^{4}$. In particular, $r
\ge     \varepsilon ^{{-}1}\rho ^{4}$. This then means that the
value of $| \cos(\tau )| $ at the two points is bounded by
$\kappa \varepsilon $. As such, the values of $\tau $ at these points must
have the form $\tau =\frac{\pi}{2}    \pm     \varepsilon
\cdot \varsigma$, where $\varsigma  > 0$ is bounded solely in terms of
$P_{0}$ and $Q_{e}$. This understood, use of the lower line in~\eqref{eq:4.14} will
establish that the respective values of $w_{e}$ at the two points differ by
an amount that is bounded by $\kappa     \varepsilon $, where $\kappa $ is as
before, a constant that depends only on $P_{0}$ and $Q_{e}$. However, if
such is the case, then small $\varepsilon $ is consistent with~\eqref{eq:4.3} only if
$\hat {x} = 0$. This and~\eqref{eq:4.27} require that $\hat {y} = 0$ also, and so
$\zeta =\zeta '$.

\substep{Step 3}
As is argued subsequently in this step, if
$\varepsilon $ is small and $\rho $ is very small, then the closure of
$K_{e}$ is smooth near any point with an inverse image that lies where $r$ is
defined and has value in $(\frac{1}{2}\rho , 3\rho )$. To see
why this is true, take $\zeta $ and $\zeta '$  to be points in the closed
parametrizing cylinder that are mapped to the same point in $\mathbb{R}
\times  (S^{1} \times  S^{2})$, are such that $\beta ' > 0$, and that
one lies where $r$ is defined and is in the indicated range. In this case, the
top line of~\eqref{eq:4.14} requires that the respective values of
$\beta_{*} \ln(r)$ differ by less than $\varepsilon $ at the two points. Thus, both
points lie where $r$ is greater than $\frac{1}{4}\rho $.
Furthermore, as one of them lies where $r \leq  3\rho $, the top line in~\eqref{eq:4.14} requires that
$1 - \kappa ^{{-}1}\rho ^{2} > \cos(v_{*}) > 1 - \kappa \rho ^{2}$;
at both; here $\kappa      \ge 1$ is a
constant that depends only on $P_{0}$ and $Q_{e}$.

Meanwhile,~\eqref{eq:4.3} and~\eqref{eq:4.4} require~\eqref{eq:4.25}, and as
$\beta '  \ne  0$, so~\eqref{eq:4.26} and~\eqref{eq:4.27} hold here too. However, if such is the case, then both lie
where $\sin(v_{*})     \ge     \kappa ^{{-}1}\rho $ or both lie
where $\sin(v_{*}) < -\kappa ^{{-}1}\rho $. Here again,
$\kappa      \ge  1$ depends only on $P_{0}$ and $Q_{e}$ when $\rho $ is
small. As a result,~\eqref{eq:4.11} and~\eqref{eq:4.14} insure that the respective values of
$w_{e}$ at the two points differ by no more that $\kappa '\rho $ where
$\kappa '$ is again a constant that depends only on $P_{0}$ and $Q_{e}$.
Granted this, then only the $\hat {x} = 0$ case of~\eqref{eq:4.3} can arise when
$\rho $ is small, and this then requires that $\hat {y} = 0 \mod(m)$ also.
Thus, $v(\zeta ') = v(\zeta )$ and the points are one and the same.

\substep{Step 4}
This step proves the following: If
$\varepsilon $ is small, there then exists $\delta ' > 0$ with the following
significance: When $\rho $ is very small, the parametrizing map embeds the
portion of $e'$s parametrizing cylinder where $\beta ' > 1-\delta' $ and
$\beta_{*} = 0$.

To start the proof, suppose that $(\sigma , v)$ and $(\sigma , v')$ are two
points that lie in the indicated portion of the closed parametrizing
cylinder. Suppose, in addition, that $v  \ne  v'$ and that these two points
are sent to the same point in $\mathbb{R}    \times (S^{1} \times  S^{2})$.
This being the case, the equality between the respective values of
$a_{e}$ at the two points requires that $v' = 2\pi  - v \mod(2\pi \mathbb{Z})$.
As the two points $(\sigma , v)$ and $(\sigma , v')$ are distinct, so
$v  \ne     \pi  \mod(2\pi \mathbb{Z}))$. Thus, at the expense of choosing
which to call $v$ and which to call $v'$, as well as a $\mathbb{Z}$ lift of $\hat
{y}$, one can assume that $v$ and $v'$ have respective $\mathbb{R}$--lifts $\hat
{v}$ and $\hat {v}'$ with $\hat {v}     \in (\pi , 2\pi )$ and with
$\hat {v}' = 2\pi -\hat {v}$. Meanwhile,~\eqref{eq:4.3} requires that
\begin{equation}\label{eq:4.28}
\varepsilon \sin(\hat {v}) - x_{0}    \frac{1}{{2\alpha
_{Q_{e' } } }}\beta ' \hat {v}' = -\varepsilon  \sin(\hat {v})
- x_{0}    \frac{1}{{2\alpha_{Q_{e' } } }}\beta ' \hat
{v} - 2\pi m \frac{1}{{\alpha_{Q_e } }}\hat {x}.
\end{equation}
To see the implications of these constraints, it proves convenient to
introduce
\begin{equation}\label{eq:4.29}
\wp \equiv    - \bigg(\hat {x}\frac{{z' p_o - zp_o ' }}{{x_o }}-\frac{\hat {y}}{m}\bigg) .
\end{equation}
As will now be explained, $\wp \in  (0, 1)$. Indeed, as
$\beta '  \ne  0$, so~\eqref{eq:4.26} holds and can be used to write
\begin{equation}\label{eq:4.30}
\hat {v}    = \pi(1+\wp+\hat{x}\hat{u}) \qquad \text{and}\qquad
{\hat {v}}'    = \pi(1-\wp-\hat{x}\hat{u})
\end{equation}
where $\hat{u}$ is a function of $\sigma$ that obeys $| \hat{u}|
\leq     \kappa     \rho ^{4}$ and $| \partial_{\sigma }\hat{u}|
     \leq     \kappa $ with $\kappa $ a constant that is determined by $P_{0}$
and $Q_{e}$.

With $\wp $ as just defined, the constraint in~\eqref{eq:4.28} can be rearranged to
read
\begin{equation}\label{eq:4.31}
\beta '\wp - 2m\frac{\hat {x}}{{x_o }}-\frac{1}{\pi}\varepsilon  \sin(2\pi\wp)+\hat {x}u = 0 ,
\end{equation}
where $u$ is a function of $\sigma$ that vanishes at $\theta_{o}$ and is
determined a priori by $P_{0}$ and $Q_{e}$. In particular $| u|
\leq     \kappa \rho ^{4}$ where $\beta '  \ne  0$ and $| \partial
_{\sigma }u|      \leq     \kappa $ where $\kappa $ is a constant that
is determined a priori by $P_{0}$ and $Q_{e}$. As will now be explained, the
desired bound for $(1-\beta ')$ follows from this last equation. To see why,
note that by virtue of~\eqref{eq:4.30}, small $\rho $ insures that the distance
between $\hat {v}$ from either $\pi$ or $2\pi$ is bounded from zero by a
constant that depends only on $P_{0}$ and $Q_{e}$. In particular, $|\sin(\hat {v})| $
is bounded away from zero by such a positive
constant. As $| \sin(\hat {v})| $ is uniformly bounded away from
zero, and as $\wp $ is a fraction between 0 and 1 whose denominator is $mx_{0}$,
it follows that there are no $(\hat {x}, \hat {y})$ versions of~\eqref{eq:4.31}
when $\varepsilon $ is small, $(1-\beta ') < \varepsilon ^{2}$,
and $\rho $ is very small.
\end{proof}

\step{Part 4}
This part of the discussion identifies the singular points and verifies
that they have positive local intersection number. In this regard, remember
\fullref{lem:4.2} which states that the inverse images of the singular points lie
where $\beta_{*} = 0$ and $\beta '  \in (\delta , (1-\delta))$.

To begin, fix $\hat {x}$ and $\hat {y}$ so that $\wp      \in  (0, 1)$
as defined in~\eqref{eq:4.29}. Then view the small $\varepsilon $ and very small
$\rho $ version of~\eqref{eq:4.31} as an equation for $\sigma      \in (\theta
_{0}-3\rho , \theta_{o})$. Because the values of $\beta '$ range
over $[0, 1]$, this equation has a solution only if  $0 \leq  m\hat {x}
\leq     \frac{1}{2}x_{0}$. Moreover, if a solution exists, then
it is unique. To explain, remember that with $\varepsilon $ fixed and small,
\fullref{lem:4.2} provides a constant $\delta      \equiv     \delta (\varepsilon )
 \in  (0, 1)$ such that a solution to~\eqref{eq:4.31} must occur at a value for
$\sigma$ where $\delta < \beta ' < (1-\delta )$. Thus, there
exists a constant,  $b \equiv  b(\varepsilon )     \in  (0, 1)$ such that
\begin{equation}\label{eq:4.32}
\partial_{\sigma }\beta ' > b \frac{1}{\rho ^4}
\end{equation}
at any value of $\sigma$ where~\eqref{eq:4.31} holds. This last fact implies the
uniqueness of any solution to~\eqref{eq:4.31} in the case that $\rho $ is very small.

The next point to make is that the solutions to the various $(\hat {x},\wp )$
versions of~\eqref{eq:4.31} result in singularities of $K_{e}$ of the
simplest sort: Each singular point is the center of a small radius ball in
$\mathbb{R}    \times  (S^1 \times  S^{2})$ that intersects $K_{e}$ as
the union of two embedded disks meeting only at the origin. Here is why:
According to~\eqref{eq:4.30}, $\hat {v}=\pi (1 + \wp) + o(\rho^{4})$
and As a consequence, when $\rho $ very small,~\eqref{eq:4.4} guarantees
that any pair of versions of~\eqref{eq:4.31} with different values for $\wp $ will
yield disjoint singular points in $K_{e}$. Meanwhile, two versions of~\eqref{eq:4.31}
that are defined using the same choice for $\wp $ but with different choices
for $\hat {x}$ give corresponding singular points in $K_e$ at
distinct $\theta$ values.

With the singular points identified, the next task is to verify that each
self intersection is transversal and has positive intersection number. This
task requires a suitable expression for the push-forward by the
parametrizing map of the vector fields $\partial_{\sigma }$ and $\partial
_{v}$. Such a formula is given in~\eqref{eq:4.33} below. In this regard, the
following notation is used: The pull-back via the parametrizing map of $(1 -
3\cos^{2}\theta )$ is denoted as $c$, and that of $\surd 6 \cos\theta$ as
$c'$. Note also that the push-forwards of $\partial_{\sigma }$ and
$\partial_{v}$ are not notationally distinguished from the originals. In
addition, the label $e$ on $(a_{e}, w_{e})$ is suppressed so that a
subscript on the resulting $a$ or $w$ can be used to indicate the partial
derivatives in the direction labeled by the subscript. The label $e$ is also
suppressed so that the integer pair $Q_{e}$ appears as $(q, q')$. With this
notation set, here are the promised formulae for the push-forwards of
$\partial_{\sigma }$ and $\partial_{v}$:
\begin{equation}\label{eq:4.33}
\begin{split}
\partial_{\sigma} &= (cw)_{\sigma }\partial_{t} +
(c'w)_{\sigma }\partial_{\varphi } + a_{\sigma }\partial_{s} +
\partial_{\theta } .
\\
\partial_{v} &= (q+cw_{v})\partial_{t} + (q'+c'w_{v})\partial
_{\varphi } + a_{v}\partial_{s} .
\end{split}
\end{equation}
Now let $(a, w)$ and $(a', w')$ denote the respective versions of the
parametrizing functions that come from the two sheets that are involved at
the given intersection point. Likewise, use $(\partial_{\sigma },
\partial_{v})$ and $({\partial_{\sigma}}', \partial_{v}')$ to
denote the corresponding versions of~\eqref{eq:4.33}. The convention used below takes
the unprimed pair as the image via the parametrizing map of the point
$(\sigma ,\hat {v})$ with $\hat {v}$ as in~\eqref{eq:4.30}. Meanwhile, the
primed pair is the image of $(\sigma , \hat {v}')$ with $\hat {v}'$ also
from~\eqref{eq:4.30}.

To establish transversality for the self-intersection point and to obtain
the local intersection number, first write $\partial_{\sigma }     \wedge
    \partial_{v}     \wedge     {\partial_{\sigma}}'  \wedge     {\partial_{v}}'$ as
    $\tau (\partial_{s}     \wedge     \partial_{t}     \wedge
    \partial_{\theta }     \wedge     \partial_{\varphi })$ with $\tau
 \in     \mathbb{R}$. As demonstrated below, $\tau $ is non-zero; thus, the
intersection is transversal. Granted this, the sign of $\tau $ is the sign
to take for the local sign of the self intersection point. With regard to
the upcoming expression for $\tau $, note that~\eqref{eq:4.14} finds $a_{\sigma } =
a'_{\sigma } = 0$ and $w_{v} = w'_{v}$ at the intersection point. Here is
$\tau $:
\begin{equation}\label{eq:4.34}
\tau    =    - (a_{v} - a'_{v})\Big[(c(w - w'))_{\sigma
}(q'+c'w_{v}) - (c'(w - w'))_{\sigma }(q+cw_{v})\Big].
\end{equation}
To evaluate~\eqref{eq:4.34}, use~\eqref{eq:4.14} to deduce that when $\rho $ is very small,
then
\begin{equation}\label{eq:4.35}
\begin{split}
a_{v} - a'_{v} &= 2 \varepsilon \sin(\pi (\wp +\hat
{x}\hat{u})) > 0,
\\
w_{\sigma}-w'_{\sigma}&= -\pi\wp x_{0}\frac{1}{{\alpha_{Q_e}(\sigma)}}\partial_{\sigma }\beta '
+ o(1),
\end{split}
\end{equation}
where the term denoted by $o(1)$ is bounded by a constant that depends only on
$P_{0}$ and $Q_{e}$. Granted~\eqref{eq:4.35} and granted that $w - w'$ is of the order
of unity, any very small $\rho $ version of~\eqref{eq:4.34} has the form $\tau  =
2\pi     \varepsilon     \wp  \sin(\pi \wp )x_{0}    \partial_{\sigma
}\beta ' + o(1)$, where the term designated as $o(1)$ is again bounded by a
constant that depends only on $P_{0}$ and $Q_{e}$. As~\eqref{eq:4.32} guarantees that
$\partial_{\sigma }\beta '$ is very large when $\rho $ is very small,
so $\tau  > 0$ as required for a transversal intersection with positive local
intersection number.

\subsection{Parametrizations near boundary circles with a trivalent vertex label}\label{sec:4d}

In this subsection, $o$ denotes a trivalent vertex in $T$ while $e$, $e'$ and $e''$
denote the three incident edges to $o$. In this regard, it is assumed here
that only one of these edges labels a cylinder in $\mathbb{R}\times(S^1 \times  S^{2})$
where $\theta  < \theta_{o}$. The
discussion for the case when $\theta  > \theta_{o}$ on only one of
$K_{e}$, $K_{e'}$ and $K_{e''}$ is not presented since it is identical to
the discussion that follows but for some obvious cosmetic alterations. This
understood, the edges $e$, $e'$ and $e''$ are distinguished as follows: First,
$\theta  < \theta_{o}$ only on $K_{e}$. Second, in the case that
$Q_{e'}$ and $Q_{e''}$ are not proportional (and thus not proportional to
$Q_{e}$), take $e'$ so as to make
\begin{equation}\label{eq:4.36}
q_{e'}{q_{e''}}' - {q_{e'}}'q_{e''} < 0.
\end{equation}
The story starts with a preliminary digression to set the stage. To begin
the digression, assume that $\rho_{e1}=\rho_{e'0}=\rho
_{e''0}$ in what follows, and use $\rho $ denote any of the three. Here
again, take $\rho  \ll \delta $. Now the function $\beta' $ refers to the
version in~\eqref{eq:4.5} with this same value for $\rho $ and with
$\theta_{*}$ set to equal $\theta_{o}$.

Require the $e$, $e'$ and $e''$ versions of $\varepsilon $ in~\eqref{eq:4.1} to be constant
and much less than $\delta $ near the respective circles where $|
\sigma -\theta_{o}|  = 2\rho $, and require that the three
constants agree. Likewise, require that $a^{0}_{e}$, $a^0_{e'}$ and
$a^0_{e''}$ are constant near these circles; the values for these
constants are specified below. In addition, require that $w^{0}_{e}$,
$w^0_{e'}$ and $w^0_{e''}$ vanish near the respective circles where
$| \sigma -\theta_{o}|  = 2\rho $. Finally, require that
each of $v^{0}_{e}$, $v^{0}_{e'}$ and $v^0_{e''}$ is constant where
$| \theta -\theta_{o}|      \sim 2\rho $; the
values of the latter are also specified below.

As $\sigma$ approaches $\theta_{o}$, the functional form of $a$ and $w$ must
be modified for each of the three edges to accommodate the first three
constraints in~\eqreft33. To ease the proliferation of subscripts in the
subsequent discussion, agree now to use $\alpha $, $\alpha' $ and $\alpha''$
to denote the respective $Q = Q_{e}$, $Q_{e'}$ and $Q_{e''}$ versions of
$\alpha_{Q}(\sigma )$. Also, to avoid possible confusion, the
coordinate $v$ is used only to denote the $\mathbb{R}/(2\pi \mathbb{Z})$
coordinate on the parametrizing cylinder for $K_{e}$. The corresponding
coordinates for the $K_{e'}$ and $K_{e''}$ cylinders are denoted below by $v'$
and $v''$. Finally, to avoid a proliferation of primes, when $N = (n, n')$ and $K
= (k, k')$ are pairs of integers (or real numbers), then $[N, K]$ is used below
to denote $nk' - n'k$. For example, the distinction between the edges $e'$ and
 $e''$ that appears in~\eqref{eq:4.36} can now be written as $[Q_{e'}, Q_{e''}] < 0$.

The first step to defining the three versions of $(a, w)$ where $\sigma$ is
near $\theta_{o}$ is to specify each near the point or points on the
$\sigma =\theta_{o}$ boundary of its parametrizing cylinder that will
map to the point in the mutual intersection of the closures of $K_{e}$,
$K_{e'}$ and $K_{e''}$. In this regard, there are two such `singular points'
on the $\sigma =\theta_{o}$ circle in the $e$ version of the
parametrizing cylinder, and one each on the $\sigma =\theta_{o}$
circle in the $e'$ and $e''$ versions of the parametrizing cylinder. Here, the
singular points on the $e$ version of the $\sigma =\theta_{o}$ circle
have $\mathbb{R}/(2\pi \mathbb{Z})$ coordinates 0 and $2\pi
\frac{\alpha '}{\alpha }$, the singular point on the $e'$
version of this circle has $\mathbb{R}/(2\pi \mathbb{Z})$ coordinate 0, while
that on the $e''$ circle has $\mathbb{R}/(2\pi \mathbb{Z})$ coordinate
$2\pi \frac{\alpha'}{\alpha''}$.

To define the versions of $(a, w)$ near these singular points, let $(x, y)$
denote Cartesian coordinates for $\mathbb{R}^{2}$. Divide a small radius,
open disk centered at the origin in the $(x, y)$ plane into four open sets,
the four components of the complement of the locus where $x^{2} = y^{2}$.
The closures of the two components where $| y|  > | x|$
will be identified with respective open neighborhoods of the points $v = 0$
and $v = 2\pi     \frac{\alpha ' }{\alpha }$ on the $\sigma
=\theta_{o}$ circle in the parametrizing cylinder for $K_{e}$.
Meanwhile, the closure of the $x > | y| $ component will be
identified with an open neighborhood of the $v' = 0$ point on the the $\sigma
=\theta_{o}$ circle in the parametrizing cylinder for $K_{e'}$.
Finally, the closure of the $x < -| y| $ portion will be
identified with an open neighborhood of the $v'' = 2\pi \frac{\alpha'}{\alpha''}$ point on the $\sigma =\theta
_{0}$ circle in the parametrizing cylinder for $K_{e''}$. These
identification are made as follows: In all cases,
\begin{equation}\label{eq:4.37}
\sigma    = \theta_{o} + x^{2}- y^{2}.
\end{equation}
Thus, $\sigma =\theta_{o}$ where $x^{2} = y^{2}$. Meanwhile,
\begin{equation}\label{eq:4.38}
\begin{aligned}
v &= \frac{1}{\alpha }xy\quad\text{where } y > |x|, &
v &= \frac{1}{\alpha }xy + 2\pi \frac{\alpha ' }
{  \alpha }\quad\text{ where } y < -|x|,
\\
v' &= \frac{1}   {\alpha ' }xy\quad\text{ where } x > |y|, &
v'' &= \frac{1}{\alpha'' }xy + 2\pi
\frac{\alpha ' }   {\alpha''} \quad\text{ where }x < -|y|.
\end{aligned}
\end{equation}
Here, all assignments are defined modulo $2\pi \mathbb{Z}$.

The next step parametrizes the three versions of $(a, w)$ using the
coordinates $x$ and $y$. This is done as follows: In all cases,
\begin{equation}\label{eq:4.39}
a  \equiv  x .
\end{equation}
Meanwhile,
\begin{equation}\label{eq:4.40}
\begin{aligned}
w_{e}     &\equiv  y  &&\text{ where }y > | x| .
\\
w_{e}     &\equiv y - 2\pi \frac{[Q_{e' } ,Q_{e''} ]}{\alpha } &&\text{ where }y < -| x| .
\\
w_{e'}     &\equiv  y + \frac{[Q_e ,Q_{e' } ]}   {\alpha\alpha'} xy . &&
\\
w_{e''}     &\equiv  y + \frac{[Q_e ,Q_{e''} ]}{\alpha\alpha''} xy +
2\pi     \frac{[Q_e ,Q_{e''} ]}   {\alpha''}. &&
\end{aligned}
\end{equation}
Granted~\eqref{eq:4.37}--\eqref{eq:4.40}, the $e$, $e'$ and $e''$ versions
of~\eqref{eq:3.2} now define a
smooth map to $\mathbb{R}    \times  (S^1 \times  S^{2})$ from each of
the four components of the complement in a small disk about the origin in
the $x$--$y$ plane of the $x^{2} - y^{2} = 0$ locus. As explained below, the
resulting maps fit together across this locus so as to define a smooth,
symplectic embedding of the whole of some smaller radius disk into $\mathbb{R}
\times  (S^1    \times S^{2})$.

The preceding formulae write $(a_{e}, w_{e})$ near the points where $v = 0$
and $v = 2\pi \frac{\alpha '}{\alpha }$ on the $\sigma
= \theta_{o}$ boundary circle of $K_{e}$'s parametrizing cylinder. They
also give $(a_{e'}, w_{e'})$ near the $v' = 0$ point on the $\sigma = \theta_{o}$
in the parametrizing cylinder for $K_{e'}$, and they
give $(a_{e''}, w_{e''})$ near the point where $v'' = 2\pi
\frac{\alpha'}   {\alpha''}$ on the $\sigma
=\theta_{o}$ boundary of the parametrizing domain for $K_{e''}$. The
third step parametrizes the three versions of $(a, w)$ on a neighborhood of
the rest of the relevant $\sigma =\theta_{o}$ circle. Note that in
the equations that follow,
\begin{equation}\label{eq:4.41}
\beta_{*} \equiv \beta \bigg(\frac{1}{\rho} \big( x^{2} +
y^{2}\big)\bigg).
\end{equation}
To start, consider $(a_{e}, w_{e})$. In this case, take
\begin{equation}\label{eq:4.42}
\begin{aligned}
a_{e}&=\beta_{*} x + (1-\beta_{*})    \varepsilon
\bigg(\cos\bigg(v - \pi \frac{\alpha '}{\alpha }\bigg) - \cos\bigg(\pi
\frac{\alpha '}{\alpha }\bigg)\bigg) \\
\text{and }\quad
w_{e}&=\beta_{*} y + \beta '\bigg(\beta \frac{[Q_e,Q_{e' } ]}{\alpha \alpha ' }xy +
\frac{[Q_e,Q_{e' } ]}{\alpha ' } v\bigg) - (1-\beta_{*})
\varepsilon \sin\bigg(v - \pi \frac{\alpha ' }{\alpha}\bigg)
\end{aligned}
\end{equation}
where $0  \leq  v  \leq 2\pi \frac{\alpha '}{\alpha}$ and
$| \sigma -\theta_{o}|  < \rho $. Where $2\pi
\frac{\alpha ' }{\alpha }     \leq  v  \leq 2\pi$ and
$| \sigma -\theta_{o}|  < \rho $, take
\begin{align}\label{eq:4.43}
a_{e}&=\beta_{*}  x + (1-\beta_{*})    \varepsilon  \bigg(\cos\bigg(v - \pi
\frac{\alpha ' }{\alpha }\bigg) - \cos\bigg(\pi
\frac{\alpha ' }{\alpha }\bigg)\bigg)\\
\text{and}\quad
w_{e}&=\beta_{*} y + \beta '\bigg(\beta_{*}\frac{[Q_e ,Q_{e''}]}{\alpha \alpha''} xy - 2\pi
\frac{[Q_e ,Q_{e''} ]}{\alpha''}+
\frac{[Q_e ,Q_{e''} ]}{\alpha''} v\bigg)
\nonumber\\
&\hspace{160pt}-(1-\beta_{*})
  \varepsilon\sin\bigg(v -\pi \frac{\alpha '}{\alpha }\bigg).\nonumber
\end{align}
The $e'$ version of $(a, w)$ requires the introduction of a $\sigma$--dependent
family of diffeomorphisms of the constant $\sigma$ circles in the $e'$
version of the parametrizing cylinder. The latter is denoted by $\phi
_{e'}$. View it as a map from the parametrizing cylinder to $\mathbb{R}/(2\pi \mathbb{Z})$
with the property that its restriction to any fixed
value of $\sigma$ define a diffeomorphism of $\mathbb{R}/(2\pi \mathbb{Z})$.
As such, it need only have the following two properties:
\begin{equation}\label{eq:4.44}
\begin{aligned}
\phi_{e'}(\sigma , v') &= \frac{\alpha '}{\alpha}
(v'-\pi )  &&\text{where both } | \sigma -\theta_{o}|  < 2\rho
^{4} \text{ and } x^{2} + y^{2} > \tfrac{1}{2}\rho.
\\
\phi_{e'}(\sigma , v') &= v'  &&\text{where }| \sigma -\theta_{o}| 
\ge  3\rho ^{4}.
\end{aligned}
\end{equation}
An analogous $e''$ version is denoted by $\phi_{e''}$. The latter should
obey
\begin{equation}\label{eq:4.45}
\begin{aligned}
\phi_{e''}(\sigma , v'') &= \frac{\alpha''}{\alpha }v'' - \pi
\frac{\alpha '}{\alpha}
&&\text{where both } |
\sigma -\theta_{o}|  < 2\rho ^{4} \text{ and } x^{2} +
y^{2} > \tfrac{1}{2}\rho.
\\
\phi_{e''}(\sigma , v') &= v'' &&\text{where }| \sigma -\theta
_{0}|      \ge  3\rho ^{4}.
\end{aligned}
\end{equation}
With $\phi_{e'}$ and $\phi_{e''}$ as above, what follows are the $e'$
and $e''$ versions of $(a, w)$:
\begin{equation}\label{eq:4.46}
\begin{aligned}
a_{e'}&=\beta_{*} x + (1-\beta ') \rho  + (1-\beta
_{*})    \varepsilon  \big(\cos(\phi_{e'}) - \cos(\phi
_{e'}|_{v' = 0})\big).
\\
w_{e'}&=\beta_{*} \bigg(y +\frac{[Q_e ,Q_{e' }]}{\alpha \alpha '} xy \bigg)
 - (1-\beta_{*})    \varepsilon
 \sin(\phi_{e'}).
\end{aligned}
\end{equation}
Meanwhile,
\begin{equation}\label{eq:4.47}
\begin{aligned}
a_{e''}&=\beta_{*} x - (1-\beta ') \rho  + (1-\beta
_{*})    \varepsilon  \big(\cos(\phi_{e''}) - \cos(\phi
_{e''}|_{v'' = 0})\big).
\\
w_{e''}&=\beta_{*} \bigg(y + \frac{[Q_e ,Q_{e'' }]}{\alpha \alpha''} xy\bigg) - (1-\beta_{*})
 \varepsilon  \sin(\phi_{e''}) + 2\pi     \beta'\frac{[Q_e ,Q_{e''}]} {\alpha''}.
\end{aligned}
\end{equation}
Three tasks lie ahead. Here is the first: Verify that the closures of
$K_{e}$, $K_{e'}$ and $K_{e''}$ fit together where they meet along the
$\theta =\theta_{o}$ locus to define a smoothly immersed surface. The
second task is to verify that the singular points are of the simplest sort:
Each centers a small radius ball that intersects the surface as a pair of
embedded disks that meet transversely at a single point. Here is the final
task: Verify the positivity of the local intersection number at each
singular point.

The three tasks are addressed next.

\step{Task 1}
The first step here is to verify that~\eqref{eq:4.37}--\eqref{eq:4.40} in conjunction
with~\eqref{eq:3.2} define maps to $\mathbb{R}    \times  (S^1 \times  S^{2})$ that
fit together across the $x^{2} = y^{2}$ locus to provide a smooth,
symplectic embedding of some small radius disk about the origin in the
$x$--$y$
plane. To be precise here, the resulting map from a small radius disk can be
written so that it sends a pair $(x, y)$ with norm $(x^{2} + y^{2})^{1 /
2} \ll \rho ^{4}$ to the point with coordinates
\begin{multline}\label{eq:4.48}
\Big(s = x,\ t = q_{e}\frac{1}{\alpha (\theta )}xy +
(1-3\cos^{2}\theta ) y,\\ \theta =\theta_{o}+x^{2}-y^{2},
\varphi  = {q_e}'\frac{1}{\alpha (\theta )}xy +
\surd 6\cos(\theta )y \Big)
\end{multline}
Here, both $t$ and $\varphi$ are defined modulo $2\pi \mathbb{Z}$. Note that
the differential at the origin of the map in~\eqref{eq:4.48} sends $\partial_{x}$
to $\partial_{s}$ and $\partial_{y}$ to the Reeb vector field
in~\eqreft16. Thus, it symplectically embeds a small radius disk about the origin.

By definition, the map in~\eqref{eq:4.48} agrees with that given where $y > |x| $
using~\eqref{eq:3.2} and~\eqref{eq:4.37}--\eqref{eq:4.40}. The verification that it agrees
with the maps from~\eqref{eq:3.2} and~\eqref{eq:4.37}--\eqref{eq:4.40}
on the other components of the
complement of the $x^{2} = y^{2}$ locus is left to the reader except for
the following comment: Algebraic manipulations can rewrite the various maps
to $\mathbb{R}    \times  (S^1 \times  S^{2})$ from the other components
so that they appear exactly as depicted in~\eqref{eq:4.48} but for the addition to $t$
and $\varphi$ of some integer multiple of $2\pi$.

The next step is to verify that the maps that are defined using~\eqref{eq:3.2} in
conjunction with~\eqref{eq:4.42}--\eqref{eq:4.47} extend to the $\sigma =\theta_{o}$
circle of each parametrizing cylinder so that the union of the $|
\theta -\theta_{o}|  < 3\rho $ portion of the resulting
images defines the image via a proper immersion of the complement in $S^{2}$
of three pairwise disjoint, closed disks. This is done as follows: Suppose
that $v  \in  (0, 2\pi \frac{\alpha ' }{\alpha })$. As
written, the formulae in~\eqref{eq:4.42} extend the definition of $(a_{e}, w_{e})$
to a small radius disk centered on $(\theta_{o}, v)$ in $(0, \pi )
\times     \mathbb{R}/(2\pi \mathbb{Z})$. Likewise,
when $v  \in  (2\pi \frac{\alpha '}{\alpha}, 2\pi )$, then~\eqref{eq:4.43} extend
the definition of $(a_{e}, w_{e})$ to a small radius disk centered on
$(\theta_{o}, v)$ in $(0, \pi )    \times     \mathbb{R}/(2\pi \mathbb{Z})$.
Meanwhile, if $v' \in  (0, 2\pi )$, then~\eqref{eq:4.44} together with~\eqref{eq:4.46}
extend the definition of $(a_{e'}, w_{e'})$ to a small radius disk about
$(\theta_{o}, v')$. Finally, if $v''  \ne 2\pi \frac{\alpha'}{\alpha''}
\mod(2\pi)$, then~\eqref{eq:4.45} and~\eqref{eq:4.47}
extend the definition of $(a_{e''}, w_{e''})$ to a small radius disk about
$(\theta_{o}, v'')$ in $(0, \pi )    \times     \mathbb{R}/(2\pi \mathbb{Z})$.

With these extensions understood, suppose again that $v \in  (0, 2\pi
\frac{\alpha ' }{\alpha })$. Then the extended $(a_{e}, w_{e})$
define via~\eqref{eq:3.2} an immersion into $\mathbb{R}    \times  (S^1
\times  S^{2})$ of a small radius disk centered at $(\theta_{o}, v)$.
Set $v' \equiv     \frac{\alpha}{\alpha ' }v$. Then~\eqref{eq:4.44}
and~\eqref{eq:4.46} define via~\eqref{eq:3.2} an immersion into
$\mathbb{R}    \times  (S^1\times  S^{2})$ of a small radius disk centered at
$(\theta_{o}, v')$.
Now, let $\sigma  \in  (0, \pi )$ and  $v \in     \mathbb{R}/2\pi$ be
such that $| \theta_{o}-\sigma | \ll\rho ^{4}$ and
such that the extension of $(a_{e}, w_{e})$ is defined at the point
$(\sigma , v)$ and that of $(a_{e'}, w_{e'})$ is defined at $(\sigma
,\frac{\alpha}{\alpha'}v)$. It then follows from~\eqref{eq:4.42},~\eqref{eq:4.44} and~\eqref{eq:4.46} that
\begin{equation}\label{eq:4.49}
a_{e}(\sigma , v) = a_{e'}\bigg(\sigma , \frac{\alpha}{\alpha'}v\bigg)
\qquad\text{and}\qquad
w_{e}(\sigma , v) = w_{e'}\bigg(\sigma ,\frac{\alpha} {\alpha '}v\bigg) +
\frac{1}{\alpha ' }
({q_e}'q_{e'} - q_{e}{q_{e'}}') v
\end{equation}
These last equalities imply that the $e$ and $e'$ versions of the extended maps
parametrize open subsets of a single immersed surface, this the union of the
$\theta  < \theta_{o}+3\rho $ portion of the closure of $K_{e'}$, the
closure of the portion of $K_{e}$ in the image of points $(\sigma , v)$ with
$\sigma  > \theta_{o}-3\rho $ and  $v \in  (0, 2\pi
\frac{\alpha}{\alpha ' })$, and the image via~\eqref{eq:4.48} of a
small radius disk centered at the origin in the $(x, y)$ plane.

A similar argument using~\eqref{eq:4.43},~\eqref{eq:4.45} and~\eqref{eq:4.47}
proves the analogous
statement for the union of the $\theta  < \theta_{o}+3\rho $ portion
of the closure of $K_{e''}$, the closure of the portion of $K_{e}$ that is
in the image of points $(\sigma , v)$ with $\sigma  > \theta
_{o}-3\rho $ and $v  \in  (2\pi \frac{\alpha}{\alpha'}, 2\pi )$, and the image via~\eqref{eq:4.48} of a small radius disk
centered at the origin in the $(x, y)$ plane. The details of the latter
argument are left to the reader.

\step{Task 2}
The task is to describe all of the immersion points. This task is
accomplished in five steps.

\substep{Step 1}
Note that $a_{e'} > 0$ except at $v' = 0 \in     \mathbb{R}/(2\pi \mathbb{Z})$ and $x = 0$, while $a_{e''} < 0$ save at
$v'' = 2\pi \frac{\alpha ' }{\alpha''}$ and
$x = 0$. However, both of these points correspond to the origin in the
$x$--$y$
coordinate disk. Thus, the closures of the portions of $K_{e'}$ and
$K_{e''}$ where $| \sigma -\theta_{o}|  < 3\rho $ are
disjoint save for the image of the origin in the $x$--$y$ coordinate disk. As a
consequence, it is sufficient to focus separately on the singularities in
the respective closures of $K_{e}$, $K_{e'}$ and $K_{e''}$.

As will now be explained, if $\varepsilon $ and $\rho $ are small, then the
closures of the $\theta  < \theta_{o}+3\rho $ portions of $K_{e'}$
and $K_{e''}$ lack singular points. To argue in the case of $K_{e'}$, note
that the variation in $w_{e'}$ is not greater than a multiple of $\rho  +
\varepsilon $, so with the latter very small, only the $\hat {x} = 0$ case
of~\eqref{eq:4.3} and~\eqref{eq:4.4} can appear. Furthermore, no $\hat {x} = 0$ and $\hat {y}
 \ne  0 \mod(m)$ versions of~\eqref{eq:4.3} and~\eqref{eq:4.4} can occur in this case with one
of $v'$ and $v' - 2\pi \frac{\hat {y}}{m}$ very close to 0 in
$\mathbb{R}/(2\pi \mathbb{Z})$. Indeed, such is the case because $a_{e'}$
achieves its unique maximum on any given constant $\sigma$ circle at the
origin in $\mathbb{R}/(2\pi \mathbb{Z})$. This understood, the existence of
any $\hat {x} = 0$ and $\hat {y}     \ne  0 \mod(m)$ solutions to~\eqref{eq:4.3}
and~\eqref{eq:4.4} is precluded by virtue of two facts: First, $\phi_{e'}$ is a
diffeomorphism. Second, if the respective values of the cosine function
agree at two distinct points in $\mathbb{R}/(2\pi \mathbb{Z})$, then the
corresponding values of the sine function do not.

Except for notational changes, the argument just given also proves the
assertion that the $\theta  < \theta_{o}-3\rho $ part of the closure
of $K_{e''}$ is embedded.

\substep{Step 2}
Turn next to the case of $K_{e}$. In this
regard, keep in mind that points $(\sigma , v)$ and $(\sigma ', v')$ in the
parametrizing domain are mapped to the same point if and only if the
conditions in~\eqref{eq:4.2} are obeyed for some integer pair $N$. Equivalently, the
conditions in~\eqref{eq:4.3} and~\eqref{eq:4.4} are obeyed for some pair $(\hat {x}, \hat
{y})     \in     \mathbb{Z}    \times     \mathbb{Z}/(m\mathbb{Z})$, and
\begin{equation}\label{eq:4.50}
 v' = v - 2\pi     \hat {x}\frac{\alpha_Z (\sigma )}   {\alpha
_{Q_e } (\sigma )} - 2\pi \frac{\hat {y}}{m} \mod (2\pi     \mathbb{Z})
\end{equation}
To start the story for $K_{e}$, note that $a_{e} > 0$ when $v  \in  (0,
2\pi \frac{\alpha '}{\alpha })$, while $a_{e} < 0$ in
the case that $v  \in  (2\pi \frac{\alpha ' }{\alpha}, 2\pi )$.
Thus, the respective images of the maps that are defined via~\eqref{eq:3.2}
by~\eqref{eq:4.42} and~\eqref{eq:4.43} are disjoint except where their domains overlap,
where $v = 0$ and $v = 2\pi \frac{\alpha '}{\alpha }$. On
both of these loci, $a_{e} = 0$. In any event, it is sufficient to consider
separately the cases where $v  \in  [0, 2\pi \frac{\alpha ' }{\alpha }]$ and where
$v  \in  [2\pi \frac{\alpha ' }{\alpha }, 2\pi ]$, but taking care not to double count any immersion
points that occur where $v = 0$ or $v = 2\pi \frac{\alpha '}{\alpha }$.

The next point to make is that the values of either $\beta_{*}$ or
$\beta'$ at any point mapping to a singular point must be non-zero when
$\varepsilon $ and $\rho $ are small. Indeed, with a reference to~\eqref{eq:4.42}, an
argument given previously establishes the existence here of a positive lower
bound for $\beta_{*}+    \beta '$ that depends only on
$\theta_{o}$, $Q_{e}$ and $Q_{e'}$. Thus, solutions to~\eqref{eq:4.3} and~\eqref{eq:4.4}
where $\sigma      \in (\theta_{o}-3\rho , \theta_{o}]$ can
occur only where $| \theta_{o}-\sigma |  < 2\rho ^{4}$,
or in the image of a point where $x^{2} + y^{2}     \leq  4\rho ^{2}$
via the map in~\eqref{eq:4.48}.

The subsequent discussion involves the number $\wp_{0}     \in  [0, 1)$
that is defined for each pair $(\hat {x}, \hat {y})     \in     \mathbb{Z}
\times     \mathbb{Z}/(m\mathbb{Z})$ by the condition
\begin{equation}\label{eq:4.51}
\wp_{0}=\hat {x}\frac{\alpha_Z (\theta_o )}{\alpha(\theta_o )}+
\frac{\hat {y}}{m} \mod(\mathbb{Z}).
\end{equation}
Because any small $\varepsilon $ and $\rho $ version of $|w_{e}| $ in~\eqref{eq:4.42} is a priori
bounded by a constant that depends
only on $\theta_{o}$, $Q_{e}$ and $Q_{e'}$, so the set of pairs $(\hat{x}, \hat {y})$
that allow a solution to~\eqref{eq:4.3} and~\eqref{eq:4.4} where $\beta
_{*} > 0$ or where $\beta ' > 0$ has size bounded by $\theta_{o}$, $Q_{e}$ and $Q_{e'}$.
Thus, the set of values of $\wp_{0}$ that
can arise from such pairs has a corresponding upper bound to its size.
Moreover, the possible values for $\wp_{0}$ in this set are determined
a priori by $\theta_{o}$, $Q_{e}$ and $Q_{e'}$.

With $\wp_{0}$ so defined, introduce the function of $\sigma$ given by
\begin{equation}\label{eq:4.52}
\wp \equiv \wp_{0}+\hat {x}\bigg(\frac{\alpha_Z (\sigma )}{\alpha (\sigma )}-
\frac{\alpha_Z (\theta_o )}{\alpha(\theta_o )}\bigg).
\end{equation}
Note that if $\sigma$ is such that $\beta ' > 0$, then $| \wp -\wp
_{0}|      \leq     \kappa     | \hat {x}|     \rho ^{4}$
where $\kappa $ is determined solely by $\theta_{o}$, $Q_{e}$ and
$Q_{e'}$. Note as well that the relation in~\eqref{eq:4.50} between $v'$ and $v$ can be
summarized succinctly by the formula $v' = v - 2\pi \wp  \mod(2\pi \mathbb{Z})$.

\substep{Step 3}
Suppose now that $(\sigma , v)$ is a
solution to some given $(\hat {x}, \hat {y})$ version of~\eqref{eq:4.3} and~\eqref{eq:4.4}
with $v  \in  [\frac{1}{\alpha }\rho ^{2}, 2\pi \frac{\alpha ' }{\alpha }-
\frac{1}{\alpha}\rho ^{2}]$ and $\beta '(\sigma ) > 0$. Granted~\eqref{eq:4.42}, the condition
in~\eqref{eq:4.4} is equivalent to
\begin{equation}\label{eq:4.53}
\cos\bigg(v - \pi \frac{\alpha '}{\alpha } - 2\pi \wp \bigg)
= \cos\bigg(v - \pi \frac{\alpha '}{\alpha }\bigg)
\end{equation}
If $\rho $ is small and $\sigma$ fixed, there are at most two values for $v$
that lie in the indicated range and satisfy~\eqref{eq:4.53}. To elaborate, without
the constraint on the domain of $v$, there are precisely two solutions to
equation in~\eqref{eq:4.53} for the given value of $\sigma$; one is the point $\pi
(\frac{\alpha '}{\alpha }+\wp )$, and the other is
$\pi (\frac{\alpha '}{\alpha }+\wp -1)$. Moreover, if
$\rho $ is sufficiently small and $\frac{\alpha '}{\alpha
} < \frac{1}{2}$, then at most one of these lies in the required
interval. If $\frac{\alpha '}{\alpha} > \frac{1}{2}$, then at least one of the two is in this interval.

In any event, if $\pi (\frac{\alpha ' }{\alpha }+\wp )$
lies in $[\frac{1}{\alpha }\rho ^{2}, 2\pi \frac{\alpha ' }{\alpha }-\frac{1}{\alpha}\rho ^{2}]$,
then so does $\pi (\frac{\alpha ' }{\alpha }-\wp )$; and if $\pi (\frac{\alpha ' }{\alpha}+\wp -1)$
lies in $[\frac{1}{\alpha }\rho ^{2}, 2\pi \frac{\alpha '}{\alpha }-\frac{1}{\alpha}\rho ^{2}]$,
then so does $\pi (\frac{\alpha ' }{\alpha }+1-\wp )$. This said, note that if any given
$(\hat {x}, \hat {y})'$s version of $\wp $ puts $\pi (\frac{\alpha ' }{\alpha }+\wp )$ in
$[\frac{1}{\alpha }\rho ^{2}, 2\pi \frac{\alpha ' }{\alpha }-\frac{1}{\alpha}\rho ^{2}]$,
then the corresponding version of $\wp $ for the pair
$(-\hat {x}, m-\hat {y})$ has $\pi (\frac{\alpha ' }{\alpha }+\wp -1)$ in this same interval.
The converse is also true.
Moreover, this correspondence does not alter the corresponding intersecting
disks in $K_{e}$ since it amounts to switching $v$ with $v'$. Thus, it is enough
to consider the case that $\pi (\frac{\alpha ' }{\alpha}+\wp )$ lies in the desired interval
$[\frac{1}{\alpha }\rho ^{2}, 2\pi \frac{\alpha ' }{\alpha }-\frac{1}{\alpha }\rho ^{2}]$.
Note that when $\rho $ is small, such is the
case if and only if
\begin{equation}\label{eq:4.54}
\wp_{0} < \frac{\alpha ' }{\alpha }|_{\sigma = \theta_o }
\end{equation}
To start the analysis, use~\eqref{eq:4.42} to write~\eqref{eq:4.3} as
\begin{equation}\label{eq:4.55}
\varepsilon \sin(\pi \wp )=\pi  m \frac{1}   {\alpha '}\bigg(\beta '\wp \frac{[Q_e ,Q_{e' } ]}{m} -
\frac{\alpha ' }{\alpha }\hat {x}\bigg).
\end{equation}
This last equation should be viewed as a condition on $\sigma$. In
particular, because $\beta'$ takes values in [0, 1], and because $[Q_{e},
Q_{e'}] = -[Q_{e'}, Q_{e''}]  \ge  0$, any small $\varepsilon $ and
$\rho  < \varepsilon ^{2}$ version of~\eqref{eq:4.55} can be satisfied by some
$\sigma      \in  [\theta_{o}-3\rho , \theta_{o}]$ if and only if
\begin{equation}\label{eq:4.56}
0 < \hat {x}\frac{\alpha ' }{\alpha }|_{\sigma
= \theta_o }  < \wp_{0}\frac{[Q_e ,Q_{e' }]}{m}.
\end{equation}
Moreover, this solution occurs at a value of $\sigma$ where
\begin{equation}\label{eq:4.57}
\begin{aligned}
&\beta '  \in (\kappa , (1-\kappa )).
\\
&\beta '\wp \frac{[Q_e ,Q_{e' } ]}{ m} -
\frac{\alpha ' }{\alpha }\hat {x} > \frac{1}{\kappa }\varepsilon.
\end{aligned}
\end{equation}
Here, $\kappa      \in (0, 1)$ is a constant that depends only on
$\theta_{o}$, $Q_{e}$ and $Q_{e'}$. Finally, if $\varepsilon $ is small,
if $\rho  < \varepsilon ^{2}$, and if~\eqref{eq:4.56} holds, then there is a
unique choice of $\sigma$ that solves~\eqref{eq:4.55}.

There is one last point to make for the cases when $\wp_{0}$ obeys~\eqref{eq:4.54}:
Distinct values for the pair $(\hat {x}, \hat {y})$ produce
disjoint singular points in $K_{e}$. To explain, note first that by virtue
of the fact that the respective values of $a_{e}$ that arise must be equal,
two choices for $(\hat {x}, \hat {y})$ can produce the same singular point
in $K_{e}$ only if the corresponding values for $\wp_{0}$ agree. Granted
this, if the resulting singular points have the same $\theta$ coordinate,
then~\eqref{eq:4.55} demands that the respective values for $\hat {x}$ agree. Thus,
so do the values for $\hat {y}$.

\substep{Step 4}
The story in this step concerns the cases
where~\eqref{eq:4.3} and~\eqref{eq:4.4} hold with a value of $v$ either in $[2\pi
\frac{\alpha '}{\alpha}-\frac{1}{\alpha}\rho ^{2}, 2\pi \frac{\alpha '}{\alpha}]$ or in
$[0, \frac{1}{\alpha}\rho ^{2}]$. As is explained next, no
solutions to~\eqref{eq:4.3} and~\eqref{eq:4.4} with such values for $v$ result in $K_{e}$
singularities singularities if $\varepsilon $ and $\rho $ are small, and if
$\theta_{o}$ is suitably generic.

To begin the explanation, assume for the moment only that $v$ is within
$\frac{1}{\alpha}\rho ^{2}$ of either 0 or $2\pi \frac{\alpha '}{\alpha}$. If $v \in [2\pi
\frac{\alpha '}{\alpha}-\frac{1}{\alpha}\rho ^{2}, 2\pi \frac{\alpha '}{\alpha}]$,
then~\eqref{eq:4.4} requires that $\wp_{0}$ is either 0 or equal to the value of
$\frac{\alpha '}{\alpha}$ at $\theta_{o}$. If $v  \in [0, \frac{1}{\alpha}\rho ^{2}]$, then $\wp_{0}$ is
either zero or equal to the value of $1-\frac{\alpha '}{\alpha}$ at $\theta_{o}$. In this regard, note
that when $\varepsilon $
and $\rho $ are small, the case where $\wp_{0} = 0$ requires $\hat {x} = 0$ and thus $\hat {y} = 0$ as well.
Thus, the $\wp_{0} = 0$ case does not
lead to a singularity in $K_{e}$. Meanwhile, at the risk of replacing the
pair $(\hat {x}, \hat {y})$ with $(-\hat {x}, m-\hat {y})$, it is
sufficient to study the case where $v  \in  [2\pi \frac{\alpha'}{\alpha}-\frac{1}{\alpha}\rho ^{2}, 2\pi
\frac{\alpha '}{\alpha}]$ and where $\wp_{0}$ is
equal to the $\theta_{o}$ value of $\frac{\alpha '}{\alpha}$.

To see what this last condition implies, write $Q_{e'} = [Q_{e'},
Q_{e}] Z + \frac{w}{m}Q_{e}$ where  $w \in     \mathbb{Z}$. Doing
so identifies
\begin{equation}\label{eq:4.58}
\frac{\alpha '}{\alpha}    = [Q_{e'}, Q_{e}]
\frac{\alpha_Z}{\alpha}+\frac{w}{m}
\end{equation}
and so the condition on $\wp_{0}$ requires that
\begin{equation}\label{eq:4.59}
([Q_{e'}, Q_{e}] - \hat {x})\frac{\alpha_Z}{\alpha} = 0 \mod\bigg(\frac{1}{m}\mathbb{Z}\bigg).
\end{equation}
Now, if $\theta_{o}$ is suitably generic, then the ratio
$\unfrac{\alpha_Z}{\alpha}$ will be irrational, and so the only
solution to~\eqref{eq:4.59} is that where $\hat {x} = [Q_{e'}, Q_{e}]$. However,
with $\varepsilon $ and $\rho $ small, a glance at~\eqref{eq:4.42} shows that such a
value for $\hat {x}$ is incompatible with the condition in~\eqref{eq:4.52} unless
$[Q_{e'}, Q_{e}]$ and $\hat {x}$ both vanish. Indeed, such is the case
because
\begin{equation}\label{eq:4.60}
w_{e}(\sigma , v-2\pi \wp )- w_{e}(\sigma , v) = 2\pi
\beta '\frac{[Q_e ,Q_{e' } ]}{\alpha } + o(\varepsilon
+\rho ),
\end{equation}
and this has the same sign as $-\hat {x}$ in the case that $\hat {x} =
[Q_{e'}, Q_{e}]$, neither are zero and both $\varepsilon $ and $\rho $
are small.

To rule out the case that both $[Q_{e'}, Q_{e}] = 0$ and $\hat {x} = 0$,
note that~\eqref{eq:4.42} demands that the resulting singularity in $K_{e}$ is the
image of two points in the $(x,y)$ plane via the map in~\eqref{eq:4.48} where one has
the form $(0, y)$ and the other $(0, -y)$. Moreover,~\eqref{eq:4.4} demands that $y$ obey
\begin{equation}\label{eq:4.61}
\beta_{*}|y|  = -(1-\beta_{*})    \varepsilon
\sin\bigg(\pi \frac{w}{m}\bigg)
\end{equation}
where  $w \in  \{1, \ldots, m-1\}$ is the integer that appears
in~\eqref{eq:4.58}. Since $\pi \frac{w}{m}=\frac{\alpha '}{\alpha}$ in this case and since
$\frac{\alpha '}{\alpha} \in  (0, 1)$, the right hand side of~\eqref{eq:4.61} is non-positive and
the left hand side is non-negative. Since the two sides can not vanish
simultaneously, there are no values of $y$ that make~\eqref{eq:4.61} hold.

In the case that $\theta_{o}$ is special and so there is an $\hat {x}
\ne [Q_{e'}, Q_{e}]$ solution to~\eqref{eq:4.59}, there may well be solutions
to~\eqref{eq:4.3} and~\eqref{eq:4.4} with $v  \in  [2\pi \frac{\alpha '}{\alpha}-\frac{1}{\alpha}\rho ^{2}, 2\pi
\frac{\alpha '}{\alpha}]$. These can be analyzed with
much the same machinery as used for when $v$ is further from either 0 or $2\pi
\frac{\alpha '}{\alpha}$. To keep an already long story
from getting longer, this task is left to the reader, as is the task of
verifying that the resulting singularities of $K_{e}$ are transversal with
local self-intersection number 1.

\substep{Step 5}
This final step characterized the
singularities in the closure of $K_{e}$ that lie where $| \theta
_{0}-\sigma |  < 3\rho $ and $v  \in  [2\pi \frac{\alpha '}{\alpha}, 2\pi ]$. The story here is
much as in \refstep{Step 3} and \refstep{Step 4}. First, when $\theta_{o}$ is sufficiently
generic, there are no solutions that lie in the image via the map in~\eqref{eq:4.48}
of points where $\beta_{*}     \ne  0$. Second, if $\varepsilon $
and $\rho $ are sufficiently small, then all singularities in $K_{e}$ must
lie where $\beta '(\theta )$ is bounded away from zero by a constant that
depends only on $\theta_{o}$, $Q_{e}$ and $Q_{e'}$. Third, if $(\sigma ,v)$ and $(\sigma ', v')$
map to the same point in $K_{e}$, then $\sigma  =\sigma '$ and $v' = v -2\pi \wp  \mod(2\pi \mathbb{Z})$ where $\wp $ is
defined as in~\eqref{eq:4.52}. Here, $\wp_{0}     \in $ [0, 1) is defined from some
pair $(\hat {x}, \hat {y})$ as in~\eqref{eq:4.51}. In addition,~\eqref{eq:4.53} must hold.
As a consequence, if $v$ is taken to be a real number in $[2\pi
\frac{\alpha '}{\alpha}, 2\pi ]$, then $v$ must have
one of two forms: Either $v = \pi (1+\frac{\alpha '}{\alpha}+\wp )$ or else
$v = \pi (\frac{\alpha '}{\alpha}+\wp )$. In the former case, $1-\wp  > \frac{\alpha '}{\alpha}$
and in the latter, $\wp  > \frac{\alpha '}{\alpha}$. For essentially the same reasons as before, it is only
necessary to consider one of these two sorts of cases, and so the discussion
below makes the assumption that $v = \pi (1+\frac{\alpha '}{\alpha}+\wp )$ and that $1-\wp  >
\frac{\alpha '}{\alpha}$. In this regard, note that when $\rho $ is small, then the
latter condition holds if and only if $1-\wp_{0} > \frac{\alpha'}{\alpha}$.

With this last assumption understood, then $v'= \pi (1+\frac{\alpha'}{\alpha}-\wp )$ is also in
$[2\pi \frac{\alpha '}{\alpha}, 2\pi ]$ and $v > v'$. This being the case, a
referral to~\eqref{eq:4.43} finds~\eqref{eq:4.3} equivalent to the condition
\begin{equation}\label{eq:4.62}
-\varepsilon \sin(\wp )=\pi  m\frac{1} {\alpha''}\bigg(\beta '\frac{[Q_e ,Q_{e''}]}{m}\wp
-\frac{\alpha''}{\alpha}\hat {x}\bigg).
\end{equation}
As with its analog in~\eqref{eq:4.55}, this should be viewed as an equation for
$\sigma$. As such, it has a solution if and only if
\begin{equation}\label{eq:4.63}
\frac{[Q_e ,Q_{e''}]}{m}\wp_{0}<    \hat{x}\frac{\alpha''}{\alpha}\bigg|_{\sigma =
\theta_o } < 0.
\end{equation}
Moreover, when $\varepsilon $ is small and $\rho $ is very small, then the
solution is unique and it occurs where
\begin{equation}\label{eq:4.64}
\begin{aligned}
&\beta '  \in (\kappa , 1-\kappa ).
\\
&\beta '\frac{[Q_e ,Q_{e''}]}{m}\wp  -
\frac{\alpha''}{\alpha}\hat {x}< \frac{1}{\kappa}\varepsilon ,
\end{aligned}
\end{equation}
where $\kappa      \in (0,1)$ is a constant that depends only on
$\theta_{o}$, $Q_{e}$ and $Q_{e'}$.

An argument from \refstep{Step 3} also applies here to prove that each singular point
in the $\beta ' > 0$ and $v  \in  [2\pi \frac{\alpha '}{\alpha}, 2\pi ]$ portion of $K_{e}$ lies in a ball whose
intersection with $K_{e}$ is the union of two embedded disks that meet only
at their centers.

\step{Task 3}
The task here is to verify that small $\varepsilon $ and very small $\rho$
guarantees that the singularities in the $\theta  > \theta_{o}-3\rho $ portion of $K_{e}$ are
those of transversally intersecting
disks with positive local intersection number. For this purpose, use $(a, w)$
and $(a', w')$ to denote the respective versions of the parametrizing
functions that come from the two disk that are involved at the given
intersection point. Use $(\partial_{\sigma }, \partial_{v})$ and
$({\partial_{\sigma}}', \partial_{v}')$ to denote the corresponding
versions of the push-forward that are depicted in~\eqref{eq:4.33}. Consider first the
case where the inverse image in the parametrizing cylinder of the singular
point in the unprimed disk is a point $(\sigma , v)$ with $v = \pi
(\frac{\alpha '}{\alpha}+\wp )$ and with $\wp  < \frac{\alpha '}{\alpha}$. In this case, the primed pair
is the image of $(\sigma , v')$ with $v' = \pi (\frac{\alpha '}{\alpha}-\wp )$. Thus, both
$v$ and $v'$ lie in the interval $(0, 2\pi\frac{\alpha '}{\alpha})$.

In the present situation, the intersection is transversal if $\tau $ as
defined in~\eqref{eq:4.34} is non-zero; and then the sign of $\tau $ is the local
sign at the intersection point. In the case at hand, a referral to~\eqref{eq:4.42}
finds that
\begin{equation}\label{eq:4.65}
\begin{aligned}
a_{v} - a'_{v} &= -2\varepsilon  \sin(\pi \wp ) < 0.
\\
w_{\sigma } - w'_{\sigma } &= 2\pi \wp \frac{[Q_e ,Q_{e'}]}{\alpha '}\beta '_{\sigma }+ o(1).
\end{aligned}
\end{equation}
Here, the term that is designated as $o(1)$ is uniformly bounded no matter how
small $\varepsilon $ and $\rho $. By virtue of~\eqref{eq:4.57}, ${\beta '}_{\rho }$
is bounded from below by $\kappa '\rho ^{{-}4}$, with $\kappa ' > 0$
depending solely on $\theta_{o}$, $Q_{e}$ and $Q_{e'}$. Meanwhile,
$[Q_{e}$, $Q_{e'}]$ = -$[Q_{e'}$, $Q_{e''}]$ and thus is positive.
Therefore, $\tau $ is positive and so the local sign at the intersection
point is $+1$.

To continue with the assigned task, the second case to consider are those
intersections where $v = \pi (\frac{\alpha '}{\alpha}+1+\wp )$ and $1-\wp  > \frac{\alpha '}{\alpha}$.
Here, $v' = \pi (\frac{\alpha '}{\alpha}+1-\wp )$.
Thus, the self-intersections that occur in the case at hand occur at values
of $v$ and $v'$ that lie in the interval $(2\pi \frac{\alpha '}{\alpha}, 2\pi )$.
This noted, then referral to~\eqref{eq:4.43} finds
\begin{equation}\label{eq:4.66}
\begin{aligned}
a_{v} - a'_{v} &= 2\varepsilon  \sin(\pi \wp ) > 0.
\\
w_{\sigma } - w'_{\sigma } &= 2\pi \wp     \frac{[Q_e ,Q_{e''}]}{\alpha''}\beta '_{\sigma } + o(1),
\end{aligned}
\end{equation}
where the term designated as $o(1)$ has the same properties as its analog
in~\eqref{eq:4.65}. In this case, the second line in~\eqref{eq:4.66} is very negative when $\rho
$ is small, so $\tau $ is again positive and the local intersection number
is equal to 1.

\subsection{Intersections between distinct cylinders}\label{sec:4e}

The purpose of this next to last subsection is to complete the
proof of \fullref{thm:3.1} in the case that all partition sets
that define $T$ have a single element. In this regard, the task
here is to verify that the intersections between cylinders
$K_{e}$ and $K_{e'}$ when $e \ne  e'$ are distinct edges in the
graph $T$ are transversal with positive local intersection number.

To see how such a guarantee can be made, suppose that $e$ is any given edge.
The immersion constructed in the preceding subsections that defines $K_{e}$
involved the specification of data $\bigl\{\rho_{e0}, \rho_{e1},
\varepsilon_{e}, a^{0}_{e}, w^{0}_{e}, v^{0}_{e}\bigr\}$ where
$\varepsilon_{e}, a^{0}_{e}, w^{0}_{e}$ and $v^{0}_{e}$ are
functions of the coordinate $\sigma$ on the parametrizing cylinder, and
where the other two are constant and positive. Although subsequent
subsections gave upper bounds for $\varepsilon_{e}$ near the boundaries
of the parametrizing cylinder there is no positive lower bound near these
boundaries. With $\varepsilon_{e}$ chosen near boundaries of the
parametrizing cylinder, upper bounds were then specified for the constants
$\rho_{e0}$ and $\rho_{e1}$, but there were no positive lower bounds.
As is explained below, the transversality of all intersections between all
pairs of distinct $K_{e}$ and $K_{e'}$ is guaranteed with careful choices
for the various versions of $\bigl\{\varepsilon_{e}, \rho_{e0}, \rho
_{e1}, a^{0}_{e}, w^{0}_{e}, v^{0}_{e}\bigr\}$. As should be
evident from details below, all new constraints on $\bigl\{\varepsilon_{e},
\rho_{e0}, \rho_{e1}, a^{0}_{e}, w^{0}_{e},
v^{0}_{e}\bigr\}$ are compatible with those given in the previous
subsections.

The discussion in the remainder of this subsection is divided into eight
parts.

\step{Part 1}
There is an immediate issue that arises when edges $e'$ and $e''$ have
monovalent vertices that share an angle assignment. Of concern is to choose
the $e = e'$ and $e = e''$ versions of $\{\varepsilon_{e}, \rho_{e0},
\rho_{e1}, a^{0}_{e}, w^{0}_{e}, v^{0}_{e}\}$ so as to
keep the resulting versions of $K_{e'}$ and $K_{e''}$ disjoint at values of
$\theta$ that approach the common vertex angle assignment. The discussion
of this issue addresses the respective cases where the angle label in
question is 0, $\pi$, and then neither 0 nor $\pi$.

\medskip{\bf The case of angle 0}

Let $o'$ and $o''$ denote the respective vertices on $e'$ and $e''$
with angle label 0. Now, $o'$ has a label from $\hat{A}$, either of the form
$(1,+,\ldots)$, $(1,-,\ldots)$ or simply
$\{1\}$. In the first case, $s  \to     \infty $ on $K_{e'}$ as $\theta
\to 0$, in the second case, the function s has a finite limit as $\theta  \to 0$,
and in the third case, $s  \to -\infty $ on $K_{e'}$ as
$\theta  \to 0$. Of course, $o''$ has one of these three sorts of labels also.

Now, suppose that $o'$ is labeled by an element of the form $(1,+,\ldots)$ from $\hat{A}$ and $o''$
by either an element $\{1\}$ from
 $\hat{A}$ or one of the form $(1,-,\ldots)$. In this case, make
both $a^0_{e'}$ and $a^0_{e''}$ constant at values of $\sigma$
where either the $e'$ or $e''$ version of the relevant~\eqref{eq:4.6} or~\eqref{eq:4.8} holds.
Choose these constants so that $a^0_{e'} \gg a^0_{e''}$; this then
makes $K_{e'}$ disjoint from $K_{e''}$ where either version of which ever
of~\eqref{eq:4.6} or~\eqref{eq:4.8} holds. A similar choice for $a^0_{e'}$ and
$a^0_{e''}$ guarantees this same conclusion when $o'$ is labeled by
 $\{1\}$ and $o''$ by $(1,-,\ldots)$.

Suppose next that $[Q_{e'}$, $Q_{e''}] < 0$ and that both $o'$ and $o''$ are
labeled by $(1,+,\ldots)$ elements from $\hat{A}$. Granted this,
take $\rho_{e'0} \gg \rho_{e'0}$, and take $a^0_{e'}$ and
$a^0_{e''}$ to be constant where either version of~\eqref{eq:4.6} holds with
$a^0_{e'} \gg a^0_{e''}$. This makes $K_{e'}$ and $K_{e''}$
disjoint where either version of~\eqref{eq:4.6} holds. In the case that $[Q_{e'},Q_{e''}] < 0$ and both $o'$ and
$o''$ are labeled by an $(-1,-,\ldots)$ element from $\hat{A}$, now take $\rho_{e'0} \ll\rho_{e''0}$,
and take $a^0_{e'}$ and $a^0_{e''}$ to be constant where either
version of~\eqref{eq:4.6} holds, but keep $a^0_{e'} \gg a^0_{e''}$.

To continue, suppose that $[Q_{e'}$, $Q_{e''}] = 0$. The simplest case has
both $o'$ and $o''$ labeled by (1). Here, it is sufficient to take
$a^0_{e'}$ and $a^0_{e''}$ to be constant where either version
of~\eqref{eq:4.8} holds, but with $|a^0_{e'} - a^0_{e''}|  >1$. Such a choice makes
$K_{e'}$ and $K_{e''}$ disjoint where either version
of~\eqref{eq:4.8} holds.

Here is the story when $[Q_{e'}, Q_{e''}]=0$ and both $o'$ and $o''$ are
labeled by $(1,+,\ldots)$ elements from $\hat{A}$, or both by
$(1,-,\ldots)$ elements from $\hat{A}$. In either case, make
$\rho_{e'0}=\rho_{e''0}$, both $\varepsilon_{e' }$and
$\varepsilon_{e''}$ much less than 1. Then, make both $a^0_{e'}$ and
$a^0_{e''}$ constant where either version of~\eqref{eq:4.6} holds, but with
$|a^0_{e''}|  \gg | $$a_{e''}| > 1$. This then
guarantees that $K_{e'}$ is disjoint from $K_{e''}$ where~\eqref{eq:4.6} holds. and
both $\varepsilon_{e'}$ and $\varepsilon_{e''}$ much less than 1 to
insure that $K_{e'}$ and $K_{e''}$ are disjoint where either version of~\eqref{eq:4.6} holds.

\medskip{\bf The case of angle $\pi$}

Each of the angle 0 subcases just described has a very
evident angle $\pi$ analog and vice versa. The stories for the
corresponding angle 0 and angle $\pi$ subcases are identical save for some
notation and sign changes. This understood, the angle $\pi$ cases are left
to the reader save for the following equations that give the angle $\pi$
versions of~\eqref{eq:4.6} and~\eqref{eq:4.7}:
\begin{equation}\label{eq:4.67}
\begin{aligned}
a_{e}&=\frac{1}{\kappa}    \beta ' \ln(\pi -\sigma ) +
a^{0}_{e} + \big(\varepsilon  (1-\beta ')+ (\pi -\sigma )\beta '\big) \cos(v + v^{0}_{e}).
\\
w_{e} &= (1-\beta ') w^{0}_{e} - \big(\varepsilon  (1-\beta ')+ (\pi -\sigma)\beta '\big)
\sin(v + v^{0}_{e}).
\end{aligned}
\end{equation}
Here, $\varepsilon \equiv \varepsilon_{e}$ and
\begin{equation}\label{eq:4.68}
\kappa  \equiv    -
\frac{q_e ' }{q_e} +
\sqrt {\frac{3}{2}},
\end{equation}
Note that the angle $\pi$ version of~\eqref{eq:4.8} has the same form as the
original.

\medskip{\bf The case of neither 0 nor $\pi$}

Let $e'$ and $e''$ again
denote the two edges that are involved. The first point to make is that
$K_{e'} \cap K_{e''} = \emptyset$ if the vertex $o'$ has the smaller angle
label of the two vertices on $e'$ while $o''$ has the larger of the two angle
labels of the vertices on $e''$. This understood, consider the case where the
angle labels of $o'$ and $o''$ are either both the smaller of the two angle
labels on their incident edges. In this case, take $\rho_{0e'}=\rho
_{0e''}$, with both much less than 1. Likewise, choose $\varepsilon
_{e}$ and $\varepsilon_{e'}$ to be very small. Finally, take both
$a^0_{e'}$ and $a^0_{e''}$ to be constant with $\bigl|
$$a^0_{e'} - a^0_{e''}\bigr|  > 2\pi$ where~\eqref{eq:4.9} is valid.
This makes $K_{e}$ and $K_{e'}$ disjoint where~\eqref{eq:4.9} holds.

\step{Part 2}
There is also an issue to address in the case that two bivalent vertices
in $T$ have the same angle assignment. Let $o$ denote the first and let $e$ and $e'$
denote its incident edges using the usual convention where $\theta_{o}$
is the larger of the angles that are assigned to the vertices on $e$. Let
$\hat{o}$ denote a vertex with $\theta_{\hat{o}}=\theta_{o}$, and let
$\hat{e}$ and $\hat{e}'$ denote the corresponding incident edges. Let $Y_{o}$ denote
the closure of $K_{e}     \cup K_{e'}$ and let $Y_{\hat{o}}$ denote that of
$K_{\hat{e}}     \cup K_{\hat{e}'}$. Since $s  \to     \infty $ on both $Y_{o}$
and $Y_{\hat{o}}$ along certain paths where $\theta$ limits to $\theta
_{0}$, these two subvarieties may well intersect where $\theta$ is near
$\theta_{o}$. The goal is to insure that the intersection points are
transversal with positive intersection number.

For this purpose, keep in mind that both $Y_{o}$ and $Y_{\hat{o}}$ converge in
$S^{1} \times S^{2}$ as multiple covers of $\theta =\theta_{*}$ Reeb orbits.
Let $\gamma_{o}$ and $\gamma_{\hat{o}}$ denote the
latter. Note that $\gamma_{o}$ is determined by the $\theta =\theta
_{0}$ value for the parameter $v^{0}_{e}$, and $\gamma_{\hat{o}}$ is
determined in an analogous fashion by $v^0_{\hat{e}}$. In particular, if
the respective constant values of $v^{0}_{e} = v^{0}_{e'}$ and
$v^0_{\hat{e}} = v^0_{\hat{e}'}$ near the $\sigma =\theta_{o}$
circles in the relevant parametrizing domains are chosen to be unequal and
sufficiently generic, then $\gamma_{o}$ and $\gamma_{\hat{o}}$ will be
distinct Reeb orbits. Choose these two angles to insure that such is the
case.

To continue, take $\rho_{e1} \ll \rho_{\hat{e}1}$ and take
$a^{0}_{e}$ and $a^0_{\hat{e}}$ both constant with $a^0_{\hat{e}} \gg a^{0}_{e}$
at points in the respective parametrizing cylinders where
$\sigma$ is within $3\rho_{\hat{e}}$ of $\theta_{o}$. In
particular, choose $a^0_{\hat{e}} \gg a^{0}_{e} -2\ln (\rho_{e1})$.
Likewise, make $a^0_{e'}$ and $a^0_{\hat{e}'}$ both constant with
$a^0_{\hat{e}'} \gg a^0_{e'}$ at points where $\sigma  < \theta
_{0}+3\rho_{\hat{e}}$ in their respective parametrizing cylinders. If
$\rho_{\hat{e}1}$ is sufficiently small, then these choices have the
following consequences: First, all intersections between $Y_{o}$ and
$Y_{\hat{o}}$ occur at points in $Y_{o}$ at very large s, in particular where
the $o$ version of the coordinates $(r, \tau )$ are defined and where the
corresponding $\beta_{*} = 1$. More to the point, these
intersection points occur where $Y_{o}$ looks very much like a multiple
cover of the $\mathbb{R}$--invariant cylinder $\mathbb{R}    \times     
\gamma_{o}$. Meanwhile, these intersection points occur in $Y_{\hat{o}}$ where the
$\hat{o}$ version of $\beta_{*}$ is zero.

Granted the preceding, keep in mind the following: Let $I$ denote an arc
with compact closure in an orbit of the Reeb vector field. Then $\mathbb{R}
\times  I$ has transversal intersections with the closure of any given
version of $K_{(\cdot )}$, and that these intersection points have positive
local intersection number. As can be verified using~\eqref{eq:4.33}, this is
a consequence of the positivity of the relevant version of the function
$\alpha_{Q}$.

Now, as remarked, if $\rho_{e1} \ll \rho_{\hat{e}1}$ and if
$a^{0}_{e} \ll a^0_{\hat{e}}$, then $Y_{o}$ looks very much like the
cylinder $\mathbb{R}    \times     \gamma_{o}$ where it intersects
$Y_{\hat{o}}$. Meanwhile, neighborhoods in $Y_{\hat{o}}$ of the intersection
points are constant translates along $\mathbb{R}$ in $\mathbb{R}    \times  (S^1 \times  S^{2})$
of a standard embedding. This understood, it
should not come as a surprise that these intersections are also transversal
and have positive intersection number. It is left to the reader as an
exercise with~\eqref{eq:4.14},~\eqref{eq:4.15} and~\eqref{eq:4.33} to verify that such is the case.

\step{Part 3}
This part of the discussion provides an overview of the strategy that is
used below to control the remaining intersections between distinct versions
of $K_{(\cdot )}$.

To start, suppose that $e'$ and $e''$ are edges of $T$, and that
$\theta_{* 0} < \theta_{*1}$ are values of $\theta$ on both
$K_{e'}$ and $K_{e''}$. In addition, suppose that both the $e = e'$ and $e = e''$ versions of the pair
$(a_{e}, w_{e})$ are given by~\eqref{eq:4.1} when $\sigma
     \in [\theta_{*0}, \theta_{*1}]$. It then follows
from~\eqref{eq:3.2} that $K_{e'}$ and $K_{e''}$ are disjoint provided that
\begin{equation}\label{eq:4.69}
| a^0_{e'} - a^0_{e''}|  > \varepsilon_{e'} +
\varepsilon_{e''} \text{ when }\sigma      \in  [\theta_{*0}, \theta
_{*1}].
\end{equation}
The pair $K_{e'}$ and $K_{e''}$ are said below to be `well separated' at a
given value, $\theta_{*}$, of $\theta$ if~\eqref{eq:4.1} describes the $e = e'$ and $e = e''$
versions of $(a_{e}, w_{e})$ and if the inequality
in~\eqref{eq:4.69} holds at $\sigma =\theta_{*}$. So as to avoid repetitive
qualifiers, the respective portions of two versions of $K_{(\cdot )}$ where
$\theta$ has a given range are also deemed `well separated' in the event
that one or both such portions is empty.

The strategy used below keeps the various versions of $K_{(\cdot )}$
pairwise well separated as much as possible. To implement the strategy,
first fix some positive constant $\rho $, smaller than the constant $\delta
$ that was introduced in \fullref{sec:4a}. Thus, $\rho$ is much smaller
than $\frac{1}{1000}$ times the difference between the larger
and smaller of the angles that label the vertices on any given edge of $T$.
Agree to make sure that all versions of $\rho_{e0}$ and $\rho_{e1}$
are much less than $\rho $. The plan is to keep the versions of $K_{(\cdot
)}$ pairwise well separated at angles with distance $2\rho $ or more
from the angles that label $T$'s vertices. To be precise, the various versions
of $a^{0}_{e}$ are taken to be locally constant on the complement in $[0,
\pi ]$ of the points with distance $2\rho $ or less from the finite set of
angles that label the vertices of $T$. Of course, these constant values are
chosen to insure that~\eqref{eq:4.69} is pairwise obeyed.

Granted the preceding, it is worth noting in advance those values of $\theta
$ where well separation must be abandoned. It proves useful for this purpose
to have on hand a particular proper immersion of $T$ into the rectangle $[-1,
1] \times  [0, \pi ]$. To define this immersion, first map the monovalent
and bivalent vertices to the boundary of the rectangle $[-1,1] \times  [0,
\pi ]$ in the following manner: Each vertex whose label from $\hat{A}$ has the
form $(\cdot ,-,\ldots)$ is placed on $\{-1\} \times [0, \pi ]$ by using its angle label for the $[0, \pi ]$
factor. The analogous rule places each vertex from $\hat{A}$ of the form $(\cdot ,+,\ldots)$ on
$\{1\}\times  [0, \pi ]$. Put each monovalent
vertex with label (1) from $\hat{A}$ on $(-1, 1) \times  \{0\}$, and put
each with a $(-1)$ label on $(-1, 1) \times     \pi$. In this regard, if $e$ is
the incident edge to such a vertex, set the horizontal coordinate of the
vertex equal to the value of $\tanh(a^{0}_{e})$ at either $\sigma  = 0$ or
$\sigma =\pi$ as the case may be. If $o$ is any given trivalent vertex,
use $\theta_{o}$ to denote its angle label, and place $o$ on $(-1, 1)
\times \theta_{o}$.

To finish the construction, it is necessary to identify each edge of $T$ with
an arc in the rectangle that runs between the relevant vertices. This is to
be done so that the interior of each arc avoids the boundary of the
rectangle and also avoids all vertices. In addition, the horizontal
coordinate on the rectangle must restrict without critical points to each
arc.

The vertical coordinate on the arc labeled by a given edge $e$ at any given
interior point is written below as $\tanh(s_{e})$ with $s_{e} \in \mathbb{R}$.

If $e'$ and $e''$ are distinct edges and if their representative arcs can be
drawn without interior intersections, then $K_{e'}$ and $K_{e''}$ can be
kept well separated. Indeed, this can be done along the following lines: Use
$\sigma$ to denote the vertical coordinate in the rectangle. As $\sigma$
has nowhere zero derivative on each arc, so the corresponding version of
$s_{(\cdot )}$ can be viewed as a function of $\sigma$. This understood,
identify $a^0_{e'}$ and $a^0_{e''}$ with $s_{e'}$ and $s_{e''}$. If
$\varepsilon_{e'}$ and $\varepsilon_{e''}$ are made very small, the
resulting $K_{e'}$ will then be well separated from $K_{e''}$.

As might be expected, crossing of these edge labeled arcs may be
unavoidable. To identify the necessary arc crossings, draw the edge labeled
arcs by starting at the top edge of the rectangle, $[-1, 1] \times \{\pi \}$,
and proceeding downwards. To conform to what has been said
already, all edge labeled arcs will be drawn as vertical arcs except perhaps
where the horizontal coordinate has distance $2\rho $ or less to an angle
that labels a vertex in $T$. Of course, distinctly labeled vertical arcs will
have distinct horizontal coordinates.

With the arcs drawn in this manner, the following are the only circumstances
that may require one arc to cross another:
\itaubes{4.70}
\textsl{If edges $e$ and $\hat{e}$ have monovalent vertices that share an angle assignment in $(0, \pi )$,
then a crossing of their arcs may be necessary to keep the $a^{0}_{e}$ and $a^0_{\hat{e}}$
assignments compatible with those given already in \refstep{Part 1}.
}

\item \textsl{Let $o$ denote a monovalent vertex with a label $(0,-,\ldots )$ and suppose that $e$ is the incident edge.
Let $\hat{e}$ denote a second edge whose vertices are assigned angles that are distinct from $\theta_{o}$.
The arcs labeled by $e$ and $\hat{e}$ must cross in the case that $s_{e}$
and $s_{\hat{e}}$ are both defined with $s_{e} > s_{\hat{e}}$ at the relevant $\sigma      \in \{\theta
_{0}\pm 2\rho \}$. It follows from~\eqref{eq:4.9} that no crossing is necessary if $s_{e} < s_{\hat{e}}$
at this value of $\sigma$.
}

\item \textsl{Let $o$ denote a bivalent vertex and suppose that $e$ is an edge that is incident to $o$.
The arc labeled by $e$ must cross that labeled by some other edge $\hat{e}$ if $s_{e}$ and $s_{\hat{e}}$
are both defined with $s_{e} < s_{\hat{e}}$ at the relevant $\sigma \in \{\theta_{o}\pm 2\rho \}$.
It follows from~\eqref{eq:4.14} and~\eqref{eq:4.15} that no crossing is necessary if $s_{e} > s_{\hat{e}}$
at this value of $\sigma$.
}

\item \textsl{Let $o$ denote a trivalent vertex that connects by two incident edges to vertices with larger angle label.
Denote these two edges by $e'$ and $e''$. Then the respective arcs labeled by $e'$ and $e''$ cross in the case that
$[Q_{e'}, Q_{e''}] < 0$ and $s_{e'} < s_{e''}$ at $\sigma =\theta_{o}+2\rho $.
It follows from~\eqref{eq:4.46} and~\eqref{eq:4.47} that no crossing is necessary if both $[Q_{e'}, Q_{e''}] < 0$
and $s_{e'} >s_{e''}$ at $\sigma =\theta_{o}+2\rho $, or if $[Q_{e'}, Q_{e''}] = 0$.
}

\item \textsl{Let $o$, $e'$ and $e''$ be as in the previous point. Let $\hat{e}$ denote a third edge and suppose that
$s_{\hat{e}}$ is defined at $\sigma =\theta_{o}+2\rho $ and suppose that it lies between
$s_{e'}$ and $s_{e''}$. Then the arc labeled by $\hat{e}$ must cross either that labeled by
$e'$ or that labeled by $e''$.
}

\item \textsl{Let $o$ and $o'$ both denote vertices with angle label 0. Then the respective arcs that are labeled by
the incident edges to $o$ and $o'$ may have to cross to keep the $a^0_{e'}$ and $a^0_{e''}$
assignments compatible with those given already in \refstep{Part 1} of this discussion.
}
\end{itemize}

\refstep{Part 5}--\refstep{Part 8} below address these various cases.

\step{Part 4}
This part of the story relates two observations that are subsequently
exploited in the case that $K_{e'}$ and $K_{e''}$ can not be kept well
separated.

\substep{Observation 1}
This observation concerns an
example where~\eqref{eq:4.69} holds at $\sigma =    \theta_{*0}$ and
$\sigma =\theta_{*1}$, fails in between, yet
$K_{e'}$ and $K_{e''}$ remain disjoint. In particular, if $Q_{e'}$ is
proportional to $Q_{e''}$, then the respective signs of $a^0_{e'} -a^0_{e''}$ can differ at
$\sigma =\theta_{*1}$ and at
$\sigma =\theta_{*0}$ with $K_{e'}$ still disjoint from
$K_{e''}$.

To explain, note that whether or not $Q_{e'}$ and $Q_{e''}$ are
proportional, a point $(\sigma , v')$ in the parametrizing cylinder for
$K_{e'}$ and a point $(\sigma , v'')$ in the parametrizing cylinder for
$K_{e''}$ are sent via the relevant versions of~\eqref{eq:3.2} to the same point in
$\mathbb{R}    \times  (S^1 \times  S^{2})$ if and only if the following
holds: There is an $\mathbb{R}$--valued lift, $\hat {v}'$, of $v'$, a
corresponding lift, $\hat {v}''$, of $v''$, and an integer pair $N = (n, n')$
such that
\begin{equation}\label{eq:4.71}
\begin{aligned}
\alpha
_{Q_{e' } }
\hat {v}' &= \alpha_{Q_{e''} } \hat {v}'' - 2\pi
\alpha_{N}
\\
a_{e'}(\sigma , v') &= a_{e''}(\sigma , v'')
\\
w_{e''}(\sigma , v') &= w_{e''}(\sigma , v'') - \frac{1}
{\alpha_{Q_{e'}}}[Q_{e'}, Q_{e''}] \hat {v}'' + 2\pi
\frac{1}  {\alpha_{Q_{e'}}}[Q_{e'}, N]
\end{aligned}
\end{equation}
Now, if $Q_{e'}$ is proportional to $Q_{e''}$ then the middle term in the
lowest line above is zero. Such being the case, let $\kappa $ denote the
maximum of the $Q = Q_{e'}$ version of $\alpha_{Q}$ over the interval
$[\theta_{*0}, \theta_{*1}]$. Then the third point
in~\eqref{eq:4.71} can not be met if $0 < | w_{e'} - w_{e''}|  < 2\pi
\frac{1}{\kappa}$ in this interval. This last condition can be
achieved by suitable choices of $w^0_{e'}$ and $w^0_{e''}$ if
$\varepsilon_{e}$ and $\varepsilon_{e''}$ are small. Of course, if the
third condition in~\eqref{eq:4.71} can not be met, then no amount of variation in
$a^0_{e'}$ and $a^0_{e''}$ on $[\theta_{*0}, \theta_{*1}]$ will make $K_{e'}$ intersect $K_{e''}$.

\substep{Observation 2}
Intersections between $K_{e'}$ and
$K_{e''}$ are allowed when transversal with $+1$ local intersection
number. This can always be arranged at values of $\theta$ in $(\theta_{*0}, \theta_{*1})$
if the following three conditions hold: First, $[Q_{e'}, Q_{e''}] < 0$. Second,~\eqref{eq:4.69} holds at
both $\sigma =\theta_{*0}$ and at $\sigma =\theta_{*1}$. Finally, $a^0_{e'} - a^0_{e''}$
is positive at $\sigma  = \theta_{*0 }$ but negative at $\sigma =\theta_{*1}$.

To explain this last claim, note that when both $\varepsilon_{e}$ and
$\varepsilon_{e'}$ are sufficiently small, then~\eqref{eq:4.33} dictates that the
intersections between $K_{e'}$ and $K_{e''}$ where $\theta  \in (\theta_{*0}, \theta_{*1})$
are transversal with sign
that of $(a^{0}_{e' }- a^0_{e''})_{\sigma } [Q_{e'},Q_{e''}]$.
Thus, if the given conditions are satisfied, then the variation
of $a^0_{e'}$ and $a^0_{e''}$ over $[\theta_{*0}, \theta_{*1}]$
can be arranged to guarantee the given conclusions.

\step{Part 5}
This part of the discussion considers the cases in the first and second
points of~\eqreft4{70}. In the situation outlined by the first point, any two
edges that are involved will have respective versions of $Q_{(\cdot )}$ that
are proportional. With this understood, then the first observation in the
preceding \refstep{Part 4} can be used to keep the corresponding versions of
$K_{(\cdot )}$ disjoint in spite of the crossing of the corresponding
arcs in $[-1, 1] \times  [0, \pi ]$.

Turn now to the second point in~\eqreft4{70}. Suppose that $o$ is a monovalent
vertex in $T$ with label $(0,-,\ldots)$ from $\hat{A}$. Suppose
first that $\theta$ takes values that are greater than $\theta_{o}$ on
$K_{e}$. If, as assumed, $a^{0}_{e} > a^0_{\hat{e}}$ at $\sigma  = \theta_{o}+2\rho $,
then a suitable modification of $a^{0}_{e}$ can
guarantee that the $\theta      \in (\theta_{o}, \theta_{o}+2\rho )$
intersection points between $K_{e}$ and $K_{\hat{e}}$ are
transversal and have $+1$ local intersection number.

To explain, take $\rho_{e0}$ to be very small, and let $\theta_{*0} \equiv \theta_{o}+2\rho_{e0}$ and
$\theta_{*1} \equiv     \theta_{o}+2\rho $. The assumption here is that
$a^0_{\hat{e}} - a^{0}_{e}$ is negative at $\theta_{*1}$. The
goal then is to modify $a^{0}_{e}$ inside the interval $(\theta_{*0}, \theta_{*1})$ so that
$a^0_{\hat{e}} - a^{0}_{e}$ is
positive at $\theta_{*0}$. For this purpose, keep in mind that the
pair $Q_{e}$ is equal to $m (p, p')$ with $m$ a positive integer and with $(p,
p')$ the relatively prime pair of integers that $\theta_{o}$ defines via~\eqref{eq:1.7}.
As such, there is a positive number, $\kappa $, such that
\begin{equation}\label{eq:4.72}
\alpha_{Q_{\hat{e}}}(\theta_{o})=\kappa [Q_e, Q_{\hat{e}}]
\end{equation}
Since the right hand side of~\eqref{eq:4.72} is positive, so $[Q_{e}, Q_{\hat{e}}] >
0$. This understood, if $\hat{e}$ and $e$ are respectively renamed as $e'$ and $e''$,
then the final observations in \refstep{Part 4} can be applied here to find the
desired modification of $a^{0}_{e}$.

Consider next the case that $\theta$ takes values on $K_{e}$ that are less
than $\theta_{o}$. In this case, set $\theta_{*0}=\theta
_{0}-4\rho_{e1}$ and set $\theta_{*1}=\theta
_{0}-2\rho $. By assumption, $a^{0}_{e}- a^0_{\hat{e}} > 0$ at
$\sigma =\theta_{*0}$, and the goal is to modify $a^{0}_{e}$
inside the interval $(\theta_{*0}, \theta_{*1})$ so that
$a^{0}_{e}- a^0_{\hat{e}}$ is negative at $\sigma =\theta
_{*1}$. For this purpose, note that $Q_{e}$ in this case is equal to
$-m (p, p')$ with $m$ a positive integer and with $(p, p')$ as before.
Thus,~\eqref{eq:4.72} holds with $\kappa  < 0$ and so $[Q_{\hat{e}}, Q_{e}] > 0$. In this
case, agree to relabel $\hat{e}$ as $e''$ as $e$ as $e'$. This done, then the
observations in \refstep{Part 4} again apply to give the desired modification of
$a^{0}_{e}$.

\step{Part 6}
This part considers the third point in~\eqreft4{70}. Consider here the case that
 $o$ is a bivalent vertex in $T$. Let $e$ and $e'$ denote the edges of $T$ that contain
 $o$ with the convention taken here that $\theta$ takes values that are less
than $\theta_{o}$ on $K_{e}$. Let $\rho_{*}$ denote the equal
values of $\rho_{e1}$ and $\rho_{e'0}$. Now, suppose that $\hat{e}$ is a
third edge and that $\theta_{o}$ is a value of $\theta$ on $K_{\hat{e}}$.
As noted in~\eqreft4{70}, in the case that $a^0_{\hat{e}} < a^{0}_{e}$ where
$\sigma = \theta_{o}-2\rho $ then it is a straightforward
consequence of~\eqref{eq:4.14} and~\eqref{eq:4.15} that the values of $a^0_{\hat{e}}$ can be
modified if necessary where $| \sigma -\theta_{o}|  < 2\rho $ so as to guarantee that
$K_{\hat{e}}$ is disjoint from the $| \theta -\theta_{o}|      \leq 2\rho $ part of the
closure of $K_{e}     \cup K_{e'}$.

On the other hand, if $a^0_{\hat{e}} - a^{0}_{e} > 0$ where $\sigma =\theta_{o}-2\rho $,
then it may not be possible to
modify $a^0_{\hat{e}}$ where $| \sigma -\theta_{o}|
\leq 2\rho $ so that $K_{\hat{e}}$ avoids the $\theta \in [\theta
_{0}-2\rho, \theta_{o}+2\rho ]$ portion of the closure of
$K_{e}     \cup K_{e'}$. However, as is explained next, there are
modifications that guarantee that all intersection points here are
transversal with $+1$ local intersection number.

To see how this such modifications come about, suppose that
$a^0_{\hat{e}} > a^{0}_{e}$ where $\sigma =\theta_{o}-2\rho $. In this case,
keep $a^{0}_{e}$ constant where $\sigma      \in [\theta_{o}-2\rho , \theta_{o}-\rho ]$ but make
$a^0_{\hat{e}}$ a non-decreasing function of $\sigma$ in this interval so
that the result is constant near $\theta_{o}-\rho $ and is such that
$a^0_{\hat{e}} \gg -4\ln(\rho_{*})$ at $\sigma =\theta_{o}-\rho $.
In particular, make this constant value greater
than $-4\ln(\rho_{*})$ plus the supremum of the values of
$a^{0}_{e}$ and $a^0_{e'}$ on the $| \sigma -\theta_{*}|  < 2\rho $
portions of their parametrizing cylinders. This done, keep $a^0_{\hat{e}}$ constant on $[\theta_{o}-\rho ,
\theta_{o}+\rho ]$, thus huge. Now, the larger this constant value for
$a^0_{\hat{e}}$, the closer $K_{e}     \cup K_{e'}$ is to an $\mathbb{R}$--invariant
cylinder where it comes near $K_{\hat{e}}$. With this in mind,
the argument used in \refstep{Part 2}
can be repeated in the case at hand to guarantee
that the intersection points between $K_{\hat{e}}$ and the closure of the
$\theta      \in [\theta_{o}-2\rho , \theta_{o}+2\rho ]$ part
of $K_{e}     \cup K_{e'}$ are transversal with $+1$ local intersection
numbers if the constant value of $a^0_{\hat{e}}$ on the interval $[\theta
_{0}-\rho , \theta_{o}+\rho ]$ is sufficiently large.

\step{Part 7}
This part considers the fourth and fifth points in~\eqreft4{70}. In the case of
the fourth point, the second observation of \refstep{Part 4} can be employed to
defined $a^0_{e'}$ and $a^0_{e''}$ where $\sigma      \in [\theta
_{0}+\rho , \theta_{o}+2\rho ]$ to the following effect: First,
all $\theta      \in [\theta_{o}, \theta_{o}+2\rho ]$
intersections between $K_{e'}$ and $K_{e''}$ occur where
$\theta      \in [\theta_{o}+\rho, \theta_{o}+2\rho ]$, are transversal, and
have +1 local intersection number. Second, $a^0_{e'} - a^0_{e''} > \varepsilon_{e'}+\varepsilon_{e''}$
at $\sigma =\theta_{o}+\rho $.

Consider next the situation that is described in the fifth point of~\eqreft4{70}.
To start, set the convention so that $[Q_{e'}, Q_{e''}] \leq 0$. Use $e$
to denote the third of the incident edges to $o$, and use $\rho_{*}$
to denote the common values of $\rho_{e1}$, $\rho_{e'0}$ and $\rho
_{e''0}$.

There are two cases to consider. The first case has $a^0_{e''} > a^0_{\hat{e}} > a^0_{e'}$
at $\theta =\theta_{o}+2\rho $.
With the first two observations of \refstep{Part 4} in mind, there is no cause for
concern in this case if either $[Q_{\hat{e}}, Q_{e''}] \leq 0$ or
$[Q_{\hat{e}}, Q_{e'}] \ge 0$. An argument that rules out the
possibility that both inequalities fail simultaneously invokes an identity
that concerns a set of four ordered pairs of real number: Denote the four
ordered pairs as $\{A_{k}\}_{k = 0,1,2,3}$, and here is the identity:
\begin{equation}\label{eq:4.73}
[A_{1}, A_{2}] [A_{3}, A_{0}] + [A_{2}, A_{3}] [A_{1},
A_{0}] + [A_{3}, A_{1}][A_{2}, A_{0}] = 0 .
\end{equation}
In this last equation, the bracket between a pair $A = (a, a')$ and another, $B= (b, b')$
is again defined by the rule $[A, B] = ab'-a'b$. This last identity
is now applied with $A_{0}$ equal to the value of $(1-3\cos^{2}\theta ,\surd 6\cos\theta )$ at
$\theta =\theta_{o}+2\rho $, $A_{1} = Q_{e'}$, $A_{2} = Q_{e''}$ and $A_{3} = Q_{\hat{e}}$. With these
assignments,~\eqref{eq:4.73} is equivalent to the assertion that
\begin{equation}\label{eq:4.74}
[Q_{e'}, Q_{e''}]
\alpha_{Q_{\hat{e}}}  + [Q_{e''}, Q_{\hat{e}}]
\alpha_{Q_{e' } }  + [Q_{\hat{e}}, Q_{e'}] \alpha
_{Q_{e''} }  = 0.
\end{equation}
Since the various versions of $\alpha_{Q}$ are positive and since
$[Q_{e'}, Q_{e''}] \leq  0$, this equality rules out the possibility
that both $[Q_{\hat{e}}, Q_{e''}] > 0$ and $[Q_{\hat{e}}, Q_{e'}] < 0$.

In the second case, $a^0_{e'} > a^0_{\hat{e}} > a^0_{e''}$ at
$\sigma =\theta_{o}+2\rho $. If $[Q_{e''}, Q_{\hat{e}}] \leq  0$
or if $[Q_{\hat{e}}, Q_{e'}] \leq  0$, then the second observation in
\refstep{Part 4} above can be applied to suitably modify $a^0_{\hat{e}}$ where $\sigma
 \in [\theta_{o}-2\rho, \theta_{o}+2\rho ]$ so that the
resulting version of $K_{\hat{e}}$ intersects the portion of the closure of
$K_{e}     \cup K_{e'}     \cup K_{e''}$ where $\theta \in [\theta_{o}-2\rho, \theta_{o}+2\rho ]$
transversally with $+1$ local intersection numbers. Of course, it may well be that both of these
inequalities go the wrong way.

What follows explains the story when both $[Q_{e''}, Q_{\hat{e}}] > 0$ and
$[Q_{\hat{e}}, Q_{e'}] > 0$. The first step here is to make $\varepsilon
_{e}, \varepsilon_{e'}$ and $\varepsilon_{e''}$ constant where
$| \sigma -\theta_{o}|  < 2\rho $. Meanwhile, decrease
$\varepsilon_{\hat{e}}$ between $\theta_{o}+2\rho $ and $\theta
_{0}+\rho $ so that the result is constant near $\theta_{o}+\rho$ and very much smaller than
$(\varepsilon_{e}\rho_{*})^{6}$. Extend $\varepsilon_{\hat{e}}$ to
$[\theta_{o}-2\rho_{*}, \theta_{o}+\rho ]$ as this constant. Next, vary
$a^0_{\hat{e}}$ as $\sigma$ decreases from $\theta_{o}+\rho $ to
$\theta_{o}+\frac{1}{2}\rho $ so that the result is
constant and zero near $\theta_{o}+\frac{1}{2}\rho $. Then,
extend $a^0_{\hat{e}}$ to the interval $[\theta_{o}-2\rho_{*}, \theta_{o}+\frac{1}{2}\rho ]$ as zero. It follows
from the vertex $o$ versions of the formulae in~\eqref{eq:4.1},~\eqref{eq:4.46} and~\eqref{eq:4.47} that
$a^0_{e'}$ and $a^0_{e''}$ can be chosen so that $K_{e'}$, $K_{e''}$
and $K_{\hat{e}}$ are pairwise disjoint where $\theta      \in [\theta
_{0}+2\rho_{*}, \theta_{o}+2\rho ]$.

In the case that $[Q_{\hat{e}}, Q_{e}] \ne 0$, vary $w^0_{\hat{e}}$ as
$\sigma$ decreases from $\theta_{o}+2\rho $ to $\theta_{o}+\rho $
so that the result is constant and zero near $\theta_{o}+\rho $. In the
case that $[Q_{\hat{e}}, Q_{e}] = 0$, introduce $\kappa $ to denote the
maximum value of the $Q = Q_{e}$ version of $\alpha_{Q}$ on the $\sigma  \in [\theta_{o}-2\rho, \theta_{o}]$
portion of $K_{e}$'s
parametrizing cylinder. In this case, vary $w^0_{\hat{e}}$ as $\sigma$
decreases from $\theta_{o}+2\rho $ to $\theta_{o}+\rho $ so that
the result is constant near $\theta_{o}+\rho $ and equal to
$\frac{\pi}{2\kappa}$. Extend $w^0_{\hat{e}}$ as a constant
to the interval $[\theta_{o}-2\rho_{*}, \theta_{o}+\rho]$.

Now, if $\varepsilon_{\hat{e}}$ is very much smaller than $(\varepsilon
_{e}\rho_{*})^{6}$ on $[\theta_{o}-2\rho_{*},\theta_{o}+\rho ]$ it then follows from the formulae
in~\eqref{eq:4.42},~\eqref{eq:4.43},~\eqref{eq:4.46} and~\eqref{eq:4.47}
that any intersection between $K_{\hat{e}}$ and the
portion of the closure of $K_{e}     \cup K_{e'}     \cup K_{e''}$ where
$\theta      \in [\theta_{o}-2\rho_{*}, \theta_{o}+2\rho ]$ occurs in the region of the latter surface that is
parametrized via~\eqref{eq:3.2} and~\eqref{eq:4.48} by the radius $\rho_{*}$ disk
centered in the $x$--$y$ plane. In fact, these intersections must occur at points
whose $x$--$y$ coordinates are within $4\varepsilon_{\hat{e}}$ of zero when
$\varepsilon_{\hat{e}}$ is very small.

To continue, note that the origin in the $x$--$y$ plane maps to a point in the
closure of the union $K_{e}     \cup K_{e'}     \cup K_{e''}$ where the
tangent plane is parallel to the tangent plane of $\mathbb{R}    \times \gamma$, where
$\gamma \subset S^{1}    \times S^{2}$ is a small
portion of an integral curve of the Reeb vector field. Now, as an
observation of \refstep{Part 2} recalls, $K_{\hat{e}}$ intersects such a surface
transversely with $+1$ local intersection numbers. This suggests that the
intersections of the small $\varepsilon_{\hat{e}}$ version of $K_{\hat{e}}$
with the $\theta      \in [\theta_{o}-2\rho_{*}, \theta_{o}+2\rho ]$
portion of the closure of $K_{e}     \cup K_{e'}     \cup K_{e''}$ will be transversal with $+1$
local intersection numbers. It is a
left as an exercise with~\eqref{eq:4.33} and~\eqref{eq:4.48} to verify that such is indeed the
case.

There is still more to do because as things stand now, both
$a^0_{\hat{e}}$ and $a^{0}_{e}$ are zero where $\sigma=\theta_{o}-2\rho_{*}$. Note that the
$\theta  \in [\theta_{o}-2\rho, \theta_{o}-2\rho_{*}]$ portions of
the cylinders $K_{e}$ and $K_{\hat{e}}$ will be disjoint as long as
$\varepsilon_{\hat{e}}$ is very much smaller than $\varepsilon $ and both
the pairs $(a^0_{\hat{e}}, w^0_{\hat{e}})$ and $(a^{0}_{e},w^{0}_{e})$ are kept at their
$\sigma =\theta_{o}-2\rho_{*}$ values as $\sigma$ decreases further to $\theta_{o}-2\rho $. Even
so, such an extension is not consistent at $\sigma =\theta_{o}-2\rho $ with~\eqref{eq:4.69}.

The desired extension of $(a^{0}_{e}, w^{0}_{e})$ keeps the latter
constant on $[\theta_{o}-2\rho, \theta_{o}-2\rho_{*}]$.  Meanwhile,
the extension of $(a^0_{\hat{e}}, w^0_{\hat{e}})$ employs the first
and second observations in \refstep{Part 4}.  To be more explicit,
$a^0_{\hat{e}}$ is either increased or decreased from zero as $\sigma$
decreases so that it is constant near $\theta_{o}-2\rho $, but with a
value that obeys $| a^0_{\hat{e}}| > \varepsilon
_{e}+\varepsilon_{\hat{e}}$. In this regard, $a^0_{\hat{e}}$ is
increased in the case that $[Q_{\hat{e}}, Q_{e}] < 0$ and it is
decreased when $[Q_{\hat{e}}, Q_{e}] > 0$. In the case that
$[Q_{\hat{e}}, Q_{e}] =0$, either a decrease or increase is
permissible. It then follows using~\eqref{eq:4.1} and~\eqref{eq:4.33}
that such a version of $a^0_{\hat{e}}$ can be constructed to insure
that $K_{\hat{e}}$ and $K_{e}$ are disjoint at values of $\theta$ near
$\theta_{o}+4\rho $ and that they intersect transversally where
$\theta \in [\theta_{o}-2\rho, \theta_{o}-2\rho_{*}]$ with $+1$ local
intersection numbers. In this regard, note that this can be done in
the case that $[Q_{\hat{e}}, Q_{e}]=0$ without introducing any
intersections between $K_{\hat{e}}$ and $K_{e}$.

\step{Part 8}
This last part of the subsection discusses the final point in~\eqreft4{70}. To
explain the situation here, let $o'$ and $o''$ denote distinct vertices of $T$
with angle label 0, and let $e'$ and $e''$ denote the corresponding incident
edges. Suppose first that the $\hat{A}$ label of $o'$ has the form $(1,+,\ldots)$
while $o''$ has either (1) or a label of the form
$(1,-,\ldots)$ from $\hat{A}$. In this case, there is no need
for an arc crossing if $s_{e'} > s_{e''}$ at $\sigma  = 2\rho $. Such is
also true when $s_{e'} > s_{e''}$ at $\sigma  = 2\rho $ and the $\hat{A}$
label of $o'$ is labeled by (1) while that of $o''$ has the form $(1,-,\ldots)$.
However, in either case, the corresponding arcs must
cross where $\sigma  < 2\rho $ if $s_{e'} < s_{e''}$. Make such a
crossing where $\sigma      \in [\rho , 2\rho ]$ and the second
observation in \refstep{Part 4} can be applied to choose $a^0_{e'}$ and
$a^0_{e''}$ on $[\rho , 2\rho ]$ so as to keep all $\theta  < 2\rho $
intersections between $K_{e'}$ and $K_{e''}$ where $\theta  \in [\rho , 2\rho ]$,
all transversal, and all with $+1$ local intersection number.

The case that both $o'$ and $o''$ have a $(+1)$ label is the simplest of those
where $o'$ and $o''$ have the same sort of label from $\hat{A}$. In this case,
$[Q_{e'}$, $Q_{e''}] = 0$, and so the first observation in \refstep{Part 4} can be
used to keep $K_{e'}$ disjoint from $K_{e''}$ if the corresponding $e'$ and
 $e''$ arcs must cross at some point where $\sigma  < 2\rho $.

In the case that $o'$ and $o''$ both have either a $(1,+,\ldots)$ label or a $(1,-,\ldots)$ label,
there are two subcases
to consider. In the case that the $e'$ and $e''$ arcs must cross where $\sigma  \leq 2\rho $,
then make such a crossing where $\sigma      \in [\rho , 2\rho ]$. In the case that $Q_{e'}$ is not proportional
to $Q_{e''}$, the second observation in \refstep{Part 4} is used to choose
$a^0_{e'}$ and $a^0_{e''}$ on $[\rho , 2\rho ]$ so as to keep all
$\theta  < 2\rho $ intersections between $K_{e'}$ and $K_{e''}$ where
$\theta      \in [\rho , 2\rho ]$, all transversal, and all with local
$+1$ intersection number. In the case that $[Q_{e'}, Q_{e''}] = 0$, then the
first observation in \refstep{Part 4} can be used to choose $(a^0_{e'},w^0_{e'})$ and $(a^0_{e''}, w^0_{e''})$ on
$[\rho , 2\rho]$ so as to keep $K_{e'}$ disjoint from $K_{e''}$ where $\theta  < 2\rho $.

\subsection{The case when $\wp $ has sets with two or more elements}\label{sec:4f}

This last subsection considers now the general case where the graph $T$ is
defined by a partition with sets that have more than one element. In what
follows, $\wp $ will denote such a partition with chosen cyclic orderings of
its subsets. The discussion here is broken into four parts. The first three
parts serve to specify the collection $\{(a_{e}, w_{e})\}$ and the
remaining part verifies that the collection meets all requirements.

\step{Part 1}
The purpose of this first part of the discussion is to construct from $T$ a
canonical moduli space graph to which the constructions in \fullref{sec:4c} apply.
This new graph is denoted by $\hat {T}$. The latter is isomorphic to $T$ as an
abstract graph via an isomorphism that preserves the labels of all edges and
all but the bivalent vertices. The isomorphism also preserves the angles of
the corresponding pairs of bivalent vertices.

Here is how the graphs $T$ and $\hat {T}$ differ: Suppose that $o \in T$  is
a bivalent vertex, and let $\hat{o}  \in \hat {T}$ denote its
partner. The vertex $o$ is labeled by a cyclic ordering of a partition subset,
say $\wp_{o}     \in     \wp $. Meanwhile, $\hat{o}$ is labeled by the data
$(0,+, P_{\hat{o}})$ where $P_{\hat{o}}$ is the sum of the integer pairs from
the elements in $\wp_{0}$.

A referral to \fullref{sec:3a} shows that $\hat {T}$ is a bona fide
moduli space graph. Moreover, the discussion in \fullref{sec:4c}
applies to $\hat {T}$. Let $\{(a_{\hat{e}}, w_{\hat{e}})\}$
denote the resulting data set for $\hat {T}$ as constructed in
the preceding Sections~\ref{sec:4b}--\ref{sec:4e}. The required set
$\{(a_{e}, w_{e})\}$ for $T$ is constructed now either starting directly
from $\{(a_{\hat{e}}, w_{\hat{e}})\}$, or from the
$J$--pseudoholomorphic curve that \fullref{thm:3.1} provides from
$\{(a_{\hat{e}}, w_{\hat{e}})\}$. The former approach is taken
below and the latter is left as an exercise for the reader.

\step{Part 2}
Suppose that $e$ is a given edge of $T$, and let $o'$ and $o$ denote the vertices
from $T$ that lie on $e$ with the convention taken that $\theta_{o'} < \theta_{o}$.
The corresponding edge, $\hat{e}$, in $\hat {T}$ has the
corresponding bounding vertices $\hat{o}$ and $\hat{o}'$ with $\theta_{\hat{o}} = \theta_{o}$ and
$\theta_{\hat{o}'}=\theta_{o'}$. This
understood, the functions $(a_{e}, w_{e})$ on $[\theta_{o'}, \theta_{o}]\times \mathbb{R}/(2\pi \mathbb{Z})$
are set equal to
$(a_{\hat{e}}, w_{\hat{e}})$ at all points except in the case that one of $o$
and $o'$ is a bivalent vertex. In the latter case, the equality still holds
except at values of $(\sigma , v)$ that are very close to those of the
missing point for the $\hat {T}$--parametrization on the relevant boundary
circle. In any event, the required data $\{\varepsilon_{e}, \rho
_{e0}, \rho_{e1}, a^{0}_{e}, w^{0}_{e}, v^{0}_{e}\}$
for $(a_{e}, w_{e})$ are declared equal to their $\hat {T}$ counterparts.

To be more explicit about the differences between $(a_{e}, w_{e})$ and
$(a_{\hat{e}}, w_{\hat{e}})$, suppose for the sake of argument that the vertex
 $o \in T$ is bivalent. Let $\hat{e}$ denote the respective partner to $e$ in
$\hat {T}$. Let $\rho $ denote the constant value of $\rho_{\hat{e}1}$
where $\sigma$ is near $\theta_{o}$. For convenience of notation, assume
that $v^0_{\hat{e}} = 0$ where $\sigma$ is near $\theta_{o}$. This
understood, then the equality between $(a_{e}, w_{e})$ and $(a_{\hat{e}},
w_{\hat{e}})$ holds where $\sigma      \sim     \theta_{o}$ except possibly
at values of $(\sigma , v)$ with distance $\rho ^{8}$ or less from the
missing point on the $\sigma =\theta_{o}$ circle.

\step{Part 3}
To describe $(a_{e}, w_{e})$ near the point where $\sigma =\theta
_{0}$ and $v = 0$, it is necessary to parametrize a neighborhood of the
point $(\sigma =\theta_{o}, v = 0)$ in the parametrizing cylinder for
 $e$ by the coordinates $(r, \tau )$ with $r \leq  3\rho $ and with $\tau
\in  [-\pi , 0]$. For this purpose, it proves necessary to introduce the
complex coordinate $z  \equiv r e^{i\tau }$. Also required is the choice
of a parameter $\delta      \in  (0, \rho ^{7})$.

To obtain the desired parametrization, write $P_{\hat{o}}=m_{\hat{o}} (p, p')$ with $p$ and $p'$
the relatively prime integers defined via~\eqref{eq:1.7}
by $\theta_{o}$ and with $m_{\hat{o}}     \ge  1$. Next, let $n$ denote
the number of elements in $\wp_{o}$ and suppose that $\wp_{o}$ has
been given a linear ordering. Use the latter to label the integer pairs from
its elements as $\{m_{1}(p, p'), \ldots , m_{n}(p, p')\}$ with each
$m_{j}$ a positive integer. Thus, $\sum_{j}m_{j} = m_{\hat{o}}$.

Let $0 = b_{1} < b_{2} < \ldots < b_{n} < \delta \rho ^{8}$ now
denote a chosen set of very small real numbers, and introduce the complex
function
\begin{equation}\label{eq:4.75}
\eta (z) = \beta \bigg(\frac{1}{\delta \rho }r\bigg) \prod_{1 \leq
j \leq n} (z - b_{j})^{m_j } + \bigg(1-\beta \bigg(\frac{1}{\delta\rho }r\bigg)\bigg) z^{m_{\hat{o}} }.
\end{equation}
Note that with $\rho $ small, and any choice for $\delta      \in  (0, \rho^{7})$,
the zeros of $\eta $ consist of the points in the set
$\{b_{j}\}$. An argument for the function $\eta $ is needed on the lower
half plane and also at points on the real axis where  $z \notin \{b_{j}\}$.
To be precise, take the branch that gives
\begin{equation}\label{eq:4.76}
\arg(\eta ) = m_{\hat{o}}    \tau  \text{ at points where } | z|      \ge  2\rho ^{8}.
\end{equation}

With the preceding in hand, here is is how to write $\sigma$ and $v$ in
terms of \mbox{$r$ and $\tau $:}
\begin{equation}\label{eq:4.77}
\begin{aligned}
\sigma  &= \theta_{\hat{o}} + \varepsilon r \sin(\tau ) .
\\
\hat {v}   & = \bigg(1 - \frac{\alpha_{Q_{e'}}{(\sigma )}}
{\alpha_{Q_e } (\sigma )}\bigg)    \frac{1}  {m_{\hat{o} }}\arg(\eta ) +
\frac{1}  {\alpha_{Q_e } (\sigma )} r \cos(\tau ) .
\end{aligned}
\end{equation}
As in the analogous~\eqref{eq:4.11}, the coordinate $\hat {v}$ is
$\mathbb{R}$--valued
and reduces modulo $2\pi$ to $v$. Meanwhile, $e'$ is the second of $o$'s incident
edges.

With $v_{*}$ as in~\eqref{eq:4.13} set
\begin{equation}\label{eq:4.78}
\begin{aligned}
a_{e} &= -\beta_{*}    \frac{1}{m_{\hat{o} }}\ln(|
\eta | ) + a_{\hat{o}}+\varepsilon  \big(\beta_{*} +
(1-\beta_{*}) \cos(v_{*})\big).
\\
w_{e} &= -\varepsilon  (1-\beta_{*}) \sin(v_{*})
\\&\quad+
x_{\hat{o}}    \beta '\bigg(\frac{1}  {\alpha_{Q_e } }\beta_{*}\bigg(\frac{1}{m_{\hat{o} }}\arg(\eta )-\frac{1}
{2\alpha_{Q_{e' } } }r \cos(\tau )\bigg) - \frac{1}  {2\alpha_{Q_e }}(1-\beta_{*}) v_{*}\bigg).
\end{aligned}
\end{equation}
Here, $v$ is viewed as taking values in $[0, 2\pi ]$. In addition, both
$\sigma$ and $v$ are to be viewed where $\beta_{*}>0$ as
functions of $r$ and $\tau $.

To define $(a_{e'}, w_{e'})$ near the point where $\sigma =\theta_{\hat{o}}$
 and $v = 0$ on the $e'$ version of the parametrizing cylinder, first
write the cylinder's coordinates $\sigma$ and $v$ near this point in terms of
 $r \in  (0, 3\rho )$ and $\tau      \in  [0, \pi ]$ using the rule
\begin{equation}\label{eq:4.79}
\begin{aligned}
\sigma    &= \theta_{*} + \varepsilon r \sin(\tau ) .
\\
\hat {v}   & = \bigg(
\frac{\alpha_{Q_e }(\sigma )}{\alpha_{Q_{e' } } (\sigma )} - 1\bigg) \frac{1}{m_{\hat{o} }}\arg(\eta
)+\frac{1}  {\alpha_{Q_{e' } } (\sigma )} r \cos(\tau).
\end{aligned}
\end{equation}
Here, $\arg(\eta )$ is again defined by~\eqref{eq:4.76}. With~\eqref{eq:4.79} set, define
\begin{equation}\label{eq:4.80}
\begin{aligned}
a_{e'} &= -\beta_{*}    \frac{1}{m_{\hat{o}}}
\ln(|
\eta | ) + a_{\hat{o}}+\varepsilon  \big(\beta_{*} +
(1-\beta_{*}) \cos(v)\big).
\\
w_{e'} &= -\varepsilon  (1-\beta_{*}) \sin(v)
\\&\quad+ x_{0}    \beta'
\bigg(\frac{1} {\alpha_{Q_{e'}}}\beta_{*}\bigg(\frac{1}  {m_{\hat{o} }}\arg(\eta )
+
\frac{1}{2\alpha_{Q_e }}r \cos(\tau )\bigg) + \frac{1}  {2\alpha_{Q_e }} (1-\beta_{*}) v\bigg).
\end{aligned}
\end{equation}
Here $v$ is again viewed as taking values in $[0, 2\pi ]$, and it with $\sigma$
are viewed as functions of $r$ and $\tau $ where $\beta_{*} > 0$.

\step{Part 4}
With what has been said in \fullref{sec:3}, \fullref{thm:3.1} now follows from the following claim:

\begin{em}
Very small $\delta $ versions of the definition just given of $\{(a_{e}, w_{e})\}$ satisfy the
criteria in~\eqreft33.
\end{em}

The justification for this claim is given next in three steps.

\substep{Step 1}
To start, let $o$ denote a bivalent vertex
in $T$ and let $e$ and $e'$ denote the incident edges. The first point to note is
that if $\delta $ is very small, then minor modifications of the arguments
from \fullref{sec:4c} prove that the change of variables from $(\sigma , v)$ to
$(r, \tau )$ is invertible on both the $e$ and $e'$ versions of the
parametrizing cylinders.

\substep{Step 2}
The closures of $K_{e}$ and $K_{e'}$ fit
together to define a smoothly immersed surface near points with $\theta
\sim     \theta_{\hat{o}}$ provided that the following is true: Let $v_{1}
\in  (0, 2\pi )$ obey
\begin{equation}\label{eq:4.81}
v_{1}     \notin  \bigg\{\frac{1}{{\alpha_{Q_e } (\theta_o)}}b_{1}, \ldots , \frac{1}{{\alpha_{Q_e } (\theta_o
)}}b_{n}\bigg\}.
\end{equation}
Then, there exists an integer pair, $N = (n, n')$, and extensions of the
definitions of $(a_{e}, w_{e})$ and $(a_{e'}, w_{e'})$ to some
neighborhood in $(0, \pi )    \times     \mathbb{R}/(2\pi \mathbb{Z})$ of
$(\theta_{*}, v_{1})$ so that~\eqref{eq:4.16} holds.

The verification of this condition proceeds just as in \fullref{sec:4c} at points
$v_{1}$ that differ by more than $2\delta \rho $ from either 0 or $2\pi$.
In the case that $v_{1}$ does not obey this condition, the equations
in~\eqref{eq:4.77}--\eqref{eq:4.80} directly give the required extensions of $(a_{e}, w_{e})$
and $(a_{e'}, w_{e'})$. This understood, there are various cases to
consider depending on whether
\begin{equation}\label{eq:4.82}
\begin{split}
v_{1} &< 2\pi, \\
\text{or}\quad \frac{1}{{\alpha_{Q_e } (\theta_{\hat{o}} )}}b_{k} &<
v_{1} < \frac{1}{{\alpha_{Q_e } (\theta_{\hat{o}})}}b_{k + 1}
\quad\text{for some }k  \in  \{1,\ldots, n-1\}, \\
\text{or}\quad v_{1} &> \frac{1}{{\alpha_{Q_e } (\theta_o )}}b_{N}.
\end{split}
\end{equation}
In the left most case, take the integer pair $N = Q_{e'}$. In the $k$'th
version of the middle case in~\eqref{eq:4.82}, take $N = Q_{e'}+\sum_{1 \leq j
\leq k}m_{j} (p, p')$, and in the right most case, take $N = Q_{e}$. Note
that when comparing this last case with the case in \fullref{sec:4c}, the $N = Q_{e}$
version of~\eqref{eq:4.16} is indistinguishable from the $N = 0$ version. It is
left to the reader to confirm that~\eqref{eq:4.16} holds with the values of $N$ as
above. In this regard, note that the functions $\beta_{*}$ and
$\beta'$ that appear in~\eqref{eq:4.13} and~\eqref{eq:4.77}--\eqref{eq:4.80} are equal to 1 at the
relevant points.

\substep{Step 3}
There are three more issues to examine vis
\`{a} vis~\eqreft33. The first is that of the asymptotics as laid out in \fullref{deff:3.2}.
The verification that these are as required is a straightforward task
using~\eqref{eq:3.2} with~\eqref{eq:4.77}--\eqref{eq:4.80}. The second is to verify that the data
$\{(a_{e}, w_{e})\}$ define a moduli space graph that is isomorphic to
the given graph $T$. This is also straightforward and so the details are omitted.

The final issue concerns the singularities that lie in the closure of $ \cup_{e}K_{e}$.
A very small choice for $\delta $ also simplifies the
analysis. To explain, let $o$ denote any given bivalent vertex. Then~\eqref{eq:3.2}
and~\eqref{eq:4.77}--\eqref{eq:4.80} define a smooth map, $\phi_{0}$, from some multiply
punctured version of the $| z|  < 2\delta \rho$ disk
in $\mathbb{C}$ into $\mathbb{R}    \times  (S^1 \times  S^{2})$. If
$\delta $ is very small, then $s$ is huge on the image of each such $\phi
_{0}$. Thus, any singularity in $ \cup_{e}K_{e}$ that is not already
present in its $\hat {T}$ analog is a singularity of the image of some $\phi
_{0}$. However, as explained next, each $\phi_{0}$ is an embedding when
$\delta $ is small. Hence, the closure of $ \cup_{e}K_{e}$ meets all
of~\eqreft33's requirements.

To prove that $\phi_{0}$ is an embedding, note first that points $z$ and $z'$
in the domain of $\phi_{0}$ are mapped to the same point only if they
have the same imaginary part. Indeed, otherwise, the images will have
distinct $\theta$ values. Meanwhile, use of~\eqref{eq:3.2} with~\eqref{eq:4.77}--\eqref{eq:4.80} finds
that~\eqref{eq:4.24} still holds. Granted that this is the case, it then follows that
the real parts of $z$ and $z'$ must agree as well if $\phi_{0}(z) = \phi_{0}(z')$. Thus, $z = z'$.

%
%

\setcounter{theorem}{0}
\section[Proof of Theorem~1.3]{Proof of \fullref{thm:1.3}}\label{sec:5}

The purpose of this last section is to prove \fullref{thm:1.3}. In this regard,
the proof is obtained from \fullref{thm:3.1} by demonstrating that $\hat{A}$ has a
positive line graph if and only if it has a moduli space graph. The
implication from positive line graph to moduli space graph is proved in the
first subsection. The reversed implication is proved in the second.

\subsection{From a positive line graph to a moduli space graph}\label{sec:5a}

Suppose that $\hat{A}$ has a positive line graph, $L_{\hat{A}}$. The goal is to
use the data from $L_{\hat{A}}$ to construct a labeled, contractible graph, $T$,
as described in \fullref{sec:3a}
The construction starts with the graph
$L_{\hat{A}}$ and successively modifies it to obtain $T$. This construction of $T$
occupies the seven parts of this subsection that follow.

\step{Part 1}
The purpose of this part of the subsection is to explain why the edges of
a positive line graph obeys Constraint 2 in \fullref{sec:3a}. Here is a formal
statement to this effect:

\begin{lemma}\label{lem:5.1}
Let $L$ denote a positive line graph for $\hat{A}$, let $e  \in L$ denote an edge, and let
$\theta_{o} < \theta_{1}$
denote the angles that are assigned the vertices on $e$.  Then
\begin{equation*}
{q_e}'(1-3\cos ^{2}\theta )- q_{e}\surd 6\cos (\theta )  \ge  0
\end{equation*}
at all $\theta  \in [\theta_{o}, \theta_{1}]$ with equality if and only if $\theta$ is either $\theta
_{0}$ or $\theta_{1}$ and the corresponding vertex is monovalent with angle in $(0, \pi )$.
\end{lemma}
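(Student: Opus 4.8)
The plan is to reduce the claimed inequality to the analogous, known positivity statement for moduli space graphs, namely \fullref{cons:3}, by tracking the angle data along the linear graph $L$. First I would recall the key fact from the definition of a positive line graph: the vertices of $L$ sit at distinct points of $[0,\pi]$, $L$ has exactly two monovalent vertices and the rest bivalent, and the edge labels $\{Q_e\}$ are determined by the six constraints in \eqreft1{18}. For a fixed edge $e$ with bounding angles $\theta_o<\theta_1$, I want to show the function
\[
\sigma \longmapsto \alpha_{Q_e}(\sigma) = {q_e}'(1-3\cos^2\sigma) - q_e\surd 6\cos\sigma
\]
is nonnegative on $[\theta_o,\theta_1]$, with a zero only at an endpoint that is a monovalent vertex with angle in $(0,\pi)$. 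Since $\alpha_{Q_e}$ is (up to the positive factor $\surd6\sin\sigma$) the coefficient in the pullback of $d\alpha$, see \eqref{eq:2.28} and \eqref{eq:2.27}, I expect the sign behavior to be governed entirely by where $\alpha_{Q_e}$ can vanish, i.e. at angles $\theta$ satisfying \eqref{eq:1.7} for the pair $Q_e$.

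The core of the argument is an induction along the edges of $L$, sweeping from the monovalent vertex with smallest angle to the one with largest angle. At a monovalent endpoint in $(0,\pi)$, the relevant edge label $Q_e$ is, by \eqreft1{18}, $\pm$ the sum of the integer pairs of the $(0,-,\ldots)$ elements defining that angle via \eqref{eq:1.7}; hence that very angle is a root of $\alpha_{Q_e}$, and I need to check the sign of $\alpha_{Q_e}$ just to the interior side of it — this is the one genuine computation, comparing signs of $p'$ and of $\alpha_{Q_e}'$ at the root, and it is exactly the kind of sign bookkeeping that the sixth bullet of \eqreft1{18} is designed to control (the conditions on ${q_{\hat e}}'<0$ and the positivity of the various $p'$). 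At the endpoints where the angle is $0$ or $\pi$, $\alpha_{Q_e}$ does not vanish in $[\theta_o,\theta_1]$ because $\alpha_{Q_e}(0)=-2{q_e}'+{}$something and the relevant inequalities $q_e<0$, together with the ratio constraints $\frac{{q_e}'}{q_e}$ versus $\pm\frac{\surd3}{\surd2}$ forced by \eqreft1{10} and \eqreft1{14}, pin down the sign. Then at each bivalent vertex $o$ with incident edges $e$ (smaller angle side) and $e'$, the relation $Q_e-Q_{e'}$ equals the sum of the $(0,+,\ldots)$ pairs minus the $(0,-,\ldots)$ pairs defining $\theta_o$ via \eqref{eq:1.7}; crucially each such summand $P$ has $\theta_o$ as a root of $\alpha_P$, so $\alpha_{Q_e}(\theta_o)=\alpha_{Q_{e'}}(\theta_o)$. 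This continuity of $\alpha_{Q_{(\cdot)}}$ across bivalent vertices lets the positivity propagate from one edge to the next.

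With that setup, the proof assembles as follows: on each edge $e$, $\alpha_{Q_e}$ is a specific trigonometric polynomial in $\cos\sigma$ of degree two, so it has at most two roots in $[0,\pi]$; I would argue that its only possible roots in $[\theta_o,\theta_1]$ are at the endpoints, because an interior root would be an angle defined via \eqref{eq:1.7} by the relatively prime reduction of $Q_e$, and the sixth bullet of \eqreft1{18} (the condition $pq_{\hat e}'-p'q_{\hat e}>0$ for $(p,p')$ defining a bivalent vertex angle on $\hat e$) forbids this at any bivalent vertex of the edge, while the endpoint-at-$0$-or-$\pi$ and monovalent-endpoint analyses handle the remaining cases. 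Combined with the endpoint sign computations above and the fact that $\alpha_{Q_e}$ changes sign at each of its simple roots, this forces $\alpha_{Q_e}\ge 0$ throughout, with a zero only at a monovalent endpoint lying in $(0,\pi)$. The main obstacle I anticipate is the endpoint sign computation at monovalent $(0,-,\ldots)$ vertices: one must carefully use the second clause of the sixth point of \eqreft1{18} — the statement that if ${q_{\hat e}}'<0$ and neither vertex angle is $0$ or $\pi$ and one version of \eqref{eq:1.7}'s $p'$ is positive then both are — to exclude the configuration where $\alpha_{Q_e}$ would dip negative immediately inside the edge. Handling the degenerate cases where an edge touches $0$ or $\pi$ at one end and a nondegenerate root at the other will require separately invoking \eqreft1{10} and \eqreft1{14}, but these are routine once the generic case is in place.
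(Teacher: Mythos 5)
Your overall plan---view $\alpha_{Q_e}$ as a quadratic in $\cos\theta$ with at most two zeros, verify non-negativity at the endpoints of the edge, and then argue no zeros occur strictly interior to $[\theta_o,\theta_1]$---is the right shape, and indeed this is what the paper's proof does. You also correctly identify the endpoint-sign bookkeeping and the role of the subtle second clause of the sixth bullet of \eqreft1{18}. However, the step you propose to exclude interior zeros does not work as stated, and this is the mathematical heart of the lemma.

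You write that an interior zero of $\alpha_{Q_e}$ on $e$ is forbidden by the sixth bullet of \eqreft1{18}, but that constraint, $p q_{\hat e}' - p' q_{\hat e} > 0$, applies only when $(p,p')$ is the pair defining the angle of a \emph{bivalent vertex on $\hat e$}, which in a linear graph means an endpoint of the edge. It says nothing about an angle strictly interior to $(\theta_o,\theta_1)$. And endpoint non-negativity alone is not sufficient: since $\alpha_{Q_e}$ is quadratic in $u=\cos\theta$ it can be positive at both endpoints and still dip negative between two interior roots. The paper closes this gap differently: it observes \eqref{eq:5.2}, that $\alpha_{Q_e}'(\theta_{Q_e})>0$ and $\alpha_{Q_e}'(\theta_{-Q_e})<0$ whenever these angles exist, which pins down the sign of $\alpha_{Q_e}$ on each of the (at most three) complementary intervals. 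This yields the explicit containment criterion \eqreft5{3}, and then the comparison lemma \eqreft5{4} (an order criterion for $\theta_Q$ versus $\theta_P$ via signs of $q'$, $p'$, and $q'p-qp'$) is used together with \eqreft1{14} and \eqreft1{18} in a case analysis (Steps 3--5) to show that $[\theta_o,\theta_1]$ actually lies inside the non-negativity region. Without this comparison argument, or a substitute for it, your proof has a hole exactly where the proposition needs support.

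A smaller point: ``reduce to \fullref{cons:3}'' misdescribes the task. \fullref{cons:3} is part of the \emph{definition} of a moduli space graph; Lemma~\ref{lem:5.1} is precisely the assertion that the constraints \eqreft1{18} on a positive line graph imply the analogue of \fullref{cons:3}, so there is nothing ``known'' to reduce to. Likewise, the induction along the linear graph and the continuity identity $\alpha_{Q_e}(\theta_o)=\alpha_{Q_{e'}}(\theta_o)$ at bivalent vertices (while true and a nice observation) are not used in the paper's proof and do not by themselves supply the missing interior-zero exclusion; each edge is treated on its own using \eqreft5{3}--\eqreft5{4} and the endpoint data from \eqreft1{14} and \eqreft1{18}.
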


\begin{proof}[Proof of \fullref{lem:5.1}]

The verification of the claim is given in five steps.

\substep{Step 1}
This first step considers the claim at the
vertex angles on $e$ . To start, remark that the stated inequality holds at any
bivalent vertex angle on $e$  because the final point in~\eqreft1{18} requires
\begin{equation}\label{eq:5.1}
p {q_e}' - p' q_{e} > 0
\end{equation}
when $(p, p')$ defines the angle of the vertex via~\eqref{eq:1.7}. In this regard, keep
in mind that~\eqref{eq:1.7} writes $p$ as a positive multiple of $(1 -3\cos ^{2}\theta )$ and $p'$
as the same multiple of $\surd 6\cos \theta$.

When the vertex is monovalent with angle in $(0, \pi )$, then the condition
in the lemma holds at the vertex angle in as much as the first and third
points in~\eqreft1{18} assert that $(q_{e}, {q_e}')$ is proportional to the pair
that defines the angle via~\eqref{eq:1.7}.

Meanwhile, the required inequality can be seen to hold at $\theta  =\theta_{o}$
when the latter is 0 by using~\eqreft1{14} with the fourth point
in~\eqreft1{18}. Likewise,~\eqreft1{14} and the second point in~\eqreft1{18} imply the lemma's
assertion when $\theta =\theta_{1}$ and $\theta_{1}=\pi$.

\substep{Step 2}
This and the remaining steps verify the Lemma's inequality at the
angles that lie strictly between $\theta_{o}$ and $\theta_{1}$. To
start this process, let $Q = (q, q') \ne (0, 0)$ denote a given ordered
pair of integers. Then the function $\alpha_{Q}(\theta )$ on $[0,
\pi ]$ vanishes only at that angles $\theta_{Q}$ and $\theta_{ - Q}$
that are respectively defined via~\eqref{eq:1.7} by $Q$ and by $-Q$.
In this regard, note that $\theta_{Q}$ can be defined when $q < 0$
provided that $\smash{\bigl|\frac{q'}{q}\bigr|} > \sqrt{\unfrac{3}{2}}$.
Meanwhile,
$\theta_{ - Q}$ can be defined when $q > 0$ provided
$\smash{\bigl|\frac{q'}{q}\bigr|} > \sqrt{\unfrac{3}{2}}$. Thus, at least of
one of $\theta_{Q}$ and
$\theta_{ - Q}$ exists in all cases and both exist only in the case that
$\smash{\bigl|\frac{q'}{q}\bigr|} < \sqrt{\unfrac{3}{2}}$.

To continue, note that the derivative of any given $Q = (q, q')$ version of
$\alpha_{Q}$ is
\begin{equation}\label{eq:5.2}
\surd 6\sin \theta  (q + \surd 6 \cos \theta  q').
\end{equation}
In particular, this derivative is positive at $\theta =\theta_{Q}$
and negative at $\theta =\theta_{ - Q}$. As a consequence, the
desired inequality is satisfied for the given edge $e$  if and only if one of
the conditions listed next hold:
\itaubes{5.3}
\textsl{Both $\theta_{Q_{e} } $ and $\theta_{ -_{Q_{{e}} }} $ are defined, $\theta_{Q_{{e}}
}  < \theta_{ -_{Q_{{e}}} } $, and  both $\theta_{Q_{{e}} }     \le \theta_{o}$ and
$\theta_{o'}    \le    \theta_{ -_{Q_{{e}}}} $.
}

\item \textsl{Both $\theta_{Q_{{e}} } $ and $\theta_{ -_{Q_{{e}}} } $ are defined,
$\theta_{ -_{Q_{{e}}} }  < \theta_{Q_{{e}} } $ and either  $\theta_{Q_{{e}} }     \le
\theta_{o}$ or $\theta_{o'}    \le    \theta_{ -_{Q_{{e}} }} $.
}

\item \textsl{$\theta_{ -_{Q_{{e}}} } $ is not defined and $\theta_{o}    \ge    \theta_{Q_{{e}} } $.
}

\item \textsl{$\theta_{Q_{{e}} } $ is not defined and $\theta_{o'}    \le    \theta_{ -_{Q_{{e}}} }$.
}
\end{itemize}

These last constraints are analyzed with the help of the following
observation: Suppose that $P$ and $Q$ are non-trivial integer pairs and suppose
that both $\theta_{P}$ and $\theta_{Q}$ exist. Then $\theta_{Q} < \theta_{P}$
if and only if one of the following holds:
\itaubes{5.4}
\textsl{$q'  \ge  0$, $p' \le 0$  and at least one is non-zero.
}

\item \textsl{If $p'$ and $q'$ have the same sign, then $q'p - qp' > 0$.
}
\end{itemize}

\substep{Step 3}
This step and the next assume that $\theta_{o} > 0$ and $\theta_{o'} < \pi$. For this purpose, let $(p_{o},
p_{o}')$ and $(p_{o'}, p_{o'}')$ denote the respective integer pairs that
define these angles via~\eqref{eq:1.7}.

This step considers the case when the $Q_{e}$ version of
$\smash{\bigl|\frac{q'}{q}\bigr|}$ is greater than
$\sqrt{\unfrac{3}{2}}$. Thus, the third and fourth options in~\eqreft5{3} are
moot. The first option in~\eqreft5{3} holds when ${q_e}' > 0$ and the second when
${q_e}' < 0$. Suppose first that ${q_e}' > 0$. If $p_{o}'  \le  0$ then
the $\theta_{o}$ requirement is met by virtue of the first point in~\eqreft5{4}.
Meanwhile,~\eqref{eq:5.1} together with the second point in~\eqreft5{4} guarantee
the $\theta_{o}$ requirement in the case that $p_{o}' > 0$. The first
point in~\eqreft5{4} guarantees the $\theta_{o'}$ requirement if $p_{o'}' \ge  0$.
If $p_{o'}' < 0$, then the $\theta_{o'}$ requirement is
guaranteed by the $o'$ version of~\eqref{eq:5.1} using the second point in~\eqreft5{4}.

Now suppose that ${q_e}' < 0$. If $p_{o}' < 0$, then~\eqref{eq:5.1} and the second
point in~\eqreft5{4} guarantee the $\theta_{o}$ requirement. If $p_{o}'  \ge  0$,
then the $\theta_{o}$ requirement fails. In this case, the $\theta_{o'}$
requirement holds due to the second part of the final point in~\eqreft1{18}.
Indeed, it would fail automatically were $p_{o'}' \le  0$, but
this is not allowed. On the other hand, if $p_{o'}' > 0$, then the $\theta
_{o'}$ requirement follows from the $o'$ version of~\eqref{eq:5.1} using the second
point in~\eqreft5{4}.

\substep{Step 4}
Granted that $\theta_{o} > 0$ and
$\theta_{o'} < \pi$, this step assumes that the $Q_{e}$ version of
$\smash{\bigl|\frac{q'}{q}\bigr|}$ is less than $\sqrt{\unfrac{3}{2}}$. In this case, neither of the first two points
in~\eqreft5{3} hold. In the case that $q_{e} > 0$, only the third point is possible
to satisfy. If ${q_e}'$ and $p_{o}'$ have the same sign, then the
requirement is met by virtue of~\eqref{eq:5.1} and the second point in~\eqreft5{4}. The
requirement is also met if $p_{o}' \le  0$ and ${q_e}' \ge  0$. Of
course, the requirement can not be met if $p_{o}'> 0$ and ${q_e}' < 0$. However, as will now be explained,
such signs for $p_{o}'$ and ${q_e}'$
never appear. Indeed, were these the correct signs, then~\eqref{eq:5.1} would demand
$p_{o}$ to be negative and
\begin{equation}\label{eq:5.5}
\sqrt {\frac3{2}}   >  \bigg|
\frac{q_e ' }{{q_e }} \bigg|  \ge  \bigg|
\frac{p_o ' }{{p_o }} \bigg|.
\end{equation}
However, this violates the condition in~\eqref{eq:1.7}.

Suppose next that $q_{e} < 0$ and so only the fourth point in~\eqreft5{3} is
relevant. In the case that $p_{o'}' \ge  0$ and ${q_e}' \ge  0$, then
the desired inequality is insured by the first point in~\eqreft5{4}. If
$p_{o'}'$ and ${q_e}'$ have different signs, the desired inequality is
insured by~\eqref{eq:5.1} and the second point in~\eqreft5{4}. Meanwhile, the case where
both $p_{o'}' < 0$ and ${q_e}' < 0$ can not occur because~\eqref{eq:5.1} again
requires that $p_{o}$ is negative while satisfying~\eqreft5{3}.

\substep{Step 5}
This step considers the case that $\theta_{o}$ = 0. The argument for the case when $\theta_{o'}=\pi$ is
omitted because it is identical to that given here save for some cosmetic
changes. In the $\theta_{o} = 0$ case, it is necessary to verify either
the second or fourth of the options in~\eqreft5{3}. In this regard, the first
point is that $-Q_{e}$ in all cases defines an angle via~\eqref{eq:1.7}. Indeed,
this is a consequence of the fact noted in \refstep{Step 1} that \fullref{lem:5.1}'s
inequality holds at $\theta  = 0$.

In the case that $Q_{e}$ defines an angle via~\eqref{eq:1.7}, then \fullref{lem:5.1}'s
inequality at $\theta  = 0$ requires that ${q_e}'<-(\sqrt{\unfrac3{2}})|q_{e}|$
and so the angle defined by $Q_{e}$ via~\eqref{eq:1.7} is greater than that defined by $-Q_{e}$.
Moreover, neither is less
than $\theta_{o'}$. Indeed, the angle defined by $Q_{e}$ via~\eqref{eq:1.7} must
be greater than $\theta_{o'}$ since the condition in \fullref{lem:5.1} holds
near $\theta  = 0$. Since ${q_e}' < 0$, this last point, the $o'$ version of~\eqref{eq:5.1}
and the second point in~\eqreft5{4} are consistent only if $p_{o'}'>0$.
Given that $p_{o'}' > 0$, then the $o'$ version of~\eqref{eq:5.1} and the second
point in~\eqreft5{4} establish the claim.

In the case that $Q_{e}$ does not define an angle via~\eqref{eq:1.7}, then $q_{e }<0$
and the absolute value of the ratio of ${q_e}'$ to $q_{e}$ is less than
$\sqrt {\unfrac3{2}} $. If ${q_e}'$ and $p_{o}'$ have opposite
signs, or if both are positive, then~\eqreft5{4} guarantees the conditions for the
fourth option in~\eqreft5{3}. On the other hand, in no case can both ${q_e}'$ and
$p_{o}'$ be negative when $q_{e}$ is negative. Here is why: Were all three
negative, then the $o'$ version of~\eqref{eq:5.1} would require $p_{o' } < 0$ also. As
such, this same version of~\eqref{eq:5.1} would declare the ratio of $p_{o'}'$ to
$p_{o'}$ to be less than that of ${q_e}'$ to $q_{e}$. By assumption the
latter is less than $\sqrt {\unfrac3{2}} $, and thus the former
would be less than $\sqrt {\unfrac3{2}} $. But this conclusion
with $p_{o'} < 0$ violates the given fact that $(p_{o'}, p_{o'}')$ defines
an angle via~\eqref{eq:1.7}.
\end{proof}

\step{Part 2}
Suppose that the maximal angle on $L_{\hat{A}}$ is less than $\pi$. Let
$\theta_{o}$ denote this angle. This step describes a modified version of
$L_{\hat{A}}$, a graph that is isomorphic to $L_{\hat{A}}$ except perhaps at
angles that are very close to $\theta_{o}$. This new graph has some
number of added monovalent vertices, all with angle $\theta_{o}$, these
labeled by the $(0,-,\ldots)$ elements from $\hat{A}$ whose
integer pairs define $\theta_{o}$ via~\eqref{eq:1.7}. The modification of
$L_{\hat{A}}$ is denoted below as $T_{1}$.

To start the description, let $o  \in L_{\hat{A}}$ denote the monovalent
vertex with the largest angle on $L_{\hat{A}}$. Let $\hat{e}$ denote the incident
edge to $o$. In the case that the element $(0,-, -Q_{\hat{e}})$ is in $\hat{A}$, no
modification occurs and $T_{1} = L_{\hat{A}}$. If this 4--tuple is is not in
$\hat{A}$, then $-Q_{{\hat{e}} }$ is equal to a sum of some $n > 1$ pairs, $P_{1} +\cdots + P_{n}$,
where each such pair is a positive
multiple of the relatively prime pair that defines $\theta_{o}$ via~\eqref{eq:1.7}
and where each $(0,-, P_{k})$ is in $\hat{A}$.

To proceed in this case, choose $n-1$ angles
$\theta_{1} < \theta_{2}\cdots  \theta_{n - 1} < \theta_{o}$ that are all
greater than the smallest angle on $e$.

Modify $L_{\hat{A}}$ so that the resulting graph has $n-1$ trivalent vertices at
these chosen angles. Label the incident edges to the $k'$th such vertex as $e$,
 $e'$, and $e''$ using the convention that $e$  connects the vertex o to a vertex
with smaller angle, while $e'$ and $e''$ connect to vertices with larger angle.
Also take the convention that any given $k  \le  n-2$ version of the edge
 $e''$ is the same as the $(k+1)'$st version of the edge $e$ . In particular, $e'$ is
capped with a $\theta =\theta_{o}$ monovalent vertex. This is also
the case for $e''$ when $k = n-1$.

Here are the integer pair assignments: In the case $k = 1$, the edge integer
pair assignments are $Q_{e} = Q_{{\hat{e}}}$, $Q_{e'} = -P_{1}$ and
$Q_{e''} = Q_{{\hat{e}}}+P_{1}$. In the case where $k > 1$, these pair
assignments are $Q_{e} = Q_{{\hat{e}} }+\sum_{j < k } P_{j}$, while
$Q_{e'} = -P_{k}$ and $Q_{e''} = Q_{{\hat{e}} }+\sum_{j \le k} P_{j}$.

By virtue of \fullref{lem:5.1}, this graph obeys the moduli space graph constraints
where it differs from $L$, thus at angles that are less than the minimal angle
on $\hat{e}$. In this regard, Constraint 2 in \fullref{sec:3a} is obeyed for $T_{1}$
because any given $T_{1}$ version of $\alpha_{Q}$ is a positive multiple
of a corresponding $L_{\hat{A}}$ version that obeys the constraint in \fullref{lem:5.1}
for the relevant interval. Moreover, the $\theta =\theta_{o}$
monovalent vertices on $T_{1}$ are in 1--1 correspondence with the subset of
$(0,-,\ldots)$ elements in $\hat{A}$ whose integer pair
component defines $\theta_{o}$ via~\eqref{eq:1.7}.

\step{Part 3}
This part of the construction describes the analogous operation on
$L_{\hat{A}}$ when its largest angle is $\pi$. The resulting version of
$T_{1}$ is isomorphic to $L_{\hat{A}}$ except at angles near $\pi$ where it
may have some trivalent vertices and more than one $\theta =\pi$
monovalent vertex. In particular, the labels of its $\theta =\pi$
monovalent vertices account for the $(-1,\ldots)$ elements
in $\hat{A}$.

To start, let $n$ and $n'$ denote the respective numbers of $(-1,-,\ldots)$ and
$(-1,+,\ldots)$ elements in $\hat{A}$. If $n > 0$,
label the $(-1,-,\ldots)$ elements from $\hat{A}$ from $1$ to $n$,
and if $n' > 0$, label the $(-1,+,\ldots)$ elements from $1$ to
$n'$. Let $\{P^{ - }_{k}\}_{1 \le k \le n}$ and $\{P^{ +}_{k}\}_{1 \le k \le n'}$
denote the corresponding set of integer pair components.

Two trivial cases can be dispensed with straight away; that where $\hat{A}$ has
but a single $(-1,\ldots)$ element and $c_{ - } = 0$, and
that where $\hat{A}$ has no $(-1,\ldots)$ elements and $c_{ - }=1$.
No modification of $L_{\hat{A}}$ is necessary in either of these cases.
Thus, $T_{1}$ is equal to $L_{\hat{A}}$ in both of these cases.

In the general case, the modification adds $n+n'+c_{ - }-1$ trivalent
vertices with successively larger angles, all near $\pi$. To be precise
here, the incident edges to any given vertex can be designated by $e$ , $e'$ and
 $e''$ so that $e$  connects the vertex in question to one with a smaller angle
while $e'$ and $e''$ connect to vertices with larger angles. In all cases, the
 edge designated as $e'$ is capped by a monovalent vertex with angle $\theta  = \pi$.
 Such is also the case for the version of $e''$ that attaches to the
trivalent vertex with the largest angle.

The edge labels for the incident edges to the trivalent vertices are
obtained via an inductive process using the following rules: For the
trivalent vertex with the smallest angle label, set $Q_{e}=\sum_{k}P^{ + }_{k}-\sum_{k} P^{ - }_{k}- (0, c_{ - })$.
Now, granted this, label this vertex as number 1 and label the remaining
trivalent vertices by consecutive integers starting from 2 in order of
increasing angle. Granted this numbering system, the first $c_{ - }$ of the
trivalent vertices have $Q_{e'} = (0, -1)$. If $n' = 0$, then the remaining
$n-1$ have $Q_{e'} = P_{j}$ for the vertex numbered $c_{ - }+j$ when  $1 \le  j  \le  n-1$.
If $n' > 0$, then $Q_{e'} = P_{j}$ for the vertex numbered
$c_{ - }+j$ when $1  \le  j  \le  n$. Use $Q_{e'} = -P_{j}$ for the
vertex numbered $c_{ - }+n+j$ with $1  \le  j  \le  n'-1$. With regard to
these assignments, note that the convention that $[Q_{e'}, Q_{e''}] \le  0$ is not necessarily observed.

By virtue of \fullref{lem:5.1}, the graph $T_{1}$ obeys the moduli space graph
conditions where it differs from $L_{\hat{A}}$, thus at the angles that are
achieved on $L_{\hat{A}}$'s largest angled edge. Moreover, this new graph has
the desired property: Its $\theta =\pi$ monovalent vertices account for
all of the $(-1,\ldots)$ elements in $\hat{A}$ plus $c_{ - }$
 elements with label $(-1)$.

By the way, the positivity requirement in \fullref{cons:2} in \fullref{sec:3a} can be
deduced from the following observations: If $\theta$ is less than its value
at the first trivalent vertex, then the relevant $Q$ is that of an edge in
$L_{\hat{A}}$ whose version of \fullref{cons:2} in \fullref{sec:3a} holds for the same
value of $\theta$. To argue for this constraint in the case that $\theta$
is near $\pi$, note that if the relevant version of $Q = (q, q')$ obeys
$q' <(\sqrt {\unfrac3{2}}) q$,
then $\alpha_{Q}(\theta ) > 0$ if
$\theta$ is nearly $\pi$ since~\eqreft1{14} guarantees its positivity at $\theta
=\pi$. This is to say that if the trivalent vertex angles are very near
$\pi$, it is enough that ${q_e}' < (\sqrt{\unfrac3{2}}) q_{e}$
for each incident edge to each trivalent vertex. This last requirement is
met by virtue of the conditions in~\eqreft1{14}.

\step{Part 4}
Let $\theta_{o}$ now denote the minimal angle on $L_{\hat{A}}$. Of
course, this is also the minimal angle on $T_{1}$. If $\theta_{o} > 0$,
then an upside down version of the discussion in \refstep{Part 2} (a verbatim
repetition save for some evident cosmetic changes) modifies $T_{1}$ by adding
trivalent vertices with angles just slightly greater than $\theta_{o}$
and monovalent vertices at $\theta_{o}$ to construct a new graph,
$T_{2}$, with the following property: First there exists some $\delta  > 0$
such that $T_{2}$ obeys the moduli space graph conditions at angles
$\theta     \in [\theta_{o}, \theta_{o}+\delta ]$. Moreover, it
has only trivalent vertices at angles in $(\theta_{o}, \theta_{o}+\delta )$
and it has as many $\theta =\theta_{o}$ monovalent
vertices as there are $(0,-,\ldots)$ elements in $\hat{A}$ whose
integer pairs define $\theta_{o}$ via~\eqref{eq:1.7}. Moreover, these elements
label the $\theta =\theta_{o}$ monovalent vertices in $T_{2}$.
Meanwhile, $T_{2}$ is isomorphic to $T_{1}$ at angles $\theta  >\theta_{o}+\frac1{2}\delta $.

In the case that $\theta_{o} = 0$, the upside down version of the
discussion of \refstep{Part 3} modifies $T_{1}$ by adding only trivalent vertices
at angles near 0 and monovalent vertices with angle equal to 0. This version
of $T_{2}$ obeys the moduli space graph conditions where it differs from
$T_{1}$ and thus where it differs from $L_{\hat{A}}$. Moreover, the set of
$\theta  = 0$ vertices in $T_{2}$ has a two subset partition: The first
subset accounts for the $(1,\ldots)$ elements in $\hat{A}$, and
the second contains $c_{ + }$ vertices with the label (1).

\step{Part 5}
Suppose now that $o$ is a bivalent vertex in $L_{\hat{A}}$ whose angle is
defined via~\eqref{eq:1.7} by one or more integer pairs from the collection of
$(0,-,\ldots)$ elements in $\hat{A}$. In this regard, consider
only the case when there are no pairs from $(0,+,\ldots)$
 elements in $\hat{A}$ that define $\theta_{o}$. Described here is a
modification to $T_{2}$ at angles very close to $\theta_{o}$ that
replaces the bivalent vertex $o$ with one or more trivalent and monovalent
vertices that account for those $(0,-,\ldots)$ elements in
$\hat{A}$ with integer pair giving $\theta_{o}$. To begin, let $e$  denote the
incident edge to $o$ on which $\theta_{o}$ is maximal, and let $e'$ denote
the incident edge on which $\theta_{o}$ is minimal. Fix some very small
and positive number, $\delta $. The modification proceeds in two steps.

The first step constructs a graph, $\hat {T}$, that lacks a bivalent vertex
at $\theta_{o}$, having one trivalent vertex at $\theta_{o}-\delta $
and one monovalent vertex at $\theta_{o}$. To be more explicit, let $\hat{o} \in    \hat {T}$
denote its trivalent vertex at $\theta_{o}-\delta $
and let $\hat{e}$, $\hat{e}'$ and $\hat{e}''$ denote its three incident edges. The
labeling convention here is such that  $\hat{o}$  has the largest angle of the two
vertices on $\hat{e}$, and the smallest angle of the two vertices on $\hat{e}'$ and
on $\hat{e}''$. In this regard, $\hat{e}''$ contains the added monovalent vertex
with angle $\theta_{o}$. In addition, as the integer pair assigned to
$\hat{e}''$ is $Q_{e}-Q_{e'}$, so $-Q_{{\hat{e}}''}$ is the sum of the integer
pairs that define $\theta_{o}$ via~\eqref{eq:1.7}.

To describe the rest of $\hat {T}$, agree to designate the three components
of $\hat {T}-\hat{o}$  as $\hat {T}_{{\hat{e}}}$, $\hat {T}_{{\hat{e}}'}$
and $\hat {T}_{{\hat{e}}''}$ so that the subscript indicates that the
component contains the interior of its labeling edge. Let $T_{2e}$ and
$T_{2e'}$ denote the analogously labeled components of $T_{2}-o$. Then
$\hat {T}_{{\hat{e}}}$ is isomorphic to $T_{2e}$ and $\hat
{T}_{{\hat{e}}'}$ to $T_{2e}$. With regards to such isomorphisms, the
convention taken here and subsequently is that an isomorphism between
labeled graphs with some open edges must preserve all labeling of vertices
and edges, but it need not match the angles of any `absent' vertices.

If $\hat{A}$ has a single $(0,-,\ldots)$ element whose integer
pair defines $\theta_{o}$, then the graph $T_{3}$ is set equal to $\hat
{T}$. If there is more than one such element, the graph $\hat {T}$ is
further modified by employing the construction in \refstep{Part 2} with the edge
$\hat{e}''$ playing the role of $\smash{L_{\hat{A}}}$. Thus, the modification replaces the
 edge $\hat{e}''$ with a subgraph whose monovalent vertices account for all of
the $(0,-,\ldots)$ elements in $\hat{A}$ with integer pairs that
define $\theta_{o}$ via~\eqref{eq:1.7}. This subgraph has one less trivalent
vertex than it has monovalent vertices. These can be assigned distinct
angles, all between $\theta_{o}$ and $\theta_{o}-\delta $.

As will now be explained, any sufficiently small $\delta $ version of the
graph $T_{3}$ obeys the moduli space graph conditions where it differs from
$T_{2}$ and thus where it differs from $L_{\hat{A}}$. To begin, remark that
the positivity of the $Q = Q_{e}$ and $Q = Q_{e'}$ versions of the function
$\alpha_{Q}$ imply that these functions are positive for small $\delta $
on the edges $\hat{e}$ and $\hat{e}'$ of $\hat {T}$. If the $Q = Q_{{\hat{e}}}$ version
of $\alpha_{Q}$ is positive on $[\theta_{o}-\delta , \theta
_{o})$ and vanishes at $\theta_{o}$, then the arguments from \refstep{Part 2}
settle the claim that $T_{3}$ obeys the moduli space graph conditions where
it differs from $T_{2}$. Granted this, remark that the $Q = Q_{{\hat{e}}}$
version of $\alpha_{Q}$ is zero at $\theta_{o}$ because $Q_{{\hat{e}}} = Q_{e}-Q_{e'}$,
and according to the fifth point in~\eqreft1{18}, this pair is $-1$ times a pair that defines
$\theta_{o}$ via~\eqref{eq:1.7}. Moreover, as
 explained subsequent to~\eqref{eq:5.2}, the derivative of $\alpha_{Q}$ at its zero
is negative when the angle of the zero is $\theta_{ - Q}$. Since this is
the case at hand, the $Q = Q_{{\hat{e}}}$ version of $\alpha_{Q}$ is positive
on the half open interval $[\theta_{o}-\delta , \theta_{o})$ when
$\delta $ is small.

\step{Part 6}
Suppose next that $o$ is a bivalent vertex whose angle is defined via~\eqref{eq:1.7}
by an integer pair from some $(0,+,\ldots)$ element in
$\hat{A}$. Consider first the case when no integer pairs from $(0,-,\ldots)$ elements
in $\hat{A}$ define this angle. In this case, the
modification to $T_{2}$ amounts to adding some data to the label of the
bivalent vertex $o$ so as to make the label that of a bivalent vertex in a
moduli space graph. In this regard, $o'$s label must be a partition subset for
some partition of the set of $(0,+,\ldots)$ elements whose
integer pairs define $\theta_{o}$ via~\eqref{eq:1.7}. Take the one set partition
and assign $o$ this set.

Consider now the case where $\theta_{o}$ is also defined via~\eqref{eq:1.7} by
integer pairs from both $(0,+,\ldots)$ elements in $\hat{A}$ and
$(0,-,\ldots)$ elements in $\hat{A}$. Let $P_{ + }$ denote the
sum of the integer pairs from the former set and let $P_{ - }$ denote the
sum of those from the latter. Note that both $P_{ + }$ and $P_{ - }$ define
$\theta_{o}$ via~\eqref{eq:1.7}. What follows describes a modification of $T_{2}$
so as to obtain a graph, $T_{3}$, with one bivalent vertex with angle
$\theta_{o}$, one or more monovalent vertices with angle $\theta_{o}$,
and some trivalent vertices with angles near $\theta_{o}$. This graph
$T_{3}$ will satisfy the moduli space graph conditions where it differs from
$T_{2}$ and its bivalent and monovalent vertices will account for all of the
$(0,\ldots)$ elements in $\hat{A}$ whose integer pair component
defines $\theta_{o}$ via~\eqref{eq:1.7}. The modification here is very similar to
that described in \refstep{Part 5}. In particular, there are two steps, with the first
modifying $T_{2}$ by adding a single trivalent vertex at an angle just less
than $\theta_{o}$ and adding a monovalent vertex with angle $\theta
_{o}$. This preliminary modification also has a bivalent vertex with angle
$\theta_{o}$. Let $\hat {T}$ denote this new graph. If $\delta $ is
positive but very small, then $\hat {T}$ can be constructed so that it has a
trivalent vertex,  $\hat{o}$ , with angle $\theta_{o}-\delta $. The three
incident edges, $\hat{e}$, $\hat{e}'$, and $\hat{e}''$ are such that  $\hat{o}$  has the
larger of the angles of the vertices on $\hat{e}$. As before, the component
$\hat {T}_{{\hat{e}}}    \subset    \hat {T}-\hat{o}$  is isomorphic to
the component $T_{2e}    \subset  T_2-o$. Meanwhile, $\hat{e}'$ connects
 $\hat{o}$  to the bivalent vertex at angle $\theta_{o}$ in $\hat {T}$ while
$\hat{e}''$ connects  $\hat{o}$  to the monovalent vertex with angle $\theta_{o}$.
The label for $\hat{e}''$ is $-P_{ - }$, while that for $\hat{e}'$ is $Q_{e}+P_{
- }$. Note that the open graph $\hat {T}_{{\hat{e}}'}-{\hat{e}}'$ is
isomorphic to $T_{2e'}$.

The graph $\hat {T}$ must now be modified so that the result, $T_{3}$, obeys
the moduli space graph conditions where it differs from $T_{2}$. First of
all, this involves replacing $\hat{e}''$ by a subgraph with some number of
monovalent vertices and one less number of trivalent vertices, with the
subgraph chosen so that its monovalent vertices have angle $\theta_{o}$
and account for those $(0,-,\ldots)$ elements in $\hat{A}$ whose
integer pair defines $\theta_{o}$ via~\eqref{eq:1.7}. This procedure is exactly
that used in the previous step to go from the latter's $\hat {T}$ to
$T_{3}$. Note that \fullref{cons:2} in \fullref{sec:3a} is obeyed on all of the edges
in this subgraph. Indeed, the argument for this is a verbatim repetition of
the one that proves the analogous claim in \refstep{Part 5}. The final task in the
construction of $T_{3}$ is to grant a label to the bivalent vertex at angle
$\theta_{o}$. In this case, the label must be a partition of the set of
those $(0,+,\ldots)$ elements in $\hat{A}$ whose integer pair
defines $\theta_{o}$ via~\eqref{eq:1.7}. As before, take the 1--set partition. Note
that this is forced by the fact that $Q_{{\hat{e}}'} - Q_{e'} = P_{ + }$
which is the sum of the integer pairs from this same set of elements. By the
way, note that the $\hat{e}'$ version of \fullref{cons:2} in \fullref{sec:3a} is obeyed
when $\delta $ is small by virtue of two facts: First, the $Q = Q_{e}$
version of $\alpha_{Q}$ is bounded away from zero on $[\theta_{o}-\delta , \theta_{o}]$.
Second, the $Q = P_{ + }$ version of
$\alpha_{Q}$ is zero at $\theta_{o}$ and so is very small on this
interval when $\delta $ is small.

\step{Part 7}
Apply the constructions in \refstep{Part 5} and \refstep{Part 6} simultaneously to all of the
bivalent vertices. The result is a moduli space graph for $\hat{A}$.

\subsection{From moduli space graph to positive line graph}\label{sec:5b}

Now suppose that $\hat{A}$ has a moduli space graph, $T_{\hat{A}}$. The goal is to
obtain from $T_{\hat{A}}$ a positive line graph for $\hat{A}$. This is
accomplished in a sequential fashion using the various `moves' that are
described in \refstep{Part 1}, below. These moves are used to eliminate trivalent
vertices. To picture this process, imagine a trivalent vertex as the point
in a partially unzipped zipper where two edges are joined as one. The
modifications amount to closing in a sequential fashion all of these
zippers. \refstep{Part 2} of the subsection provides the details for how these moves
are used.

The modifications to $T_{\hat{A}}$ will result in graphs that are not moduli
space graphs. Even so, these graphs have labeled edges and vertices that
obey certain constraints. These constraints are listed below, and a graph
that obeys them is deemed a `positive graph'.

A positive graph, $T$, is a connected, contractible graph with at least one
 edge and with labeled vertices and edges. The vertices of $T$ are either
monovalent, bivalent or trivalent. Each is labeled with an angle in $[0, \pi]$.
These angles are constrained as follows:
\itaubes{5.6}
\textsl{The two vertices on any given edge have distinct angles.}

\item \textsl{The angle of any given multivalent vertex is neither the largest nor the smallest of the angles
of the vertices on its incident edges.
}

\item \textsl{Any vertex angle in $(0, \pi )$ is defined via~\eqref{eq:1.7} by an integer pair.
}
\end{itemize}

Each edge of $T$ is labeled by an integer pair. If $e$  its an edge, then $Q_{e} = (q_{e}$, ${q_e}')$
denotes its integer pair. These are constrained as follows:
\itaubes{5.7}
\textsl{Let  $o \in  T$ denote a monovalent vertex with angle in $(0, \pi )$ and let $e$ denote its incident edge.
Then $\pm Q_{e}$ defines $\theta_{o}$ via~\eqref{eq:1.7} with the $+$ sign taken if and only if $\theta
_{o}$ is the smaller of the two angles of the vertices.
}

\item \textsl{Let $o  \in T$ denote a bivalent vertex and let $e$ and $e'$ denote its incident edges. If $Q_{e} \ne Q_{e'}$,
then either $Q_{e }- Q_{e'}$ or $Q_{e' }-Q_{e}$ defines $\theta_{o}$ via~\eqref{eq:1.7}.
}

\item \textsl{Let $o  \in  T$ denote a trivalent vertex, and let $e$, $e'$ and $e''$ denote its incident edges.
Then $Q_{e} = Q_{e'} + Q_{e''}$ with the convention that $\theta_{o}$ lies between the angle of
the vertex opposite $o$ on $e$ and the angles of the vertices opposite $o$ on both $e'$ and $e''$.
}

\item \textsl{Let $e$  denote any given edge of $T$ and let $\theta_{o} < \theta_{1}$ denote the angles that are
assigned the vertices on $e$. Then
\begin{equation*}
{q_e}'(1-3\cos ^{2}\theta )- q_{e}\surd 6\cos (\theta )    \ge  0
\end{equation*}
at all $\theta     \in [\theta_{o}, \theta_{1}]$ with equality if and only if $\theta$ is either $\theta
_{0}$ or $\theta_{1}$, the angle in question is in $(0, \pi )$, and the corresponding vertex is monovalent.
}
\end{itemize}

Each positive graph that appears below is related to the asymptotic data set
$\hat{A}$ in a manner that is described momentarily. For this purpose, it is
necessary to assign an integer pair to each vertex with angle in $(0, \pi )$. If $o$ is such a vertex,
then $P_{o}$ is used to denote its integer pair.
Here are the assignments: If $o$ is monovalent, then $P_{o}=\pm Q_{e}$,
where $e$  denotes $o$s incident edge and where the $+$ sign is taken if and only
if $\theta_{o}$ is the smaller of the two angles of $e'$s vertices. If $o$ is
bivalent, then $P_{o} = Q_{e}-Q_{e'}$ where $e$  and $e'$ are $o$s incident
 edges with the convention here that $\theta_{o}$ is the larger of the two
angles of the vertices on $e$. If $o$ is a trivalent vertex, set $P_{o} = 0$.

What follows describes how $\hat{A}$ enters the picture:
\itaubes{5.8}
\textsl{The sum of the integer pairs that are associated to the edges with a $\theta =\pi$
vertex is obtained from $\hat{A}$ by the following rule: First, subtract the sum of the
integer pairs from the $(-1,-,\ldots)$ elements in $\hat{A}$ from the sum of those from the
$(-1,+,\ldots)$ elements, and then subtract $(0, c_{ - })$ from the result.
}

\item \textsl{The sum of the integer pairs that are associated to the edges with a $\theta  = 0$ vertex is obtained from
$\hat{A}$ by the following rule: First, subtract the sum of the integer pairs from the $(1,+,\ldots)$
elements in $\hat{A}$ from the sum of those from the $(1,-,\ldots)$ elements and then subtract
$(0, c_{ + })$ from the result.
}

\item \textsl{Let $\theta \in (0, \pi )$. Then, the sum of the integer pairs that are associated to the
bivalent vertices at angle $\theta$ minus the sum of those pairs that are associated to the monovalent
vertices at angle $\theta$ is obtained from $\hat{A}$ by the following rule:
Subtract the sum of the integer pairs from the $(0,-,\ldots)$ elements in $\hat{A}$ that define
$\theta$ via~\eqref{eq:1.7} from the sum of the integer pairs from the
$(0,+,\ldots)$ elements in $\hat{A}$ that defined $\theta$ via~\eqref{eq:1.7}.
}
\end{itemize}

A positive graph that obeys~\eqreft5{8} is called a `positive graph for $\hat{A}$'.
According to \fullref{lem:5.1}, a positive line graph for $\hat{A}$ is a linear positive
graph for $\hat{A}$, that is, one with no trivalent vertices. \fullref{lem:5.2} below
proves the converse. Note that $T_{\hat{A}}$ itself is a positive graph for $\hat{A}$.

\step{Part 1}
To set the stage here and in \refstep{Part 2}, let $T$ now denote a given positive
graph. Let $o$ denote a trivalent vertex in $T$ and let $e$ , $e'$ and $e''$ denote the
three incident edges to $o$ with the usual convention taken to distinguish $e$.
This is to say that the angle $\theta_{o}$ lies between the angle of the
vertex opposite $o$ on $e$  and both the angle of the vertex opposite $o$ on $e'$ and
that of the angle opposite $o$ on $e''$. The edges $e'$ and $e''$ are distinguished
when $Q_{e'}$ is not proportional to $Q_{e''}$ by making
$[Q_{e'},Q_{e''}] \equiv q_{e'}{q_{e''}}' - {q_{e'}}'q_{e''}$ negative.
Also, keep in mind the following two conventions from the previous
subsection that concern the three components of $T-o$: The first is
with regards to their labeling, this as $T_{e}$, $T_{e'}$ and $T_{e''}$ with
the labeling such that the closure of any one of the three contains its
labeling edge. The other convention concerns the notion of an isomorphism
between one of these components and some other non-compact graph with
labeled vertices and edges. In particular, the isomorphism must send
vertices to vertices and edges to edges so as to respect the labeling.
However, such an isomorphism has no need to respect the angle of the absent
vertex on the open edge.

With these conventions set, what follows in this \refstep{Part 1} are the moves that
are used to modify a given positive graph for $\hat{A}$ so as to eliminate the
trivalent vertices. In all cases, the modified graph is a positive graph for
$\hat{A}$. There are two versions to each move listed below, one for the case
that $e$  connects the given vertex to a vertex with a larger angle, and one
for the case that the connection is to a vertex with a smaller angle. Only
the first version is presented since the two versions differ only
cosmetically.

Note that the first three moves modify the original graph so as to
\underline{decrease} the angle that is assigned to the given trivalent
vertex. (In the omitted version where $e$  connects to a vertex with smaller
angle, the corresponding moves will increase the angle of the given
trivalent vertex.) The remaining four moves modify the graph so as to
eliminate the given trivalent vertex.

To set the stage for the moves, agree to let $o$ denote the trivalent vertex
in question and let $\theta_{o}$ denote its original angle assignment. In
what follows, $\theta_{\hat{o}}$ denotes the larger of the two angles that
label the vertices that lie opposite $o$ on $e'$ and $e''$. Keep in mind that
$\theta_{\hat{o}}$ is less than $\theta_{o}$. A distinguishing feature
of the geometry here is that neither the $Q = Q_{e'}$ nor $Q_{e''}$ versions
of $\alpha_{Q}$ can vanish on $(\theta_{\hat{o}}, \theta_{o})$.
Since $Q_{e} = Q_{e' }+ Q_{e''}$, the $Q = Q_{e}$ version of $\alpha
_{Q}$ is also positive on $(\theta_{\hat{o}}, \theta_{o})$. As a
result, $T$ can be modified without either compromising the positive graph
conditions or changing its topology by giving $o$ any angle in
$(\theta_{\hat{o}}, \theta_{o})$.

With the preceding understood, the first three moves describe cases where $T$
is modified so that the result has a trivalent vertex with angle just less
than $\theta_{\hat{o}}$.

\substep{Move 1}
Assume here that $\theta_{\hat{o}}$
labels just one vertex on $e' \cup e''$ and that this vertex is bivalent.
In this case $T$ is modified to produce a new positive graph for $\hat{A}$, this
denoted by $T_{*}$. The graph $T_{*}$ has a trivalent vertex,
$o_{*}$, with angle $\theta_{*}$ just less than $\theta_{\hat{o}}$ and a
bivalent vertex with angle $\theta_{\hat{o}}$. The
integer pair component of the latter's label is the same as that of the
$\theta_{\hat{o}}$ labeled vertex on $e' \cup e''$.

To continue the description, note that $o_{*}$ has incident edges
 $e_{*}$, $e_{*}'$, $e_{*}''$ where $e_{*}$ is the
only one of the three that connects $o_{*}$ to a vertex with a larger
angle. The latter is the aforementioned bivalent vertex with angle $\theta_{\hat{o}}$.
Write the components of $T_{*}-o_{*}$ as
$T_{*e}$, $T_{*e'}$ and $T_{*e''}$. These graphs are
related to $T_{e}$, $T_{e'}$ and $T_{e''}$ as follows: In the case that $e''$
has the $\theta_{\hat{o}}$ labeled vertex, then, $T_{e''}-e''$ and
$T_{*e''}$ are isomorphic as non-compact graph with labeled vertices
and edges. Meanwhile $T_{e'}$ and $T_{*e'}$ are likewise isomorphic,
as are the pair $T_{e}$ and $T_{*e}-e_{*}$. The
analogous isomorphisms hold when $e'$ has the $\theta_{\hat{o}}$ bivalent
vertex.

\substep{Move 2}
This move is relevant to the case that
both $e'$ and $e''$ have bivalent vertices with angle $\theta_{\hat{o}}$. In
this case, $T$ is again modified to produce a new positive graph for $\hat{A}$.
This graph has a trivalent vertex at angle just less than $\theta_{\hat{o}}$
and a single bivalent vertex at angle $\theta_{\hat{o}}$ that
sits on the incident edge $e_{*}$ to $o_{*}$. Here, the
notational convention for the incident edges to $o_{*}$ are as in Move~1.
The integer pair component of the label for this bivalent vertex is the
sum of the integer pairs that label the bivalent vertices on $e'$ and $e''$. In
this regard, the components $T_{*e'}$ and $T_{*e''}$ of $T_{*}-o_{*}$ are respectively isomorphic
to $T_{e'}-e'$ and $T_{e''}-e''$ from $T-o$. Meanwhile, $T_{*e}-e_{*}$ is isomorphic to $T_{e}$.
The verification that the version of $T_{*}$ with $\theta_{*}$ nearly   
$\theta_{\hat{o}}$  is a positive graph for $\hat{A}$ requires 
only the verification of the fourth
point in~\eqreft5{7} for the edges that touch $e_{*}$. In this regard, the
positivity of the relevant versions of $\alpha_{Q}$ follow from the
positivity at $\theta_{\hat{o}}$ of the $Q_{e'}$ and $Q_{e''}$ versions.

\substep{Move 3}
This move is relevant to when there is a
single vertex  $\hat{o} \in e' \cup e''$ with angle $\theta_{\hat{o}}$,
that this vertex is trivalent, and that it has a single incident edge that
connects it to a vertex with angle less than $\theta_{\hat{o}}$. Agree to
relabel the edge between $o$ and  $\hat{o}$  as $e_{0}$. Now, label the other two
incident edges to $o$ as $e_{1}$ and $e_{2}$ with the convention that $e = e_{1}$,
while labeling the other two incident edges to  $\hat{o}$  as $e_{3}$ and
 $e_{4}$ with the convention that $e_{4}$ connects  $\hat{o}$  to a vertex with
angle less than $\theta_{\hat{o}}$.

Let $T_{*}$ denote the modified graph. It has a trivalent vertex,
$o_{*}$, at angle just less than $\theta_{\hat{o}}$, and another,
 $\hat{o}_{*}$ at angle just greater than $\theta_{o}$. These two are
connected by an edge, $e_{*0}$. The remaining two incident edges to
$o_{*}$ connect the latter to vertices with smaller angles, while the
remaining two incident edges to  $\hat{o}_{*}$ connect it to vertices
with larger angles. The integer pair assigned to $e_{*0}$ is the sum
of those assigned to $e_{2}$ and $e_{4}$, this being also the sum of those
assigned to $e_{1}$ and $e_{3}$. Meanwhile, $T_{*}-e_{*0}$ is isomorphic to $T-e_{0}$.

The fact that $T_{*}$ is a positive graph for $\hat{A}$ follows directly
with the verification of the fourth condition in~\eqreft5{7}. And, the latter
follows when $o_{*}$ has angle nearly $\theta_{\hat{o}}$ and
 $\hat{o}_{*}$ has angle nearly $\theta_{o}$ from the fact that the
inequality is strictly obeyed by $e_{0}$ on $[\theta_{\hat{o}}, \theta_{o}]$,
and by the other incident edges to $o$ and  $\hat{o}$  on the relevant
intervals in $[0, \pi ]$.

The remaining moves describe modifications to $T$ that either remove a given
trivalent vertex, or replace it with either one monovalent vertex or one
bivalent vertex.

\substep{Move 4}
Suppose here that only one vertex on $e'\cup e''$ has angle $\theta_{\hat{o}}$, and that the latter is monovalent.
This move explains how $T$ is modified so as to replace the trivalent and
monovalent vertices with a single bivalent vertex.

To begin the story, remark that $\theta_{\hat{o}}$ must be greater than 0
as $e'\cup e''$ has a vertex with a smaller angle. Moreover, because the
 $e'$ version of $\alpha_{Q}$ is positive at $\theta_{\hat{o}}$ and because
$[Q_{e'}, Q_{e''}] < 0$, the vertex on $e' \cup e''$ with angle $\theta_{\hat{o}}$
must sit on $e''$. The graph $T$ is modified at angles near $\theta_{\hat{o}}$ by
removing $e''-o$ so as to replace $o$ with a bivalent
vertex in the modified graph. To elaborate, let $T_{*}$ denote the new
graph. It has a bivalent vertex, $o_{*}$, at angle $\theta_{\hat{o}}$. Use $e_{*}$ and $e_{*}'$
to denote its incident edges with the convention that $e_{*}$ connects $o_{*}$ to a
vertex with angle less than $\theta_{\hat{o}}$. Label the components of
$T_{*}-o_{*}$ as $T_{*e}$ and $T_{*e'}$
with the convention that $T_{*e}$ contains the interior of $e_{*}$. Then $T_{*e}$ is isomorphic to the
component $T_{e'}$ of $T-o$, and $T_{*e'}$ is isomorphic to the component $T_{e}$. Because
the $Q = Q_{e'}$ version of $\alpha_{Q}$ is positive at $\theta_{\hat{o}}$, this is also
the case for the $Q = Q_{e}$ version. This then
implies that $T_{*}$ obeys the fourth constraint in~\eqreft5{7}. Thus,
$T_{*}$ is a positive graph since it also obeys the first three
conditions in~\eqreft5{7}. Meanwhile, the $T_{*}$ version of~\eqreft5{8} is obeyed
by virtue of the fact that the integer pair for the vertex $o_{*}$ is $-Q_{e''}$.

\substep{Move 5}
This move is relevant when both $e'$ and $e''$
have vertices with angle $\theta_{\hat{o}}$ with one bivalent and the other
monovalent. In this regard, note that $Q_{e'}$ and $Q_{e''}$ can not lie on
the same line in $\mathbb{R}^{2}$ in this case. Thus, with the $[Q_{e'},Q_{e''}] < 0$
convention, the bivalent vertex must lie on $e'$. In this case,
the graph $T$ is modified by eliminating both the trivalent vertex and the
monovalent vertex on $e''$. To elaborate here, let $T_{*}$ again denote
the new graph. It has a bivalent vertex with angle $\theta_{\hat{o}}$. Let
$o_{*}$ denote the latter, and let $e_{*}$ and $e_{*}'$
denote its incident edges with the convention that $e_{*}$ connects
$o_{*}$ to a vertex with a smaller angle label. Then the component
$T_{*e}$ of $T_{*}-o_{*}$ is isomorphic to the
component $T-e'$ that contains vertices with angles less that $\theta_{\hat{o}}$.
Meanwhile, the component $T_{*e'}$ of $T_{*}-o_{*}$ is isomorphic to the component $T_{e}$ of $T-o$.

With the labeling as describe, $T_{*}$ is a positive graph for $\hat{A}$.
Indeed, the only substantive issue here is that raised by the fourth point
in~\eqreft5{7}. In this regard, the $e_{*}'$ version of this inequality
holds because it is strictly obeyed by the $Q = Q_{e}$ version of $\alpha
_{Q}$ at $\theta_{*}$. Meanwhile, the $e_{*}$ version of
the inequality holds because it holds for the version that is labeled by the
 edge that connects the bivalent vertex on $e'$ to a smaller angled vertex.

\substep{Move 6}
This and the remaining moves are relevant
to the cases where $\theta_{\hat{o}}$ is the angle of a monovalent vertex
on $e'$ and a monovalent vertex on $e''$. This move considers the case where the
angle is in $(0, \pi )$.

In this case, a new graph, $T_{*}$, is obtained from $T$ by removing $(e'  \cup e'')-o$,
thus replacing $o$ by a monovalent vertex with angle
$\theta_{\hat{o}}$. To elaborate, the graph $T_{*}$ has a monovalent
vertex, $o_{*}$, with angle $\theta_{\hat{o}}$. In addition,
$T_{*}-o_{*}$ is isomorphic to $T_{e}$. In this regard, keep
in mind that both $Q_{e'}$ and $Q_{e''}$ define $\theta_{\hat{o}}$ via~\eqref{eq:1.7}.
Thus, they are positive multiples of each other. This understood,
then $Q_{e}$ must also define $\theta_{\hat{o}}$ via~\eqref{eq:1.7}. The fourth
point in~\eqreft5{7} holds on $T_{*}$ because it holds on $T$ and because the
 $Q = Q_{e}$ version of $\alpha_{Q}$ vanishes at $\theta_{\hat{o}}$ and
is positive on $(\theta_{\hat{o}}, \theta_{o}]$.

\substep{Move 7}
This considers the case that $\theta_{\hat{o}} = 0$. In all of these cases, $T_{*}$ has a $\theta = 0$
monovalent vertex whose complement is isomorphic to $T_{e}$. The
verification that $T_{*}$ is a positive graph for $\hat{A}$ is
straightforward and so left to the reader.

\step{Part 2}
This last part of the subsection explains how the preceding moves can be
used to construct a positive line graph from $\hat{A}$ given its original moduli
space graph $T_{\hat{A}}$. In this regard, keep in mind that $T_{\hat{A}}$ is a
positive graph for $\hat{A}$. The forthcoming \fullref{lem:5.2} asserts that a linear
positive graph for $\hat{A}$ is a positive line graph. This understood, the task
at hand is to modify $T_{\hat{A}}$ using \refstep{Move 1}--\refstep{Move 7} so as to obtain a positive
graph for $\hat{A}$ that lacks trivalent vertices.

\begin{lemma}\label{lem:5.2}
A linear, positive graph for $\hat{A}$ is a positive line graph for $\hat{A}$.
\end{lemma}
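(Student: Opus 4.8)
The plan is to show that a linear positive graph $T$ for $\hat{A}$ satisfies the two defining requirements of a positive line graph: that its set of vertex angles equals $\Lambda_{\hat{A}}$, and that its edge labels obey the six conditions in~\eqreft1{18}. First I would extract the combinatorics from~\eqreft5{6}. Since $T$ is connected, contractible and has no trivalent vertices, the underlying graph is a path with both endpoints monovalent and all interior vertices bivalent; the first point of~\eqreft5{6} makes the vertex angles pairwise distinct, and the second makes the angle of each interior vertex lie strictly between those of its two neighbours, so the angles along the path are strictly monotone. Reversing the orientation of the path if necessary, write the vertices as $o_0,\dots,o_n$ with $\theta_0<\dots<\theta_n$, the edges as $e_1,\dots,e_n$ with $e_i$ joining $o_{i-1}$ to $o_i$, and put $Q_i=Q_{e_i}$. (If some interior vertex has $Q_i=Q_{i+1}$ it may be suppressed by merging $e_i$ with $e_{i+1}$, since the fourth point of~\eqreft5{7} persists on the merged interval, so I may assume no interior vertex carries equal labels on its incident edges.) This presents $T$ as a linear graph in the sense of \fullref{sec:1e}, with the monovalent endpoints carrying the extremal vertex angles; note also that the fourth point of~\eqreft5{7}, applied on a genuine (nondegenerate) interval, forces every edge label to be nontrivial.

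Next I would identify the vertex-angle set with $\Lambda_{\hat{A}}$. For a vertex angle $\theta\in(0,\pi)$ of $T$, the third point of~\eqreft5{6} says $\theta$ is defined via~\eqref{eq:1.7} by an integer pair, and — since monotonicity puts exactly one vertex $o$ of $T$ at $\theta$ — the third point of~\eqreft5{8} says $\pm P_{o}$ equals the difference of the sums of the pairs from the $(0,+,\dots)$ and $(0,-,\dots)$ elements of $\hat{A}$ that define $\theta$; as $P_{o}$ is nonzero (the monovalent case uses the first point of~\eqreft5{7} and the nontriviality of edge labels, the bivalent case the assumption that interior edge pairs differ), $\theta$ is defined by some $(0,\dots)$ element and so lies in $\Lambda_{\hat{A}}$. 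If $\theta\in\{0,\pi\}$ is a vertex angle, the first or second point of~\eqreft5{8} forces the corresponding prescribed integer-pair sum to be nontrivial, which — because the sign conditions in~\eqreft1{14} confine the integer pairs of the $(\pm1,\pm,\dots)$ elements and all partial sums of pairs of a fixed $(\delta,\varepsilon)$–type to disjoint open convex cones in $\mathbb{R}^{2}$, so that such sums cannot cancel — forces either $\mc_{\pm}>0$ or the presence of a $(\pm1,\dots)$ element, i.e.\ $\theta\in\Lambda_{\hat{A}}$. Conversely, every element of $\Lambda_{\hat{A}}$ occurs as a vertex of $T$: for an angle in $(0,\pi)$ coming from $(0,\dots)$ elements the relevant $(0,+)$-minus-$(0,-)$ sum is nonzero by the same cone bookkeeping, and the left side of the third point of~\eqreft5{8} vanishes unless $\theta$ is a vertex; the angles $0$ and $\pi$ are handled by the first two points of~\eqreft5{8}. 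Hence the vertex-angle set of $T$ is exactly $\Lambda_{\hat{A}}$, and in particular $\theta_0=\min\Lambda_{\hat{A}}$ and $\theta_n=\max\Lambda_{\hat{A}}$ are extremal.

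With that in hand, checking~\eqreft1{18} is a direct comparison of~\eqreft5{8} with~\eqreft1{18} together with a pointwise reading of the fourth point of~\eqreft5{7}; monotonicity collapses each prescribed sum in~\eqreft5{8} to a single edge or vertex quantity. The first point of~\eqreft5{8} is literally the second point of~\eqreft1{18} for $e_n$ when $\theta_n=\pi$; the second point of~\eqreft5{8} is literally the fourth point of~\eqreft1{18} for $e_1$ when $\theta_0=0$; and for an interior vertex $o_i$, with $e_i$ the incident edge on whose vertices $\theta_i$ is largest, one has $P_{o_i}=Q_i-Q_{i+1}$, so the third point of~\eqreft5{8} is exactly the fifth point of~\eqreft1{18}. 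For a monovalent endpoint $o_n$ with $\theta_n\in(0,\pi)$ one has $P_{o_n}=-Q_n$, so the third point of~\eqreft5{8} at $\theta_n$ reads $Q_n=$ ($(0,+)$-sum defining $\theta_n$) $-$ ($(0,-)$-sum defining $\theta_n$); since $\theta_n$ is an extremal angle of $\Lambda_{\hat{A}}$ and $\Lambda_{\hat{A}}$ has more than one angle, the second point of~\eqreft1{17} kills the $(0,+)$ term, leaving $-Q_n=$ ($(0,-)$-sum defining $\theta_n$), which is the first point of~\eqreft1{18}; the symmetric argument at $\theta_0$ (with $P_{o_0}=+Q_1$) gives the third point of~\eqreft1{18}. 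Finally, for the sixth point of~\eqreft1{18}: if $(p,p')$ defines the angle of a bivalent vertex on an edge $\hat e$, then $(p,p')$ is a positive multiple of $\big((1-3\cos^{2}\theta),\surd6\cos\theta\big)$ by the normalization in~\eqref{eq:1.7}, so $p\,q_{\hat e}'-p'\,q_{\hat e}$ is a positive multiple of $\alpha_{Q_{\hat e}}(\theta)$, which the fourth point of~\eqreft5{7} makes strictly positive at that (non-endpoint) angle; and if $q_{\hat e}'<0$ while neither vertex of $\hat e$ has angle $0$ or $\pi$, then $\pi/2$ cannot lie in the angle interval spanned by $\hat e$, since $\alpha_{Q_{\hat e}}(\pi/2)=q_{\hat e}'<0$ would contradict the fourth point of~\eqreft5{7}; hence both vertex angles of $\hat e$ lie on the same side of $\pi/2$, which is precisely the claimed statement about the two versions of $p'$. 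This exhausts~\eqreft1{18}, so $T$ is a positive line graph for $\hat{A}$.

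The one step carrying real content is the identification of the vertex-angle set with $\Lambda_{\hat{A}}$, and within it the ``no spurious cancellation'' fact for the integer-pair sums appearing in~\eqreft5{8}; this is exactly where the sign constraints~\eqreft1{14} and the genericity clause in~\eqreft1{17} enter, and it should be organized as the same convex-cone bookkeeping that underlies the case analysis in the proof of \fullref{lem:5.1}. Everything else is formal matching of the two lists of defining conditions.
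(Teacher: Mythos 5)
Your approach coincides with the paper's at the level of strategy: both reduce the lemma to matching the positive graph constraints (5--6)--(5--8) against the positive line graph constraints (1--18), and both recognize the sixth point of (1--18) as the only part carrying real content. Where you differ, favorably, is in the argument for that sixth point: you observe directly that $\alpha_{Q_{\hat e}}(\pi/2)=q_{\hat e}'$, so $q_{\hat e}'<0$ together with the fourth point of (5--7) excludes $\pi/2$ from the angle interval of $\hat e$, forcing $\cos\theta$ (and hence, by (1.7), the sign of $p'$) to be constant across the two vertices. The paper instead runs a case analysis through the ordering criteria (5--3)--(5--4) developed in the proof of \fullref{lem:5.1}, tracking where the zero $\theta_{Q_e}$ of $\alpha_{Q_e}$ can sit. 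Your argument is shorter and more transparent.

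One overstatement you should fix: when verifying that every angle of $\Lambda_{\hat A}$ in $(0,\pi)$ is a vertex, you assert that ``the relevant $(0,+)$-minus-$(0,-)$ sum is nonzero by the same cone bookkeeping.'' That reasoning is sound for the $(\pm1,\ldots)$ elements, whose integer pairs land in disjoint open convex cones by (1--14), but it fails for $(0,\ldots)$ elements at a \emph{fixed} angle $\theta_0\in(0,\pi)$: every such pair is a positive integer multiple of the single relatively prime pair $(p,p')$ defining $\theta_0$ via (1.7), so they are all collinear and the $(0,+)$ and $(0,-)$ sums can coincide, making the third point of (5--8) trivially satisfied with no vertex at $\theta_0$. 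The paper's own proof does not touch this point (it calls everything but (1--18)'s last item ``non-substantive''), and in context the issue is harmless: when the $(0,+)$/$(0,-)$ contributions at $\theta_0$ cancel, one may insert a bivalent vertex there with $Q_e=Q_{e'}$ without disturbing (5--6), (5--7), (5--8) or any item of (1--18). You should either make that insertion explicit or drop the unwarranted nonvanishing claim; as written the sentence is false, though the conclusion you draw from it can be rescued.
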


\begin{proof}[Proof of \fullref{lem:5.2}]
The only substantive issue here is that raised
by the fourth point in~\eqreft1{18}. In this regard, suppose that $L$ is a linear,
positive graph for $\hat{A}$ and that $e$  is an edge in L. The condition on the
positivity of $p{q_e}'-p'q_{e} > 0$ in the case that $(p, p')$ is an integer
pair that defines the angle of a bivalent vertex on $\hat{e}$ follows directly
from fourth constraint in~\eqreft5{7}. Now, suppose that ${q_e}' < 0$. Let $o$ and
 $o'$ denote the two vertices on $e$  with the convention that o's angle is less
than that of $o'$. Assume first that both these vertices have angles in $(0, \pi )$.
If such is the case, then $p_{o'}' > 0$ requires $p_{o}' > 0$
because both angles must be smaller than $\frac\pi{2}$ if the larger is.

For the sake of argument, suppose that $p_{o}' > 0$ but that $p_{o'}'< 0$. The $Q = Q_{e}$
version of $\theta_{Q}$ must be greater than
$\theta_{o}$ since the former is greater than $\frac\pi{2}$
and the latter less than $\frac\pi{2}$. However, according to
the second point in~\eqreft5{4}, this same $\theta_{Q}$ is less then $\theta
_{o'}$. Thus, $\alpha_{Q}$ vanishes between $\theta_{o}$ and $\theta
_{o'}$ and this violates the last item in \eqreft57.
The other
possibility is that where either $\theta_{o}$ is 0 or $\theta_{o'} = \pi$.
In the case that $\theta_{o} = 0$, then the argument just given
has the $Q = Q_{e}$ version of $\theta_{Q}$ less than $\theta_{o'}$
when it exists. If available, the $Q = -Q_{e}$ version of this angle is also
less than $\theta_{o'}$ by virtue of the first point in~\eqreft5{4}. In either
case, this means that the $Q = Q_{e}$ version of $\alpha_{Q}$ has a zero
in $(0, \theta_{o'})$. In the case that $\theta_{o'}=\pi$, the $Q = Q_{e}$ version of
$\theta_{Q}$ is greater than $\theta_{o}$ when it
exists. This is another consequence of the first point of~\eqreft5{4}. When
available, the $Q = -Q_{e}$ version of $\theta_{Q}$ is also greater than
$\theta_{o}$; this a consequence of the second point in~\eqreft5{4}.
Thus the $Q = Q_{e}$ version of $\alpha_{Q}$ has a zero in $(\theta_{o}, \pi)$.

The remainder of this section describes an algorithm that uses \refstep{Move 1}--\refstep{Move 7}
from \refstep{Part 1} to change $T_{\hat{A}}$ into a linear, positive graph for $\hat{A}$
and thus produce the desired positive line graph for $\hat{A}$. The algorithm
has four steps.\end{proof}

\substep{Step 1}
Suppose that $T$ is any given positive graph
for $\hat{A}$. Let $V_{ + }$ denote the set of trivalent vertices with only one
incident edge that contains a larger angle vertex. Likewise, define $V_{ -}$
to be the set of trivalent vertices with only one incident edge that
contains a smaller angle vertex. Let $n_{T + }$ denote the number of
 elements in $V_{ + }$ and let $n_{T - }$ denote the corresponding number in
$V_{ - }$. If $V_{ + }$ is empty, go to \refstep{Step 3}. If not, let $o \in V_{ +}$
be a vertex whose angle is the smallest of those from the vertices in
$V_{ + }$. \refstep{Move 1}--\refstep{Move 7} can now be used to successively modify $T$ so that the
result, $T'$, is a positive graph for $\hat{A}$ with $n_{T' + } = n_{T + }-1$
and with $n_{T' - }= n_{T - }$. Indeed, \refstep{Move 1}--\refstep{Move 3} successively
decrease the angle of the relevant trivalent vertex, this by an amount that
is bounded uniformly away from zero. Thus, only finitely many applications
of \refstep{Move 1}--\refstep{Move 3} are possible. The subsequent moves all eliminate a trivalent
vertex. In any event, when $T'$ is produced, go to \refstep{Step 2}.

\substep{Step 2}
Repeat \refstep{Step 1} using $T'$ now instead of $T$.

Note that \refstep{Step 1} and \refstep{Step 2} ultimately result in a positive graph for $\hat{A}$
whose version of $V_{ + }$ is empty and whose version of $V_{ - }$ has the
same number of elements as does that of $T_{\hat{A}}$.

\substep{Step 3}
The input to this step is a positive line
graph for $\hat{A}$ whose version of $V_{ + }$ is empty. Let $T$ now denote the
latter. If $V_{ - }$ is also empty, then stop because $T$ is the desired
linear graph. If $V_{-} \ne \emptyset$, let $o$ denote the trivalent graph
with the largest angle. Successively apply the up side down versions of
\refstep{Move 1}--\refstep{Move 7} to $o$. The result is a new, positive graph for $\hat{A}$ with no
 elements in its version of $V_{ + }$ and one less trivalent vertex than $T$.
Denote this graph by $T'$. Go to \refstep{Step 4}.

\substep{Step 4}
Repeat \refstep{Step 3} using $T'$ now instead of $T$.

\bibliographystyle{gtart}
\bibliography{link}
\nocite{HWZ1-corr}
\end{document}